\numberwithin{equation}{section}
\newtheorem{thm}{Theorem}[section]
\newtheorem{lem}[thm]{Lemma}
\newtheorem{prop}[thm]{Proposition}
\newtheorem{cor}[thm]{Corollary}
\newtheorem{remark}[thm]{Remark}
\newcommand{\MMM}{\color{black}}
\newcommand{\KKK}{\color{black}}
\newcommand{\eps}{\varepsilon}
\newcommand{\R}{{\mathbb R}}
\newcommand{\be}[1]{\begin{equation}\label{#1}}
\newcommand{\ee}{\end{equation}}
\newcommand{\bC}{{\mathbbm{1}}}
\newcommand{\prf}{\par\smallskip\noindent{\sl Proof. \/}}
\newcommand{\finprf}{\unskip\null\hfill$\;\square$\vskip 0.3cm}
\newenvironment{proof}{\prf}{\finprf}
\newtheorem{definition}{Definition}[section]
\numberwithin{equation}{section}
\newcommand{\argmin}{{\rm argmin\ }}
\newenvironment{proofad1}{\removelastskip\par\medskip
\noindent{\textit {Proof of {\rm  Lemma  \ref{Minimiz}}}.}
\rm}{\penalty-20\null\hfill$\square$\par\medbreak} 
\newenvironment{proofad2}{\removelastskip\par\medskip
\noindent{\textbf {Proof of Theorem  \ref{furtherregularity*}}.}
\rm}{\penalty-20\null\hfill$\square$\par\medbreak} 
\newenvironment{proofad3}{\removelastskip\par\medskip
\noindent{\textbf {Proof of Theorem  \ref{theorem:Gammaconvergence*}}.}
\rm}{\penalty-20\null\hfill$\square$\par\medbreak} 
\newenvironment{proofad4}{\removelastskip\par\medskip
\noindent{\textbf {Proof of Theorem  \ref{existencethm}}.}
\rm}{\penalty-20\null\hfill$\square$\par\medbreak} 
\newenvironment{proofad5}{\removelastskip\par\medskip
\noindent{\textbf {Proof of Theorem  \ref{stozero}}.}
\rm}{\penalty-20\null\hfill$\square$\par\medbreak} 
\newenvironment{proofad6}{\removelastskip\par\medskip
\noindent{\textbf {Proof of Lemma  \ref{RieszW11}}.}
\rm}{\penalty-20\null\hfill$\square$\par\medbreak} 
\newenvironment{proofad7}{\removelastskip\par\medskip
\noindent{\textbf {Proof of Lemma  \ref{lastbutone}}.}
\rm}{\penalty-20\null\hfill$\square$\par\medbreak} 
\newenvironment{proofad8}{\removelastskip\par\medskip
\noindent{\textbf {Proof of Lemma  \ref{lastlemma}}.}
\rm}{\penalty-20\null\hfill$\square$\par\medbreak} 
\def\qed{\,\unskip\kern 6pt \penalty 500
\raise -2pt\hbox{\vrule \vbox to8pt{\hrule width 6pt
\vfill\hrule}\vrule}\par}
\definecolor{darkblue}{rgb}{0.05, .05, .65}
\definecolor{darkgreen}{rgb}{0.1, .65, .1}
\definecolor{darkred}{rgb}{0.8,0,0}
\begin{document}
\title{Nonlinear aggregation-diffusion equations with Riesz potentials
}
\author{Yanghong Huang, Edoardo Mainini, Juan Luis V\'azquez, Bruno Volzone}
\date{ }
\newcommand{\Addresses}{{
  \bigskip
  \footnotesize
\noindent Yanghong Huang: Department of Mathematics, University of Manchester, Oxford Road, Manchester M13 9PL, United Kingdom.\\
E-mail: {\tt yanghong.huang@manchester.ac.uk} \\\\
\noindent Edoardo Mainini: DIME,
  Universit\`a  degli studi di Genova, Via all'Opera Pia, 15 - 16145 Genova, Italy. \\
E-mail: {\tt mainini@dime.unige.it}\\\\
\noindent Juan Luis V\'azquez: Departamento de Matem\'aticas, Universidad Aut\'onoma de Madrid. 28049  Madrid, Spain.\\
E-mail: {\tt juanluis.vazquez@uam.es}\\\\
\noindent Bruno Volzone: Dipartimento di Scienze e Tecnologie, Universit\`a degli Studi di
Napoli ``Parthenope'', 80143 Napoli, Italy. \\
E-mail: {\tt bruno.volzone@uniparthenope.it}\\\\
}}

\maketitle

\begin{abstract}
We consider an aggregation-diffusion model, where the diffusion is nonlinear of porous medium type  and the aggregation is governed by the Riesz potential of order $s$. The addition of a quadratic diffusion term produces a more precise competition with the aggregation term for small $s$, as they have the same scaling if $s=0$.    
We prove existence and uniqueness of stationary states and  we characterize their asymptotic behavior  as $s$ goes to zero. 
Moreover, we prove existence of gradient flow solutions to the evolution problem by applying  the JKO scheme.
\end{abstract}

\section{Introduction}
We consider the  Cauchy problem in the whole space $\R^{d}$, $d\geq1$, for the aggregation-diffusion equation
\begin{equation}\label{cauchy1}\left\{\begin{array}[c]{lll}
\partial_t\rho=\Delta\rho^m+\beta \Delta \rho^{2}-\chi\nabla\cdot(\rho\nabla(K_s\ast\rho)),\\
&\\
\rho(0)=\rho^0,
\end{array}\right.\end{equation}
where the initial datum $\rho^0$ is a nonnegative mass density in $L^1(\R^d)\cap L^m(\R^d)$, and the parameters satisfy $\chi>0$, $\beta\ge 0$, $m>2$.
Here,  $K_{s}$ denotes the Riesz kernel of order $s\in(0,d/2)$, namely $K_{s}(x)=c_{d,s}|x|^{2s-d}$, being $c_{d,s}$ an explicit normalization constant defined \-as
\begin{equation}\label{cds}
c_{d,s}:=\pi^{-d/2}2^{-2s}\Gamma(\tfrac d2-s)/\Gamma(s)\qquad \mbox{(as $s\downarrow 0$ there holds $c_{d,s}\sim \pi^{-d/2}\Gamma(\tfrac d2)\,s$)}.
\end{equation}
The natural free energy associated
with the nonlocal PDE  \eqref{cauchy1} is given by
\begin{align}\label{functional}
\mathcal{F}_s[\rho]&=\frac{1}{m-1}\int_{\mathbb{R}^d}\rho^m(x)\,dx+\beta\int_{\mathbb{R}^d}\rho^2(x)\,dx-\frac\chi2\int_{\mathbb{R}^d}\int_{\mathbb{R}^d}K_{s}(x-y)\rho(x)\rho(y)\,dx\,dy.
\end{align}
We notice that \eqref{cauchy1} has the structure of a continuity equation
$
  \partial_t\rho + \nabla\cdot(\rho \mathbf{u})=0,
$
where the velocity vector field is a gradient $\mathbf{u} = -\nabla\psi$ and the velocity potential
 \begin{equation}\label{u*}
\psi:=\frac{m}{m-1}\rho^{m-1}+2\beta\rho-\chi \,K_{s}\ast\rho
\end{equation}
is the functional derivative of $\mathcal{F}_s[\rho]$ with respect to $\rho$. For this reason, the evolution equation \ref{cauchy1} is formally the gradient flow of functional $\mathcal F_s$ with respect to the Wasserstein distance. \\

Our first objective is the analysis of stationary states of the dynamics, with most emphasis on their behavior as $s$ becomes small.
In fact, in our first result we show that for any given mass $M>0$, $\mathcal F_s$
has a unique minimizer over
 \begin{equation*}\label{YM}
   \mathcal{Y}_M:=\left\{\rho\in L^1_+(\mathbb{R}^d)\cap L^m(\mathbb{R}^d): \int_{\mathbb{R}^d}\rho(x)\,dx=M,\;\int_{\mathbb{R}^d}x\rho(x)\,dx=0\right\},
\end{equation*}
coinciding with the unique radial stationary state of  the dynamics with mass $M$ and center of mass at the origin.
Properties of stationary states have been thoroughly investigated for $\beta=0$ and for different ranges of  $m,s$, which are usually classified as follows:
by considering the homogeneity property of the terms of functional $\mathcal F_s$, diffusion and aggregation are in balance if $m$ is equal to the critical exponent $m_c:=2-2s/d$, which  is  the so called fair competition regime that is analyzed in \cite{CCH, CCH2}. The diffusion dominated regime $m>m_c$ was investigated in \cite{CHMV}.
 Uniqueness of stationary states have also been proved in the different regimes \cite{CCH3, CGHMV, DYY}.
Since the diffusion exponents in \eqref{cauchy1} are greater than $2$,  here we are considering a diffusion dominated model. However, the competition between the additional quadratic diffusion term and the aggregation term becomes crucial in the small $s$ regime, since the limiting critical exponent is exactly $2$.

Let us  introduce the precise notion of stationary state. We will check in Section \ref{stationarysection} that  the assumptions on $\rho$ in the next definition entail $\rho\nabla\psi\in L^1_{loc}(\R^d)$, where $\psi$ is given by  \eqref{u*}. 
 \begin{definition}\label{steadydef*}
 Let $\rho\in W^{1,1}(\R^{d})\cap L^{\infty}(\R^{d})\cap C(\R^{d})$ be a nonnegative density. Let $\Omega:=\left\{ x\in\mathbb R^d:\,\rho(x)>0\right\}$ and let
 $\psi$ be defined by \eqref{u*}. We say that $\rho$ is a stationary state for the evolution equation in \eqref{cauchy1} if
$\psi\in W^{1,\infty}(\Omega)$ and $\nabla\cdot(\rho \nabla\psi)=0$ in  $\mathcal D'(\R^d)$.
 \end{definition}

We stress that this definition differs from the one appearing in \cite{CCH} and in  later works. The definition that we propose is better suited to treat the small $s$ regime. Indeed, as we explain in Section \ref{stationarysection}, minimizers of $\mathcal F_s$ always satisfy the new definition.
We have the following
\begin{thm}\label{furtherregularity*}
Let $\beta\ge 0$. If $\beta=0$ and $s<1/2$ assume in addition that $m<\tfrac{2-2s}{1-2s}$.  For any mass $M>0$, there exists a unique stationary state of mass $M$ and center of mass $0$. Such steady state is radially decreasing, compactly supported,  H\"older on $\mathbb R^d$ and smooth inside its support. It coincides with the unique minimizer  of the energy functional $\mathcal{F}_{s}$ in the class $\mathcal{Y}_{M}$.
\end{thm}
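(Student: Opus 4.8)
The plan is to split the statement into an existence/characterization part and a regularity part, treating the variational problem first. First I would establish that $\mathcal F_s$ is bounded below and coercive on $\mathcal Y_M$: the nonlocal term is controlled by the Hardy--Littlewood--Sobolev inequality, whose conjugate exponent corresponds to an $L^p$ norm with $p<m$ (using $s<d/2$ and $m>2$), so interpolation between $L^1$ and $L^m$ absorbs it into the diffusion terms up to a constant depending on $M$. Then by the direct method---taking a minimizing sequence, using the mass and second-moment constraints together with a concentration-compactness or symmetric-rearrangement argument to prevent mass escaping to infinity---I would extract a minimizer. Riesz rearrangement inequality shows a radially decreasing minimizer exists, and the strict convexity properties of the diffusion part combined with the strict radial monotonicity forced by the aggregation term (as in \cite{CHMV, CGHMV}) give uniqueness of the minimizer in $\mathcal Y_M$.

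Next I would derive the Euler--Lagrange condition: any minimizer $\rho$ satisfies $\frac{m}{m-1}\rho^{m-1}+2\beta\rho-\chi K_s\ast\rho = C$ on $\Omega=\{\rho>0\}$ and $\ge C$ outside, for some constant $C$ (the Lagrange multiplier). This is exactly $\psi=C$ on the support, so $\nabla\psi=0$ there and hence $\rho\nabla\psi=0$, giving $\nabla\cdot(\rho\nabla\psi)=0$ in $\mathcal D'(\R^d)$; thus the minimizer is a stationary state in the sense of Definition \ref{steadydef*}. Conversely, I would show any stationary state of the given regularity with center of mass $0$ must satisfy the same algebraic relation on its (connected, by radial monotonicity) support and therefore coincides with the minimizer---this is where one invokes that $\psi\in W^{1,\infty}(\Omega)$ is locally constant on each connected component of $\Omega$ and an argument (continuity of $K_s\ast\rho$, plus the structure of the equation) pins down the single constant and rules out disconnected supports. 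Compact support follows because $K_s\ast\rho(x)\to 0$ as $|x|\to\infty$ while the left side would have to stay $\ge C>0$, forcing $\rho\equiv 0$ far out; more precisely one inverts the algebraic relation $\frac{m}{m-1}\rho^{m-1}+2\beta\rho = (C+\chi K_s\ast\rho)_+$ and reads off the support from where the right side exceeds the threshold needed for $\rho>0$, i.e. where $\chi K_s\ast\rho + C > 0$ fails eventually.

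For the regularity statement: inside $\Omega$, invert $\Phi(\rho):=\frac{m}{m-1}\rho^{m-1}+2\beta\rho$ (a strictly increasing smooth function of $\rho>0$ with smooth inverse) to get $\rho = \Phi^{-1}(C+\chi K_s\ast\rho)$. Since $K_s\ast\rho$ is as smooth as elliptic (fractional Laplacian) regularity theory allows---$\rho\in L^\infty$ with compact support gives $K_s\ast\rho\in C^{0,\alpha}$ for suitable $\alpha$, and then a bootstrap using Schauder-type estimates for $(-\Delta)^{-s}$ (equivalently, that $(-\Delta)^s(K_s\ast\rho)$ is a constant multiple of $\rho$ on $\Omega$)---one upgrades $\rho$ to $C^\infty$ in the interior. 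Global H\"older continuity across $\partial\Omega$ follows from the explicit behavior of $\Phi^{-1}$ near $0$: since $\Phi^{-1}(t)\sim (\tfrac{m-1}{m}t)^{1/(m-1)}$ for small $t$ and $C+\chi K_s\ast\rho$ is Lipschitz (or $C^{0,\alpha}$) and vanishes to first order at the boundary, $\rho$ is $C^{0,1/(m-1)}$ near $\partial\Omega$, hence H\"older on all of $\R^d$.

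The main obstacle I anticipate is the rigidity/uniqueness step: showing that \emph{every} stationary state in the sense of Definition \ref{steadydef*} (not just minimizers) is radial, connected-support, and equals the minimizer. The subtlety is that $\nabla\cdot(\rho\nabla\psi)=0$ only forces $\psi$ to be locally constant on connected components of $\{\rho>0\}$ and says nothing a priori across components or about sign of $\rho$'s gradient; ruling out multi-bump configurations and identifying the Lagrange constant requires a careful argument---likely combining the continuity of $K_s\ast\rho$, the maximum-principle structure of the fractional operator, and possibly a moving-plane or strict-rearrangement inequality---and this is precisely the place where the extra hypothesis $m<\tfrac{2-2s}{1-2s}$ when $\beta=0,\ s<1/2$ should enter, to control the integrability of $\nabla(K_s\ast\rho)$ and make $\psi\nabla\rho$ well-defined so the weak formulation can be tested appropriately. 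The secondary technical point is the endpoint bootstrapping of interior smoothness, where the order $s$ being small makes the gain per iteration small but still positive, so finitely many steps suffice.
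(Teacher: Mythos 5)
Your outline of the variational side (coercivity via HLS, concentration--compactness for existence, Riesz rearrangement for a radially decreasing minimizer, Euler--Lagrange condition, and the deduction that minimizers are stationary states) matches the paper's route. But the converse direction---every stationary state in the sense of Definition~\ref{steadydef*} equals the minimizer---is exactly where you admit uncertainty, and the directions you gesture at (moving-plane, maximum principle, rearrangement) are not what makes the argument close. The paper proceeds in a specific sequence: (i) the zero-dissipation identity of Proposition~\ref{nodissipation} (obtained by approximating $\eta\psi$ by smooth functions in $W^{1,1}$ and letting the cut-offs $\eta_k$ expand) forces $\psi$ to be constant on each connected component $\Theta_n$ of $\{\rho>0\}$, giving $f(\rho)=\chi K_s\ast\rho - Q_n$ on each $\Theta_n$ with $f(t)=\tfrac m{m-1}t^{m-1}+2\beta t$; (ii) because $f^{-1}$ is globally Lipschitz when $\beta>0$, the H\"older constant of $\rho$ restricted to each component is uniform in $n$, which lets one glue across components and obtain a global $C^{0,\gamma}$ bound, then bootstrap via the H\"older theory for Riesz potentials up to $\rho\in C^{0,1}(\R^d)$; (iii) the Lipschitz regularity of $\rho^{m-1}$ then feeds into the modified \emph{continuous Steiner symmetrization} argument of \cite{CHMV,CHVY}, which is what actually yields radiality of \emph{arbitrary} stationary states; (iv) uniqueness is not obtained from any convexity of $\mathcal F_s$ (the interaction term destroys convexity), but by invoking the result of \cite{DYY} for nonlinearities $\Phi(\rho)=\tfrac{1}{m-1}\rho^m+\beta\rho^2$ that are strictly increasing smooth convex. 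Your appeal to ``strict convexity properties of the diffusion part'' to get uniqueness of the minimizer is not enough; neither is it circular-proof to write ``its (connected, by radial monotonicity) support'' at a stage where radial monotonicity of the stationary state is precisely what must be established.

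You also misattribute the role of the hypothesis $m<\tfrac{2-2s}{1-2s}$ in the case $\beta=0$, $s<1/2$. It is not there to ``make $\psi\nabla\rho$ well-defined'' nor to control integrability of $\nabla(K_s\ast\rho)$; the definition of stationary state and Lemma~\ref{RieszW11} already handle that for any $m>2$. The restriction enters because when $\beta=0$ the inverse of $f(t)=\tfrac m{m-1}t^{m-1}$ is only $C^{1/(m-1)}$, not Lipschitz, so the bootstrap in step (ii) above produces $\rho^{m-1}\in C^{0,1}$ only under the bound $m<m^*:=\tfrac{2-2s}{1-2s}$ (see \cite[Theorem~8]{CHMV}); without Lipschitz $\rho^{m-1}$ the Steiner symmetrization argument for radiality breaks down, and one is left with uniqueness only among \emph{radial} stationary states. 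This is exactly the content of Remark~\ref{remark} and the reason the hypothesis appears in the statement. A further minor point: for $\beta>0$ the conclusion is Lipschitz (not merely H\"older) regularity on $\R^d$; your boundary analysis via $\Phi^{-1}(t)\sim t^{1/(m-1)}$ is the $\beta=0$ picture, and when $\beta>0$ the $2\beta t$ term dominates near $t=0$ so $\Phi^{-1}$ is Lipschitz at the origin.
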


If $\beta=0$ and $s<1/2$, without the additional restriction $m<\tfrac{2-2s}{1-2s}$ we are not able to apply the radiality result from \cite{CHVY} and \cite{CHMV}, and for this reason we do not get the same conclusion, but we shall still obtain uniqueness in the class of radial stationary states.\\

We are mostly interested in the limiting behavior of stationary states as $s\to 0$. In this perspective, since $K_s\to\delta_0$, the limit functional is formally given by
\begin{equation*}
\mathcal{F}_0[\rho]:=\frac{1}{m-1}\int_{\mathbb{R}^d}\rho^m(x)\,dx+\left(\beta-\frac\chi2\right)\int_{\mathbb{R}^d}\rho^2(x)\,dx\label{limitfunct*}.
\end{equation*}
It is clear that the minimization problem $\min_{\mathcal Y_M}\mathcal F_0$ is strongly influenced by the sign of the coefficient $\beta-\chi/2$. Indeed, it has solutions if and only if $\beta< \chi/2$, and in such case we will check that there is a unique radially decreasing minimizer, given by the characteristic function of a ball. Our second main result is the following. It will be proven in section \eqref{asymptoticsection}, where some illustration of stationary states from numerical simulations will also be provided.

\begin{thm}\label{theorem:Gammaconvergence*}
 For any $s\in (0,1/2)$, let $\rho_s\in\mathcal{Y}_M$ be the unique minimizer of $\mathcal{F}_s$ over $\mathcal Y_M$.
If $0\le\beta<\chi/2$,
 there exists $\rho\in\mathcal{Y}_M$ such that $\rho_s\to \rho$ strongly in $L^m(\mathbb{R}^d)$ as $s\downarrow 0$, and moreover
$\rho$ is the unique radially decreasing minimizer
of the functional \eqref{limitfunct} over $\mathcal{Y}_M$. Else if $\beta\ge\chi/2$, we have $\lim_{s\downarrow 0}\mathcal F_s[\rho_s]=0$ and $\rho_s\to 0$ uniformly on $\mathbb R^d$.
 \end{thm}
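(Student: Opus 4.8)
The plan is to treat the two cases $0\le\beta<\chi/2$ and $\beta\ge\chi/2$ separately, the first by a $\Gamma$-convergence argument and the second by a direct scaling/energy estimate.

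\textbf{Case $0\le\beta<\chi/2$.} First I would establish a uniform bound: evaluating $\mathcal F_s$ at a fixed competitor $\bar\rho\in\mathcal Y_M$ (say the characteristic function of a ball, suitably normalized) gives $\mathcal F_s[\rho_s]\le \mathcal F_s[\bar\rho]$, and since $K_s\to\delta_0$ one has $\mathcal F_s[\bar\rho]\to\mathcal F_0[\bar\rho]<+\infty$; combined with a lower bound controlling the nonlocal term by the $L^m$ and $L^2$ norms via the Hardy--Littlewood--Sobolev inequality (interpolated with the $L^1$ mass constraint), this yields a uniform bound on $\|\rho_s\|_{L^m}$ and $\|\rho_s\|_{L^2}$. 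The HLS constant for $K_s$ behaves well as $s\to 0$ because $c_{d,s}\sim\pi^{-d/2}\Gamma(d/2)\,s\to 0$, so the nonlocal term is asymptotically controlled. Next, up to a subsequence $\rho_s\rightharpoonup\rho$ weakly in $L^m$ (and in $L^2$), with $\rho\in\mathcal Y_M$ after checking the mass and center-of-mass constraints pass to the limit (tightness: I would argue the $\rho_s$ have uniformly controlled tails, e.g. using that each $\rho_s$ is radially decreasing and mass-normalized, which forces uniform integrability of $|x|\rho_s$ away from a compact set, or invoke compact support bounds from Theorem \ref{furtherregularity*}). Then I would prove the $\Gamma$-liminf inequality: the local terms are weakly lower semicontinuous, and the nonlocal term $\frac\chi2\iint K_s\rho_s\rho_s$ converges to $\frac\chi2\int\rho^2$ — this requires showing $\iint K_s(x-y)\rho_s(x)\rho_s(y)\,dx\,dy \to \int\rho^2$, which I would do by splitting $K_s\ast\rho_s = (K_s\ast\rho_s - \rho_s) + \rho_s$ and using that $K_s\to\delta_0$ in a quantitative sense against the equi-bounded $L^m\cap L^2$ family, together with strong local compactness of $\rho_s$ (Rellich, after noting $\rho_s$ are uniformly Hölder by Theorem \ref{furtherregularity*}). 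For the $\Gamma$-limsup / optimality, I would use the minimizer $\rho_0$ of $\mathcal F_0$ as a recovery sequence (it is admissible for every $s$) to get $\limsup_s\mathcal F_s[\rho_s]\le\limsup_s\mathcal F_s[\rho_0]=\mathcal F_0[\rho_0]$. Combining, $\mathcal F_0[\rho]\le\liminf\mathcal F_s[\rho_s]\le\limsup\mathcal F_s[\rho_s]\le\mathcal F_0[\rho_0]=\min_{\mathcal Y_M}\mathcal F_0$, so $\rho$ is a minimizer of $\mathcal F_0$; by the uniqueness of the radially decreasing minimizer of $\mathcal F_0$ (the characteristic function of a ball, established earlier) and the fact that each $\rho_s$ is radial, $\rho$ is that minimizer, hence the whole family converges, not just a subsequence. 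Finally I would upgrade weak to strong $L^m$ convergence: from $\mathcal F_s[\rho_s]\to\mathcal F_0[\rho]$ and convergence of the other terms, $\|\rho_s\|_{L^m}^m\to\|\rho\|_{L^m}^m$, which with weak convergence in the uniformly convex space $L^m$ gives strong convergence.

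\textbf{Case $\beta\ge\chi/2$.} Here the plan is to show $\inf_{\mathcal Y_M}\mathcal F_s\to 0$ by a spreading competitor and that this forces $\rho_s\to 0$ uniformly. For the upper bound, take $\rho_R(x)=\frac{M}{|B_R|}\ind_{B_R}(x)$ (centered, mass $M$, in $\mathcal Y_M$): its local terms scale like $R^{-d(m-1)}$ and $R^{-d}$, both $\to 0$ as $R\to\infty$, while the nonlocal term is $\frac\chi2 c_{d,s}\iint_{B_R\times B_R}|x-y|^{2s-d}\,(M/|B_R|)^2\,dx\,dy$, which scales like $c_{d,s}\,R^{2s-d}\to 0$; thus $\mathcal F_s[\rho_R]\to 0$ as $R\to\infty$ (for fixed $s$). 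But we also need to rule out that the infimum is negative and bounded away from $0$, and to pass $s\to0$. For a uniform (in $s$) lower bound, I would show $\mathcal F_s[\rho]\ge 0$ for all $\rho\in\mathcal Y_M$ when $\beta\ge\chi/2$: the quadratic diffusion plus the nonlocal term is $\beta\int\rho^2 - \frac\chi2\iint K_s\rho\rho \ge \frac\chi2\big(\int\rho^2 - \iint K_s\rho\rho\big)$; writing $\iint K_s\rho\rho = \int \rho\,(K_s\ast\rho)$ and using that $K_s\ast\rho$ is, up to the normalization, a fractional-type smoothing, I would estimate $\iint K_s\rho\rho \le \|\rho\|_{L^2}\|K_s\ast\rho\|_{L^2} \le \widehat{K_s}(0^+)$-type bound — more cleanly, in Fourier variables $\iint K_s\rho\rho = \int \widehat{K_s}(\xi)|\widehat\rho(\xi)|^2\,d\xi$ with $\widehat{K_s}(\xi)=|\xi|^{-2s}$, which is $>1$ for $|\xi|<1$, so this naive bound fails; instead I would use the sharp HLS inequality with its known best constant and the scaling $c_{d,s}=O(s)$ to show $\iint K_s\rho\rho \le (1+o(1))\int\rho^2$ uniformly over a set where $\|\rho\|_{L^m}+\|\rho\|_{L^2}$ is controlled, and combine with the coercive $\frac1{m-1}\int\rho^m$ term to absorb the error. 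The cleanest route is probably: from $\mathcal F_s[\rho_s]\le\mathcal F_s[\rho_R]$ (any $R$) we get $\mathcal F_s[\rho_s]$ bounded above by something $\to 0$; combined with a lower bound of the form $\mathcal F_s[\rho]\ge \tfrac1{m-1}\int\rho^m - C_s\big(\int\rho^m\big)^\theta - C_s$ with $C_s\to 0$, we deduce $\|\rho_s\|_{L^m}\to 0$ and $\mathcal F_s[\rho_s]\to 0$. Uniform convergence $\rho_s\to0$ then follows from the uniform Hölder bound in Theorem \ref{furtherregularity*} together with $\|\rho_s\|_{L^m}\to0$: an $L^\infty$-bound interpolating Hölder seminorm and $L^m$ norm (e.g. if $\rho_s$ had a point value $\ge\delta$, radial monotonicity plus Hölder continuity would force $\int\rho_s^m$ bounded below, a contradiction).

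\textbf{Main obstacle.} The delicate point is the quantitative control of the nonlocal term $\iint K_s(x-y)\rho_s(x)\rho_s(y)\,dx\,dy$ as $s\to 0$ — both its convergence to $\int\rho^2$ in the first case (where one must handle the singularity of $|x-y|^{2s-d}$ at the diagonal uniformly in $s$, using that its integral against the smoothing normalization $c_{d,s}$ tends to a Dirac mass) and the uniform-in-$s$ sharp upper bound in the second case (where the sign of $\beta-\chi/2$ matters and one cannot afford constants worse than $1$). I would handle this by writing $K_s\ast\rho - \rho = \int K_s(y)(\rho(\cdot - y)-\rho(\cdot))\,dy$ and exploiting the uniform Hölder modulus of $\rho_s$ from Theorem \ref{furtherregularity*} to bound this by $C\,[\rho_s]_{C^\alpha}\int K_s(y)|y|^\alpha\,dy$ on a ball $\{|y|\le 1\}$, with $\int_{|y|\le1}c_{d,s}|y|^{2s-d}|y|^\alpha\,dy = O(s/\alpha)\to 0$, and controlling the far part $\{|y|>1\}$ by $\|\rho_s\|_{L^\infty}\int_{|y|>1}c_{d,s}|y|^{2s-d}\,dy$ — but here $\int_{|y|>1}|y|^{2s-d}\,dy$ diverges, so one instead uses $\|K_s\ast\rho_s\|_{L^\infty}\le C\|\rho_s\|_{L^1}^{1-\theta}\|\rho_s\|_{L^p}^\theta$ via Young/HLS with the $O(s)$ normalization to see the far contribution is also $O(s)$. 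This is the computational heart of the argument; everything else is standard lower-semicontinuity and compactness.
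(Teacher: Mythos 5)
Your outline captures the right high-level shape — uniform bounds, compactness, identification of the limit of the nonlocal term, $\Gamma$-type comparison with $\mathcal F_0$, and a scaling/energy argument when $\beta\ge\chi/2$ — but several of the load-bearing steps as you propose them would fail, and the paper's proof takes meaningfully different routes at exactly those points.

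The most serious gap is the repeated appeal to ``uniform H\"older regularity from Theorem \ref{furtherregularity*}.'' That theorem gives H\"older regularity of each individual minimizer $\rho_s$, with constants (and, when $\beta=0$, even the exponent $\gamma<2s$) depending on $s$; nothing in it says these estimates survive the limit $s\downarrow 0$. This undermines both your use of Rellich to get strong local compactness in the first case, and your interpolation argument ($L^m\to L^\infty$ via a H\"older modulus) for the uniform convergence in the second case. The paper avoids H\"older estimates entirely. For compactness it proves a uniform $L^\infty$ bound directly from the Euler--Lagrange equation evaluated at $x=0$ (Lemma \ref{lemma:equibounded}, using crucially that $c_{d,s}\sigma_d/(2s)$ stays bounded), proves a uniform compact-support bound (Lemma \ref{lemma:tightness}), and then exploits radial monotonicity plus the $L^\infty$ bound to get a uniform $BV$ estimate (Lemma \ref{lemma:strongcompactness}), from which strong $L^p$ compactness follows by $BV\hookrightarrow L^1_{loc}$ and tightness. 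For uniform convergence when $\beta\ge\chi/2$, it again evaluates the Euler--Lagrange identity at $x=0$, sends $s\downarrow0$, and shows $\frac m{m-1}\bar\rho^{m-1}+2\beta\bar\rho\le\chi\bar\rho$ with $\bar\rho=\limsup\rho_s(0)$, forcing $\bar\rho=0$.

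Two more gaps, also concrete. First, your tightness claim — that radial monotonicity plus the mass constraint control the tails — is false (take $\rho_R\sim R^{-d}\mathbf 1_{B_R}$, $R\to\infty$); and ``compactly supported for each $s$'' does not give uniformly bounded supports. The paper's Lemma \ref{lemma:tightness} is genuine work: it evaluates \eqref{Euler} at the boundary of the support and combines this with the uniform lower bound $\liminf_s\mathcal C_s>0$ on the Euler--Lagrange multiplier, which is itself a nontrivial lemma (Lemma \ref{lemma:equiboundedconstant}, relying on Lemma \ref{lemma:ball}). Second, for $\beta\ge\chi/2$ your proposed lower bound ``$\mathcal F_s[\rho]\ge\tfrac1{m-1}\int\rho^m-C_s(\int\rho^m)^\theta-C_s$ with $C_s\to0$'' is not available: the relevant HLS-type constant $\bar C$ in \eqref{boundLm}/\eqref{cbar} does \emph{not} tend to zero as $s\downarrow0$ (since $S_{d,s}\to1$), so without explicitly playing the $\beta\int\rho^2$ term against the nonlocal term the constant cannot be made small. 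The paper's fix is a scaling argument: it minimizes over mass-preserving dilations, producing the explicit bound $\mathcal F_s[\rho]\ge-\frac{2s\beta}{d-2s}\bigl(\tfrac{\chi(d-2s)}{2d\beta}\bigr)^{d/(2s)}M^2\,(c_{d,s}H_{d,s})^{d/(2s)}$, whose prefactor $2s$ and the condition $\beta\ge\chi/2$ send the right-hand side to $0$.

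Finally, on the route you chose for the nonlocal limit: you propose splitting $K_s*\rho_s=\rho_s+(K_s*\rho_s-\rho_s)$ and controlling the remainder by a H\"older modulus. The paper instead proves Lemma \ref{lemma:limitfunctional} entirely on the Fourier side (Plancherel: $\iint c_{d,s}|x-y|^{2s-d}\rho_s\rho_s=(2\pi)^{-d}\int|\xi|^{-2s}|\hat\rho_s|^2$), which needs only strong $L^2$ convergence and no modulus of continuity. This is more elementary, robust, and matches what is actually available after the $BV$-compactness step. If you wish to keep your real-space decomposition, you would have to supply a uniform (in $s$) modulus for $\rho_s$, which is precisely what is missing.
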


We next focus on the gradient flow structure of evolution problem \eqref{cauchy1}. In this case, the initial datum $\rho^0$ is supposed to belong to $\mathcal Y_{M,2}$, being $\mathcal Y_{M,2}$ the set of all densities in $\mathcal Y_{M}$ with finite second moment:
\[
\mathcal Y_{M,2}:=\left\{\rho\in L^1_+(\mathbb{R}^d)\cap L^m(\mathbb{R}^d): \int_{\mathbb{R}^d}\rho(x)\,dx=M,\;\int_{\mathbb{R}^d}x\rho(x)\,dx=0,\;
\int_{\mathbb{R}^d}|x|^{2}\rho(x)\,dx<\infty\right\}.
\]
The analysis of Keller-Segel models with Newtonian potential as Wasserstein gradient flows is found in \cite{BCC, B2, BL, B6}. More generally, there are many studies about gradient flow approach for interaction-driven evolutions. The case of the Riesz potential appears in \cite{LMS}, in the analysis of the porous medium equation with fractional pressure introduced in \cite{CV}.
 Problem \eqref{cauchy1} is formally preserving mass, positivity and center of mass, and a solution is naturally seen as a trajectory in the space \MMM $\mathcal Y_{M}$. \KKK
  A narrowly continuous curve $[0,+\infty)\ni t\mapsto \rho(t,\cdot)\in\MMM\mathcal Y_{M}\KKK$ (i.e., $t\mapsto \int_{\R^d}\varphi(x)\rho(t,x)\,dx$ is  continuous  for every continuous bounded function $\varphi$ on $\R^d$),
is a  weak solution to \eqref{cauchy1} if $\rho(0)=\rho^0$ and for every $\varphi\in C^{\infty}_c(\mathbb R^d)$ and every $\eta\in C^\infty_c((0,+\infty))$
\begin{equation}\label{veryweak}\begin{aligned}
&-\int_0^{+\infty}\int_{\mathbb R^d}\rho(t,x)\varphi(x)\eta'(t)\,dx\,dt=\int_{0}^{+\infty}\int_{\mathbb R^d}\eta(t)\Delta\varphi(x)\,\big(\rho(t,x)^m+\beta\rho(t,x)^2\big)\,dx\,dt
\\&\quad-\frac{(d-2s)\,c_{d,s}\,\chi}{2}\int_0^{+\infty}\int_{\mathbb R^d}\int_{\mathbb R^d}\eta(t)\frac{\big(\nabla\varphi(x)-\nabla\varphi(y)\big)
\cdot(x-y)}{|x-y|^{d+2-2s}}\,\rho(t,x)\rho(t,y)\,dx\,dy\,dt.
\end{aligned}
\end{equation}
 This notion of solution was introduced in \cite{SS} for the case of the interaction with the logarithmic
potential, see also \cite{BCC}.
We shall construct  weak solutions to problem \eqref{cauchy1} by applying the Jordan-Kinderlehrer-Otto \cite{JKO} scheme. Therefore, denoting by $W_2$ the Wasserstein distance of order $2$, for a discrete time step  $\tau>0$, we shall solve the recursive minimization problems
\begin{equation*}\label{MM*}
\rho_\tau^0=\rho^0,\qquad\quad
\rho_{\tau}^k\in \displaystyle\mathrm{arg}\!\!\!\min_{\rho\in \mathcal Y_M}\left(\mathcal F_s[\rho]+\frac1{2\tau}\,W_2^2(\rho,\rho_\tau^{k-1})\right),\;\;\; k\in\mathbb N,
\end{equation*}
and we shall prove that piecewise constant in time interpolations of minimizers do converge to a weak solution to \eqref{cauchy1} as $\tau\to 0$ along a suitable vanishing sequence $(\tau_n)_{n\in\mathbb N}$. A weak solution that is constructed in this way, that is, as a limit of the JKO scheme applied to $\mathcal F_s$, will be called a gradient flow solution. We have the following existence result

\begin{thm}\label{existencethm} Let $\beta\ge 0$ and $0<s<\min\{1,d/2\}$. If $\beta=0$, assume in addition that $d\ge 2$ and $1/2\le s<1$. Let $\rho^0\in\mathcal Y_{M,2}$. Then there exists a gradient flow solution to problem \eqref{cauchy1}.
\end{thm}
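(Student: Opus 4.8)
The plan is to run the Jordan--Kinderlehrer--Otto minimising-movement scheme associated with $\mathcal F_s$ and let the time step vanish. Since the centre of mass is conserved along the scheme and $\rho^0\in\mathcal Y_{M,2}$, one may equivalently perform the recursive minimisations over densities of mass $M$ with finite second moment, so that the constraint $\int_{\R^d}x\rho\,dx=0$ plays no active role in the variational equations. That each step is solvable follows by the direct method: given $\rho_\tau^{k-1}\in\mathcal Y_{M,2}$, the functional $\rho\mapsto\mathcal F_s[\rho]+\tfrac1{2\tau}W_2^2(\rho,\rho_\tau^{k-1})$ is bounded below because, by the Hardy--Littlewood--Sobolev inequality, $\int_{\R^d}\int_{\R^d}K_s(x-y)\rho(x)\rho(y)\,dx\,dy\le C_{d,s}\|\rho\|_{L^p(\R^d)}^2$ with $p=\tfrac{2d}{d+2s}\in(1,2)$, and interpolating $L^p$ between $L^1$ and $L^m$ (licit since $p<2<m$) and using Young's inequality one absorbs this term into a fraction of $\tfrac1{m-1}\|\rho\|_{L^m}^m$, obtaining $\mathcal F_s[\rho]\ge\tfrac1{2(m-1)}\|\rho\|_{L^m}^m-C(M)$. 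A minimising sequence is then bounded in $L^m$ and, through the $W_2^2$ term, has bounded second moments, hence is tight; along a subsequence converging weakly in $L^m$ and narrowly, the porous-medium and quadratic terms are lower semicontinuous, $W_2^2(\cdot,\rho_\tau^{k-1})$ is lower semicontinuous, the nonlocal term is continuous (again by Hardy--Littlewood--Sobolev, the $L^m$ bound ruling out concentration), and the mass constraint passes to the limit; hence a minimiser $\rho_\tau^k\in\mathcal Y_{M,2}$ exists (uniqueness is not needed here).

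Next come the a priori estimates. The standard discrete energy inequality $\sum_{k=1}^N\tfrac1{2\tau}W_2^2(\rho_\tau^k,\rho_\tau^{k-1})+\mathcal F_s[\rho_\tau^N]\le\mathcal F_s[\rho^0]$, combined with the lower bound above, gives, uniformly in $\tau$, $\sup_k\|\rho_\tau^k\|_{L^m}\le C$, $\sum_kW_2^2(\rho_\tau^k,\rho_\tau^{k-1})\le C\tau$, and a uniform bound on second moments on compact time intervals; consequently the piecewise-constant interpolant $t\mapsto\rho_\tau(t)$ is $\tfrac12$-H\"older continuous in $W_2$ up to an $O(\sqrt\tau)$ error. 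To pass to the limit in the nonlinear terms one needs compactness in space, which is obtained by a flow-interchange argument: one differentiates $\mathcal F_s$ along an auxiliary gradient flow (the heat semigroup, i.e.\ the Wasserstein gradient flow of the Boltzmann entropy) and feeds the resulting dissipation into the minimality of $\rho_\tau^k$, producing a uniform space-time $L^2$ bound on the gradient of a suitable power of $\rho_\tau$ --- coming from the quadratic diffusion when $\beta>0$, and from the porous-medium term alone when $\beta=0$, in which case the restriction $d\ge2$, $1/2\le s<1$ is precisely what allows one to absorb the ``wrong-sign'' contribution $\chi\|\rho\|_{\dot H^{1-s}}^2$ of the Riesz interaction into $\|\nabla\rho^{m/2}\|_{L^2}^2$ by a Gagliardo--Nirenberg inequality. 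Combined with the $W_2$-equicontinuity in time, this spatial bound yields, via an Aubin--Lions-type compactness result for curves in Wasserstein space, a subsequence $\rho_{\tau_n}$ converging for a.e.\ $t$, strongly in $L^1_{loc}(\R^d)$ (hence a.e.), to a narrowly continuous curve $\rho$ with $\rho(t)\in\mathcal Y_{M,2}$, $\rho(0)=\rho^0$, and $\rho_{\tau_n}\rightharpoonup\rho$ in $L^m$.

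Finally one passes to the limit in the discrete Euler--Lagrange equation. Perturbing $\rho_\tau^k$ by pushforward along $\id+\ve\zeta$ with $\zeta\in C_c^\infty(\R^d;\R^d)$ and using optimality gives, for every $\varphi\in C_c^\infty(\R^d)$,
\[
\int_{\R^d}\nabla\varphi\cdot\frac{\id-T_\tau^k}{\tau}\,\rho_\tau^k\,dx=\int_{\R^d}\Delta\varphi\,\big((\rho_\tau^k)^m+\beta(\rho_\tau^k)^2\big)\,dx-\frac{(d-2s)\,c_{d,s}\,\chi}{2}\int_{\R^d}\int_{\R^d}\Phi_\varphi(x,y)\,\rho_\tau^k(x)\rho_\tau^k(y)\,dx\,dy,
\]
where $T_\tau^k$ is the optimal transport map from $\rho_\tau^k$ to $\rho_\tau^{k-1}$ (which exists since $\rho_\tau^k$ is absolutely continuous) and $\Phi_\varphi(x,y):=\tfrac{(\nabla\varphi(x)-\nabla\varphi(y))\cdot(x-y)}{|x-y|^{d+2-2s}}$ is the kernel appearing in \eqref{veryweak}. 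Multiplying by the interpolated $\eta(t)$, summing over $k$ and using $\tfrac1\tau\int(\rho_\tau^k-\rho_\tau^{k-1})\varphi=\int\nabla\varphi\cdot\tfrac{\id-T_\tau^k}{\tau}\rho_\tau^k+O(\tfrac1\tau\|D^2\varphi\|_\infty W_2^2(\rho_\tau^k,\rho_\tau^{k-1}))$, one obtains a discrete form of \eqref{veryweak} whose total error is $O(\tau)\,\|\eta\|_\infty\|D^2\varphi\|_\infty$, controlled by $\sum_kW_2^2(\rho_\tau^k,\rho_\tau^{k-1})\le C\tau$. Sending $\tau=\tau_n\to0$, the diffusion terms pass to the limit by the strong $L^1_{loc}$ convergence together with the $L^m$ bound, the left-hand side by narrow continuity of $\rho$, and in the nonlocal term one splits $\Phi_\varphi$ into a near-diagonal piece, whose contribution is made uniformly small via $|\Phi_\varphi(x,y)|\le\|D^2\varphi\|_\infty|x-y|^{2s-d}$ and Hardy--Littlewood--Sobolev, and a bounded far-diagonal piece, for which the narrow convergence of $\rho_{\tau_n}$ suffices (note $|\Phi_\varphi(x,y)|\le 2\|\nabla\varphi\|_\infty|x-y|^{2s-d-1}$ there and $s<d/2$). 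This produces \eqref{veryweak}, so $\rho$ is a gradient flow solution.

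The main obstacle is the spatial compactness in the second step: upgrading weak $L^m$ convergence to strong $L^1_{loc}$ convergence, so as to pass to the limit in $\rho^m$, $\rho^2$ and in the quadratic interaction integrated against its singular kernel. When $\beta>0$ the quadratic diffusion directly supplies an $H^1$-type bound and the argument is comparatively soft; the delicate case is $\beta=0$, where the needed regularity must be extracted from the porous-medium dissipation alone, and the hypotheses $d\ge2$, $1/2\le s<1$ are exactly what make the flow-interchange estimate close.
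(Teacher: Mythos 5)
Your overall plan for $\beta>0$ matches the paper's: JKO scheme, direct method for discrete minimisers (with lower semicontinuity of $\mathcal F_s$ secured by Hardy--Littlewood--Sobolev along weak-$L^m$/narrowly convergent sequences), discrete energy estimates, flow interchange along the heat semigroup yielding a uniform space-time $L^2$ bound on $\nabla\rho^{m/2}$, an Aubin--Lions type compactness, and passage to the limit in the discrete Euler--Lagrange identity. Your cosmetic reformulations (treating the centre-of-mass constraint as inactive since $\mathcal F_s$ is translation-invariant; sign conventions in the optimality condition) are also fine.

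The gap is in your treatment of $\beta=0$. You keep the heat semigroup as the auxiliary flow and propose to absorb the ``wrong-sign'' term $\chi\|\rho\|_{\dot H^{1-s}}^2$ -- which is indeed what the heat-flow interchange produces -- into $\|\nabla\rho^{m/2}\|_{L^2}^2$ ``by a Gagliardo--Nirenberg inequality.'' But $\dot H^{1-s}$ has strictly positive differentiation order $1-s>0$, so $\|\rho\|_{\dot H^{1-s}}$ cannot be dominated by Lebesgue norms of $\rho$; it demands genuine fractional regularity of $\rho$ itself. The dissipation $\|\nabla\rho^{m/2}\|_{L^2}$ gives $\dot H^1$ control only on the nonlinear quantity $\rho^{m/2}$, and since $m>2$, transferring this to $\rho=(\rho^{m/2})^{2/m}$ via a Mironescu-type superposition result lands in $W^{2/m,m}(\R^d)$, which (because $m>2$) does not embed into any $L^2$-based homogeneous Sobolev space $\dot H^r$; even the crude local H\"older reduction only dominates $\dot H^{1-s}_{loc}$ when $2/m\ge 1-s$, i.e.\ $s\ge(m-2)/m$, which for $m>4$ is strictly stronger than the hypothesis $s\ge 1/2$. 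No inequality of the form you invoke is apparent, and you do not produce one, so your $\beta=0$ argument does not close.

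The paper avoids this by switching the auxiliary flow for $\beta=0$ to the porous medium equation $\partial_t u=\Delta u^{m-1}$, the Wasserstein gradient flow of $\mathcal G_m[u]=\tfrac{1}{m-2}\int u^{m-1}$. Its dissipation is $\tfrac{m}{m-1}\|\nabla u^{m-1}\|_{L^2}^2$, and the derivative of the interaction term along this flow is the \emph{pairing} $\chi\langle u,u^{m-1}\rangle_{\dot H^{1-s}}$, not the square $\chi\|u\|_{\dot H^{1-s}}^2$. Cauchy--Schwarz in Fourier variables splits this as $\|u\|_{\dot H^{1-2s}}\,\|u^{m-1}\|_{\dot H^1}$, and since $s\ge1/2$ the order $1-2s$ is \emph{nonpositive}: the negative-order norm $\|u\|_{\dot H^{1-2s}}$ is controlled by $\|u\|_{L^{2d/(d+4s-2)}}$ via Hardy--Littlewood--Sobolev, which is exactly where the hypotheses $d\ge 2$, $s\ge1/2$ are used. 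The spatial compactness one gains for $\beta=0$ is thus on $\rho_\tau^{m-1}$ (not on $\rho_\tau^{m/2}$), and the Aubin--Lions step must be carried out with the corresponding space $W^{1/(m-1),\,2m-2}(\R^d)$. This change of auxiliary flow is the idea missing from your proposal.
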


Solutions are global-in-time, as expected in a diffusion dominated model.
Further properties of solutions will be described in Section \ref{evolutionsection}, along with some numerical analysis of the problem in Section \ref{qualitative}. We shall also discuss an interesting feature: radial decreasing initial data do not necessarily preserve radial monotonicity during the dynamics.\\

We finally turn the attention to the behavior of solutions as $s\to 0$. The formal limiting equation reads
\begin{equation}\label{pme}
\partial_t \rho=\Delta \rho^m+\left(\beta-\chi/2\right)\,\Delta \rho^2,
\end{equation}
and its behavior is again crucially depending on the sign of the coefficient $\beta-\chi/2$. Here, we limit ourselves to treat the case $\beta\ge \chi/2$, in which \eqref{pme} is a standard degenerate parabolic equation. We have the following

\begin{thm}\label{stozero} Let $\beta\ge \chi /2$. Let $\rho^0\in\mathcal Y_{M,2}$.
 Let $(s_n)_{\{n\in\mathbb N\}}\subset (0,1/2)$ be a vanishing sequence, and for every $n\in\mathbb N$ let $\rho_n$ be
 a gradient flow solution to \eqref{cauchy1} with $s=s_n$. 
 Then the sequence $(\rho_n)_{n\in\mathbb N}$ admits strong $L^2_{loc}((0,+\infty);L^2(\mathbb R^d))$ limit points. If  $\rho$ is one of such  limit points, then
  $[0,+\infty)\ni t\mapsto \rho(t,\cdot)$ is narrowly continuous with values in $\mathcal Y_{M,2}$, $\rho(0,\cdot)=\rho^0$ and $\rho$ is a distributional solution to the nonlinear diffusion equation \eqref{pme}, i.e.,
 \[
 -\int_0^{+\infty}\int_{\mathbb R^d}\rho(t,x)\varphi(x)\eta'(t)\,dx\,dt=\int_{0}^{+\infty}\int_{\mathbb R^d}\eta(t)\Delta\varphi(x)\,\big(\rho(t,x)^m+(\beta-\chi/2)\rho(t,x)^2\big)\,dx\,dt
 \]
 for every $\varphi\in C^\infty_c(\R^d)$ and every $\eta\in C^\infty_c((0,+\infty))$.
\end{thm}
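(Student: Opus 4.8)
The proof is a compactness argument: I would first collect a priori bounds on the family $(\rho_n)$ that are \emph{uniform in} $n$ (equivalently, in $s_n\downarrow0$), then extract a strong $L^2_{loc}$ limit point, and finally pass to the limit in the weak formulation \eqref{veryweak}. The only delicate passage is the limit of the nonlocal term, which requires understanding precisely in what sense the kernel $(d-2s_n)c_{d,s_n}|z|^{2s_n-d-2}\,(z\otimes z)$ converges as $s_n\downarrow0$.

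\textbf{Step 1: uniform estimates.} Each $\rho_n$ is a limit of the JKO scheme for $\mathcal F_{s_n}$, hence inherits, for every $T>0$ and uniformly in $\tau$ (and thus in the limit): mass conservation $\|\rho_n(t)\|_{L^1}=M$; vanishing center of mass; the energy--monotonicity $\mathcal F_{s_n}[\rho_n(t)]\le\mathcal F_{s_n}[\rho^0]$; the $\tfrac12$--Hölder bound $W_2(\rho_n(t),\rho_n(r))\le C|t-r|^{1/2}$; and a spatial regularity estimate of the form $\rho_n^{\gamma}\in L^2_{loc}((0,\infty);H^1(\mathbb R^d))$ for a suitable $\gamma\ge1$ (e.g. $\gamma=m-\tfrac12$, coming from the dissipation of the internal energy along the scheme). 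The key point is that all the constants depend on $s_n$ only through $\mathcal F_{s_n}[\rho^0]$, which is bounded uniformly in $n$: indeed $\tfrac1{m-1}\|\rho^0\|_{L^m}^m+\beta\|\rho^0\|_{L^2}^2$ is fixed, while $\int\!\!\int K_{s_n}(x-y)\rho^0(x)\rho^0(y)\,dx\,dy\to\int(\rho^0)^2$ since $K_{s_n}$ is an approximate identity and $\rho^0\in L^1\cap L^m\subset L^2$. By Plancherel, $\int\!\!\int K_{s}(x-y)\rho(x)\rho(y)\,dx\,dy=\int|\widehat\rho(\xi)|^2|\xi|^{-2s}\,d\xi\le\|\rho\|_{L^2}^2+\tfrac{\omega_{d-1}}{d-2s}M^2$ (splitting $|\xi|\lessgtr1$ and using $\|\widehat\rho\|_\infty\le M$), so with $\beta\ge\chi/2$ one gets the uniform coercivity $\mathcal F_{s_n}[\rho]\ge\tfrac1{m-1}\|\rho\|_{L^m}^m-C$ on $\mathcal Y_M$. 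Combined with energy--monotonicity this yields $\sup_n\sup_{t\ge0}\|\rho_n(t)\|_{L^m}\le C$, hence $\sup_n\sup_t\|\rho_n(t)\|_{L^p}\le C$ for $p\in[1,m]$ by interpolation with the mass; and $\int|x|^2\rho_n(t)\,dx\le C(1+t)$ from the $W_2$--bound and $\rho^0\in\mathcal Y_{M,2}$. Interpolating the second--moment bound against the $L^m$ bound gives uniform $L^2$--tightness: $\sup_n\sup_{t\le T}\int_{|x|>R}\rho_n(t)^2\,dx\to0$ as $R\to\infty$.

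\textbf{Step 2: compactness.} By an Aubin--Lions--Simon type argument (as in the proof of Theorem~\ref{existencethm}) — using the uniform $L^2_{loc}(H^1)$ bound on $\rho_n^\gamma$ for spatial compactness, the $W_2$--equicontinuity for the time variable, and the uniform $L^\infty_{loc}(L^m)$ bound plus $L^2$--tightness — the family $(\rho_n)$ is relatively compact in $L^2_{loc}((0,\infty);L^2(\mathbb R^d))$. Fix a subsequence with $\rho_n\to\rho$ there; passing to a further subsequence, $\rho_n(t)\to\rho(t)$ in $L^2(\mathbb R^d)$ for a.e. $t$ and $\rho_n\to\rho$ a.e. A refined Arzelà--Ascoli argument for the $W_2$--Hölder curves gives a narrowly continuous representative of $\rho$, which (by lower semicontinuity of $\|\cdot\|_{L^m}$ and $\int|x|^2\,d\cdot$, conservation of mass and center of mass) takes values in $\mathcal Y_{M,2}$ and satisfies $\rho(0)=\lim_n\rho_n(0)=\rho^0$.

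\textbf{Step 3: passage to the limit, and the main obstacle.} It remains to pass to the limit in \eqref{veryweak} written with $s=s_n$. The term $-\int\!\int\rho_n\varphi\,\eta'$ passes by weak $L^1_{loc}$ convergence. For the diffusion terms, $\rho_n^2\to\rho^2$ in $L^1_{loc}$ because $\|\rho_n^2-\rho^2\|_{L^1(B_R\times(0,T))}\le\|\rho_n-\rho\|_{L^2}\|\rho_n+\rho\|_{L^2}\to0$, and $\rho_n^m\to\rho^m$ in $L^1_{loc}$ by a.e. convergence and Vitali, the needed uniform integrability of $\{\rho_n^m\}$ following from the improved bound $\rho_n\in L^{q}_{loc}$ with $q=2\gamma>m$ provided by the $H^1$ estimate on $\rho_n^\gamma$ and the Sobolev embedding. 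The genuinely new point is the nonlocal term, for which I would establish: if $\varphi\in C^\infty_c(\mathbb R^d)$ and $f_n\to f$ in $L^2_{loc}(\mathbb R^d)$ with $\sup_n(\|f_n\|_{L^1}+\|f_n\|_{L^2})<\infty$, then
\[
(d-2s_n)\,c_{d,s_n}\int\!\!\int\frac{\big(\nabla\varphi(x)-\nabla\varphi(y)\big)\cdot(x-y)}{|x-y|^{d+2-2s_n}}\,f_n(x)f_n(y)\,dx\,dy\ \longrightarrow\ \int_{\mathbb R^d}\Delta\varphi(x)\,f(x)^2\,dx .
\]
One rewrites the left side as $(d-2s_n)\int f_n(x)\big(\int\Phi_\varphi(x,y)\,K_{s_n}(x-y)\,f_n(y)\,dy\big)dx$, where $\Phi_\varphi(x,y):=\big(\nabla\varphi(x)-\nabla\varphi(y)\big)\cdot(x-y)/|x-y|^2$ is bounded, continuous, and vanishes unless $x$ or $y$ lies in $\operatorname{supp}\varphi$ — which, together with $c_{d,s_n}\to0$, lets one discard the region $\{|x|>R\}\cup\{|y|>R\}$ for $B_R\supset\operatorname{supp}\varphi$. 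On the near--diagonal region $|x-y|<\delta$ one Taylor expands $\big(\nabla\varphi(x)-\nabla\varphi(y)\big)\cdot(x-y)=D^2\varphi(x)(x-y)\cdot(x-y)+O(|x-y|^3)$; the remainder and the contribution of $|x-y|>\delta$ are estimated by Young's convolution inequality and carry the vanishing factor $c_{d,s_n}$. For the main term one uses that $K_{s_n}(z)=c_{d,s_n}|z|^{2s_n-d}$ concentrates at the origin as $s_n\downarrow0$ (indeed $c_{d,s_n}\,\omega_{d-1}\,\delta^{2s_n}/(2s_n)\to1$ for each $\delta>0$ by \eqref{cds}, while its mass on any annulus vanishes), so that, in polar coordinates and using $\int_{\mathbb S^{d-1}}\omega_i\omega_j\,d\omega=\tfrac{\omega_{d-1}}{d}\delta_{ij}$, the inner integral converges to $\tfrac1d\,\Delta\varphi(x)f(x)$; since $(d-2s_n)\to d$ and $f_n\to f$ in $L^2_{loc}$, the whole quantity tends to $\int\Delta\varphi\,f^2$. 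Applying this with $f_n=\rho_n(t,\cdot)$ and noting that the integrand in $t$ is dominated by $C\,|\eta(t)|\sup_t\|\rho_n(t)\|_{L^2}^2\in L^1(dt)$, dominated convergence finishes the integration in time and yields $-\tfrac\chi2\int\!\int\eta\,\Delta\varphi\,\rho^2$. Collecting the three limits, the limiting identity is exactly $-\int\!\int\rho\varphi\eta'=\int\!\int\eta\,\Delta\varphi\,\big(\rho^m+(\beta-\chi/2)\rho^2\big)$, i.e.\ $\rho$ is a distributional solution of \eqref{pme}.

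\textbf{Where the difficulty lies.} The main obstacle is precisely the identification of the limit of the nonlocal term: showing that $(d-2s_n)c_{d,s_n}|z|^{2s_n-d-2}(z\otimes z)$, tested against a smooth function and against densities that converge only strongly in $L^2_{loc}$, behaves like $\tfrac1d$ times the Laplacian times $\delta_0$, the factor $\tfrac1d$ from the angular average cancelling the prefactor $(d-2s_n)\to d$. A secondary, more routine task is to check that the JKO energy/moment and spatial--regularity estimates are genuinely uniform as $s_n\downarrow0$; here the hypothesis $\beta\ge\chi/2$ — which, with $\beta\ge0$ and $0<s_n<1/2$, also places us in the setting of Theorem~\ref{existencethm} and makes \eqref{pme} a bona fide degenerate parabolic equation — is what guarantees the uniform coercivity in Step~1.
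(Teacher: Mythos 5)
Your overall architecture — uniform a priori estimates, Aubin--Lions--Simon compactness, then passage to the limit in \eqref{veryweak} with a separate treatment of the nonlocal term — is the same as the paper's (cf.\ the proof of Theorem~\ref{stozero} together with Proposition~\ref{14}, Lemma~\ref{compact} and the proofs of Lemmas~\ref{lastbutone}--\ref{lastlemma}). For Step~3 you take a genuinely different route: the paper proves the kernel limit via the Riesz--potential identity \eqref{identitydouble} and a density argument exploiting the Hardy--Littlewood--Sobolev bound (Lemmas~\ref{lastbutone}, \ref{lastlemma}), whereas you argue directly on the kernel, using the angular average $\int_{\mathbb S^{d-1}}\omega_i\omega_j\,d\omega=\tfrac{\omega_{d-1}}d\delta_{ij}$ and the approximate--identity property of $K_{s_n}$. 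This is a nice, more ``hands--on'' variant, though the key passage ``replacing $f_n(x-z)$ by $f(x)$'' against an approximate identity, with only $L^2_{loc}$ convergence and no pointwise control of $f_n$, is precisely the step the paper's smooth--approximant-plus-HLS argument is designed to make rigorous; as stated it is a sketch rather than a proof.

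There is, however, a genuine gap in Step~1, and it concerns exactly the point where the hypothesis $\beta\ge\chi/2$ is doing its work. You attribute $\beta\ge\chi/2$ to the coercivity/uniform $L^m$ bound. That is misplaced: the coercivity estimate \eqref{boundLm} holds for \emph{any} $\beta\ge0$, with a constant $\bar C$ (given by \eqref{cbar} and \eqref{sds}) that is readily checked to stay bounded as $s\downarrow0$; your Plancherel--splitting gives a second, fine proof of this, but $\beta\ge\chi/2$ is not needed for it. What the strong $L^2_{loc}$ compactness actually requires is a spatial regularity bound of the form $\rho_n^\gamma\in L^2_{loc}(H^1)$ that is \emph{uniform in $n$}, and this is where $\beta\ge\chi/2$ is indispensable. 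You propose $\gamma=m-\tfrac12$ ``from the dissipation of the internal energy along the scheme'' and claim the constants depend on $s_n$ only through $\mathcal F_{s_n}[\rho^0]$. This is not substantiated: the JKO dissipation controls the \emph{total} velocity $\nabla\psi=\nabla\big(\tfrac m{m-1}\rho^{m-1}+2\beta\rho-\chi K_{s_n}\ast\rho\big)$, and to isolate $\nabla\rho^{m-1/2}$ you would need a uniform-in-$s_n$ control of the cross term $\int\rho_n|\nabla K_{s_n}\ast\rho_n|^2$; since $\nabla K_{s_n}$ has order $1-2s_n<0$ for $s_n<1/2$, this is not a boundedness assertion one gets for free from $\rho_n\in L^1\cap L^m$, and you do not address it. The paper circumvents this via the flow--interchange estimate (Proposition~\ref{entropy}, Corollary~\ref{gradestinterp}), yielding $\gamma=m/2$, and the constant there is $\chi s\big(\tfrac{\chi(1-s)}{2\beta}\big)^{(1-s)/s}$, which is bounded as $s\downarrow0$ \emph{if and only if} $\beta\ge\chi/2$ (Remark~\ref{redremark}). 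So the hypothesis is needed for the uniform $H^1$-type bound, not for coercivity, and your proposal as written does not establish the uniform spatial compactness on which the rest of the argument rests. Everything downstream of Step~2 is fine once this is fixed.
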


\subsection*{Plan of the paper}
In Section \ref{stationarysection} we discuss Definition \ref{steadydef*} and prove uniqueness of stationary states along with some regularity properties.
Asymptotic behavior of  stationary states  as $s$ approaches $0$ is investigated  in Section \ref{asymptoticsection}.
In Section \ref{evolutionsection} we prove the theorems about the evolution problem.  In Section \ref{qualitative}
we provide some numerical examples demonstrating other phenomena, and  in Section \ref{open} we provide a discussion on some open problems. 

 \section{Stationary states and minimizers of the free energy}\label{stationarysection}

%

We start this section by discussing the definition of stationary states for the evolution equation in \eqref{cauchy1}, see Definition \ref{steadydef*}.
 We begin  with the following lemma about Riesz potentials of bounded continuous $W^{1,1}(\mathbb R^d)$ densities $\rho$,
 showing that indeed $\rho\nabla\psi\in L^1_{loc}(\R^d)$, where $\psi$ is the velocity potential defined by \eqref{u*}.
 The lemma includes the equivalence of two formulations for the equation that  governs stationary states in Definition \ref{steadydef*}.

\begin{lem}\label{RieszW11}
Let  $\rho\in W^{1,1}(\R^{d})\cap L^{\infty}(\R^{d})\cap C(\R^{d})$ be a nonnegative function. Then
$K_{s}\ast \rho \in  W^{1,1}_{loc}(\R^{d})$ and
$
 \nabla(K_{s}\ast \rho)=K_{s}\ast \nabla\rho.
 $
 Moreover,   $\psi\in W^{1,1}_{loc}(\R^{d})$, where $\psi$ is defined by \eqref{u*}. Finally,
\begin{equation}
 \nabla\cdot (\rho\nabla \psi)=0\quad\textit{ in }\mathcal{D}^{\prime}(\R^{d})\label{continuityeq}
\end{equation}
if  and only if
\begin{equation}\label{varequsteady}\begin{aligned}
 & \int_{\mathbb R^d}\nabla(\rho^m+\beta \rho^{2})\cdot\nabla\varphi\,dx\\&\quad+
\frac{(d-2s)\,c_{d,s}\,\chi}2\int_{\mathbb R^d}\int_{\mathbb R^d}(\nabla\varphi(x)-\nabla\varphi(y))\cdot(x-y)|x-y|^{2s-d-2}\rho(x)\,\rho(y)\,dx\,dy=0
 \end{aligned}\end{equation}
for any $\varphi\in C_{c}^{\infty}(\R^{d})$.
 \end{lem}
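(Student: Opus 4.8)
The plan is to establish the three assertions in order: the regularity and commutation property for $K_s\ast\rho$, the consequent regularity of $\psi$, and finally the equivalence between \eqref{continuityeq} and \eqref{varequsteady}.

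\textbf{Step 1: $K_s\ast\rho\in W^{1,1}_{loc}$ and $\nabla(K_s\ast\rho)=K_s\ast\nabla\rho$.} Since $\rho\in L^1\cap L^\infty$, interpolation gives $\rho\in L^p$ for all $p\in[1,\infty]$, and likewise $\nabla\rho\in L^1$ so each component $\partial_i\rho\in L^1$. I would split the Riesz kernel $K_s(x)=c_{d,s}|x|^{2s-d}$ as $K_s = K_s\ind_{B_1} + K_s\ind_{B_1^c}$. The near part $K_s\ind_{B_1}\in L^1(\R^d)$ because $2s-d>-d$, so $(K_s\ind_{B_1})\ast\rho$ and $(K_s\ind_{B_1})\ast\partial_i\rho$ are well-defined in $L^1_{loc}$ by Young's inequality; the far part $K_s\ind_{B_1^c}$ is bounded (indeed $\le c_{d,s}$) and its gradient $\nabla K_s\ind_{B_1^c}$, which behaves like $|x|^{2s-d-1}\ind_{B_1^c}$, lies in $L^q$ for $q$ large, so convolution with $\rho\in L^1\cap L^\infty$ is again locally integrable. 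Then the identity $\nabla(K_s\ast\rho)=K_s\ast\nabla\rho$ follows by approximating $\rho$ by mollifications $\rho_\eps=\rho\ast\phi_\eps$, for which $\nabla(K_s\ast\rho_\eps)=(K_s\ast\rho_\eps)\ast\nabla\phi_\eps=K_s\ast(\rho\ast\nabla\phi_\eps)=K_s\ast(\nabla\rho)_\eps$ holds by Fubini, and passing to the limit in $L^1_{loc}$ using the splitting above. This also shows $\nabla(K_s\ast\rho)$ is continuous and bounded, since $K_s\ast\nabla\rho$ is: the near part is a bounded-kernel-times-$L^1$ expression on a compact set and the far part is continuous.

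\textbf{Step 2: $\psi\in W^{1,1}_{loc}$.} This is immediate from Step 1 together with the chain rule: $\rho^{m-1}$ and $\rho$ belong to $W^{1,1}_{loc}$ since $\rho\in W^{1,1}\cap L^\infty$ and $t\mapsto t^{m-1}$ is locally Lipschitz (with $m>2$), so $\nabla(\rho^{m-1})=(m-1)\rho^{m-2}\nabla\rho\in L^1$ and $2\beta\nabla\rho\in L^1$. Adding $-\chi\,\nabla(K_s\ast\rho)$ from Step 1 gives $\nabla\psi\in L^1_{loc}$, whence $\rho\nabla\psi\in L^1_{loc}$ because $\rho$ is bounded. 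Actually $\nabla\psi$ is even bounded on bounded sets.

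\textbf{Step 3: equivalence of \eqref{continuityeq} and \eqref{varequsteady}.} Testing $\nabla\cdot(\rho\nabla\psi)=0$ against $\varphi\in C_c^\infty$ means $\int_{\R^d}\rho\,\nabla\psi\cdot\nabla\varphi\,dx=0$. Using $\nabla\psi=\frac{m}{m-1}\nabla(\rho^{m-1})+2\beta\nabla\rho-\chi\nabla(K_s\ast\rho)$ and the pointwise identities $\rho\,\nabla(\rho^{m-1})=\frac{m-1}{m}\nabla(\rho^m)$ and $\rho\,\nabla\rho=\tfrac12\nabla(\rho^2)$, the local diffusion part becomes $\int_{\R^d}\nabla(\rho^m+\beta\rho^2)\cdot\nabla\varphi\,dx$, matching the first line of \eqref{varequsteady}. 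For the aggregation part, $-\chi\int_{\R^d}\rho(x)\,\nabla(K_s\ast\rho)(x)\cdot\nabla\varphi(x)\,dx = -\chi\int\int \rho(x)\rho(y)\,\nabla_x K_s(x-y)\cdot\nabla\varphi(x)\,dx\,dy$ using Step 1 and Fubini; since $\nabla K_s(z) = c_{d,s}(2s-d)\,z\,|z|^{2s-d-2}$, symmetrizing in $x\leftrightarrow y$ turns $\nabla\varphi(x)\cdot(x-y)$ into $\tfrac12(\nabla\varphi(x)-\nabla\varphi(y))\cdot(x-y)$ and multiplying through by $-\chi$ yields exactly the double-integral term in \eqref{varequsteady} with the stated constant $\tfrac{(d-2s)c_{d,s}\chi}{2}$ (the sign flips since $2s-d<0$). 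The integrability needed to justify Fubini and the symmetrization near the diagonal comes from $|\nabla\varphi(x)-\nabla\varphi(y)|\lesssim\min\{|x-y|,1\}$ for $\varphi\in C_c^\infty$, so the kernel $|x-y|^{2s-d-1}$ is controlled by $|x-y|^{2s-d}\in L^1_{loc}$ near the diagonal against the bounded compactly-supported-in-one-variable density product, and away from the diagonal everything is integrable since $\rho\in L^1$.

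\textbf{Main obstacle.} The routine part is the chain rule and the local-diffusion rewriting; the delicate point is Step 1 together with the diagonal integrability in Step 3 — making the kernel splitting rigorous so that $\nabla(K_s\ast\rho)=K_s\ast\nabla\rho$ and justifying Fubini and the $x\leftrightarrow y$ symmetrization for the singular kernel $|x-y|^{2s-d-1}$. The key estimate throughout is that for a compactly supported smooth $\varphi$ one has $|\nabla\varphi(x)-\nabla\varphi(y)|\le C|x-y|$, which tames the extra $|x-y|^{-1}$ coming from differentiating the kernel, reducing everything to the locally integrable singularity $|x-y|^{2s-d}$; combined with $\rho\in L^1\cap L^\infty$ this closes all the integrability gaps.
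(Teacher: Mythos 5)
Your Step~1 takes a genuinely different route from the paper's. The paper establishes $\nabla(K_s\ast\rho)=K_s\ast\nabla\rho$ by duality: it writes $\int(K_s\ast\rho_{x_i})\varphi\,dx = \int\rho_{x_i}\,(K_s\ast\varphi)\,dx$, integrates by parts over a large ball $B_R$, and estimates the boundary term $\int_{\partial B_R}\rho\,(K_s\ast\varphi)\,\nu_i\,d\sigma$; this term scales like $R^{2s-1}$ and only vanishes as $R\to\infty$ when $s<1/2$, forcing separate arguments for $s=1/2$ (Bessel potential spaces) and $s>1/2$ (where $K_s\ast\rho\in W^{1,\infty}$ is cited directly). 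Your mollification argument, $\nabla(K_s\ast\rho_\eps)=K_s\ast(\nabla\rho)_\eps$ followed by $\eps\downarrow 0$ in $L^1_{loc}$, treats all $s\in(0,d/2)$ at once and is arguably cleaner. You do overclaim at the end of Step~1: $\nabla(K_s\ast\rho)$ need not be continuous or bounded, only locally integrable, since the near-kernel piece $(K_s\ind_{B_1})\ast\nabla\rho$ is an $L^1\ast L^1$ convolution; but the lemma only requires $W^{1,1}_{loc}$, so this costs you nothing.

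Step~3, however, has a genuine gap. You pass from $\int_{\R^d}\rho(x)\,\nabla(K_s\ast\rho)(x)\cdot\nabla\varphi(x)\,dx$ to the double integral with $\nabla_x K_s(x-y)$ inside, citing ``Step~1 and Fubini,'' but Step~1 gives $\nabla(K_s\ast\rho)=K_s\ast\nabla\rho$, i.e.\ $\int K_s(x-y)\nabla\rho(y)\,dy$, with the derivative on $\rho$ and not on the kernel. Moving the gradient onto the kernel requires an integration by parts in $y$, and the resulting double integral with kernel $|\nabla K_s(x-y)|\sim|x-y|^{2s-d-1}$ is \emph{not} absolutely convergent for $s\le 1/2$: in polar coordinates $\int_{|z|<1}|z|^{2s-d-1}\,dz=\sigma_d\int_0^1 r^{2s-2}\,dr=+\infty$ in that range. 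Fubini therefore does not apply, and the $x\leftrightarrow y$ symmetrization cannot be carried out on an expression that is not a well-defined finite quantity. Your observation that the \emph{symmetrized} integrand $(\nabla\varphi(x)-\nabla\varphi(y))\cdot(x-y)|x-y|^{2s-d-2}$ has the integrable singularity $|x-y|^{2s-d}$ is the correct reason the final formula makes sense, but it does not retroactively license the manipulation that produced it. The paper closes exactly this gap by replacing $K_s$ with the bounded truncation $K_{s,\eps}$ (constant inside $B_\eps$, equal to $K_s$ outside), symmetrizing the truncated double integral --- legitimate since it is finite --- via the antisymmetry of $\nabla K_{s,\eps}$, and then sending $\eps\downarrow 0$ using the Young estimate $\|\nabla\rho\|_{L^1(\R^d)}\|K_{s,\eps}-K_s\|_{L^1(\R^d)}\to 0$. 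Some regularization of this kind (truncation, excising a small tube around the diagonal and symmetrizing before letting the tube shrink, or a principal-value interpretation) is unavoidable in the small-$s$ regime, which is precisely the case this lemma is designed to cover.
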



We postpone the technical proof of the above lemma to the appendix. Here, we focus  on its consequences in relation with the definition of stationary states and on further properties that follow from Definition \ref{steadydef*}. 

\begin{remark}\rm
By virtue of Lemma \eqref{RieszW11}, we have the equivalence between the stationary version of \eqref{veryweak} and \eqref{continuityeq}. Therefore, Definition \ref{steadydef*} agrees with
the natural definition of steady solutions of the evolution equation in \eqref{cauchy1} in the formulation \eqref{veryweak},
i.e., time independent solutions.

\end{remark}
 \begin{remark}\rm
 Definition \ref{steadydef*} provides a notion of  stationary state which is weaker than the ones previously used in \cite{CCH, CCH2,CCH3, CHMV, CHVY, DYY}, see \cite[Definition 2.1]{CCH}. Indeed, the latter definition requires
$
\rho\nabla \psi=0 \quad\text{in }\mathcal{D}^{\prime}(\R^{d})
$ along with more regularity properties of $\rho$.  We point out that  the $W^{1,1}(\mathbb R^d)$ regularity of the density $\rho$ and the $W^{1,\infty}$
regularity of the velocity potential $\psi$ in the interior of the support of $\rho$ do always agree with the properties of the minimizers of the free energy
functional $\mathcal{F}_{s}$ that we shall discuss later on. On the other hand, such minimizers do not match the notion of stationary states from \cite[Definition 2.1]{CCH} if $s$ is small.
\end{remark}

Our aim is now to show that if $\rho$ is a steady state, then we can  obtain the natural zero-dissipation identity
$$\int_{\mathbb R^d}|\nabla \psi|^2\,\rho\,dx=0,$$
for $\psi$ defined by \eqref{u*}.
 \begin{prop}\label{nodissipation}
 Assume  that $\rho$ is a stationary state for the evolution equation in \eqref{cauchy1}, according to {\rm Definition \ref{steadydef*}}. Then
 $\int_{\mathbb R^d}|\nabla \psi|^2\,\rho\,dx=0$. In particular $\psi$ is constant in each connected component of $\Omega$.
\end{prop}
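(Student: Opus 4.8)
The plan is to use the stationarity equation $\nabla\cdot(\rho\nabla\psi)=0$ in $\mathcal D'(\R^d)$ together with the regularity afforded by Definition \ref{steadydef*} (namely $\rho\in W^{1,1}\cap L^\infty\cap C$, $\psi\in W^{1,\infty}(\Omega)$) and by Lemma \ref{RieszW11} (namely $\psi\in W^{1,1}_{loc}$ and $\rho\nabla\psi\in L^1_{loc}$), and to test the equation against an approximation of $\psi$ itself. Formally, testing $\nabla\cdot(\rho\nabla\psi)=0$ with $\varphi=\psi$ gives $\int_{\R^d}\nabla\psi\cdot(\rho\nabla\psi)\,dx=0$, which is exactly the claim; the whole point of the proof is to make this legitimate, since $\psi$ is neither compactly supported nor smooth.

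First I would fix a cutoff: let $\zeta_R\in C_c^\infty(\R^d)$ with $\zeta_R\equiv 1$ on $B_R$, $\zeta_R\equiv 0$ outside $B_{2R}$, and $|\nabla\zeta_R|\le C/R$. The integrand $|\nabla\psi|^2\rho$ is in $L^1(\R^d)$: indeed $\rho\nabla\psi\in L^1_{loc}$ and, on $\Omega$, $\nabla\psi\in L^\infty$ while $\rho\le\|\rho\|_\infty$, and outside $\Omega$ the integrand vanishes (where $\rho=0$ it contributes nothing; one should note $\nabla\rho=0$ a.e.\ on $\{\rho=0\}$ and $K_s\ast\rho$ is smooth, so $\nabla\psi$ is controlled on $\overline\Omega$). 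Actually the cleanest route is to first mollify: set $\psi_\eps:=\psi\ast\theta_\eps$ with $\theta_\eps$ a standard mollifier, so $\psi_\eps\in C^\infty$, and use $\varphi=\zeta_R\psi_\eps$ as a test function (it is in $C_c^\infty$). This yields
\[
\int_{\R^d}\rho\nabla\psi\cdot\nabla(\zeta_R\psi_\eps)\,dx=0,
\]
that is
\[
\int_{\R^d}\zeta_R\,\rho\,\nabla\psi\cdot\nabla\psi_\eps\,dx=-\int_{\R^d}\psi_\eps\,\rho\,\nabla\psi\cdot\nabla\zeta_R\,dx.
\]

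Next I would pass to the limit $\eps\to 0$ with $R$ fixed. On the bounded set $B_{2R}$, $\nabla\psi_\eps\to\nabla\psi$ in $L^1(B_{2R})$ and $\psi_\eps\to\psi$ in $L^1(B_{2R})$, and by interpolation with the uniform $L^\infty$ bounds on $\psi$ and $\nabla\psi$ inside the (relatively compact part of the) support — more carefully, using that $\rho\nabla\psi\in L^1_{loc}$ and that $\nabla\psi_\eps$ is bounded on $B_{2R}\cap\Omega$ uniformly, together with $\rho\nabla\psi\cdot\nabla\psi_\eps\to\rho|\nabla\psi|^2$ pointwise a.e.\ and dominated convergence — one obtains
\[
\int_{\R^d}\zeta_R\,\rho\,|\nabla\psi|^2\,dx=-\int_{\R^d}\psi\,\rho\,\nabla\psi\cdot\nabla\zeta_R\,dx.
\]
Finally let $R\to\infty$: the left side increases to $\int_{\R^d}\rho|\nabla\psi|^2\,dx$ by monotone convergence. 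For the right side, $|\nabla\zeta_R|\le C/R$ supported in the annulus $B_{2R}\setminus B_R$, and since $\rho\in W^{1,1}$ with $\rho$ compactly supported (we know from Theorem \ref{furtherregularity*} that steady states are compactly supported — but to avoid circularity at this point I would instead just use $\rho\in L^1$ together with the fact that $\psi\rho\nabla\psi\in L^1$, so that the annular integral tends to $0$ as $R\to\infty$). Thus $\int_{\R^d}\rho|\nabla\psi|^2\,dx=0$. Since the integrand is nonnegative, $\nabla\psi=0$ $\rho$-a.e., hence a.e.\ on $\Omega$; because $\psi\in W^{1,\infty}(\Omega)$ is (locally Lipschitz and) continuous, $\psi$ is constant on each connected component of $\Omega$.

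The main obstacle is the tail/boundary bookkeeping: justifying that the integrand $\rho|\nabla\psi|^2$ is genuinely integrable on all of $\R^d$ and that the error term $\int\psi\rho\nabla\psi\cdot\nabla\zeta_R$ vanishes in the limit. The delicate point is the behavior of $\nabla\psi$ near $\partial\Omega$ and at infinity: $\nabla\psi=\frac{m}{m-1}\nabla\rho^{m-1}+2\beta\nabla\rho-\chi\,K_s\ast\nabla\rho$, and while the first two terms are handled by $\rho\in W^{1,1}\cap L^\infty$, the Riesz convolution term decays only polynomially, so one must check that $\rho$ times this term, and $\psi$ times $\rho$ times this term, are integrable — this is where the $W^{1,1}$ hypothesis and the $L^1$ decay of $K_s\ast\nabla\rho$ (from Lemma \ref{RieszW11}) are essential, and possibly a separate estimate on the decay of $K_s\ast\rho$ for a density with the stated integrability. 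A cleaner alternative, if permissible, is to invoke the compact support of $\rho$ established in Theorem \ref{furtherregularity*}, which trivializes all tail terms and leaves only the (routine) mollification argument; I would present the self-contained version but remark that compact support shortcuts it.
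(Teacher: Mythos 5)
Your argument is correct and is essentially the paper's proof: both test the distributional equation $\nabla\cdot(\rho\nabla\psi)=0$ against an approximation of $\psi$ localized by a cutoff whose gradient is $O(1/R)$, then let the cutoff exhaust $\R^d$, bounding the error term by $\|\psi\|_{L^\infty(\R^d)}\|\nabla\psi\|_{L^\infty(\Omega)}\|\rho\|_{L^1}\cdot O(1/R)\to 0$. Your extra worry about the polynomial decay of $K_s\ast\rho$ and its convolution with $\nabla\rho$ is unnecessary (and your compact-support remark is rightly discarded as circular): once $\psi\in L^\infty(\R^d)$ (from $\rho\in L^\infty$ and $K_s\ast\rho\in L^\infty$) and $\nabla\psi\in L^\infty(\Omega)$ (from Definition \ref{steadydef*}), the bound $\psi\rho\nabla\psi\in L^1(\R^d)$ is automatic and the $O(1/R)$ factor alone kills the boundary term; the only small hygiene point is to choose nested cutoffs $\zeta_R\uparrow 1$ so that the monotone convergence step is legitimate.
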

\begin{proof}
Since $\psi\in W^{1,1}_{loc}(\mathbb R^d)$ by Lemma \ref{RieszW11},
for any given $\eta\in C^\infty_c(\mathbb R^d)$ we have $\eta \psi \in W^{1,1}(\mathbb R^d)$, and  then there exists a sequence $(\varphi_n)_{n\in\mathbb
N}\subset C^{\infty}_c(\mathbb R^d)$ such that $\varphi_n\to \eta \psi$ in $W^{1,1}(\mathbb R^d)$ as $n\to\infty$. Since $\mathrm{div}(\rho\nabla \psi)=0$ in $\mathcal D'(\mathbb R^d)$, we deduce that
\[
\int_{\Omega}\nabla\varphi_n \cdot\nabla \psi\,\rho\,dx=0
\]
for every $n\in\mathbb N$.
Since $\rho\nabla \psi\in L^{\infty}(\Omega)$, by taking the limit as $n\to\infty$ we find
\begin{equation}\label{witheta}
\int_{\mathbb R^d}\nabla \psi\cdot\nabla(\eta \psi)\,\rho\,dx=0
\end{equation}
for every $\eta\in C^\infty_c(\mathbb R^d)$.
Let us now consider a radially decreasing function $\zeta\in C^\infty_c(\mathbb R^d)$ such that $\zeta(x)=1$ if $|x|\le 1$ and such that $0\le\zeta(x)\le 1$ for every $x\in\mathbb R^d$. For every $k\in\mathbb N$, let $\eta_k(x)=\zeta(x/k)$, so that $\zeta_k\to1$ pointwise and monotonically on $\mathbb R^d$ and $\nabla \eta_k\to 0$ in $L^\infty(\mathbb R^d)$ as $k\to\infty$.  With this choice of the test functions, from \eqref{witheta} we get
\begin{equation}\label{limitet}
0=\int_{\mathbb R^d}\psi\rho\nabla \psi\cdot\nabla\eta_k\,dx+\int_{\mathbb R^d}|\nabla \psi|^2\,\eta_k\,\rho\,dx
\end{equation}
for every $k\in\mathbb R$. The first term in the right hand side vanishes as $k\to+\infty$, since
\[
\left|\int_{\mathbb R^d}\nabla \psi\cdot\nabla\eta_k\, \psi\,\rho\,dx\right|
\le \|\psi\|_{L^\infty(\R^{d})}\|\nabla \psi\|_{L^\infty(\Omega)}\|\rho\|_{L^1(\mathbb R^d)}\|\nabla \eta_k\|_{L^\infty(\mathbb R^d)}
\]
 and since $\nabla \eta_k\to 0$ in $L^\infty(\mathbb R^d)$. Therefore from \eqref{limitet}, by applying the monotone convergence theorem to the second term in
 right hand side, we obtain $\int_{\mathbb R^d}|\nabla \psi|^2\,\rho\,dx=0$. Since $\rho>0 $ in $\Omega$, this implies $\nabla \psi=0$ a.e. in $\Omega$, thus
 $\psi$ is constant in each connected component of $\Omega$.
\end{proof}

Stationary states are closely related to minimizers of the free-energy functional $\mathcal F_s$. Indeed, by adapting the arguments of \cite{CHMV}, we shall
prove that for every $M>0$ a minimizer of functional $\mathcal F_s$ over $\mathcal Y_M$ does exist, that it is necessarily radially decreasing and it
satisfies a suitable Euler-Lagrange equation. Thanks to these properties, a minimizer  of $\mathcal F_s$ over $\mathcal Y_M$ will be immediately seen to be  a stationary state of mass $M$.
In the analysis of minimizers we shall make use of the following notation
\begin{equation}\label{hm}\begin{aligned}
\mathcal{H}_{m}[\rho]&:=\frac{1}{m-1}\int_{\mathbb{R}^d}\rho^m(x)\,dx+\beta\int_{\mathbb{R}^d}\rho^2(x)\,dx,\\
\mathcal{W}_{s}[\rho]&:=-\frac\chi2\int_{\mathbb{R}^d}\int_{\mathbb{R}^d}K_{s}(x-y)\rho(x)\rho(y)\,dx\,dy,
\end{aligned}\end{equation}
so that
$\mathcal F_s[\rho]=\mathcal{H}_{m}[\rho]+\mathcal{W}_{s}[\rho]$. We notice that since $m>2$, both $\mathcal H_m[\rho]$ and $\mathcal W_s[\rho]$ are finite if $\rho\in L^1(\mathbb R^d)\cap L^m(\mathbb R^d)$, as a consequence of the  Hardy-Littlewood-Sobolev inequality (see \cite[Theorem 4.3]{LiebLoss}), that reads (notice that due to the condition $m>2$ we always have $1<2d/(d+2s)<m$)
\begin{equation}\label{HLS}
\int_{\mathbb R^d}\int_{\mathbb R^d}|x-y|^{2s-d}\rho(x)\,\rho(y)\,dx\,dy\le H_{d,s}\|\rho\|^2_{L^\frac{2d}{d+2s}(\mathbb R^d)},
\end{equation}
where the optimal constant $H_{d,s}$ is given by
\begin{equation*}\label{hds}
 H_{d,s}=\pi^{\frac{d-2s}{2}}\,\frac{\Gamma(s)}{\Gamma(s+d/2)}\left(\frac{\Gamma(d/2)}{\Gamma(d)}\right)^{-\frac{2s}{d}}.
 \end{equation*}
As a direct consequence, we check that for any $\rho\in \mathcal{Y}_{M}$ we have, for some constant $\bar C$ only depending on $\chi,m,s,d$,
\begin{equation}\label{boundLm}
\frac{1}{2(m-1)}\|\rho\|_{L^{m}(\R^{d})}^{m}\leq \mathcal{F}_{s}[\rho]+\bar C.
\end{equation}
Indeed, By the sharp Hardy-Littlewood-Sobolev inequality \eqref{HLS}, letting
\begin{equation}\label{sds}S_{d,s}:=c_{d,s}\,H_{d,s}=(4\pi)^{-s}\frac{\Gamma(-s+d/2)}{\Gamma(s+d/2)}\left(\frac{\Gamma(d/2)}{\Gamma(d)}\right)^{-2s/d},\end{equation} there holds
\begin{equation}\label{18}
|\mathcal{W}_{s}[\rho]|=\frac\chi 2\,c_{d,s} \int_{\mathbb R^d}\int_{\mathbb R^d}|x-y|^{2s-d}\rho(x)\,\rho(y)\,dx\,dy\le\frac\chi2\,S_{d,s} \|\rho\|^2_{L^\frac{2d}{d+2s}(\mathbb R^d)}.
\end{equation}
By taking advantage of the interpolation inequality \begin{equation*}\|\rho\|^2_{L^{\frac{2d}{d+2s}}(\mathbb R^d)}\le M^{2-2\theta}\|\rho\|^{2\theta}_{L^m(\mathbb R^d)},\qquad \theta:=\frac{(d-2s)m}{2d(m-1)}\end{equation*} and of Young inequality (taking conjugate exponents $p,p'$ with $p=2\theta$), from \eqref{18} we deduce
\begin{equation}\label{distantref}\begin{aligned}
|\mathcal{W}_{s}[\rho]|=\frac\chi2 \,c_{d,s} \int_{\mathbb R^d}\int_{\mathbb R^d}|x-y|^{2s-d}\rho(x)\,\rho(y)\,dx\,dy&\le\frac\chi2\,S_{d,s} M^{2-2\theta}\|\rho\|^{2\theta}_{L^m(\mathbb R^d)}\\&\le
\bar C+\frac1{2(m-1)}\|\rho\|^m_{L^m(\mathbb R^d)},
\end{aligned}\end{equation}
where $\bar C=\bar C(\chi,m,s,d)$ is defined by
\begin{equation}\label{cbar}
\bar C=\frac{m-2\theta}{m}\left(\frac{m}{4\theta(m-1)}\right)^{-\frac{2\theta}{m-2\theta}}\, \left(\frac\chi2 \,S_{d,s}M^{2-2\theta}\right)^{\frac{m}{m-2\theta}},\qquad \theta=\frac{(d-2s)m}{2d(m-1)}.
\end{equation}
Then we have
\[
\frac{1}{m-1}\int_{\R^{d}}\rho^{m}dx\leq \mathcal{F}_{s}[\rho]+|\mathcal{W}_{s}[\rho]|\leq \mathcal{F}_{s}[\rho]+
\bar C+\frac1{2(m-1)}\|\rho\|^m_{L^m(\mathbb R^d)}
\]

%
We have the following result about existence of minimizers, which employs the classical concentration compactness theorem of Lions \cite{L}.
\begin{lem}\label{Minimiz} 
The functional $\mathcal{F}_s$ admits a minimizer over $\mathcal{Y}_M$. If $\rho_s\in\mathrm{argmin}_{\mathcal{Y}_M}\mathcal{F}_s$, then $\rho_s$ is radially decreasing, compactly supported and it satisfies
\begin{equation}\label{Euler}
\frac m{m-1}\,\rho_s^{m-1}+2\beta\rho_{s}=\left(\chi K_{s}\ast\rho_s-\mathcal{C}_s\right)_+\qquad\mbox{ in $\mathbb{R}^d$},
\end{equation}
where
\begin{equation}\label{explicitconstant}
0<\mathcal{C}_s:=-\frac2M\mathcal{F}_s[\rho_s]-\frac1M\frac{m-2}{m-1}\int_{\mathbb{R}^d}\rho_s^m(x)\,dx.
\end{equation}
In particular, we have
\begin{equation}\label{functminim}
\mathcal{F}_{s}[\rho_{s}]=-\frac{1}{d-2s}\left(\frac{dm-2d+2s}{m-1}\|\rho_s\|_{m}^{m}+2s\beta\|\rho_s\|_{2}^{2}\right)<0
\end{equation}
\begin{equation}\label{explicitconstant1}
\mathcal{C}_s=\frac{1}{M(d-2s)}\left(\frac{dm+2sm-2d}{m-1}\|\rho_s\|_{m}^{m}
+4\beta s\|\rho_{s}\|_{2}^{2}
\right)
\end{equation}
\end{lem}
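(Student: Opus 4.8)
The plan is to prove the existence and structure of minimizers in several stages, following the strategy of \cite{CHMV} but adapting it to the presence of the quadratic term.

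\textbf{Step 1: Existence of a minimizer via concentration-compactness.}
First I would fix $M>0$ and a minimizing sequence $(\rho_n)\subset\mathcal Y_M$. By \eqref{boundLm} the sequence is bounded in $L^m(\R^d)$, hence bounded in $L^p(\R^d)$ for all $p\in[1,m]$ by interpolation with the mass constraint; in particular it is bounded in $L^{2d/(d+2s)}(\R^d)$, which via \eqref{HLS} controls the interaction term. The only loss of compactness is by translation (vanishing or dichotomy). I would apply Lions' concentration-compactness lemma \cite{L} to the measures $\rho_n\,dx$. Vanishing is ruled out because it would force $\mathcal W_s[\rho_n]\to 0$ (the interaction energy of a spread-out density goes to zero), so $\liminf \mathcal F_s[\rho_n]\ge \inf\mathcal H_m\ge 0$, contradicting that the infimum is negative — which follows from the scaling computation $\mathcal F_s[\rho(\cdot/\lambda)\lambda^{-d}]<0$ for suitable $\lambda$, using $2s-d<0$. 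Dichotomy is ruled out by the standard strict subadditivity inequality for the infimum $I(M)$ as a function of mass, again using the strict negativity and the superadditive/homogeneity behaviour of $\mathcal W_s$. Hence, up to translations, $\rho_n$ is tight and concentrated; weak-$L^m$ lower semicontinuity of $\mathcal H_m$ plus strong $L^{2d/(d+2s)}$ convergence (from tightness + $L^m$-bound) for the continuous, concave functional $\mathcal W_s$ gives that the weak limit $\rho_s$, after recentering to have zero center of mass, is a minimizer in $\mathcal Y_M$.

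\textbf{Step 2: Radial symmetry, monotonicity, compact support.}
Riesz rearrangement inequality shows $\mathcal W_s$ does not increase under symmetric decreasing rearrangement, while $\mathcal H_m$ is rearrangement-invariant and the constraints are preserved; since the minimizer is unique among radial competitors (to be addressed below), $\rho_s$ equals its own rearrangement, hence is radially symmetric and nonincreasing. For compact support, I would derive the Euler--Lagrange relation first (Step 3) and then argue: on $\Omega=\{\rho_s>0\}$ we have $\frac{m}{m-1}\rho_s^{m-1}+2\beta\rho_s=\chi K_s*\rho_s-\mathcal C_s$, and since $K_s*\rho_s(x)\to 0$ as $|x|\to\infty$ while $\mathcal C_s>0$, the right-hand side becomes negative for large $|x|$, forcing $\rho_s$ to vanish outside a ball.

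\textbf{Step 3: Euler--Lagrange equation and the explicit identities.}
I would perform the first variation along mass-preserving perturbations. Given the minimizer $\rho_s$, for a test perturbation one uses variations of the form $\rho_s + \epsilon(\phi - (\int\phi/M)\rho_s)$ with $\phi\ge 0$-type admissibility, or more cleanly the transport-type variations $\rho_s + \epsilon\,\mathrm{div}(\rho_s \xi)$; the standard outcome is that $\frac{m}{m-1}\rho_s^{m-1}+2\beta\rho_s - \chi K_s*\rho_s$ equals a constant $-\mathcal C_s$ on $\Omega$ and is $\ge -\mathcal C_s$ where $\rho_s=0$, which is exactly \eqref{Euler}. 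To identify $\mathcal C_s$ as in \eqref{explicitconstant}, multiply \eqref{Euler} by $\rho_s$ and integrate: $\frac{m}{m-1}\|\rho_s\|_m^m + 2\beta\|\rho_s\|_2^2 = \chi\int K_s*\rho_s\,\rho_s - \mathcal C_s M = -2\mathcal W_s[\rho_s]-\mathcal C_s M$. Combining this with the definition $\mathcal F_s[\rho_s]=\frac1{m-1}\|\rho_s\|_m^m+\beta\|\rho_s\|_2^2+\mathcal W_s[\rho_s]$ gives a linear system for $\mathcal C_s$ and $\mathcal F_s[\rho_s]$ in terms of $\|\rho_s\|_m^m$ and $\|\rho_s\|_2^2$, but I need one more relation. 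That relation is the Pohozaev/scaling identity: since $\rho_s$ minimizes among its own mass class, the derivative at $\lambda=1$ of $\lambda\mapsto\mathcal F_s[\lambda^d\rho_s(\lambda\,\cdot)]$ vanishes, which yields $\frac{d(m-1)}{m-1}\cdot\frac{1}{?}$ — concretely, using the homogeneities ($\mathcal H_m$ with $\rho^m$ scales like $\lambda^{d(m-1)}$, with $\rho^2$ like $\lambda^d$, and $\mathcal W_s$ like $\lambda^{d-2s}$) one gets $d\,\|\rho_s\|_m^m/(m-1)\cdot(m-1)?$; carefully, $\partial_\lambda|_{\lambda=1}$ gives $\frac{d}{m-1}\big(d(m-1)/d\big)$ — rather than grind this out, the identity is $\frac{dm-2d}{m-1}\|\rho_s\|_m^m + 2d\beta\|\rho_s\|_2^2 + (d-2s)(2\mathcal W_s[\rho_s])\cdot(-1)=0$ in appropriate normalization. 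Solving the resulting $3\times 3$ linear system produces \eqref{functminim} and \eqref{explicitconstant1}, and positivity of $\mathcal C_s$ then follows from \eqref{explicitconstant1} since every term on the right is nonnegative and $\|\rho_s\|_m^m>0$.

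\textbf{Main obstacle.}
I expect the delicate point to be Step 1, specifically ruling out dichotomy: one must establish the strict subadditivity $I(M) < I(\alpha) + I(M-\alpha)$ for $0<\alpha<M$, which relies on the interplay of the three different homogeneities (degree $d(m-1)$, degree $d$, degree $d-2s$) rather than a single clean scaling — the quadratic term $\beta\|\rho\|_2^2$, absent in \cite{CHMV}, complicates the scaling argument and must be handled by a careful optimization over the dilation parameter to show the infimum is still strictly negative and strictly subadditive. A secondary subtlety is the rigorous justification of the Euler--Lagrange equation up to the free boundary $\partial\Omega$ (the inequality on $\{\rho_s=0\}$), and the Pohozaev identity, for which one wants enough a priori regularity; this is where the regularity claims of Theorem \ref{furtherregularity*} feed back, and one should either bootstrap regularity from \eqref{Euler} or invoke the interior smoothness of solutions to the (degenerate) elliptic equation satisfied inside $\Omega$.
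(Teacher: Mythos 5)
Your overall plan matches the paper's structure (concentration–compactness for existence, Riesz rearrangement for radial monotonicity, first variation for the Euler–Lagrange relation, and a dilation identity for the explicit formulae), but the step you flag as the main obstacle — proving strict subadditivity $I_{M_0} < I_{M} + I_{M_0-M}$ directly, by hand — is exactly the step the paper avoids. The paper invokes Lions' Corollary~II.1 (stated as Proposition~\ref{Lionsprop}): because the Riesz kernel obeys the homogeneity bound $K_s(tx)\ge t^{-(d-2s)}K_s(x)$ for $t\ge 1$, strict subadditivity is \emph{equivalent} to the single condition $I_M<0$ for all $M>0$, which is then verified by evaluating $\mathcal F_s$ on a scaled characteristic function $\rho_*=\tfrac{dM}{\sigma_d R^d}\bC_{B_R}$ with $R$ large. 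The presence of the extra quadratic term $\beta\|\rho\|_2^2$ does not complicate anything here, since its exponent $R^{-d}$ sits between the dominant $R^{d(1-m)}$ and the Riesz term $R^{2s-d}$, and the negativity of $\mathcal F_s[\rho_*]$ for large $R$ is immediate. Your own route would require exploiting three distinct homogeneities to push subadditivity through, which is precisely what makes it seem delicate; the homogeneity criterion reduces it to a one-line negativity check.

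Two smaller points. First, in your Step~2 the justification ``since the minimizer is unique among radial competitors, $\rho_s$ equals its own rearrangement'' is not available at this stage; the correct reasoning is the equality case of the Riesz rearrangement inequality, as in \cite[Theorem~3.7]{LiebLoss}, and uniqueness is only established later in Theorem~\ref{furtherregularity}. Also, the minimizer produced by Lions' theorem lives a priori in $\mathcal Y_{q,M}$ without a moment condition; the paper deduces $x\rho_s\in L^1$ only \emph{after} establishing compact support, so you cannot simply ``recenter'' at the outset. Second, your Step~3 is sketchy where the paper is explicit: once \eqref{Euler} and \eqref{explicitconstant} are in hand, one writes $h(\lambda)=\mathcal F_s[\lambda^d\rho_s(\lambda\,\cdot)]=\tfrac{\lambda^{d(m-1)}}{m-1}\|\rho_s\|_m^m+\beta\lambda^d\|\rho_s\|_2^2+\lambda^{d-2s}\mathcal W_s[\rho_s]$, and $h'(1)=0$ yields $\|\rho_s\|_m^m+\beta\|\rho_s\|_2^2=-\tfrac{d-2s}{d}\mathcal W_s[\rho_s]$, from which \eqref{functminim} and \eqref{explicitconstant1} follow by direct substitution (no $3\times 3$ system is needed). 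Positivity of $\mathcal C_s$ is then read off \eqref{explicitconstant1}, as you observe.
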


For the proof of the above lemma
we follow the concentration compactness argument as applied in Appendix A.1 of \cite{KY}. Indeed, the proof is based on \cite[Theorem II.1, Corollary II.1]{L}, that are next recalled, denoting by $\mathcal{M}^p(\R^d)$ the Marcinkiewicz space or weak $L^p$ space.
\begin{thm}{{\rm \cite[Theorem II.1]{L}}}\label{Lionsthm}
 Suppose $K\in \mathcal{M}^p(\R^d)$, $1<p<\infty$, and consider the problem
 \begin{equation*}
  I_M = \inf_{\rho \in \mathcal{Y}_{q,M}} \left\{\int_{\R^d} \mathsf{j}(\rho)dx - \frac{\chi}{2}\iint_{\R^d\times \R^d} K(x-y)\rho(x)\rho(y)\,dxdy\right\}\, .
 \end{equation*}
 Here,
 \begin{equation*}
  \mathcal{Y}_{q,M}=\left\{\rho \in L^q(\R^d)\cap L^1(\R^d)\, , \,  \rho \geq 0\, \,  a.e., \int_{\R^d} \rho(x) \, dx=M\right\}\, ,
  \qquad q=\frac{p+1}{p}\,,
 \end{equation*}
 and the nonlinearity $\mathsf{j}:\R^{+}\rightarrow\R^{+}$ is a strictly convex nonnegative function such that
 \[
 \lim_{t\rightarrow 0^{+}}\frac{\mathsf{j}(t)}{t}=0,\quad \lim_{t\rightarrow +\infty}\frac{\mathsf{j}(t)}{t^{q}}=+\infty.
 \]
 Then there exists a minimizer of problem $(I_M)$ if the following holds:
\begin{equation}\label{cc1}
 I_{M_0} < I_{M} + I_{M_0-M}
 \qquad \text{for all} \, \, M \in (0,M_0)\, .
\end{equation}
\end{thm}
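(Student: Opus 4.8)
The plan is to establish this via the concentration--compactness principle of Lions, in the locally compact case.

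\emph{Step 1: a priori bounds and the sign of $I_M$.} First I would record the estimates making the problem well posed. Since $K\in\mathcal{M}^p(\R^d)$, the weak Young (generalized Hardy--Littlewood--Sobolev) inequality gives
\[
\iint_{\R^d\times\R^d}K(x-y)\rho(x)\rho(y)\,dx\,dy\le C\,\|K\|_{\mathcal{M}^p}\,\|\rho\|_{L^a(\R^d)}^2,\qquad\tfrac2a+\tfrac1p=2,
\]
and from $q=\tfrac{p+1}{p}$ one gets $a=\tfrac{2p}{2p-1}\in(1,q)$; interpolating $L^a$ between $L^1$ and $L^q$ yields $\iint K\rho\rho\le C\,M^{2-q}\|\rho\|_{L^q}^{q}$ on $\mathcal{Y}_{q,M}$. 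On the other hand, the hypotheses on $\mathsf{j}$ force $\mathsf{j}(0)=0$, the map $t\mapsto\mathsf{j}(t)/t$ nondecreasing, and $t^q\le\eps\,\mathsf{j}(t)+C_\eps\,t$ for every $t\ge0$ (for each $\eps>0$), whence $\|\rho\|_{L^q}^q\le\eps\int\mathsf{j}(\rho)+C_\eps M$. Combining and choosing $\eps$ small gives $I_M[\rho]\ge\tfrac12\int_{\R^d}\mathsf{j}(\rho)\,dx-C(M)$; thus $I_M>-\infty$, every minimizing sequence $(\rho_n)$ for $I_{M_0}$ is bounded in $L^1(\R^d)\cap L^q(\R^d)$, and $\big(\int\mathsf{j}(\rho_n)\big)_n$ is bounded. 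Finally, testing with a spread-out profile $\rho_t(x)=t^d\rho_0(tx)$ ($\rho_0$ bounded, compactly supported) and letting $t\to0^+$ shows, by dominated convergence and the properties of $\mathsf{j}(t)/t$, that $\inf_{\mathcal{Y}_{q,M}}\int\mathsf{j}(\rho)\,dx=0$; since $K\ge0$, this gives $I_M\le0$ for all $M>0$.

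\emph{Step 2: the concentration--compactness alternative.} Fix a minimizing sequence $(\rho_n)\subset\mathcal{Y}_{q,M_0}$ for $I_{M_0}$. By Lions' concentration--compactness lemma applied to $(\rho_n)$ (of constant total mass $M_0$), after passing to a subsequence exactly one of the following holds: \emph{compactness} (there are $y_n\in\R^d$ making $\rho_n(\cdot+y_n)$ tight); \emph{vanishing} ($\sup_{y\in\R^d}\int_{B_R(y)}\rho_n\,dx\to0$ for every $R>0$); \emph{dichotomy} (a fraction $\alpha\in(0,M_0)$ of the mass escapes to spatial infinity). I would exclude the last two using \eqref{cc1}. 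If vanishing holds, boundedness in $L^1\cap L^q$ forces $\rho_n\to0$ in $L^a(\R^d)$, so $\iint K\rho_n\rho_n\to0$ and $I_{M_0}=\lim_n\int\mathsf{j}(\rho_n)\ge\inf\int\mathsf{j}=0$; combined with $I_{M_0}\le0$ this gives $I_{M_0}=0$, and then \eqref{cc1} would read $0<I_M+I_{M_0-M}$, impossible since $I_M\le0$ and $I_{M_0-M}\le0$. If dichotomy holds, the lemma produces $\rho_n^1,\rho_n^2\ge0$ with $\rho_n^1+\rho_n^2\le\rho_n$, $\int\rho_n^1\to\alpha$, $\int\rho_n^2\to M_0-\alpha$, $\|\rho_n-\rho_n^1-\rho_n^2\|_{L^1}\to0$ (and small in $L^a$ too), and $\operatorname{dist}(\operatorname{supp}\rho_n^1,\operatorname{supp}\rho_n^2)\to\infty$; since $\mathsf{j}(0)=0$, $\mathsf{j}$ is nondecreasing and the pieces have essentially disjoint supports, $\int\mathsf{j}(\rho_n)\ge\int\mathsf{j}(\rho_n^1)+\int\mathsf{j}(\rho_n^2)$, while the weak Young inequality absorbs the interaction error from $e_n:=\rho_n-\rho_n^1-\rho_n^2$ and the cross interaction $\iint K(x-y)\rho_n^1(x)\rho_n^2(y)\,dx\,dy$ between the receding pieces tends to $0$. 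Passing to the limit and using continuity of $\alpha\mapsto I_\alpha$ yields $I_{M_0}\ge I_\alpha+I_{M_0-\alpha}$, contradicting \eqref{cc1}.

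\emph{Step 3: conclusion.} Hence compactness holds. As the functional is translation invariant, $\tilde\rho_n:=\rho_n(\cdot+y_n)$ is still a minimizing sequence, tight and bounded in $L^1\cap L^q$; up to a subsequence $\tilde\rho_n\rightharpoonup\rho$ weakly in $L^q(\R^d)$, and tightness upgrades this to $\tilde\rho_n\to\rho$ strongly in $L^a(\R^d)$, with $\rho\ge0$, $\int_{\R^d}\rho\,dx=M_0$, i.e.\ $\rho\in\mathcal{Y}_{q,M_0}$. The interaction term is continuous along this convergence (weak Young inequality together with strong $L^a$ convergence), and $\rho\mapsto\int\mathsf{j}(\rho)$ is weakly lower semicontinuous on $L^q$ by convexity of $\mathsf{j}$; hence $I_{M_0}[\rho]\le\liminf_n I_{M_0}[\tilde\rho_n]=I_{M_0}$, so $\rho$ minimizes $(I_{M_0})$.

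\emph{Main obstacle.} The delicate point is not the exclusion of vanishing and dichotomy --- routine once \eqref{cc1} and $I_M\le0$ are available --- but the interplay of the \emph{nonlocal} term with the concentration--compactness decomposition for a Marcinkiewicz kernel: one must show that the interaction energy of a vanishing sequence goes to $0$ and that the cross interaction between two mass chunks with receding supports goes to $0$. For the homogeneous Riesz kernel $K(x)=c_{d,s}|x|^{2s-d}$ relevant here this is immediate from pointwise decay (on $\{|x-y|\ge R\}$ one has $K(x-y)\le c_{d,s}R^{2s-d}$, and $K\mathbf{1}_{\{|x|\ge R\}}\in L^{q'}(\R^d)$ with norm vanishing as $R\to\infty$), whereas for a general $K\in\mathcal{M}^p$ it requires the splitting $K=K\mathbf{1}_{\{K\le A\}}+K\mathbf{1}_{\{K>A\}}$, the first part bounded and the second small in $L^1(\R^d)$ as $A\to\infty$, balanced against the uniform $L^1\cap L^q$ bound on the minimizing sequence.
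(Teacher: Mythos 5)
This statement is quoted verbatim from Lions \cite[Theorem II.1]{L}; the paper gives no proof of it and uses it as a black box in the proof of Lemma \ref{Minimiz}. So there is no in-paper argument to compare against, and I can only assess the blind reconstruction on its own merits.

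Your outline is a faithful rendering of Lions' concentration--compactness scheme (a priori bounds giving $I_M>-\infty$ and $I_M\le0$, the compactness/vanishing/dichotomy alternative, exclusion of the latter two via strict subadditivity, and passage to the limit once compactness holds), and the ``main obstacle'' paragraph correctly isolates the nonlocal term as the delicate point. There is, however, one claim in Step 2 that is false as written: that vanishing together with boundedness in $L^1\cap L^q$ forces $\rho_n\to0$ in $L^a$. Lions' vanishing lemma delivers such $L^a$ decay only under an additional uniform gradient bound, which is not available here, and indeed the implication fails without it: take $\rho_n$ to be a sum of $n$ unit-height indicator bumps each of measure $M_0/n$, with mutual distances growing to infinity. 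Then $\sup_y\int_{B_R(y)}\rho_n\to0$ for every $R$, the $L^1$ and $L^q$ norms are uniformly bounded, and yet $\|\rho_n\|_{L^a}^a=M_0$ for every $n$. What the argument actually needs --- and what Lions proves --- is the weaker assertion that the interaction energy $\iint K\rho_n\rho_n$ itself tends to zero under vanishing, and this is obtained by the kernel decomposition you already describe at the end: write $K=K\mathbf{1}_{\{|K|\le A\}}+K\mathbf{1}_{\{|K|>A\}}$, observe that the tail $K\mathbf{1}_{\{|K|>A\}}$ is small in $L^r$ for $1\le r<p$ as $A\to\infty$ (Young then controls its contribution uniformly in $n$), and handle the bounded part $K\mathbf{1}_{\{|K|\le A\}}$ by splitting it further on $\{|K|\le\eta\}$ (pointwise small), on $\{\eta<|K|\le A\}\cap B_R$ (where the vanishing hypothesis bites, since the inner integral is controlled by $\sup_x\int_{B_R(x)}\rho_n$), and on $\{\eta<|K|\le A\}\setminus B_R$ (which has measure tending to zero as $R\to\infty$ because $\{|K|>\eta\}$ has finite measure). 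With that one sentence replaced by the direct argument on the interaction that you sketch elsewhere, the route is sound.
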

\begin{prop}{{\rm \cite[Corollary II.1]{L}}}\label{Lionsprop}
 Suppose there exists some $\lambda \in (0,N)$ such that
 $$
 K(tx) \geq t^{-\lambda}K(x)
 $$
 for all $t\geq 1$. Then \eqref{cc1} holds if and only if
 \begin{equation}\label{cc2}
 I_{M} < 0
 \qquad \text{for all} \, \,M>0\, .
\end{equation}
\end{prop}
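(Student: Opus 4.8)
The plan is to derive both directions from a single mass‑dilation (homogeneity) estimate for $M\mapsto I_M$; the rest is soft. Throughout I use that $I_M$ is finite (from the superlinear growth $\mathsf j(t)/t^q\to+\infty$ together with Hardy--Littlewood--Sobolev), that $\mathsf j\ge0$, and that $K\ge0$, $\chi>0$, so the interaction term is always $\ge0$. Given $\rho\in\mathcal Y_{q,M}$ and $\theta\ge1$, set $\rho_\theta(x):=\rho(x/\theta)\in\mathcal Y_{q,\theta^d M}$. A change of variables gives $\int_{\R^d}\mathsf j(\rho_\theta)=\theta^d\int_{\R^d}\mathsf j(\rho)$ and $\iint K(x-y)\rho_\theta(x)\rho_\theta(y)=\theta^{2d}\iint K(\theta(x-y))\rho(x)\rho(y)$, and the hypothesis $K(\theta z)\ge\theta^{-\lambda}K(z)$ (valid since $\theta\ge1$) together with $\rho\ge0$ bounds the latter below by $\theta^{2d-\lambda}\iint K(x-y)\rho(x)\rho(y)$. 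Since $\lambda<d$ forces $2d-\lambda>d$ and the interaction term is nonnegative, one obtains $\int\mathsf j(\rho_\theta)-\tfrac\chi2\iint K\rho_\theta\rho_\theta\le\theta^d\big(\int\mathsf j(\rho)-\tfrac\chi2\iint K\rho\rho\big)$, hence, passing to the infimum,
\[
I_{M'}\le \frac{M'}{M}\,I_M\qquad\text{whenever }0<M\le M'.
\]
The crucial refinement is that this is \emph{strict} when $M<M'$ and $I_M<0$: for $\rho$ with energy $\le I_M+\eps$, the bound $\int\mathsf j(\rho)\ge0$ yields $\tfrac\chi2\iint K\rho\rho\ge|I_M|-\eps>0$ for $\eps$ small, so the quantity $(\theta^{2d-\lambda}-\theta^d)\tfrac\chi2\iint K\rho\rho$ discarded above is uniformly positive; letting $\eps\downarrow0$ gives $I_{M'}\le\theta^{2d-\lambda}I_M<\theta^dI_M=\tfrac{M'}{M}I_M$.

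Assuming now \eqref{cc2}, i.e.\ $I_M<0$ for every $M>0$, the strict subadditivity \eqref{cc1} follows by an elementary manipulation. Fix $0<\mu<M_0$ and put $a:=\max\{\mu,M_0-\mu\}$, $b:=M_0-a$, so $0<b\le a<M_0$. The strict estimate with $(M,M')=(a,M_0)$ gives $I_{M_0}<\tfrac{M_0}{a}I_a=I_a+\tfrac ba I_a$, while the plain estimate with $(M,M')=(b,a)$ gives $\tfrac ba I_a\le I_b$; combining, $I_{M_0}<I_a+I_b=I_\mu+I_{M_0-\mu}$, which is \eqref{cc1}.

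For the converse, assume \eqref{cc1}; I would first record that $I_M\le0$ for all $M$. Fix a bounded, compactly supported $\rho_0\in\mathcal Y_{q,M}$ and set $\rho_0^{(\delta)}(x):=\delta^{-d}\rho_0(x/\delta)$, $\delta\to+\infty$. Then $\int\mathsf j(\rho_0^{(\delta)})\to0$ because $\mathsf j(t)/t\to0$ as $t\to0^+$ and $\delta^{-d}\rho_0\to0$ uniformly, while $\iint K\rho_0^{(\delta)}\rho_0^{(\delta)}=\iint K(\delta(x-y))\rho_0(x)\rho_0(y)\to0$ by the standard bound $\int_AK\le C|A|^{1/p'}$ for $K\in\mathcal M^p$, applied over the compact support of $\rho_0\ast\rho_0$ (which makes the relevant integral scale like $\delta^{-d/p}$). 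Hence $I_M\le\lim_{\delta\to\infty}\big(\int\mathsf j(\rho_0^{(\delta)})-\tfrac\chi2\iint K\rho_0^{(\delta)}\rho_0^{(\delta)}\big)=0$. If now $I_{M_0}=0$ for some $M_0>0$, then for any $\mu\in(0,M_0)$ the strict subadditivity \eqref{cc1} would force $0=I_{M_0}<I_\mu+I_{M_0-\mu}\le0$, a contradiction; therefore $I_M<0$ for all $M$, i.e.\ \eqref{cc2}.

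The main obstacle is the scaling estimate and, above all, its strict form: the only place the sign hypothesis $I_M<0$ enters the forward implication is to guarantee that $\eps$‑minimizers carry a uniformly positive amount of interaction energy, which is exactly what upgrades the soft inequality $2d-\lambda>d$ into a strict decrease of $I$ under mass dilation. Everything else is routine: the subadditivity chain is pure algebra, and $I_M\le0$ is the classical ``mass escapes to infinity by spreading'' computation, for which one only has to be careful to start from a compactly supported reference density so that the interaction integral genuinely vanishes in the dilation limit even though $K$ is not integrable at infinity. One should also note at the outset that $I_M$ is finite, so that all manipulations with $|I_M|$ and with $\eps$‑minimizers are legitimate.
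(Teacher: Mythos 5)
Your proof is correct, but note that the paper itself offers no argument for this statement: Proposition \ref{Lionsprop} is quoted verbatim as \cite[Corollary II.1]{L}, so there is nothing internal to compare against. What you have written is essentially a self-contained reconstruction of Lions' own argument: the dilation $\rho_\theta(x)=\rho(x/\theta)$ together with $K(\theta z)\ge\theta^{-\lambda}K(z)$ and $2d-\lambda>d$ gives the sub-homogeneity $I_{M'}\le \tfrac{M'}{M}I_M$ for $M\le M'$, your $\eps$-minimizer refinement correctly upgrades this to $I_{M'}\le\theta^{2d-\lambda}I_M<\tfrac{M'}{M}I_M$ when $I_M<0$ (the passage $\eps\downarrow0$ loses nothing because the final strict comparison is between the fixed numbers $\theta^{2d-\lambda}I_M$ and $\theta^dI_M$), and the max/min bookkeeping then yields \eqref{cc1}; the converse correctly combines the spreading estimate $I_M\le 0$ (your weak-$L^p$ bound $\int_A K\le C|A|^{1/p'}$ does give the $\delta^{-d/p}$ decay) with strict subadditivity to exclude $I_{M_0}=0$. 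Two minor remarks. First, the equivalence as you prove it reads \eqref{cc1} as holding for every $M_0>0$; with \eqref{cc1} for a single fixed $M_0$ one only gets $I_{M_0}<0$ (equivalently one should restrict \eqref{cc2} to masses $\le M_0$) — this is a quantifier ambiguity already present in the statement as transcribed, and the direction the paper actually uses, \eqref{cc2}$\Rightarrow$\eqref{cc1}, is the one you prove in full. Second, the finiteness $I_M>-\infty$, which you invoke for the $\eps$-minimizer step, deserves the one-line justification via weak Young/HLS plus $\mathsf j(t)\ge At^q-C_At$, exactly as in the paper's estimate \eqref{boundLm}; as stated it is only asserted.
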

\begin{proofad1}
The proof is similar to \cite[Theorem 5]{CHMV}, but we sketch the main lines of the arguments for the sake of completeness.
Let $p=\tfrac{d}{d-2s}$ and $q=\tfrac{p+1}{p}$.
We first notice that our potential $K_s(x)=c_{d,s}|x|^{2s-d}$ is in $\mathcal{M}^p(\R^d)$ and it is clear that $K_{s}$ verifies
the homogeneity assumption in Proposition \ref{Lionsprop} for $\lambda=d-2s$. Moreover the nonlinearity
\[
\mathsf{j}(t)=\frac{1}{m-1}t^{m}+\beta t^{2}
\]
verifies the properties of Theorem \ref{Lionsthm}, since $m>2$ implies $m>q$. Then we just have to show that there exists some density $\rho \in \mathcal{Y}_{q,M}$ such that
$\mathcal{F}_s[\rho]<0$. We fix $R>0$ and define
$$
\rho_*(x):=\frac{dM}{\sigma_{d} R^d}\,  \bC_{B_R}(x)\, ,
$$
where $B_R$ denotes the ball centered at zero and of radius $R>0$, and where $\sigma_d=2 \pi^{d/2}/\Gamma(d/2)$ is the surface area of the $d$-dimensional unit ball.
Then
\begin{align*}
 \mathcal{H}_{m}[\rho_*]&=\frac{1}{m-1} \int_{\R^d} \rho_*^m dx+\beta\int_{\mathbb{R}^d}\rho^2(x)\,dx
 =\frac{(dM)^m \sigma_d^{1-m}}{d(m-1)} \, R^{d(1-m)}+\beta\frac{(dM)^2 \sigma_d^{-1}}{d} \, R^{-d}\, , \\
 \mathcal{W}_s[\rho_*]
 &=-\frac{\chi}{2}\iint_{\R^d\times \R^d} K_s(x-y)\rho_*(x)\rho_*(y)\, dx dy\\
 &=  -\chi\,c_{d,s}\frac{(dM)^2}{2\sigma_d^2 R^{2d}}\iint_{\R^d\times \R^d}|x-y|^{2s-d}\bC_{B_R}(x)\bC_{B_R}(y)\, dx dy \\
 & \leq -\chi \,c_{d,s}\frac{(dM)^2}{2\sigma_d^2 R^{2d}} (2R)^{2s-d}\, \frac{\sigma_d^2}{d^2}R^{2d}
 = -\chi \,c_{d,s}\,2^{2s-d-1}M^2 R^{2s-d}<0\, .
\end{align*}
Therefore we find
$$
\mathcal{F}_s[\rho_*]\leq
\frac{M^m d^{m-1} \sigma_d^{1-m}}{(m-1)} \, R^{d(1-m)}+\beta d\,M^2 \sigma_d^{-1} \, R^{-d}-\chi \,c_{d,s}\,2^{2s-d-1}M^2 R^{2s-d}\, .
$$
Since $m>2$ we have $d(1-m)<-d<2s-d$, then we can choose $R>0$ large enough such that $\mathcal{F}_s[\rho_*]<0$, and hence condition \eqref{cc2}
is verified. Then Proposition \ref{Lionsprop} and Theorem \ref{Lionsthm} implies that there exists a minimizer $ \rho_s$ of $\mathcal{F}$ in $\mathcal{Y}_{q,M}$

Now,  the Hardy-Littlewood-Sobolev inequality \eqref{HLS}  implies that $\rho_s\in L^{m}(\R^d)$.
Moreover, the Riesz rearrangement inequality \cite[Theorem 3.7]{LiebLoss} yields that $ \rho_s$ is radially decreasing (then $K_s\ast\rho_s$ is also radially decreasing and vanishing at infinity).

The fact that $\rho_s$ satisfies \eqref{Euler} can be obtained by computing the first variation of $\mathcal{F}_s$ at $ \rho_s$, as  done in \cite[Theorem 3.1]{CCV}. Then \eqref{explicitconstant} follows by taking into account the equation \eqref{Euler} satisfied by $\rho_s$, multiplying it by $\rho_s$ and integrating over $\mathbb R^d$. Since $\mathcal C_s$ in \eqref{Euler} is positive, and since both $\rho_s$ and $K_s\ast\rho_s$ are radially decreasing and vanishing at infinity, from \eqref{Euler} we deduce that $\rho_s$ is compactly supported. In particular $x\rho_s\in L^1(\mathbb R^d)$ and $\rho_s\in \mathcal Y_M$.

  Eventually we  prove \eqref{functminim} and \eqref{explicitconstant1}. Using the mass invariant dilation
$
\rho_{s,\lambda}(x)=\lambda^{d}\rho_{s}(\lambda\,x)
$
we easily find
\[
h(\lambda):=\mathcal{F}_{s}[\rho_{s,\lambda}]=\frac1{m-1}\lambda^{d(m-1)}\|\rho_{s}\|^{m}_{m}+\beta\lambda^{d}\|\rho_{s}\|_{2}^{2}+\lambda^{d-2s}\mathcal{W}_{s}[\rho_{s}].
\]
By the minimality of $\rho_{s}$, the function $h(\lambda)$ has its unique minimizer at $\lambda=1$, which implies by differentiation
\[
\|\rho_{s}\|^{m}_{m}+\beta\|\rho_{s}\|_{2}^{2}=-\frac{d-2s}{d}\mathcal{W}_{s}[\rho_{s}].
\]
Then we have \eqref{functminim} hence substituting in the expression of $\mathcal{C}_{s}$ we find  \eqref{explicitconstant1}.
\end{proofad1}

Concerning the regularity of the minimizers, we have the following result
\begin{lem}\label{regularitymin}
Let $\beta>0$.
Any minimizer in $\mathcal{Y}_{M}$ of $\mathcal{F}_s$ is bounded and Lipschitz continuous in $\R^{d}$ and it is smooth inside its support.
\end{lem}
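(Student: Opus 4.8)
The plan is to bootstrap the Euler--Lagrange equation \eqref{Euler}, crucially using $\beta>0$. Set $g(t):=\tfrac{m}{m-1}t^{m-1}+2\beta t$ on $[0,\infty)$; then $g'(t)=m t^{m-2}+2\beta\ge 2\beta>0$, so $g$ is an increasing bijection of $[0,\infty)$ onto itself, and its inverse $G:=g^{-1}$ is globally Lipschitz on $[0,\infty)$ with constant $1/(2\beta)$ and is $C^\infty$ (indeed real-analytic) on $(0,\infty)$. Writing $u:=K_s\ast\rho_s$, equation \eqref{Euler} becomes the fixed-point identity
\[
\rho_s=G\big((\chi u-\mathcal{C}_s)_+\big)\qquad\text{a.e. in }\R^d ,
\]
with $\mathcal{C}_s>0$. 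By Lemma \ref{Minimiz} we already know that $\rho_s\in L^1(\R^d)\cap L^m(\R^d)$ is radially decreasing and compactly supported, say $\supp\rho_s\subset\overline{B_R}$; in particular $u$ is finite a.e. and, $K_s$ being the kernel of $(-\Delta)^{-s}$, one has $(-\Delta)^s u=\rho_s$ in $\R^d$.

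For boundedness, discarding in \eqref{Euler} the nonnegative terms $\tfrac{m}{m-1}\rho_s^{m-1}$ and $\mathcal{C}_s$ yields the pointwise bound $\rho_s\le\tfrac{\chi}{2\beta}\,u$ a.e. Since $K_s\ast{}$ is the Riesz potential $I_{2s}$, Hardy--Littlewood--Sobolev gives $K_s\ast{}:L^p(\R^d)\to L^{p^*}(\R^d)$ with $1/p^*=1/p-2s/d$ for $1<p<d/(2s)$, and, using also $\rho_s\in L^1$ together with compact support, $K_s\ast{}:L^p\to L^\infty$ for $p>d/(2s)$. Starting from $\rho_s\in L^{p_0}$ for a suitably chosen $p_0\in(1,m]$ (any such $p_0$ being admissible, as $\rho_s\in L^p$ for every $p\in[1,m]$ by interpolation) and iterating $1/p_{k+1}=1/p_k-2s/d$, the exponent exceeds $d/(2s)$ after finitely many steps, the borderline value $p_k=d/(2s)$ being skipped thanks to the freedom in $p_0$; hence $\rho_s\le\tfrac{\chi}{2\beta}K_s\ast\rho_s\in L^\infty(\R^d)$.

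Next comes Lipschitz continuity. Now $\rho_s\in L^\infty\cap L^1$ with compact support. If $s>1/2$, then $\nabla K_s(x)=(2s-d)c_{d,s}\,x|x|^{2s-d-2}$ satisfies $|\nabla K_s(x)|\lesssim|x|^{2s-d-1}$, which is locally integrable; splitting $\nabla u=(\nabla K_s)\ast\rho_s$ over $\{|x-y|<1\}$ (controlled by $\|\rho_s\|_{L^\infty}$) and $\{|x-y|\ge 1\}$ (controlled by $\|\rho_s\|_{L^1}$) gives $\nabla u\in L^\infty(\R^d)$. If $s\le 1/2$, one iterates the Hölder smoothing of the Riesz potential: $\rho_s\in L^\infty$ with compact support gives $u\in C^{0,2s}(\R^d)$, hence $\rho_s=G((\chi u-\mathcal{C}_s)_+)\in C^{0,2s}(\R^d)$ because $G$, $t\mapsto t_+$ and affine maps preserve the Hölder exponent; feeding this back, each round raises the Hölder exponent of $\rho_s$ by $2s$, so after finitely many rounds $u\in C^{1,\gamma}(\R^d)$ for some $\gamma>0$ (the critical exponents being avoided by lowering the starting exponent slightly). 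In all cases $u$ is Lipschitz on $\R^d$, and therefore so is $\rho_s=G((\chi u-\mathcal{C}_s)_+)$.

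Finally, on $\Omega=\{\rho_s>0\}$ we have $\chi u-\mathcal{C}_s>0$, so locally $\rho_s=G(\chi u-\mathcal{C}_s)$ with $G$ smooth; since $\rho_s\in C^{0,1}_{loc}$, the interior smoothing of the Riesz potential (equivalently, interior Schauder estimates for $(-\Delta)^s u=\rho_s$, using that $u\in L^\infty(\R^d)$ with decay at infinity to localize the nonlocal tail) improves the interior Hölder exponent of $u$, and hence of $\rho_s$, by $2s$ at each step; iterating yields $u\in C^\infty(\Omega)$ and thus $\rho_s=G(\chi u-\mathcal{C}_s)\in C^\infty(\Omega)$. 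The main technical points are the bookkeeping ensuring that the integrability and Hölder bootstraps terminate (together with the handling of the critical exponents at which the Riesz potential loses an endpoint) and the localization of the nonlocal operator in the interior-smoothness step; the essential structural input that makes Lipschitz regularity available, rather than merely Hölder regularity at the free boundary, is that $\beta>0$ forces $G$ to be globally Lipschitz.
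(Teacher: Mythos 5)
Your proof is correct and follows essentially the same route as the paper's: rewrite \eqref{Euler} as $\rho_s=G\big((\chi K_s\ast\rho_s-\mathcal{C}_s)_+\big)$, use that $\beta>0$ makes the inverse $G$ of $t\mapsto\tfrac{m}{m-1}t^{m-1}+2\beta t$ globally Lipschitz, get $L^\infty$ via the pointwise bound $\rho_s\le\tfrac{\chi}{2\beta}K_s\ast\rho_s$ and a bootstrap, then bootstrap Hölder/Lipschitz regularity through the Riesz potential (with the $s>1/2$ and $s\le 1/2$ dichotomy), and finally iterate interior elliptic regularity inside the support. The only difference is one of packaging: where you give a self-contained $L^p$-iteration for boundedness and an explicit interior Schauder bootstrap for smoothness, the paper delegates these steps to \cite[Proposition 2.8]{CGHMV} and \cite[Theorem 10]{CHMV} respectively; the underlying mechanisms are the same, including the observation $\mathfrak g(v)\le(2\beta)^{-1}v$ which is exactly your $\rho_s\le\tfrac{\chi}{2\beta}u$.
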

\begin{proof}
The boundedness of the minimizers follows by \cite[Theorem 7]{CHMV} up to minor modifications, nevertheless in the case $s\in (0,1)$ we propose here a more direct proof based on
\cite[Proposition 2.8]{CGHMV}. Indeed, let $\rho_{0}\in \mathcal{Y}_{M}$ be any minimizer of $\mathcal{F}_s$, which is radially decreasing and compactly supported by Lemma \ref{Minimiz}. Then \eqref{Euler} implies that the Riesz potential $v_{0}:=K_{s}\ast\rho_{0}$ is a radially decreasing, vanishing at infinity solution to the fractional PDE
\begin{equation}
(-\Delta)^{s}v=\mathfrak{g}(v),\label{fraPDE}
\end{equation}
where the nonlinearity $\mathfrak{g}$ is defined as $\mathfrak{g}(t)=f^{-1}((t-\mathcal{C}_{s})_{+})$, being $f^{-1}$ the inverse function of the convex nonlinearity $f(t)=\frac m{m-1}\,t^{m-1}+2\beta t$, $t\geq0$. Since $\beta>0$, $f^{-1}$ is Lipschitz continuous therefore so is $\mathfrak{g}$. Now, since $\rho_{0}\in L^{m}(\mathbb R^d)\cap L^{1}(\mathbb R^d)$ with $m>2$, we have $\rho_{0}\in L^{2d/(d+2s)}(\mathbb R^d)$  \KKK thus the Hardy-Littlewood-Sobolev inequality implies that $v_{0}\in L^{2^{\ast}_{s}}(\mathbb R^d)$, being $2^{\ast}_{s}=2d/(d-2s)$. Since $\mathfrak{g}(v_{0})\leq (2\beta)^{-1} v_{0}$,  \KKK we find in particular $\mathfrak{g}\circ v\in L^{2d/(d+2s)}(\mathbb R^d)$: indeed,
if $q=2d/(d+2s)$, by H\"{o}lder inequality and the radial monotonicity there is a constant $C\ge 0$ such that
\[
\int_{\mathbb R^d} \mathfrak{g}(v_{0})^{q}\,dx=\int_{\{v_0> \mathcal C_s\}} \mathfrak{g}(v_{0})^{q}\,dx\leq C\int_{\{v_{0}>\mathcal{C}_{s}\}}v_{0}^{q}\leq C \left(\int_{\mathbb R^d} v_{0}^{2^{\ast}_{s}}\,dx\right)^{\frac{d-2s}{d+2s}}.
\]
Then we can proceed exactly as in \cite[Proposition 2.8]{CGHMV} in order to conclude that $v_{0}\in L^{\infty}(\mathbb R^d)$, which implies in turns by \eqref{Euler} that $\rho_{0}\in L^{\infty}(\mathbb R^d)$.

Now we turn to the regularity properties of $\rho_{0}$ and we refer mainly to \cite[Theorem 8]{CHMV}. Let us first consider the easiest case $s>1/2$. We have in this case $u_{0}\in W^{1,\infty}(\R^{d})$ from \cite[Lemma 1]{CHMV}, thus by \eqref{fraPDE} we have
\begin{equation}
\rho_{0}=\mathfrak{g}(u_{0}),\label{bootstrap}
\end{equation}
therefore (since $\mathfrak{g}$ is Lipschitz) $\rho_{0}\in W^{1,\infty}(\R^{d})$. If $s\in (0,1/2)$, by the first part of the proof of \cite[Theorem 8]{CHMV} it follows that $u_{0}\in C^{0,\gamma}$ for any $\gamma<2s$, then equation \eqref{bootstrap} gives $\rho_{0}\in C^{0,\gamma}$ for any $\gamma<2s$. By the H\"{o}lder regularity of the Riesz potential (see again \cite[Eq. 3.24]{CHMV}) we have $u_{0}\in C^{0,\gamma}$ for any $\gamma<4s$. Bootstrapping, we finally find $u_{0},\,\rho_{0}\in W^{1,\infty}(\R^{d})$. The case $s=1/2$ can be treated analogously, see again the proof of \cite[Theorem 8]{CHMV}. Finally, \cite[Theorem 10]{CHMV} gives the smoothness of $\rho_{0}$ inside its support.
\end{proof}

\begin{remark}\label{rmbeta0}\rm
The case $\beta=0$ of  Lemma \ref{regularitymin} is treated in \cite[Theorem 8, Remark 2]{CHMV}. As shown therein, if $\beta=0$  any minimizer in $\mathcal{Y}_{M}$ of $\mathcal{F}_s$ is bounded, smooth inside its support, and enjoys suitable H\"older regularity on $\mathbb R^d$ depending on $m,s$.
\end{remark}
\KKK

It is now easy to check that minimizers of functional $\mathcal F_s$ over $\mathcal Y_M$, whose existence is ensured by Lemma \ref{Minimiz}, are stationary states.

\begin{prop}\label{id} Let $\beta\ge 0$. If $\rho_s$ minimizes  $\mathcal F_s$ over $\mathcal Y_M$, then $\rho_s$ is a stationary state according to {\rm Definition \ref{steadydef*}}.
\end{prop}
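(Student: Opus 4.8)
The plan is to read off the velocity potential $\psi$ from the Euler--Lagrange equation \eqref{Euler} satisfied by $\rho_s$ and to notice that $\psi$ turns out to be \emph{constant} on the positivity set $\Omega$; this makes the zero-dissipation requirement in Definition \ref{steadydef*} essentially automatic.

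First I would check that $\rho_s$ lies in the class of Definition \ref{steadydef*}, i.e.\ $\rho_s\in W^{1,1}(\mathbb R^d)\cap L^\infty(\mathbb R^d)\cap C(\mathbb R^d)$. By Lemma \ref{Minimiz} the minimizer $\rho_s$ is radially decreasing, compactly supported and satisfies \eqref{Euler}; by Lemma \ref{regularitymin} when $\beta>0$ and by Remark \ref{rmbeta0} when $\beta=0$ it is bounded, continuous on $\mathbb R^d$, and smooth in the interior of its support $\Omega$. If $\beta>0$ then $\rho_s$ is Lipschitz with compact support, so $\rho_s\in W^{1,1}(\mathbb R^d)$ trivially. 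If $\beta=0$ I would invoke radial monotonicity: along each ray $\rho_s$ is continuous, monotone and $C^1$ except at the free boundary, hence its radial derivative is integrable (with $L^1$-mass $\rho_s(0)$ on that ray), and the continuity of $\rho_s$ across $\partial\Omega$ rules out a singular part, so $\nabla\rho_s\in L^1(\mathbb R^d)$. With $\rho_s$ in this class, Lemma \ref{RieszW11} applies; in particular $\psi\in W^{1,1}_{loc}(\mathbb R^d)$, so $\rho_s\nabla\psi\in L^1_{loc}(\mathbb R^d)$ and the distributional divergence $\nabla\cdot(\rho_s\nabla\psi)$ is well defined.

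Next I would identify $\psi$ on $\Omega$. On $\Omega=\{\rho_s>0\}$ the left-hand side of \eqref{Euler} is strictly positive, hence $\chi K_s\ast\rho_s-\mathcal{C}_s>0$ there and \eqref{Euler} reduces to $\frac{m}{m-1}\rho_s^{m-1}+2\beta\rho_s=\chi K_s\ast\rho_s-\mathcal{C}_s$ on $\Omega$. Substituting into \eqref{u*} gives $\psi\equiv-\mathcal{C}_s$ on $\Omega$. Therefore $\psi$ is constant on $\Omega$, so $\psi\in W^{1,\infty}(\Omega)$, and $\nabla\psi=0$ a.e.\ on $\Omega$. Since $\rho_s=0$ on $\mathbb R^d\setminus\Omega$ and $\partial\Omega$ is Lebesgue-null, we conclude $\rho_s\nabla\psi=0$ a.e.\ on $\mathbb R^d$, whence $\nabla\cdot(\rho_s\nabla\psi)=0$ in $\mathcal D'(\mathbb R^d)$. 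By Definition \ref{steadydef*}, $\rho_s$ is a stationary state.

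The only genuine obstacle is the $W^{1,1}(\mathbb R^d)$ membership of $\rho_s$ in the degenerate case $\beta=0$, where the available regularity is merely Hölder up to the free boundary and one has to extract integrability of the gradient from the radial structure rather than from a global Lipschitz bound. Everything after that — in particular the identity $\psi=-\mathcal{C}_s$ on $\Omega$ and its consequences — is elementary; note that we do not need the equivalence \eqref{varequsteady} of Lemma \ref{RieszW11}, since the conclusion follows directly from $\rho_s\nabla\psi=0$ a.e.
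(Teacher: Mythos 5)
Your proposal is correct and follows essentially the same route as the paper: deduce $\rho_s\in W^{1,1}(\mathbb R^d)$ from the radial structure and compact support, invoke Lemma~\ref{RieszW11} to get $\psi\in W^{1,1}_{loc}(\mathbb R^d)$, read off from \eqref{Euler} that $\psi\equiv-\mathcal C_s$ on $\Omega$, and conclude $\rho_s\nabla\psi=0$ a.e., so $\nabla\cdot(\rho_s\nabla\psi)=0$. One small caveat on the $\beta=0$ step: the paper establishes $W^{1,1}$ in a unified way by noting the radial profile is absolutely continuous on $[0,R]$ (because it is smooth inside, monotone, and continuous up to the free boundary) and then citing a weighted-Sobolev result \cite[Theorem 2.3]{F}; your phrase ``continuity across $\partial\Omega$ rules out a singular part'' is not quite the right justification (continuity excludes jumps, not a Cantor part) — what actually kills the singular part is the interior smoothness combined with monotonicity, ingredients you do list, so the gap is one of phrasing rather than substance.
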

\begin{proof}  By Lemma \ref{Minimiz}, Lemma \ref{regularitymin} and Remark \ref{rmbeta0},   $\rho_s$ is continuous, compactly supported, radially decreasing
  and smooth on $B_R$, where $R$ is the radius of its support. Therefore its radial profile is absolutely continuous in $[0,R]$, and belongs to the weighted
  Sobolev space $W^{1,1}((0,R), r^{d-1}\,dr)$. This implies $\rho_s\in W^{1,1}(B_R)$, see \cite[Theorem 2.3]{F}. Since $\rho_s$ is also vanishing on $\mathbb
  R^d\setminus B_R$, we conclude that it belongs to $W^{1,1}(\mathbb R^d)$. By Lemma \ref{RieszW11}, we get $\psi\in W^{1,1}_{loc}(\mathbb R^d)$, where $\psi$ is defined by \eqref{u*},
 thus $\rho\nabla \psi\in L^1(\mathbb R^d)$. But the validity of \eqref{Euler} implies that $\psi$ is constant on $B_R$. In particular $\rho\nabla \psi=0$ a.e. in
 $\mathbb R^d$ and \eqref{continuityeq} follows.
\end{proof}

The complete characterization of stationary states (according to Definition \ref{steadydef*}) is finally  given by the next theorem.

\begin{thm}\label{furtherregularity}
Let $\beta>0$ (resp. $\beta=0$).  For any mass $M>0$, there exists a unique stationary state (resp. a unique radial stationary state) of mass $M$ and center of mass $0$. Such steady state is radially decreasing, compactly supported, Lipschitz on $\mathbb R^d$ (resp. H\"older on $\mathbb R^d$) and smooth inside its support. Moreover, it coincides (up to translation) with the unique minimizer  of the energy functional $\mathcal{F}_{s}$ in the class $\mathcal{Y}_{M}$.
\end{thm}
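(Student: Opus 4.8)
The plan is to produce the stationary state as the minimizer of $\mathcal F_s$ and then prove uniqueness on its own. For existence with all the stated properties I would simply take $\rho_s\in\mathrm{argmin}_{\mathcal Y_M}\mathcal F_s$, which exists by Lemma \ref{Minimiz}: it is radially decreasing, compactly supported, has center of mass $0$, and solves the Euler--Lagrange equation \eqref{Euler} with the positive constant $\mathcal C_s$ of \eqref{explicitconstant1}. Lemma \ref{regularitymin} (if $\beta>0$) and Remark \ref{rmbeta0} (if $\beta=0$) say it is bounded, smooth in the interior of its support and Lipschitz --- resp. H\"older --- on $\mathbb R^d$, and Proposition \ref{id} says it is a stationary state in the sense of Definition \ref{steadydef*}. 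So $\rho_s$ already provides existence with all the claimed regularity, and what remains is to show that any stationary state $\rho$ of mass $M$ and center of mass $0$ --- radial, in the case $\beta=0$ --- coincides with $\rho_s$; this also yields uniqueness of the minimizer over $\mathcal Y_M$, since every minimizer is a stationary state by Proposition \ref{id}, and makes the ``up to translation'' in the statement exact, both $\rho$ and $\rho_s$ being radial about the origin.

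The second step is to upgrade such a $\rho$ to a solution of \eqref{Euler}. By Proposition \ref{nodissipation}, the velocity potential $\psi$ of \eqref{u*} is constant on each connected component $\Omega_i$ of $\Omega=\{\rho>0\}$, i.e.\ $\tfrac m{m-1}\rho^{m-1}+2\beta\rho=\chi K_s\ast\rho-C_i$ there. When $\beta>0$ I would invoke the radial symmetry result of \cite{CHVY} (see also \cite{CHMV}) to conclude that $\rho$ is radially symmetric and decreasing about its center of mass, i.e.\ about the origin; the delicate point --- and the reason the weaker Definition \ref{steadydef*} is relevant --- is that this symmetry argument, originally run for \cite[Definition 2.1]{CCH}, still applies here, which it does precisely because Proposition \ref{nodissipation} furnishes the pointwise relation on $\Omega$ that the argument uses. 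When $\beta=0$, $\rho$ is radial by hypothesis. In either case $K_s\ast\rho$ is radial, vanishes at infinity, and --- since $\rho\not\equiv 0$ and $K_s$ is strictly radially decreasing --- is strictly decreasing in $|x|$; together with $\rho\in L^1$ this forces $C_i>0$ for every $i$, excludes unbounded components, and, because strict monotonicity of $K_s\ast\rho$ makes it impossible for $\rho$ to vanish on an annulus separating two components, forces $\Omega$ to be a single ball $B_R$, $\rho$ radially decreasing, and $C_i\equiv C>0$. Continuity of $\rho$ and $K_s\ast\rho$ with $\rho=0$ on $\partial B_R$ then gives $\chi K_s\ast\rho-C>0$ on $B_R$ and $\le 0$ outside, so $\rho$ solves \eqref{Euler} on all of $\mathbb R^d$ with $\mathcal C_s=C$.

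The final and hardest step is uniqueness for \eqref{Euler}: among radially decreasing $\rho\in\mathcal Y_M$ with some constant $\mathcal C_s>0$, the solution is unique, whence $\rho=\rho_s$. The natural setting is to put $v:=\chi K_s\ast\rho$, which turns \eqref{Euler} into the radial semilinear fractional equation $(-\Delta)^s v=\chi\,\mathfrak g(v)$, $v$ radially decreasing and vanishing at infinity, with $\mathfrak g(t)=f^{-1}\big((t-\mathcal C_s)_+\big)$ and $f(t)=\tfrac m{m-1}t^{m-1}+2\beta t$ (so $\mathfrak g$ is nondecreasing, and Lipschitz when $\beta>0$). Given two solutions of mass $M$, say with $\mathcal C^{(1)}\le\mathcal C^{(2)}$, I would show they must be ordered --- by a first-crossing comparison of the radial cumulative-mass functions $r\mapsto\int_{B_r}\rho^{(j)}\,dx$ built on the integral representation of the Riesz potential of a radial density and the monotonicity of $\mathfrak g$ --- so $\rho^{(1)}\ge\rho^{(2)}$ pointwise, and then the common mass forces $\rho^{(1)}=\rho^{(2)}$ (and $\mathcal C^{(1)}=\mathcal C^{(2)}$). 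This is essentially the uniqueness mechanism available for the diffusion-dominated regime in \cite{CGHMV,DYY} (compare \cite{CHMV}); the only adaptation is that the pure power $\tfrac m{m-1}t^{m-1}$ is replaced by the still strictly increasing $f$, which leaves the argument intact. I expect the two real difficulties to be exactly this comparison argument for the nonlocal operator $(-\Delta)^s$ (with the $\beta>0$ term carried along) and, in the case $\beta>0$, the verification that the symmetry result of \cite{CHVY,CHMV} survives the passage to Definition \ref{steadydef*} --- the latter handled by the zero-dissipation identity of Proposition \ref{nodissipation}.
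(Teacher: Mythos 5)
Your overall architecture matches the paper's: build the reference stationary state from the minimizer (Lemma \ref{Minimiz}, Lemma \ref{regularitymin}, Remark \ref{rmbeta0}, Proposition \ref{id}), then show an arbitrary stationary state (radial when $\beta=0$) must coincide with it, reducing to uniqueness for the Euler--Lagrange equation \eqref{Euler} settled by \cite{DYY}. However, there is a genuine gap in the step where you invoke the radiality result of \cite{CHVY,CHMV} for $\beta>0$. You treat the applicability of the continuous Steiner symmetrization as being ``handled by the zero-dissipation identity of Proposition \ref{nodissipation},'' but Proposition \ref{nodissipation} only supplies the pointwise relation $f(\rho)=\chi K_s\ast\rho - Q_n$ on each connected component of $\Omega$. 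What the symmetrization argument of \cite[Theorem 3]{CHMV} \emph{additionally} requires is that $\rho^{m-1}$ be Lipschitz on $\mathbb R^d$, and this is precisely what is not automatic for a generic stationary state in the sense of Definition \ref{steadydef*}. The paper closes this hole with a nontrivial regularity bootstrap: one first shows $v=K_s\ast\rho\in C^{0,\gamma}$, deduces a uniform H\"older modulus for $\rho$ on each component via the Lipschitz inverse $\mathfrak g=f^{-1}$ (here is where $\beta>0$ is crucial), glues the component estimates to get $\rho\in C^{0,\gamma}(\mathbb R^d)$, and then bootstraps the H\"older exponent using the Riesz-potential estimates of \cite{RS} to arrive at $v,\rho\in C^{0,1}(\mathbb R^d)$, hence $\rho^{m-1}$ Lipschitz. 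Remark \ref{remark} makes explicit that without this Lipschitz control (e.g.\ when $\beta=0$) the radiality argument cannot run, which is why the $\beta=0$ case only asserts uniqueness among radial states. Your proposal skips the bootstrap entirely, so as written the appeal to \cite{CHVY,CHMV} is not justified.

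Two smaller remarks. First, once radial decrease is established, the connected-component discussion in your second step becomes redundant: $\Omega$ is automatically a ball and there is only one constant, so the annulus-exclusion argument is not needed; in the paper the component analysis serves the prior regularity bootstrap, not the structure of $\Omega$. Second, for the final uniqueness of solutions to \eqref{Euler} you sketch a first-crossing comparison of cumulative masses; the paper simply invokes \cite{DYY}, whose hypotheses are met because $\Phi(\rho)=\tfrac{1}{m-1}\rho^m+\beta\rho^2$ is a strictly increasing smooth convex nonlinearity. Your sketched comparison is the same mechanism that underlies \cite{CGHMV,DYY}, so this part is a legitimate (if more self-contained) alternative, but it doesn't compensate for the missing regularity bootstrap earlier.
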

\begin{proof}
Let $\beta>0$. Let $\rho$ be a stationary state according to Definition \ref{steadydef*}. Let $v:=K_s\ast\rho$. Besides $v\in W^{1,1}_{loc}(\mathbb R^d)$, which is proven in Lemma \ref{RieszW11}, by using Sobolev embeddings it is possible to prove that exists $\gamma\in(0,1)$ such that $v\in C^{0,\gamma}(\mathbb R^d)$ (see again \cite[Theorem 8]{CHMV}).


Let as usual $\Omega=\left\{\rho>0\right\}$. Since $\rho$ is continuous, $\Omega$ is open and thus $\Omega=\cup_{n=1}^{\infty}\Theta_n$, where the $\Theta_n$'s are the countably many open connected components of $\Omega$. Let us introduce the continuous functions
\begin{equation}\label{roenne}
\rho_n(x):=\left\{\begin{array}{ll}\rho(x)\quad&\mbox{if $x\in\Theta_n$}\\0\quad&\mbox{if $x\in\mathbb R^d\setminus\Theta_n$}.\end{array}\right.
\end{equation}
From Proposition \ref{nodissipation}, for each $n\in\mathbb N$  there is a constant $Q_n\in \mathbb R^d$ such that there holds
\[
f (\rho_n(x))=f(\rho(x))=\chi\,v(x)-Q_n\quad\mbox{ for every $x\in\Theta_n$},
\]
where $f(t):=\tfrac{m}{m-1} t^{m-1}+2\beta t$, $t\ge 0$. The constant $Q_n$ is nonnegative, since $\rho$ and $v$ are nonnegative continuous and $\rho$ vanishes on $\partial\Theta_n$.
 Therefore we have
\[\rho_n(x)=\mathfrak g(\chi v(x)-Q_n)\quad\mbox{ for every $x\in\overline{\Theta_n}$},
\]
 where $\mathfrak g:[0,+\infty)\to[0,+\infty)$ is the inverse function of $f$, and we notice that $\mathfrak g$ is Lipschitz on $[0,+\infty)$ since $\beta>0$. Hence, we have $\rho_n\in C^{0,\gamma}(\overline{\Theta_n})$, and \eqref{roenne} implies $\rho_n\in C^{0,\gamma}(\mathbb R^d)$. We  stress that the H\"older constant of $\rho_n$, that we denote by $c$, is independent of $n$, since it only depends on the H\"older constant of $v$ and the Lipschitz constant of $f$. Now, for every two distinct points $x\in \mathbb R^d$ and $y\in\mathbb R^d$, there exist $n\in\mathbb N$ and $m\in\mathbb N$ such that $\rho(x)=\rho_n(x)$ and $\rho(y)=\rho_m(y)$ and
 \[
 |\rho(x)-\rho(y)|=|\rho_n(x)-\rho_m(y)|\le |\rho_n(x)-\rho_n(y)|+|\rho_m(x)-\rho_m(y)|\le 2c|x-y|^\gamma,
 \]
 so that $\rho\in C^{0,\gamma}(\mathbb R^d)$. Since $v=K_s\ast\rho$, the H\"older estimates from \cite{RS} entail $v\in C^{0,\gamma+2s-\eps}(\mathbb R^d)$ for every arbitrarily small $\eps$. Then we may bootstrap this argument and in a finite number of steps we get $v\in C^{0,1}(\mathbb R^d)$ and $\rho\in C^{0,1}(\mathbb R^d)$. Since $m>2$ and $\rho$ is bounded, we conclude that $\rho^{m-1}$ is Lipschitz on $\mathbb R^d$ as well. We recall that by Proposition \eqref{nodissipation} the the velocity potential $\psi$ defined by \eqref{u*}
is constant on each connected component of $\rho$.

Now, an easy modification of \cite[Theorem 3]{CHMV}, which crucially exploits the Lipschitz regularity of $\rho^{m-1}$, shows the radiality of the steady state $\rho$: indeed, the actual entropy
\[
\mathcal{H}_m[\rho_*]=\frac{1}{m-1} \int_{\R^d} \rho_*^m dx+\beta\int_{\mathbb{R}^d}\rho_{*}^2(x)\,dx
\]
decreases under the modified continuous Steiner symmetrization introduced in the proof therein.

All the regularity properties of the steady states can be argued by the proof of Lemma \ref{regularitymin}. The uniqueness follows by the results of \cite{DYY}, namely by the Remark under the statement of \cite[Remark 1.2]{DYY}, because the nonlinearity $\Phi:\R^{+}\rightarrow\R^{+}$, defined by $\Phi(\rho)=\frac{1}{m-1}\rho^{m}+\beta\rho^{2}$  entering in the nonlinear diffusion is a strictly increasing smooth convex function. Finally, the identification (up to translation)  with the unique minimizer of $\mathcal{F}_{s}$ over $\mathcal Y_M$ follows from Proposition \ref{id}.

Finally, if $\beta=0$, we only consider the class of radial stationary states, and the uniqueness in this class follows from the results in \cite{DYY}. Again the unique radial stationary states is the unique minimizer of $\mathcal{F}_{s}$ over $\mathcal Y_M$, see Proposition \ref{id}, and the other properties  follow from Lemma \ref{Minimiz} and \cite[Theorem 8]{CHMV}.
\end{proof}

\begin{remark}\label{remark}\rm
If $\beta=0$ the general uniqueness result of Theorem \ref{furtherregularity} (without assuming radiality) is still an open problem, mainly due to the fact that the inverse function of $f(t)=t^{m-1}$ is $C^{1/(m-1)}$ for $m>2$ but not Lipschitz, preventing to obtain that $\rho^{m-1}$ is Lipschitz for a stationary state $\rho$, which is crucial for applying the radiality result of \cite{CHVY}. However, we still have radiality of every stationary states, thus uniqueness, as long as
we assume in addition that  
 $m<m^*$, 
 because in this case $\rho^{m-1}$ can be proven to be Lipschitz, see \cite[Theorem 8]{CHMV}. Here, $m^*:=\tfrac{2-2s}{1-2s}$ if $s<1/2$ and $m^*:=+\infty$ otherwise (so that there is no restriction if $d\ge 2$ and $s\ge 1/2$).
 \KKK
\end{remark}

\begin{proofad2} The result follows from Theorem \ref{furtherregularity} and Remark \ref{remark}.
\end{proofad2}

\section{Asymptotic behavior of stationary states as $s\to 0$}\label{asymptoticsection}

We investigate the asymptotic behavior of  $\rho_s$ as $s$ approaches $0$, where $\rho_s$ is, for each small $s$, the unique stationary state of given mass $M$ and center of mass $0$ for the equation in \ref{cauchy1}, provided by Theorem \ref{furtherregularity}.
Thanks to the identifications with minimizers of the free-energy,
this will be done by showing that functionals $\mathcal{F}_{s}$  $\Gamma$-converge to the limit energy functional $\mathcal{F}_{0}$ defined by
\begin{equation}
\mathcal{F}_0[\rho]:=\frac{1}{m-1}\int_{\mathbb{R}^d}\rho^m(x)\,dx-\frac{\chi-2\beta}{2}\int_{\mathbb{R}^d}\rho^2(x)\,dx\label{limitfunct},
\end{equation}
whose minimization is governed by the following proposition.

\begin{prop}[Minimization of the limit functional $\mathcal F_0$.]\label{minF0}
Suppose that $0\le \beta<\chi/2$. Then functional $\mathcal F_0$ admits a unique radially decreasing minimizer over $\mathcal Y_M$, given by
\begin{equation}\label{rad0}
\rho_0(x):=\left(\frac{\chi-2\beta}{2}\right)^{1/(m-2)}\,\bC_{B_{R_0}}(x),\qquad\mbox{where}\;\;\; R_0=\left(\frac{d M}{\sigma_{d}}\right)^{1/d}\left(\frac{\chi-2\beta}{2}\right)^{-\frac{1}{d(m-2)}}.
\end{equation}
Else if $\beta\ge \chi/2$, functional $\mathcal F_0$ does not admit a minimizer over $\mathcal Y_M$ and $\inf_{\mathcal Y_M}\mathcal F_0=0$.
\end{prop}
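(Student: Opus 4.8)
The plan is to reduce both statements to a one–dimensional pointwise analysis of the integrand $g(t):=\frac{1}{m-1}t^m-\frac{\chi-2\beta}{2}t^2$, using that $\mathcal F_0[\rho]=\int_{\mathbb R^d}g(\rho(x))\,dx$ is well defined and finite for $\rho\in\mathcal Y_M$ (since $m>2$ gives $\mathcal Y_M\subset L^2(\mathbb R^d)$ by interpolation).

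Consider first the case $0\le\beta<\chi/2$. Set $a:=\frac{\chi-2\beta}{2}>0$ and $\bar t:=a^{1/(m-2)}$. The key step is the sharp pointwise inequality $g(t)\ge c\,t$ for all $t\ge 0$, where $c:=\frac{1}{m-1}\bar t^{m-1}-a\bar t=-\frac{m-2}{m-1}a\bar t<0$, with equality precisely at $t\in\{0,\bar t\}$. To prove it I would introduce $\psi(t):=\frac{g(t)}{t}-c=\frac{1}{m-1}t^{m-1}-at-c$ on $(0,\infty)$: since $\psi''(t)=(m-2)t^{m-3}>0$, $\psi$ is strictly convex, while $\psi(\bar t)=0$ by the choice of $c$ and $\psi'(\bar t)=\bar t^{m-2}-a=0$ by the choice of $\bar t$; hence $\psi>0$ on $(0,\infty)\setminus\{\bar t\}$, and together with $g(0)=0$ this yields the claim. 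Integrating over $\mathbb R^d$ gives $\mathcal F_0[\rho]\ge c\int_{\mathbb R^d}\rho\,dx=cM$ for every $\rho\in\mathcal Y_M$, with equality if and only if $\rho\in\{0,\bar t\}$ a.e., i.e. $\rho=\bar t\,\bC_A$ for a measurable set $A$ with $|A|=M/\bar t$ and $\int_A x\,dx=0$. Now $\rho_0$ in \eqref{rad0} is exactly $\bar t\,\bC_{B_{R_0}}$ with $\sigma_d R_0^d/d=M/\bar t$, so $\rho_0\in\mathcal Y_M$ and it attains the lower bound $cM$, hence it is a minimizer; conversely, any radially decreasing density taking only the values $0$ and $\bar t$ must equal $\bar t\,\bC_{B_R}$ for some $R$, and the mass constraint forces $R=R_0$. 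This proves $\rho_0$ is the unique radially decreasing minimizer.

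For the case $\beta\ge\chi/2$ we have $\chi-2\beta\le 0$, so $\mathcal F_0[\rho]=\frac{1}{m-1}\int_{\mathbb R^d}\rho^m\,dx+\frac{2\beta-\chi}{2}\int_{\mathbb R^d}\rho^2\,dx\ge\frac{1}{m-1}\int_{\mathbb R^d}\rho^m\,dx>0$ for every $\rho\in\mathcal Y_M$ (strictly, because the zero density is not admissible), so no minimizer exists. To identify the infimum I would test with the spreading profiles $\rho_R:=\frac{M}{|B_R|}\bC_{B_R}\in\mathcal Y_M$, for which $\int_{\mathbb R^d}\rho_R^m\,dx=M^m|B_R|^{1-m}\to 0$ and $\int_{\mathbb R^d}\rho_R^2\,dx=M^2|B_R|^{-1}\to 0$ as $R\to\infty$ (using $m>2$), whence $\mathcal F_0[\rho_R]\to 0$ and $\inf_{\mathcal Y_M}\mathcal F_0=0$.

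The only genuinely non-routine point is the sharp pointwise bound $g(t)\ge ct$ together with its equality case: it amounts to computing the convex envelope of $g$ on $[0,\infty)$, which is the line through the origin tangent to the graph of $g$, the contact point being $\bar t=a^{1/(m-2)}$ — and this is precisely what fixes the height of the optimal profile $\rho_0$ in \eqref{rad0}. Once that inequality is in hand, the existence, the explicit form, and the uniqueness among radial profiles follow immediately, and the case $\beta\ge\chi/2$ is an elementary estimate.
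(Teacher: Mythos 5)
Your proof is correct and takes a genuinely different route from the paper's for the main case $0\le\beta<\chi/2$. The paper proceeds by first optimizing over mass-preserving dilations $\rho_\lambda(x)=\lambda^d\rho(\lambda x)$ and then applying a H\"older inequality with exponents $\frac{m-1}{m-2}$ and $m-1$ to the decomposition $2=\frac{m-2}{m-1}+\frac{m}{m-1}$; the sharp constant $M\frac{2-m}{m-1}\gamma^{\frac{m-1}{m-2}}$ emerges after minimizing the dilation parameter, and equality is traced back through both the H\"older step (forcing $\rho$ to be a multiple of a characteristic function) and the optimal-$\lambda$ condition (fixing the measure of the support). Your approach instead works pointwise on the integrand $g(t)=\frac{1}{m-1}t^m-\frac{\chi-2\beta}{2}t^2$: the bound $g(t)\ge ct$ with $c=\inf_{t>0}g(t)/t$ is exactly the lower convex envelope of $g$ over $[0,\infty)$ (a line through the origin tangent to $g$ at $\bar t=\left(\frac{\chi-2\beta}{2}\right)^{1/(m-2)}$), proved cleanly via the strict convexity of $t\mapsto g(t)/t$. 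Integrating against the fixed-mass constraint gives the same lower bound $cM=-M\frac{m-2}{m-1}\left(\frac{\chi-2\beta}{2}\right)^{\frac{m-1}{m-2}}$, and the equality case (values in $\{0,\bar t\}$ a.e.) directly yields the same family of bang-bang minimizers $\bar t\,\bC_A$ with $|A|=M/\bar t$, pinning down $\rho_0=\bar t\,\bC_{B_{R_0}}$ among radially decreasing profiles. The two approaches buy essentially the same information, but yours is more elementary and self-contained: it avoids the dilation calculus and the two-step equality analysis, making the structure (tangent line to the nonconvex integrand fixes the optimal height; mass constraint fixes the radius) immediately transparent. For the case $\beta\ge\chi/2$ both arguments are equivalent in spirit — the paper sends $\lambda\to 0$ in the dilation identity, you test with spreading characteristic functions, which is the same family of trial densities — and both correctly record strict positivity of $\mathcal F_0$ on $\mathcal Y_M$ together with $\inf=0$ not attained.
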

\begin{proof}
Through the proof, we  let for simplicity $\gamma:=-\beta+\chi/2$. For every $\rho\in \mathcal Y_M$ and every $\lambda>0$,  letting $\rho_\lambda(x):=\lambda^d\rho(\lambda x)$, $x\in\mathbb R^d$, direct computations show that
\begin{equation}\label{scalediff}
\begin{aligned}
&\mathcal F_0[\rho_\lambda]=\frac1{m-1}\lambda^{d(m-1)}\int_{\mathbb R^d}\rho^m(x)\,dx-\gamma\lambda^d\int_{\mathbb R^d}\rho^2(x)\,dx,
\\
&
\frac{d}{d\lambda} \mathcal F_0[\rho_\lambda]=d\lambda^{d-1}\left(\lambda^{d(m-2)}\int_{\mathbb R^d}\rho^m(x)\,dx-\gamma\int_{\mathbb R^d}\rho^2(x)\,dx\right).\end{aligned}
\end{equation}

Assume that $0\le \beta<\chi/2$. In this case, by \eqref{scalediff}
 the map $(0,+\infty)\ni\lambda\mapsto \mathcal F_0(\rho_\lambda)$ is uniquely minimized at $$\lambda=\lambda_*:= \left({\gamma\int_{\mathbb R^d}\rho^2(x)\,dx}\right)^{\frac1{d(m-2)}}\left(\int_{\mathbb R^d}\rho^m(x)\,dx\right)^{-\frac1{d(m-2)}}$$
with value
\[
\mathcal F_0[\rho_{\lambda_*}]=\frac{2-m}{m-1}\,\gamma^{\frac{m-1}{m-2}}{\left(\int_{\mathbb R^d}\rho^2(x)\,dx\right)^{\frac{m-1}{m-2}}}{\left(\int_{\mathbb R^d}\rho^m(x)\,dx\right)^{-\frac{1}{m-2}}}.
\]
But writing
\[
2=\frac{m-2}{m-1}+\frac{m}{m-1},
\]
H\"older inequality with exponents $p=(m-1)/(m-2)$, $p^{\prime}=m-1$ yields  $$\left(\int_{\mathbb R^d}\rho^2(x)\,dx\right)^{{m-1}}\le M^{m-2}\int_{\mathbb R^d}\rho^m(x)\,dx,$$ thus for every $\rho\in\mathcal Y_M$ there holds
\begin{equation}\label{nextmin}
\mathcal F_0[\rho]\ge \frac{2-m}{m-1}\,\gamma^{\frac{m-1}{m-2}}{\left(\int_{\mathbb R^d}\rho^2(x)\,dx\right)^{\frac{m-1}{m-2}}}{\left(\int_{\mathbb R^d}\rho^m(x)\,dx\right)^{-\frac{1}{m-2}}}\ge M\,\frac{2-m}{m-1}\,\gamma^{\frac{m-1}{m-2}}.
\end{equation}
Therefore, if there is a density $\rho$ achieving the constant at the right-hand side of \eqref{nextmin}, then $\rho$ is a minimizer of $\mathcal{F}_{0}$.
However, the above H\"older inequality is an equality if and only $\rho$ is a multiple of a characteristic function, i.e., $\rho(x)=t\bC_\Omega(x)$ for some measurable subset $\Omega$ of $\mathbb R^d$, and the condition $\int_{\mathbb R^d}\rho(x)\,dx=M$ implies $|\Omega|>0$ and $t=M|\Omega|^{-1}$. In particular, the second inequality in \eqref{nextmin} is an equality if and only if $\rho(x)=M|\Omega|^{-1}\bC_\Omega(x)$. On the other hand, the first inequality in \eqref{nextmin} is an equality if and only if $\rho=\rho_{\lambda_*}$, i.e., $\lambda_*=1$, which means $\gamma\int_{\mathbb R^d}\rho^2(x)\,dx=\int_{\mathbb R^d}\rho^m(x)\,dx$, and this condition, in case $\rho(x)=M|\Omega|^{-1}\bC_\Omega(x)$, entails $|\Omega|=M\gamma^{-\frac{1}{m-2}}$.
 We conclude that both inequalities in \eqref{nextmin} are equalities if and only if  $\rho(x)=M|\Omega|^{-1}\bC_\Omega(x)$ for some measurable set $\Omega\subset\mathbb R^d$ such that  $|\Omega|=M\gamma^{-\frac{1}{m-2}}$, implying that this family of functions coincides with $\argmin_{\mathcal Y_M}\mathcal F_0$ up to translations. In this family there is a unique radially decreasing function $\rho_0$, obtained by letting $\Omega=B_R$ and by choosing $R$ such that $|\Omega|=M\gamma^{-\frac{1}{m-2}}$. Hence, $\rho_0$  is given by \eqref{rad0}.

The statement concerning  the case $\beta\ge\chi/2$ (i.e., $\gamma\le 0$) follows by letting $\lambda\to 0$ in \eqref{scalediff}.
\end{proof}


\begin{figure}[htp]
\begin{center}
 \includegraphics[totalheight=0.25\textheight]{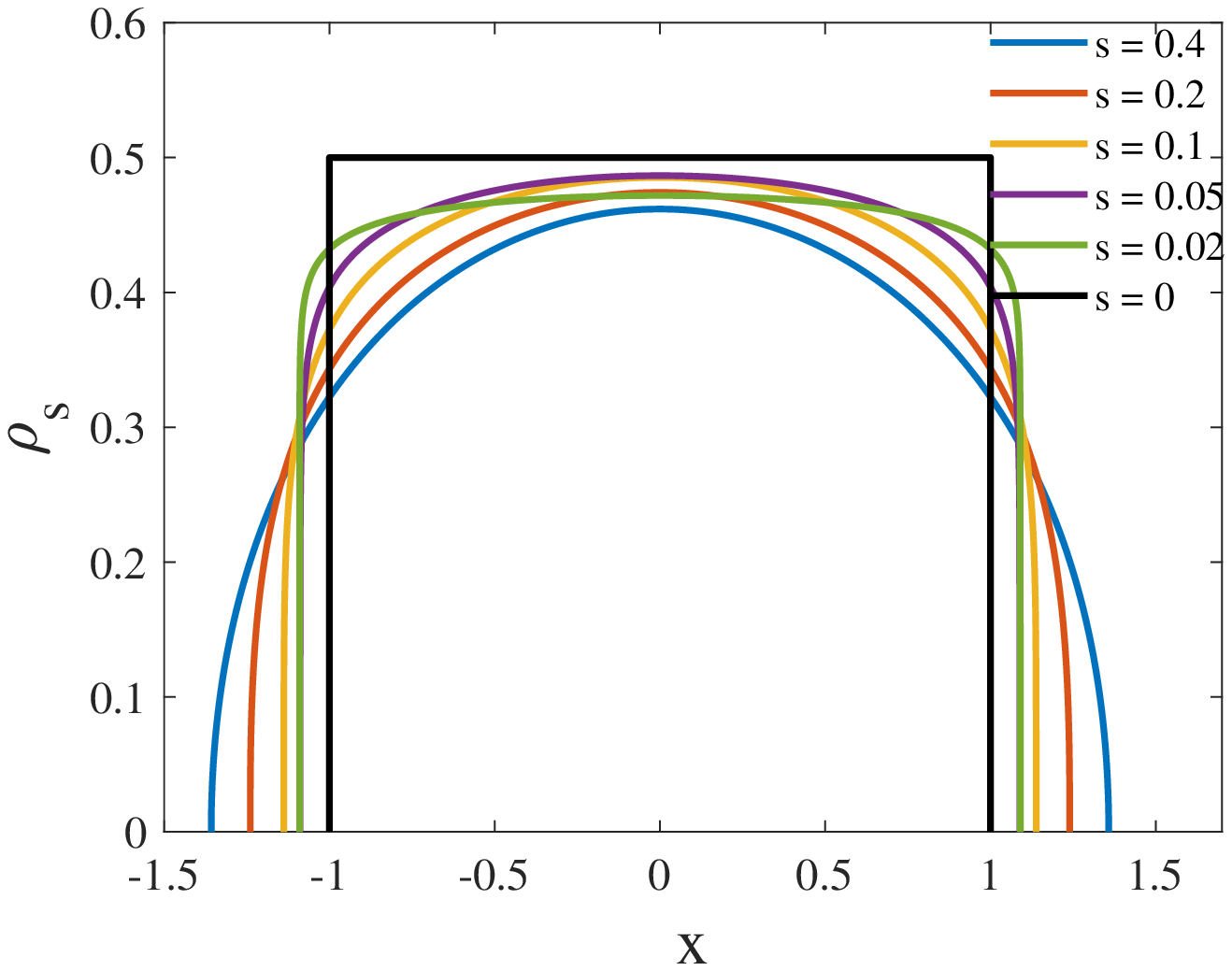}$~$
 \includegraphics[totalheight=0.26\textheight]{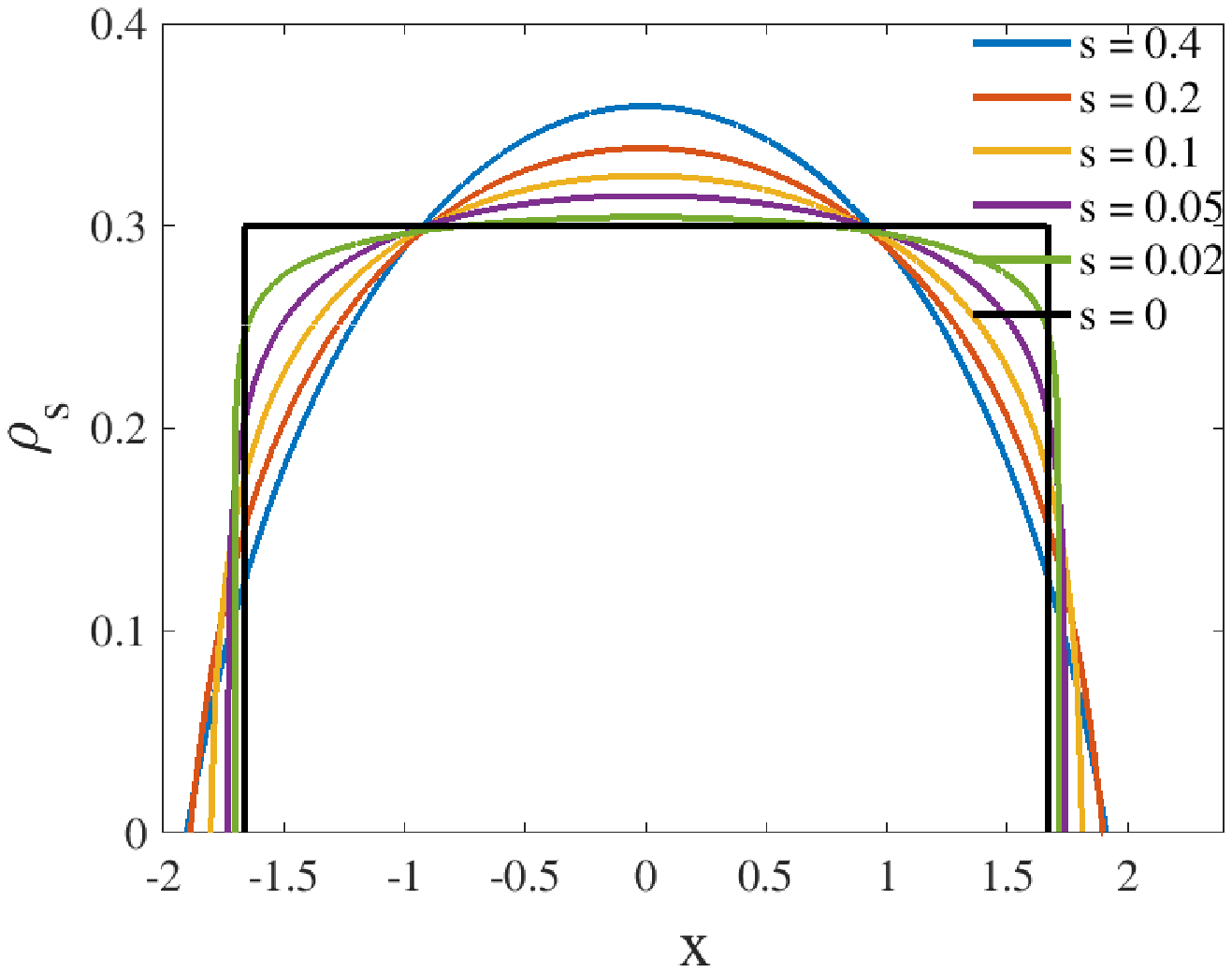}
\end{center}
\caption{The steady states for different $s>0$ with $m=3$ and $\chi = 1$ (Left figure: $\beta=0$ and Right figure: $\beta=0.2$). The expected limiting steady
state with $s=0$, which is a characteristic function with height $\left(\frac{\chi-2\beta}{2}\right)^{1/(m-2)}$ is also plotted for reference. }
\label{fig:steady_stateb01}
\end{figure}

The steady states $\rho_s$ in one dimension for different values of $s$ are shown in Figure~\ref{fig:steady_stateb01}, with $m=3$, $\chi=1$, obtained
by iterating the governing equation~\eqref{Euler}, i.e.,
\[
  \frac{m}{m-1}\big(\tilde{\rho}_s^{(new)}\big)^{m-1} + 2\beta \tilde{\rho}_s^{(new)}
  =\big(\chi K_x\ast \rho_s^{(old)}-C_s^{(old)})_+,
\]
with $C_s^{(old)}$ given by Eq.~\eqref{explicitconstant}, followed by a spatial scaling $\rho_s^{(new)}(x) = \tilde{\rho}_s^{(new)}\big(\lambda^{(new)}x\big)$ such that
the total mass is exactly $M$.
The convergence towards the limit $\rho_0$ in Proposition~\ref{minF0}
as $s$ goes to zero is
illustrated, for both $\beta=0$ (left figure) and $\beta>0$ (right figure).
The regularizing effect with $\beta>0$ is obvious, especially near the boundary of the support.\\

The next step is the investigation of  the behavior of $\mathcal{F}_s$ on characteristic functions.
\begin{lem}\label{lemma:ball}
Let $s\in (0,1)$ ($s<1/2$ if $d=1$). 
For any $R>0$, let $\mathcal{Y}_M\ni\rho_R:=\frac{dM}{\sigma_d R^d}\,\bC_{B_R}$, where $B_R$ is the ball of radius $R$, centered at the origin. Assume that $0\leq\beta<\chi/2$. Then there exists a unique positive number $R_s$ such that
$$
\mathcal{F}_s[\rho_{R_s}]=\min_{R>0} \mathcal{F}_s[\rho_R].
$$
In particular, for $\beta=0$ its value is
\begin{equation*}\label{erreesse}
R_s=\left(\frac{2\sqrt{\pi}M^{m-2}\,\Gamma(s+1)\,\Gamma(\tfrac d2+s+1)}{\chi \omega_d^{m-2}\,\Gamma(s+\tfrac12)\,\Gamma(\tfrac d2-s+1)}\right)^{\frac{1}{2s+d(m-2)}}.
\end{equation*}
Moreover, the map $(0,1/2)\ni s\mapsto \mathcal{F}_s[\rho_{R_s}]$ is continuous, it has negative value for any $s\in(0,1/2)$ and there holds
\begin{equation}
\lim_{s\downarrow 0} \mathcal{F}_s[\rho_{R_s}]= -M\frac{m-2}{m-1}\left(\frac{\chi-2\beta}{2}\right)^{\frac{m-1}{m-2}}.\label{limitchar}
\end{equation}
In particular, the value \eqref{limitchar} is exactly $\mathcal{F}_{0}(\rho_{R_0})$, where $R_{0}$  is given by \eqref{rad0}.
\end{lem}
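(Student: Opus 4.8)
The plan is to reduce the whole statement to the analysis of the one–variable function $h_s(R):=\mathcal F_s[\rho_R]$, $R>0$, which can be written down explicitly. Exactly as in the computation of $\mathcal H_m[\rho_*]$, $\mathcal W_s[\rho_*]$ in the proof of Lemma \ref{Minimiz}, one gets
\[
\mathcal H_m[\rho_R]=\frac{(dM)^m\sigma_d^{1-m}}{d(m-1)}\,R^{d(1-m)}+\beta\,\frac{(dM)^2}{d\sigma_d}\,R^{-d},
\]
and, after the rescaling $x=R\xi$, $y=R\eta$ in the double integral,
\[
\mathcal W_s[\rho_R]=-\frac{\chi\,c_{d,s}\,(dM)^2}{2\sigma_d^2}\Big(\iint_{B_1\times B_1}|\xi-\eta|^{2s-d}\,d\xi\,d\eta\Big)R^{2s-d}.
\]
Thus $h_s(R)=A\,R^{d(1-m)}+\beta B\,R^{-d}-C_s\,R^{2s-d}$ with $A,B>0$ independent of $s$ and $C_s:=\tfrac{\chi(dM)^2}{2\sigma_d^2}\,c_{d,s}\iint_{B_1\times B_1}|\xi-\eta|^{2s-d}>0$. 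The inner double integral is the classical Riesz integral over a ball; evaluating it in terms of Gamma functions (a standard but slightly delicate computation) and inserting the explicit expressions for $\sigma_d$ and $c_{d,s}$ gives the closed form $c_{d,s}\iint_{B_1\times B_1}|\xi-\eta|^{2s-d}=\pi^{(d-1)/2}\,\frac{\Gamma(\frac d2-s)\,\Gamma(s+\frac12)}{\Gamma(s+1)\,\Gamma(\frac d2)\,\Gamma(\frac d2+s+1)}$, which I would record since it is needed for the explicit value of $R_s$.

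Next I would minimise $h_s$ over $R>0$. Since $m>2$ we have $d(1-m)<2s-d<0$, so $h_s(R)\to+\infty$ as $R\to0^+$, $h_s(R)\to0^-$ as $R\to+\infty$, and in particular $h_s(R)<0$ for $R$ large. Multiplying by $R^{d+1}$,
\[
R^{d+1}h_s'(R)=C_s(d-2s)\,R^{2s}-d\big(A(m-1)\,R^{d(2-m)}+\beta B\big),
\]
which is strictly increasing on $(0,\infty)$ from $-\infty$ to $+\infty$, being the sum of the strictly increasing $R\mapsto C_s(d-2s)R^{2s}$ and the nondecreasing $R\mapsto -dA(m-1)R^{d(2-m)}-d\beta B$ (monotone because $d(2-m)<0$). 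Hence $h_s'$ vanishes at a unique point $R_s$, with $h_s'<0$ on $(0,R_s)$ and $h_s'>0$ on $(R_s,\infty)$; so $R_s$ is the unique global minimiser of $h_s$, and $h_s(R_s)<0$ because $h_s$ is negative somewhere. When $\beta=0$, $h_s'(R_s)=0$ reads $R_s^{2s+d(m-2)}=\frac{A\,d(m-1)}{C_s(d-2s)}$; inserting the values of $A$, $C_s$, $\sigma_d$ and using $d-2s=2(\tfrac d2-s)$, $\Gamma(\tfrac d2-s+1)=(\tfrac d2-s)\Gamma(\tfrac d2-s)$, $\Gamma(s+1)=s\Gamma(s)$, this collapses to the stated formula for $R_s$.

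For the remaining assertions: $s\mapsto C_s$ is continuous (immediately from the Gamma-function formula, or because $c_{d,s}$ and $s\mapsto\iint_{B_1\times B_1}|\xi-\eta|^{2s-d}$ are continuous by dominated convergence), and $s\mapsto R_s$ is continuous, either by the implicit function theorem since $h_s''(R_s)>0$ at the nondegenerate minimum, or directly from uniqueness of $R_s$ and the joint continuity and strict monotonicity in $R$ of $(s,R)\mapsto R^{d+1}h_s'(R)$; hence $s\mapsto\mathcal F_s[\rho_{R_s}]=h_s(R_s)$ is continuous, and negative on $(0,1/2)$ by the previous step. To compute the limit I would first show $C_s\to C_0:=\tfrac{\chi(dM)^2\omega_d}{2\sigma_d^2}=\tfrac\chi2B$, either by letting $s\to0$ in the Gamma-function expression (with $c_{d,s}\sim\pi^{-d/2}\Gamma(\tfrac d2)s$, so $c_{d,s}\iint_{B_1\times B_1}|\xi-\eta|^{2s-d}\to\pi^{d/2}/\Gamma(\tfrac d2+1)=\omega_d$), or conceptually from $C_s=\tfrac{\chi(dM)^2}{2\sigma_d^2}\int_{B_1}(K_s\ast\bC_{B_1})(y)\,dy$ together with $K_s\ast\bC_{B_1}\to\bC_{B_1}$ a.e. and a uniform-in-$s$ $L^\infty$ bound near $s=0$, giving $\int_{B_1}(K_s\ast\bC_{B_1})\to|B_1|=\omega_d$ by dominated convergence. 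Then $h_s\to h_0$ locally uniformly on $(0,\infty)$, where $h_0(R)=A\,R^{d(1-m)}+(\beta-\tfrac\chi2)B\,R^{-d}=\mathcal F_0[\rho_R]$. Next I would check that $R_s$ stays in a compact $[a,b]\subset(0,\infty)$ for $s$ near $0$: if $R_{s_n}\to0$ then, factoring out $R^{2s_n-d}$ and using $d(2-m)-2s_n<0$, one gets $h_{s_n}(R_{s_n})\to+\infty$; if $R_{s_n}\to\infty$ then all three terms of $h_{s_n}(R_{s_n})$ tend to $0$; both contradict $h_s(R_s)\le h_s(R_0)\to\mathcal F_0[\rho_{R_0}]<0$. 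Finally, along a subsequence with $R_{s_n}\to R_*\in[a,b]$, $h_{s_n}(R_{s_n})\to h_0(R_*)\ge\min_{R>0}h_0=\mathcal F_0[\rho_{R_0}]$, while $\limsup_{s\downarrow0}h_s(R_s)\le\lim_{s\downarrow0}h_s(R_0)=\mathcal F_0[\rho_{R_0}]$, whence $\lim_{s\downarrow0}\mathcal F_s[\rho_{R_s}]=\mathcal F_0[\rho_{R_0}]$. Since $\rho_{R_0}=\rho_0$ from \eqref{rad0}, Proposition \ref{minF0} (and its proof) gives $\mathcal F_0[\rho_{R_0}]=-M\tfrac{m-2}{m-1}\big(\tfrac{\chi-2\beta}{2}\big)^{(m-1)/(m-2)}$, which is exactly \eqref{limitchar}; this yields both the value of the limit and the last sentence of the statement.

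The step I expect to be the real obstacle is not the closed-form evaluation of $\iint_{B_1\times B_1}|x-y|^{2s-d}$ (routine), but upgrading the locally uniform convergence $h_s\to h_0$ to convergence of the minimum values: this rests on the a priori compactness of the minimisers $(R_s)$ as $s\downarrow0$, which comes from the coercivity of $h_s$ near $R=0$ and from $h_s(R_0)\to\mathcal F_0[\rho_{R_0}]<0$ — and it is precisely here that the hypothesis $\beta<\chi/2$ (which makes $\mathcal F_0[\rho_{R_0}]<0$) is used.
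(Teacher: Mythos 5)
Your proof is correct, and the overall strategy — writing $\mathcal F_s[\rho_R]$ as an explicit one–variable function of $R$, extracting a unique minimiser $R_s$ from the structure of the exponents, and then studying the dependence on $s$ — is the same as in the paper. Two details differ, and they are worth noting. For uniqueness of $R_s$: you multiply $h_s'(R)$ by $R^{d+1}$, obtaining a function which is \emph{globally} strictly increasing from $-\infty$ to $+\infty$; the paper instead writes the critical-point equation $\mathfrak h(s,R)=0$ (equivalently $R^{\alpha+1}h_s'(R)=0$, with $\alpha=d(m-1)$) and shows that $\partial_R\mathfrak h>0$ \emph{at any solution} by substituting back the relation $\mathfrak h(s,R)=0$, which forces uniqueness indirectly (all crossings are transversal from $-$ to $+$, and $\mathfrak h$ goes from $-\alpha C_1<0$ to $+\infty$). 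Your version is more direct and buys you the global monotonicity for free, but the paper's version has the advantage that the same computation of $\partial_R\mathfrak h>0$ is then immediately reused as the nondegeneracy hypothesis in the Implicit Function Theorem. This leads to the second difference: the paper deduces continuity of $s\mapsto R_s$ (and hence of $s\mapsto\mathcal F_s[\rho_{R_s}]$) up to $s=0$ via the IFT, and then simply evaluates at $s=0$. You instead run a compactness-plus-squeeze ($\Gamma$-convergence style) argument: locally uniform convergence $h_s\to h_0$, equi-coercivity to confine $R_s$ in a compact set, and comparison with the competitor $R_0$. Both routes are valid; the IFT route is shorter once the derivative is already in hand, while your route is more robust (it needs no regularity of $s\mapsto R_s$ and isolates exactly where $\beta<\chi/2$ enters, namely in the inequality $h_s(R_0)\to\mathcal F_0[\rho_{R_0}]<0$). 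One small remark: the closed form of $c_{d,s}\iint_{B_1\times B_1}|\xi-\eta|^{2s-d}$ that you would need to derive is not rederived in the paper either; it is imported (through $C_3$) from \cite[Lemma 6.1]{CGHMV}, so citing that reference would shorten your write-up in the same way.
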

\begin{proof}
By the proof of Lemma \ref{Minimiz} we argue that for large $R$ we have $\mathcal{F}_s[\rho_R]<0$. Moreover, by the proof of \cite[Lemma 6.1]{CGHMV} one has
\[
\mathcal{F}_{s}[\rho_{R}]= C_{1} R^{-\alpha}+C_{2} R^{-\gamma}-C_{3}R^{-\delta}
\]
where
\[
\alpha=d(m-1),\,\gamma=d,\,\delta=d-2s,\,\alpha>\gamma>\delta
\]
and
\[C_{1}=\frac{(dM)^m \sigma_d^{1-m}}{d(m-1)},\quad C_{2}=\beta\frac{(dM)^2 \sigma_d^{-1}}{d}, \,
C_{3}=\frac{\chi\,d^{2}\,M^2 \Gamma(s+\frac{1}{2})\Gamma(\frac{d}{2}-s)}
  {4\sqrt{\pi}\sigma_d \Gamma(s+1)\Gamma(\frac{d}{2}+s+1)}.
\]
Since $m>2$ and $\beta<\chi/2$, it is then readily seen that for all $s\in [0,1/2)$, the map $(0,+\infty)\ni R\mapsto\mathcal{F}_s[\rho_R]$ admits a unique minimizer $R_s$, which is the unique solution to the algebraic equation
\begin{equation}
\mathfrak{h}(s,R):=-\alpha C_{1}-\gamma C_{2}R^{\alpha-\gamma}+\delta C_{3}R^{\alpha-\delta}=0.\label{algebraic}
\end{equation}
Due to the structure of the coefficients, observe that the map $\mathfrak{h}(\cdot,R)$ is continuous for $s\in [0,1/2)$. We claim that the map $s\mapsto R_{s}$ is continuous up to $s=0$ in the interval $(0,1/2)$. To prove the claim, for any $s_{0}\in [0,1/2)$, let
$(s_0,R_{s_0})$ be the unique solution to \eqref{algebraic}. Then we have
\[
\frac{\partial\mathfrak{h}}{\partial R}(s_{0},R_{0})=\frac{1}{R_{0}}\left((\alpha-\delta)\delta C_{3}R_{0}^{\alpha-\delta}-\gamma C_{2} (\alpha-\gamma)R_{0}^{\alpha-\gamma}\right).\]
Thus, using that $\mathfrak{h}(s_{0},R_{0})=0$,
\[
\frac{\partial\mathfrak{h}}{\partial R}(s_{0},R_{0})=\frac{1}{R_{0}}\left((\gamma-\delta)\gamma C_{2}R_{0}^{\alpha-\gamma}+\alpha(\alpha-\delta)C_{1}\right)>0.
\]
Then the Implicit Function Theorem assures that the map $s\rightarrow R_{s}$ is continuous in a neighborhood of any point $s_{0}$ and the claim follows. This implies in particular that the map
$
s\rightarrow \mathcal{F}_{s}(\rho_{R_{s}})
$
is continuous at $s=0$ and we have
\[
\lim_{s\downarrow 0} \mathcal{F}_s[\rho_{R_s}]=\mathcal{F}_{0}(\rho_{R_0}),
\]
then the only issue is to compute the value $R_0$, which is obtained by equation \eqref{algebraic} letting $s\rightarrow0$. Such computation shows that the value $R_0$ is the one in \eqref{rad0}.
Then, inserting $\rho_{R_{0}}$ in the expression \eqref{limitfunct} of $\mathcal{F}_{0}$ we have
\[
\mathcal{F}_{0}[\rho_{R_0}]=-M\frac{m-2}{m-1}\left(\frac{\chi-2\beta}{2}\right)^{\frac{m-1}{m-2}},
\]
as desired.
\end{proof}
\begin{remark}\rm
Since the limit value as $s\rightarrow0$ of $\mathcal{F}_{s}$ in $\rho_{R}$ is given by
\[
\mathcal{F}_{0}[\rho_{R}]=\frac{(dM)^{m}}{d(m-1)}\sigma_{d}^{1-m}R^{-d(m-1)}+\frac{dM^{2}}{\sigma_{d}}\left(\beta-\frac{\chi}{2}\right)R^{-d}
\]
and $m>2$, we observe that $R\mapsto\mathcal{F}_{0}[\rho_{R}]$ does not admit a minimum for $\beta\geq \chi/2$.
\end{remark}

\noindent Next we investigate some asymptotic properties of minimizers as $s\downarrow 0$.

\begin{lem}\label{lemma:equibounded}  Fix any $s_{0}\in (0,1/2)$.
For any $s\in (0,s_0)$, let  $\rho_s\in\mathcal{Y}_M$ be the unique minimizer of $\mathcal{F}_s$ over $\mathcal Y_M$. Then $\sup_{s\in(0,s_0)}\|\rho_s\|_{L^\infty(\mathbb{R}^d)}<+\infty$.
\end{lem}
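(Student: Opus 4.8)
The strategy is to feed the Euler--Lagrange identity \eqref{Euler} back into itself at the point where $\rho_s$ is largest, producing a self-improving inequality for $L_s:=\|\rho_s\|_{L^\infty(\mathbb R^d)}$ whose constant depends on $s$ only through the normalisation $c_{d,s}$; the asymptotics \eqref{cds} will then close the argument. Concretely, fix $s\in(0,s_0)$. By Lemma \ref{Minimiz}, Lemma \ref{regularitymin} and Remark \ref{rmbeta0}, $\rho_s$ is nonnegative, bounded, continuous, radially decreasing and compactly supported, and $\mathcal{C}_s>0$; in particular $L_s=\rho_s(0)<\infty$ and $(K_s\ast\rho_s)(0)=c_{d,s}\int_{\mathbb R^d}|y|^{2s-d}\rho_s(y)\,dy$ is a finite integral. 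Evaluating \eqref{Euler} at $x=0$ and dropping the nonnegative quantities $2\beta\rho_s(0)$ and $\mathcal{C}_s$ gives
\[
\frac m{m-1}\,L_s^{m-1}\ \le\ \chi\,c_{d,s}\int_{\mathbb R^d}|y|^{2s-d}\rho_s(y)\,dy .
\]

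The next step is a two-scale bound of the Riesz potential at the origin. For any radius $r_0>0$, split the last integral at $|y|=r_0$: on $\{|y|<r_0\}$ bound $\rho_s(y)\le L_s$ and use $\int_{|y|<r_0}|y|^{2s-d}\,dy=\sigma_d\,r_0^{2s}/(2s)$, while on $\{|y|\ge r_0\}$ bound $|y|^{2s-d}\le r_0^{2s-d}$ and use $\int_{\mathbb R^d}\rho_s=M$. This yields
\[
\frac m{m-1}\,L_s^{m-1}\ \le\ \chi\,c_{d,s}\Big(\frac{\sigma_d}{2s}\,L_s\,r_0^{2s}+M\,r_0^{2s-d}\Big).
\]
If $L_s<1$ there is nothing to prove, so assume $L_s\ge 1$ and take $r_0=L_s^{-1/(2d)}$, so that $r_0^{2s}=L_s^{-s/d}\le 1$ and $r_0^{2s-d}=L_s^{1/2-s/d}\le L_s$. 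Then the right-hand side is at most $\chi\,c_{d,s}\big(\tfrac{\sigma_d}{2s}+M\big)L_s$, and dividing through by $\tfrac m{m-1}L_s$ we obtain
\[
L_s^{\,m-2}\ \le\ \frac{(m-1)\chi}{m}\;c_{d,s}\Big(\frac{\sigma_d}{2s}+M\Big).
\]

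Finally, for the uniformity in $s$: by \eqref{cds}, $s\mapsto c_{d,s}$ is positive and continuous on $(0,d/2)\supset(0,s_0]$ and $c_{d,s}/s\to\pi^{-d/2}\Gamma(\tfrac d2)$ as $s\downarrow 0$; hence $s\mapsto c_{d,s}\big(\tfrac{\sigma_d}{2s}+M\big)$ extends to a continuous function on $[0,s_0]$ and is therefore bounded on $(0,s_0)$ by some constant $C_*<\infty$. Since $m>2$, the previous display gives $L_s\le\max\{1,(\tfrac{(m-1)\chi}{m}C_*)^{1/(m-2)}\}$ for every $s\in(0,s_0)$, which is the assertion. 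The only point requiring care is precisely this last one: the local singularity of the kernel $c_{d,s}|y|^{2s-d}$ at $y=0$ inevitably brings a factor $1/(2s)$ into the estimate of $K_s\ast\rho_s$, and it is exactly the behaviour $c_{d,s}\sim\pi^{-d/2}\Gamma(\tfrac d2)\,s$ recorded in \eqref{cds} that cancels it, keeping $c_{d,s}/s$ bounded (and away from a singularity) on $(0,s_0)$. Everything else is the elementary interior $L^\infty$ bound for \eqref{Euler}, made quantitative by the crude pointwise bound $\rho_s\le L_s$ together with the mass constraint $\int_{\mathbb R^d}\rho_s=M$.
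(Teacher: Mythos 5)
Your argument is correct and follows essentially the same route as the paper's: evaluate the Euler--Lagrange relation \eqref{Euler} at the origin (where the radially decreasing minimizer attains its maximum), drop the nonnegative terms $2\beta\rho_s(0)$ and $\mathcal C_s$, and estimate the Riesz potential at $0$ by splitting the convolution integral into a near part (bounded by $\rho_s(0)\,\sigma_d r_0^{2s}/(2s)$) and a far part (bounded by $M r_0^{2s-d}$), finally using $c_{d,s}\sim s$ from \eqref{cds} to absorb the $1/(2s)$ factor uniformly on $(0,s_0)$. The only cosmetic difference is the closure: the paper fixes $r_0=1$, obtaining $\rho_s(0)^{m-1}\le a\,\rho_s(0)+b$ with $s$-uniform constants $a,b$ and invoking $m>2$ to deduce boundedness, whereas you optimize $r_0=L_s^{-1/(2d)}$ (after reducing to $L_s\ge 1$) so that the polynomial inequality collapses to $L_s^{m-2}\le C_*$ at once; both finish correctly and with the same dependence on $c_{d,s}/s$.
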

\begin{proof}
Since $\rho_s$ is continuous and radially decreasing by Lemma \ref{Minimiz} and Lemma \ref{regularitymin} , we have $\|\rho_s\|_{L^\infty(\mathbb{R}^d)}=\rho_s(0)$. By \eqref{Euler} and \eqref{explicitconstant} we have, letting $B_1$ denote the unit ball centered at the origin,
\[\begin{aligned}
\frac{m}{m-1}\rho_s(0)^{m-1}+2\beta \rho_{s}(0)&=\chi c_{d,s}(|\cdot|^{2s-d}\ast\rho_s)(0)-\mathcal{C}_s\le \chi c_{d,s}\int_{\mathbb{R}^d}|y|^{2s-d}\rho_s(y)\,dy\\
&\le
\chi c_{d,s}\rho_s(0)\int_{B_1}|y|^{2s-d}\,dy+\chi c_{d,s}\int_{\mathbb{R}^d\setminus B_1}\rho_s(y)\,dy\\
&\le \chi c_{d,s}\rho_s(0)\sigma_{d}\int_0^{1}r^{2s-1}\,dr+\chi c_{d,s} M\le \frac{\chi c_{d,s}\sigma_{d}}{2s}\,\rho_s(0)+\chi c_{d,s}M.
\end{aligned}
\]
Therefore $\rho_s(0)^{m-1}\le a\rho_s(0)+b$ for any $s\in(0,s_0)$, where $$a:=\frac{m-1}{m}\sup_{ s\in(0,s_0)} \,\frac{\chi c_{d,s}\sigma_{d}}{2s}<+\infty\quad \mbox{ and}\quad b:=\frac{m-1}{m}\sup_{ s\in(0,s_0)} \chi c_{d,s}M<+\infty.$$ Notice that $a<+\infty$ follows from $\lim_{s\downarrow 0}{c_{d,s}}/{s}=\pi^{-d/2}\Gamma(d/2)$, see \eqref{cds}. Since $m>2$, we conclude that $\rho_s(0)\le\bar x$ for any $s\in(0,s_0)$, where $\bar x$ is the unique positive number such that $\bar x^{m-1}=a\bar x+b$.
\end{proof}

\begin{lem}\label{lemma:equiboundedconstant} Let $0\le \beta<\chi/2$. 
For any $s\in (0,1/2)$, 
let $\rho_s\in\mathcal{Y}_M$ be the unique minimizer of $\mathcal{F}_s$ over $\mathcal Y_M$. Then
\begin{equation*}
\liminf_{s\downarrow 0}\mathcal{C}_s\geq \frac{m-2}{m-1}\left(\frac{\chi-2\beta}{2}\right)^{\frac{m-1}{m-2}}\label{lowboundC}
\end{equation*}
where $\mathcal{C}_s$ is defined in \eqref{explicitconstant1}.
\end{lem}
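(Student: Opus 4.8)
The plan is to combine the two identities \eqref{functminim} and \eqref{explicitconstant1} from Lemma \ref{Minimiz}, so as to express $\mathcal{C}_s$ directly in terms of the minimal energy value $\mathcal{F}_s[\rho_s]$, and then to let $s\downarrow 0$ using the energy upper bound provided by the ball competitors $\rho_{R_s}$ of Lemma \ref{lemma:ball}. As a preliminary algebraic observation, since
\[
\frac{dm+2sm-2d}{m-1}=\frac{dm-2d+2s}{m-1}+2s,
\]
a direct manipulation of \eqref{functminim} and \eqref{explicitconstant1} gives the identity
\[
M\,\mathcal{C}_s=-\mathcal{F}_s[\rho_s]+\frac{2s}{d-2s}\left(\|\rho_s\|_{m}^{m}+\beta\|\rho_s\|_{2}^{2}\right),\qquad s\in(0,1/2).
\]

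Next I would check that the correction term is negligible as $s\downarrow 0$. Fixing $s_0\in(0,1/2)$, Lemma \ref{lemma:equibounded} gives $C_\infty:=\sup_{s\in(0,s_0)}\|\rho_s\|_{L^\infty(\mathbb{R}^d)}<+\infty$, and together with the mass constraint $\|\rho_s\|_{L^1(\mathbb{R}^d)}=M$ this yields $\|\rho_s\|_{m}^{m}\le C_\infty^{m-1}M$ and $\|\rho_s\|_{2}^{2}\le C_\infty M$ uniformly for $s\in(0,s_0)$. Since $\tfrac{2s}{d-2s}\to 0$, the term $\tfrac{2s}{d-2s}\big(\|\rho_s\|_{m}^{m}+\beta\|\rho_s\|_{2}^{2}\big)$ tends to $0$, hence
\[
\liminf_{s\downarrow 0}M\,\mathcal{C}_s=\liminf_{s\downarrow 0}\big(-\mathcal{F}_s[\rho_s]\big)=-\limsup_{s\downarrow 0}\mathcal{F}_s[\rho_s].
\]

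It then remains to bound $\limsup_{s\downarrow 0}\mathcal{F}_s[\rho_s]$ from above, and here I would invoke minimality together with Lemma \ref{lemma:ball}. For every $s\in(0,1/2)$ the density $\rho_{R_s}$ of Lemma \ref{lemma:ball} belongs to $\mathcal{Y}_M$, so $\mathcal{F}_s[\rho_s]\le\mathcal{F}_s[\rho_{R_s}]$, while by \eqref{limitchar} one has $\lim_{s\downarrow 0}\mathcal{F}_s[\rho_{R_s}]=-M\tfrac{m-2}{m-1}\left(\tfrac{\chi-2\beta}{2}\right)^{\frac{m-1}{m-2}}$. Therefore $\limsup_{s\downarrow 0}\mathcal{F}_s[\rho_s]\le -M\tfrac{m-2}{m-1}\left(\tfrac{\chi-2\beta}{2}\right)^{\frac{m-1}{m-2}}$, and inserting this into the previous display and dividing by $M$ yields
\[
\liminf_{s\downarrow 0}\mathcal{C}_s\ge \frac{m-2}{m-1}\left(\frac{\chi-2\beta}{2}\right)^{\frac{m-1}{m-2}},
\]
which is the claim.

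The argument is essentially bookkeeping, and the only delicate point is to verify that all the lower-order contributions (namely the terms carrying a factor $s$ coming from \eqref{functminim} and \eqref{explicitconstant1}, together with the quadratic $\beta$-terms) really vanish in the limit, which is precisely where the uniform $L^\infty$ bound of Lemma \ref{lemma:equibounded} enters. An equivalent alternative avoids the combined identity: one first extracts $\liminf_{s\downarrow 0}\|\rho_s\|_{m}^{m}\ge M\left(\tfrac{\chi-2\beta}{2}\right)^{\frac{m-1}{m-2}}$ directly from \eqref{functminim}, using the same upper bound on $\mathcal{F}_s[\rho_s]$ and the boundedness of $\|\rho_s\|_{2}^{2}$, and then substitutes into \eqref{explicitconstant1}; the bookkeeping is marginally longer but the obstacle is the same.
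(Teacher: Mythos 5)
Your proof is correct and follows essentially the same route as the paper: combine \eqref{functminim} and \eqref{explicitconstant1} to relate $\mathcal{C}_s$ to $-\mathcal{F}_s[\rho_s]$, then use minimality against the ball competitor $\rho_{R_s}$ and \eqref{limitchar}. The one small difference is that the paper avoids Lemma \ref{lemma:equibounded} entirely: since your correction term $\tfrac{2s}{d-2s}\big(\|\rho_s\|_m^m+\beta\|\rho_s\|_2^2\big)$ is nonnegative, one can simply drop it and write the one-sided inequality $\mathcal{C}_s>-\tfrac1M\mathcal{F}_s[\rho_s]$, so the ``delicate point'' you flag is actually harmless and requires no uniform $L^\infty$ bound.
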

\begin{proof}
By \eqref{explicitconstant1} and \eqref{functminim} we obtain
\begin{align*}
\mathcal{C}_s&=\frac{1}{M(d-2s)}\left(\frac{dm-2d+2s}{m-1}\|\rho_{s}\|_{m}^{m}+4\beta s\|\rho_{s}\|_{2}^{2}+2s\|\rho_{s}\|_{m}^{m}\right)\\
&>\frac{1}{M(d-2s)}\left(\frac{dm-2d+2s}{m-1}\|\rho_{s}\|_{m}^{m}+2\beta s\|\rho_{s}\|_{2}^{2}\right)=-\frac{1}{M}\mathcal{F}_s(\rho_s)\
\end{align*}
and the result follows by the minimality of $\rho_{s}$ and Lemma \eqref{lemma:ball}.
\end{proof}

\begin{lem}\label{lemma:tightness} Let $0\le \beta<\chi/2$.
For any $s\in (0,1/2)$, let $\rho_s\in\mathcal{Y}_M$ be the unique minimizer of $\mathcal{F}_s$ over $\mathcal Y_M$. Then there exists $R\in(0,+\infty)$ and $s_{0}\in(0,1/2)$ such that $\mathrm{supp}(\rho_s)\subset B_R$ for any $s\in(0,s_0)$.
\end{lem}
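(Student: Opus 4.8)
The plan is to combine the Euler--Lagrange equation \eqref{Euler} with the uniform-in-$s$ lower bound on $\mathcal{C}_s$ from Lemma \ref{lemma:equiboundedconstant}, thereby reducing the statement to a uniform decay estimate for the Riesz potential $K_s\ast\rho_s$ at infinity. Since $\beta<\chi/2$, Lemma \ref{lemma:equiboundedconstant} gives $\liminf_{s\downarrow 0}\mathcal{C}_s>0$, so we may fix $s_0\in(0,1/2)$ and $c_0>0$ such that $\mathcal{C}_s\ge c_0$ for every $s\in(0,s_0)$. On the other hand, by \eqref{Euler}, at any point $x$ with $\rho_s(x)>0$ one has $\chi (K_s\ast\rho_s)(x)>\mathcal{C}_s\ge c_0$. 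Hence it suffices to show that $(K_s\ast\rho_s)(x)\to 0$ as $|x|\to\infty$, uniformly for $s\in(0,s_0)$: once this is established, choosing $R$ so large that $\chi(K_s\ast\rho_s)(x)<c_0$ whenever $|x|\ge R$ forces $\rho_s(x)=0$ for all such $x$ and all $s\in(0,s_0)$, and since $\rho_s$ is radially decreasing (Lemma \ref{Minimiz}) this gives $\mathrm{supp}(\rho_s)\subset B_R$.

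For the uniform decay I would proceed as follows. Fix $x$ with $r:=|x|\ge 2$ and split $(K_s\ast\rho_s)(x)=c_{d,s}\int_{\R^d}|x-y|^{2s-d}\rho_s(y)\,dy$ over $B_{r/2}(x)$ and its complement. On $\R^d\setminus B_{r/2}(x)$ we have $|x-y|\ge r/2$, so that contribution is at most $c_{d,s}(r/2)^{2s-d}M$, using $\|\rho_s\|_{L^1}=M$. On $B_{r/2}(x)$ every $y$ satisfies $|y|\ge r/2$, so radial monotonicity together with the elementary bound $\rho_s(r/2)\le\tfrac{dM}{\sigma_d}(r/2)^{-d}$ for radially decreasing densities of mass $M$ yields a contribution at most $c_{d,s}\tfrac{dM}{\sigma_d}(r/2)^{-d}\int_{B_{r/2}(0)}|z|^{2s-d}\,dz=\tfrac{c_{d,s}}{2s}\,dM\,(r/2)^{2s-d}$. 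Since $\sup_{s\in(0,s_0)}c_{d,s}<\infty$ and $\sup_{s\in(0,s_0)}\tfrac{c_{d,s}}{2s}<\infty$ --- both because $c_{d,s}\sim\pi^{-d/2}\Gamma(\tfrac d2)\,s$ as $s\downarrow 0$, see \eqref{cds}, the same cancellation already used in the proof of Lemma \ref{lemma:equibounded} --- and since $(r/2)^{2s-d}\le(r/2)^{2s_0-d}$ for $r\ge 2$, we obtain $(K_s\ast\rho_s)(x)\le C\,(r/2)^{2s_0-d}$ with $C$ depending only on $d,m,\chi,\beta,M$ and not on $s\in(0,s_0)$. As $2s_0-d<0$, this tends to $0$ as $r\to\infty$, which is exactly the required uniform decay.

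It then remains only to pick $R\ge 2$ with $\chi C(R/2)^{2s_0-d}<c_0$; for this choice $\rho_s$ vanishes outside $B_R$ for every $s\in(0,s_0)$, which is the assertion. I expect the one genuinely delicate point to be the uniformity in $s$ of the potential estimate: both the kernel normalization $c_{d,s}$ and the local integral $\int_{B_\rho}|z|^{2s-d}\,dz=\sigma_d\rho^{2s}/(2s)$ degenerate as $s\downarrow 0$ --- the former to $0$, the latter to $+\infty$ --- and they must be kept together so that the product remains bounded; the splitting, the radial monotonicity bounds and the final choice of $R$ are routine.
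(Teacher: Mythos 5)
Your proof is correct and rests on the same pillars as the paper's: Lemma \ref{lemma:equiboundedconstant} for the uniform lower bound on $\mathcal{C}_s$, the Euler--Lagrange relation \eqref{Euler}, the elementary decay bound $\rho_s(x)\le dM/(\sigma_d|x|^d)$ (which is \eqref{xd} there), and the observation that $c_{d,s}$ and $c_{d,s}/s$ remain bounded as $s\downarrow 0$. The implementation, however, differs slightly. The paper evaluates the potential only at the boundary radius $R_s$ of the support, splits the Riesz convolution around a \emph{fixed} unit ball centered at the evaluation point, obtains a bound of the form $\chi c_{d,s}\,dM/(2s(R_s-1)^d)+\chi c_{d,s}M$, and then argues by contradiction assuming $\limsup_{s\downarrow 0}R_s=+\infty$; here the far-field term $\chi c_{d,s}M$ does not decay in $R_s$ and is controlled only because $c_{d,s}\to 0$. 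You instead split at the scale-adapted radius $|x|/2$, which makes both the near and far contributions carry the same factor $(|x|/2)^{2s-d}$; this yields a direct, uniform-in-$s\in(0,s_0)$ decay estimate for $K_s\ast\rho_s$ valid at every $x$ with $|x|\ge 2$, so that $R$ can simply be chosen large enough without any contradiction argument. Both routes are rigorous; yours is slightly cleaner in that it avoids the case distinction $R_s>1$ and establishes a (marginally stronger) pointwise uniform decay of the potential, while the paper's goes straight to the radius at which the Euler--Lagrange equation degenerates.
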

\begin{proof}
We a slight abuse of notation we still denote by $\rho_s$ the radial profile of $\rho_s$ and we notice that since $\rho_s$ is radially non-increasing there holds for any $R>0$
\[
M\ge\int_{B_R}\rho_s(x)\,dx\ge\int_{B_R}\rho_s(R)\,dx=\frac{1}{d}\rho_s(R)\sigma_d R^d,
\]
which entails
\begin{equation}\label{xd}
\rho_s(x)\le\frac{dM}{\sigma_d |x|^d}\quad\mbox{for any $x\in\mathbb{R}^d\setminus\{0\}$.}
\end{equation}

Let $\mathrm{supp}(\rho_s)=:B_{R_s}$.
Form \eqref{Euler} we deduce
\begin{equation}\label{edge}
\chi c_{d,s}(|\cdot|^{2s-d}\ast\rho_s)(R_s)=\mathcal{C}_s\quad\mbox { for any  $s\in(0,s_0)$}.
\end{equation}
Notice that by \eqref{xd} we have if $R_s>1$
\[\begin{aligned}
&\chi c_{d,s} (|\cdot|^{2s-d}\ast\rho_s)(R_s)\le \chi c_{d,s}\int_{B_1} |y|^{2s-d}\rho_s(R_s-y)\,dy+\chi c_{d,s}\int_{B_1^C}|y|^{2s-d}\rho_s(R_s-y)\,dy
\\
&\qquad\le \chi c_{d,s}\frac{dM}{\sigma_{d}}\int_{B_1}\frac{|y|^{2s-d}}{|R_s-y|^d}dy+M\chi c_{d,s}\le \chi c_{d,s}\frac{dM}{2s|R_s-1|^d}+M\chi c_{d,s}.
\end{aligned}\]
Assuming by contradiction that $$\limsup_{s\downarrow 0}R_s=+\infty,$$ the above computation shows, recalling from \eqref{cds} that $c_{d,s}/s$ is bounded on $(0,s_0)$, that
\[
\liminf_{s\downarrow 0} \chi c_{d,s}(|\cdot|^{2s-d}\ast\rho_s)(R_s)=0.
\]
This contradicts \eqref{edge}, since $\liminf_{s\downarrow 0}\mathcal{C}_s>0$ by Lemma \ref{lemma:equiboundedconstant}.
\end{proof}

\begin{lem}\label{lemma:strongcompactness} Let $0\le \beta<\chi/2$.
For any $s\in (0,1/2)$, let $\rho_s\in\mathcal{Y}_M$ be the unique minimizer of $\mathcal{F}_s$ over $\mathcal Y_M$. For any vanishing sequence $(s_n)\subset(0,1/2)$, the sequence $(\rho_{s_n})$ admits limit points in the strong $L^p(\mathbb{R}^d)$ topology as $n\to+\infty$ for any $p\in[1,+\infty)$.
\end{lem}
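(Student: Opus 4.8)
The plan is to deduce compactness purely from the uniform bounds that the previous lemmas already provide, together with the elementary compactness of uniformly bounded monotone functions. First I would fix a vanishing sequence $(s_n)\subset(0,1/2)$ and, discarding finitely many terms, assume $s_n<s_0$ with $s_0$ as in Lemma \ref{lemma:tightness}, so that $\mathrm{supp}(\rho_{s_n})\subset B_R$ for a fixed radius $R$ independent of $n$. By Lemma \ref{lemma:equibounded} there is a constant $L<+\infty$ with $\|\rho_{s_n}\|_{L^\infty(\mathbb R^d)}\le L$ for all $n$. Hence $0\le\rho_{s_n}\le L\,\bC_{B_R}$ for every $n$, which in particular bounds the sequence in $L^p(\mathbb R^d)$ for every $p\in[1,+\infty]$.

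Next I would exploit that, by Lemma \ref{Minimiz}, each $\rho_{s_n}$ is radially non-increasing. Denoting (with the usual abuse of notation) by $\rho_{s_n}:[0,+\infty)\to[0,L]$ the non-increasing radial profile, supported in $[0,R]$, Helly's selection theorem provides a subsequence, not relabeled, and a non-increasing $g:[0,+\infty)\to[0,L]$, supported in $[0,R]$, such that $\rho_{s_n}(r)\to g(r)$ for every $r\ge 0$. Setting $\rho(x):=g(|x|)$, one gets $\rho_{s_n}(x)\to\rho(x)$ for every $x$ whose modulus is a continuity point of $g$; since a monotone function has at most countably many discontinuities, the exceptional set is a countable union of spheres, hence Lebesgue-null, and therefore $\rho_{s_n}\to\rho$ a.e.\ in $\mathbb R^d$.

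Finally, since $0\le\rho_{s_n}\le L\,\bC_{B_R}$ and $L\,\bC_{B_R}\in L^p(\mathbb R^d)$ for every $p\in[1,+\infty)$, the dominated convergence theorem upgrades the a.e.\ convergence to strong convergence $\rho_{s_n}\to\rho$ in $L^p(\mathbb R^d)$ for every $p\in[1,+\infty)$, which is the assertion. As a byproduct, dominated convergence (with dominating functions $L\,\bC_{B_R}$ and $R\,L\,\bC_{B_R}$ respectively) gives $\int_{\mathbb R^d}\rho\,dx=M$ and $\int_{\mathbb R^d}x\,\rho\,dx=0$, so the limit point $\rho$ is a nonnegative radially decreasing element of $\mathcal Y_M$, compactly supported in $\overline{B_R}$. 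I do not expect a genuine obstacle in this lemma: the only mildly delicate point is the passage from pointwise convergence of the one-dimensional radial profiles furnished by Helly's theorem to a.e.\ convergence on $\mathbb R^d$, which is handled by the null-set remark above; everything else is a direct consequence of Lemmas \ref{lemma:equibounded} and \ref{lemma:tightness}.
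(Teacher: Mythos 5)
Your proof is correct, but the compactness mechanism you use is genuinely different from the paper's. The paper establishes a uniform $BV(\mathbb R^d)$ bound by writing out $\int|\nabla\rho_s|$ in terms of the radial profile and integrating by parts, arriving at $\int_{\mathbb R^d}|\nabla\rho_s|\,dx\le d\sigma_d(\rho_s(0)+M)$, and then invokes the compact embedding $BV(\mathbb R^d)\hookrightarrow L^1_{loc}(\mathbb R^d)$, upgraded to $L^1(\mathbb R^d)$ via the tightness from Lemma \ref{lemma:tightness} and then to $L^p$ via the uniform $L^\infty$ bound. You instead sidestep any gradient estimate entirely: from the same two inputs (Lemma \ref{lemma:equibounded} for equi-boundedness and Lemma \ref{lemma:tightness} for uniform compact support) you extract a pointwise-convergent subsequence of the non-increasing radial profiles by Helly's selection theorem, note that the exceptional set of discontinuity-radii has Lebesgue-null union in $\mathbb R^d$, and then conclude by dominated convergence against the fixed integrable majorant $L\,\bC_{B_R}$. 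Your route is more elementary — it leverages monotonicity directly and never computes a Sobolev norm — while the paper's $BV$ route is more quantitative (it yields a uniform total-variation bound that survives even without radial monotonicity) and is closer in spirit to the compactness toolkit used elsewhere in the paper. Both are sound; the only mildly subtle step in yours, the passage from pointwise convergence of profiles to a.e.\ convergence on $\mathbb R^d$, is handled correctly by the null-set remark.
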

\begin{proof} We have $\rho_s\in W^{1,1}(\mathbb R^d)$, by reasoning as done in the proof of Proposition \ref{id}.
We still denote by $\rho_s$ the radial profile of $\rho_s$ and we notice that $\nabla \rho_s(0)=0$ and $\nabla\rho_s(x)=\rho'_s(|x|)\,\tfrac{x}{|x|}$ for $x\neq0$.
Let $R>1$ and $s_{0}\in (0,1/2)$ such that $\mathrm{supp}(\rho_s)\subset B_R$ for any $s\in(0,s_0)$. The existence of such $R$, $s_{0}$ is due to Lemma \ref{lemma:tightness}. We have
\[
\int_{\mathbb{R}^d}|\nabla\rho_s(x)|\,dx=\int_{\mathbb{R}^d}|\rho_s'(|x|)|\,dx=-\sigma_{d}\int_0^R\rho_s'(r)r^{d-1}\,dr
\]
If $d=1$ we have $\sigma_{d}=2$ and $\int_0^R\rho'(r)\,dr=-\rho_s(0)$. If $d\ge 2 $ we have $$\rho_s'(r)r^{d-1}=(\rho_s(r)r^{d-1})'-(d-1)\rho_s(r)r^{d-2},$$ thus
\[\begin{aligned}
-\int_0^R\rho_s'(r) r^{d-1}\,dr&=-\int_0^R (\rho_s(r)r^{d-1})'\,dr+(d-1)\int_0^R \rho_s(r)r^{d-2}\,dr= (d-1)\int_0^R \rho_s(r)r^{d-2}\,dr\\
&\le (d-1)\left(\int_0^1\rho_s(r)\,dr+\int_1^R\rho_s(r)\,r^{d-1}\,dr\right)\le (d-1)(\rho_s(0)+M).
\end{aligned}\]
Therefore we always have
\[
\sup_{s\in(0,s_0)}\int_{\mathbb{R}^d}|\nabla\rho_s|\le d\sigma_{d}\sup_{s\in(0,s_0)}(\rho_s(0)+M)<+\infty,
\]
where the finiteness is due to Lemma \ref{lemma:equibounded}. The above uniform $BV(\mathbb{R}^d)$ estimate and the usual compact embedding $BV(\R^{d})$ in $L^{1}_{loc}(\R^{d})$, entails the strong sequential $L^{1}_{loc}(\mathbb{R}^d)$ compactness of the family $(\rho_s)$, which can be extended to the whole $L^{1}(\R^{d})$ by the tightness  due to Lemma \ref{lemma:tightness}. If $\rho$ is a limit point along a vanishing sequence $s_n$, we also have $\rho\in L^\infty(\mathbb{R}^d)$ and $\rho_{s_n}\to\rho$ strongly in $L^p(\mathbb{R}^d)$ for any $p\in[1,+\infty)$, since the sequence $(\rho_{s_n})$ is also equi-bounded by Lemma \ref{lemma:equibounded}.
\end{proof}

\begin{lem}\label{lemma:limitfunctional}  Suppose that $\rho_s\in\mathcal{Y}_M$ for any $s>0$ and that $\rho\in\mathcal{Y}_M$. If $\rho_s\to\rho$ strongly in $L^2(\mathbb{R}^d)$ as $s\downarrow 0$, then
\[
\lim_{s\downarrow 0}\int_{\mathbb{R}^{2d}}c_{d,s}|x-y|^{2s-d}\rho_s(x)\rho_s(y)\,dx\,dy=\int_{\mathbb{R}^d}\rho^2(x)\,dx.
\]
\end{lem}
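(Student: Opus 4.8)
The plan is to reduce the double integral to a convolution tested against the Riesz kernel, and to exploit that, with the normalization \eqref{cds}, the kernels $c_{d,s}|z|^{2s-d}$ concentrate at the origin as $s\downarrow 0$ with total mass on any fixed ball tending to $1$.

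First I would substitute $z=x-y$ and apply Tonelli (all integrands are nonnegative) to write
\[
\int_{\mathbb{R}^{2d}}c_{d,s}|x-y|^{2s-d}\rho_s(x)\rho_s(y)\,dx\,dy=\int_{\mathbb{R}^d}c_{d,s}|z|^{2s-d}g_s(z)\,dz,\qquad g_s(z):=\int_{\mathbb{R}^d}\rho_s(y+z)\rho_s(y)\,dy.
\]
Then I record the elementary properties of $g_s$: since $\rho_s\in L^1(\mathbb R^d)\cap L^m(\mathbb R^d)$ with $m>2$, interpolation gives $\rho_s\in L^2(\mathbb R^d)$, so $g_s$ is the convolution of two $L^2$ functions, hence continuous; moreover $g_s\ge 0$, $g_s(0)=\|\rho_s\|_{L^2(\mathbb R^d)}^2$, $0\le g_s\le\|\rho_s\|_{L^2(\mathbb R^d)}^2$ by Cauchy--Schwarz, and $\int_{\mathbb{R}^d}g_s=\|\rho_s\|_{L^1(\mathbb R^d)}^2=M^2$. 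Setting $g_0(z):=\int_{\mathbb R^d}\rho(y+z)\rho(y)\,dy$, the hypothesis $\rho_s\to\rho$ in $L^2(\mathbb R^d)$ gives $\|\rho_s\|_{L^2(\mathbb R^d)}^2\to\|\rho\|_{L^2(\mathbb R^d)}^2=:L$ and, by splitting $g_s-g_0$ into a difference in the first and in the second factor and applying Cauchy--Schwarz, the uniform bound $\|g_s-g_0\|_{L^\infty(\mathbb R^d)}\le\|\rho_s-\rho\|_{L^2(\mathbb R^d)}\big(\|\rho_s\|_{L^2(\mathbb R^d)}+\|\rho\|_{L^2(\mathbb R^d)}\big)\to 0$.

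Next, I fix $\eps>0$ and, using continuity of $g_0$ at $0$, pick $\delta\in(0,1)$ with $|g_0(z)-L|<\eps$ for $|z|\le\delta$, so that $|g_s(z)-L|\le\|g_s-g_0\|_{L^\infty(\mathbb R^d)}+\eps$ on $\{|z|\le\delta\}$. I would then decompose
\[
\int_{\mathbb{R}^d}c_{d,s}|z|^{2s-d}g_s\,dz-L=\int_{|z|\le\delta}c_{d,s}|z|^{2s-d}(g_s-L)\,dz+L\Big(c_{d,s}\!\!\int_{|z|\le\delta}\!\!|z|^{2s-d}\,dz-1\Big)+\int_{|z|>\delta}c_{d,s}|z|^{2s-d}g_s\,dz
\]
and estimate the three terms using the single arithmetic fact that $c_{d,s}\int_{|z|\le r}|z|^{2s-d}\,dz=\tfrac{\sigma_d}{2}\,\tfrac{c_{d,s}}{s}\,r^{2s}\to 1$ as $s\downarrow 0$ for every fixed $r>0$, which is exactly the content of $c_{d,s}\sim\pi^{-d/2}\Gamma(\tfrac d2)\,s$ from \eqref{cds} together with $\sigma_d=2\pi^{d/2}/\Gamma(\tfrac d2)$. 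The first term is bounded by $\big(\|g_s-g_0\|_{L^\infty(\mathbb R^d)}+\eps\big)\,c_{d,s}\int_{|z|\le\delta}|z|^{2s-d}\,dz$, so its $\limsup$ is $\le\eps$; the second tends to $0$; for the third I split $\{\delta<|z|\le 1\}$, where $g_s\le\|\rho_s\|_{L^2(\mathbb R^d)}^2$ is uniformly bounded and $c_{d,s}\int_{\delta<|z|\le1}|z|^{2s-d}\,dz=\tfrac{c_{d,s}}{s}\,\tfrac{\sigma_d}{2}(1-\delta^{2s})\to 0$, and $\{|z|>1\}$, where $|z|^{2s-d}\le 1$ for $s<1/2$ and $d\ge 1$, so that this contribution is $\le c_{d,s}\|g_s\|_{L^1(\mathbb R^d)}=c_{d,s}M^2\to 0$. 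Hence $\limsup_{s\downarrow0}\big|\int_{\mathbb R^d} c_{d,s}|z|^{2s-d}g_s\,dz-L\big|\le\eps$, and letting $\eps\to0$ yields the claim.

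There is no genuine obstacle here; the only point requiring care is that $g_s$ depends on $s$, so one cannot directly invoke a fixed ``$K_s\to\delta_0$'' statement. The resolution is precisely the comparison with the $s$-independent $g_0$ on the small ball where the kernel concentrates, combined with the uniform mass bound $\|g_s\|_{L^1(\mathbb R^d)}=M^2$ away from the origin, and the exact value of the normalizing constant in $c_{d,s}\sim\pi^{-d/2}\Gamma(\tfrac d2)\,s$.
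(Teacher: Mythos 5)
Your proof is correct, but it takes a genuinely different route from the paper's. The paper proves this lemma entirely on the Fourier side: by Plancherel, the double integral becomes $(2\pi)^{-d}\int|\xi|^{-2s}|\hat\rho_s(\xi)|^2\,d\xi$, and the argument consists in comparing $|\xi|^{-2s}$ with $1$ (dominated convergence on a ball $B_R$, using $|\hat\rho_s|\le M$) and controlling the high-frequency tail $\int_{B_R^C}|\hat\rho_s|^2$ via the $L^2$ convergence $\hat\rho_s\to\hat\rho$; letting $R\to\infty$ then finishes. You instead stay in physical space: you pass to the autocorrelation $g_s=\rho_s*\check\rho_s$, which is continuous, uniformly bounded by $\|\rho_s\|_{L^2}^2$, has constant $L^1$ mass $M^2$, and converges to $g_0$ in $L^\infty(\R^d)$; you then use the sharp constant in $c_{d,s}\sim \pi^{-d/2}\Gamma(d/2)\,s$ to show that $c_{d,s}|z|^{2s-d}$ has mass $\to 1$ on every fixed ball and mass $\to 0$ outside, and you compare $g_s$ with the $s$-independent continuous $g_0$ near the origin. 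Both arguments are complete; the Fourier route is shorter (one Plancherel identity replaces your autocorrelation preliminaries) and the $|\xi|^{-2s}\to1$ step makes the role of the normalization constant automatic, whereas your physical-space route is more elementary (no Fourier analysis) and makes the heuristic ``$K_s\to\delta_0$'' rigorous in an explicit approximate-identity form, which you also correctly identify as the point needing care because $g_s$ itself depends on $s$.
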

\begin{proof}
By Plancherel theorem we have
\begin{equation}\label{plancherel}\begin{aligned}
&\left|\int_{\mathbb{R}^{2d}}c_{d,s}|x-y|^{2s-d}\rho_s(x)\rho_s(y)\,dx\,dy-\int_{\mathbb{R}^d}\rho^2(x)\,dx\right|\\&\qquad=\frac{1}{(2\pi)^d}\left|\int_{\mathbb{R}^d}|\xi|^{-2s}|\hat\rho_s(\xi)|^2\,d\xi-\int_{\mathbb{R}^d}|\hat\rho(\xi)|^2\,d\xi\right|\\
&\qquad \le\frac{1}{(2\pi)^d}\int_{\mathbb{R}^d}||\xi|^{-2s}-1|\,|\hat\rho_s(\xi)|^2\,d\xi+\frac{1}{(2\pi)^d}\left|\int_{\mathbb{R}^d}|\hat\rho_s(\xi)|^2\,d\xi-\int_{\mathbb{R}^d}|\hat \rho(\xi)|^2\,d\xi\right|.\end{aligned}
\end{equation}
About the first term in the right hand side, for any $R>1$ we have, since $|\hat \rho_s(\xi)|\le M$ for any $\xi\in\mathbb{R}^d$,
\begin{equation*}\begin{aligned}
\int_{\mathbb{R}}||\xi|^{-2s}-1|\,|\hat\rho_s(\xi)|^2\,d\xi&= \int_{B_R}||\xi|^{-2s}-1||\hat\rho_s(\xi)|^2\,d\xi+\int_{B_R^C}||\xi|^{-2s}-1||\hat\rho_s(\xi)|^2\,d\xi\\
&\le M^2\int_{B_R}||\xi|^{-2s}-1|\,d\xi+2\int_{B_R^C}|\hat\rho_s(\xi)|^2\,d\xi\\\
&\le M^2\int_{B_R}||\xi|^{-2s}-1|\,d\xi+4\int_{\mathbb{R}^d}|\hat\rho_s(\xi)-\hat\rho(\xi)|^2\,d\xi+4\int_{B_R^C}|\hat\rho(\xi)|^2\,d\xi.
\end{aligned}\end{equation*}
As $s\downarrow 0$, the first term in the right hand side goes to zero by dominated convergence (as dominating function we take $|\xi|^{-2s_0}+1$ for $|\xi|\le 1$ and $2$ for $1<|\xi|\le R$). Therefore, by the strong $L^2(\mathbb{R}^d)$ convergence of $\rho_s$ to $\rho$, from \eqref{plancherel} we get
\[
\limsup_{s\downarrow 0} \left|\int_{\mathbb{R}^d}c_{d,s}|x-y|^{2s-d}\rho_s(x)\rho_s(y)\,dx\,dy-\int_{\mathbb{R}^d}\rho^2(x)\,dx\right|\le
4\int_{B_R^C}|\hat\rho(\xi)|^2\,d\xi.
\]
The result follows, since $\rho\in L^2(\mathbb{R}^d)$ and $R$ is arbitrary.
\end{proof}

\noindent Now we are in the position to state the main result about convergence of minimizers $\rho_s$ towards $\rho_0$, where $\rho_0$ is defined  in \eqref{rad0}.

\begin{thm}\label{theorem:Gammaconvergence} Assume $0\le\beta<\chi/2$.
 For any $s\in (0,1/2)$,  let $\rho_s\in\mathcal{Y}_M$ be the unique minimizer of $\mathcal{F}_s$ over $\mathcal Y_M$.
Then, there exists $\rho\in\mathcal{Y}_M$ such that $\rho_s\to \rho$ strongly in $L^m(\mathbb{R}^d)$ as $s\downarrow 0$. Moreover,
$\rho$ is the unique radially decreasing minimizer
of the functional \eqref{limitfunct} over $\mathcal{Y}_M$, given by \eqref{rad0}.
 \end{thm}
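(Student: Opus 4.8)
The plan is to run a $\Gamma$-convergence--type compactness argument along subsequences, using the preparatory lemmas, and then to promote subsequential convergence to convergence of the whole family. Fix a vanishing sequence $(s_n)\subset(0,1/2)$. By Lemma \ref{lemma:strongcompactness} there is a subsequence (not relabelled) and a function $\rho\in L^1(\mathbb R^d)\cap L^\infty(\mathbb R^d)$ with $\rho_{s_n}\to\rho$ strongly in $L^p(\mathbb R^d)$ for every $p\in[1,+\infty)$, and after a further subsequence we may assume $\rho_{s_n}\to\rho$ also pointwise a.e. First I would verify that $\rho\in\mathcal Y_M$ and that $\rho$ is radially decreasing: by Lemma \ref{lemma:tightness} all the $\rho_{s_n}$ are supported in a fixed ball $B_R$, so the strong $L^1$ convergence yields $\int_{\mathbb R^d}\rho\,dx=M$ and $\int_{\mathbb R^d}x\,\rho(x)\,dx=\lim_n\int_{\mathbb R^d}x\,\rho_{s_n}(x)\,dx=0$; radial monotonicity passes to the a.e.\ pointwise limit since each $\rho_{s_n}$ is radially decreasing by Lemma \ref{Minimiz}.

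Next I would pass to the limit in the energy. Since $\rho_{s_n}\to\rho$ strongly in $L^m(\mathbb R^d)$ and in $L^2(\mathbb R^d)$, we get $\int\rho_{s_n}^m\to\int\rho^m$ and $\int\rho_{s_n}^2\to\int\rho^2$, hence $\mathcal H_m[\rho_{s_n}]\to\mathcal H_m[\rho]$; moreover Lemma \ref{lemma:limitfunctional} gives $\mathcal W_{s_n}[\rho_{s_n}]\to-\tfrac\chi2\int_{\mathbb R^d}\rho^2\,dx$, so $\mathcal F_{s_n}[\rho_{s_n}]\to\mathcal F_0[\rho]$. For the upper bound I would exploit the minimality of $\rho_{s_n}$ tested against the optimal ball profile of Lemma \ref{lemma:ball}: $\mathcal F_{s_n}[\rho_{s_n}]\le\min_{R>0}\mathcal F_{s_n}[\rho_R]=\mathcal F_{s_n}[\rho_{R_{s_n}}]$, and by Lemma \ref{lemma:ball} the right-hand side converges to $\mathcal F_0[\rho_{R_0}]=\mathcal F_0[\rho_0]$, where $\rho_0$ is the function in \eqref{rad0} (a one-line computation identifies $\frac{dM}{\sigma_d R_0^d}\bC_{B_{R_0}}$ with $\rho_0$). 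By Proposition \ref{minF0}, $\mathcal F_0[\rho_0]=\min_{\mathcal Y_M}\mathcal F_0\le\mathcal F_0[\rho]$ since $\rho\in\mathcal Y_M$. Chaining, $\mathcal F_0[\rho]=\lim_n\mathcal F_{s_n}[\rho_{s_n}]\le\mathcal F_0[\rho_0]=\min_{\mathcal Y_M}\mathcal F_0\le\mathcal F_0[\rho]$, so equality holds throughout and $\rho$ is a minimizer of $\mathcal F_0$ over $\mathcal Y_M$.

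Since $\rho$ is moreover \emph{radially decreasing}, Proposition \ref{minF0} forces $\rho=\rho_0$. As this limit is independent of the extracted subsequence, a standard Urysohn subsequence argument upgrades the conclusion to convergence of the whole family: $\rho_s\to\rho_0$ strongly in $L^p(\mathbb R^d)$ for every $p\in[1,+\infty)$ as $s\downarrow0$, in particular in $L^m(\mathbb R^d)$, which is exactly the claim (with the identification of $\rho$ as the unique radially decreasing minimizer \eqref{rad0} already recorded above).

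The main obstacle is the behavior of the nonlocal term, but it is essentially resolved by the preparatory lemmas: one needs a priori that the minimizers neither spread out (Lemma \ref{lemma:tightness}, which rests on the lower bound for $\mathcal C_s$ in Lemma \ref{lemma:equiboundedconstant}) nor blow up in sup-norm (Lemma \ref{lemma:equibounded}), so that strong $L^2$ compactness holds and the singular kernel $c_{d,s}|x-y|^{2s-d}$ concentrates to a Dirac mass as in Lemma \ref{lemma:limitfunctional}. A secondary point worth emphasizing is that $\mathcal F_0$ admits many minimizers in $\mathcal Y_M$ (normalized characteristic functions of arbitrary sets of the prescribed volume), so identifying the limit as precisely $\rho_0$ genuinely uses that radial monotonicity is preserved along the sequence.
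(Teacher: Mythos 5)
Your proof is correct, and it follows the same overall skeleton as the paper's argument: extract an $L^p$-strong limit point along $(s_n)$ via Lemma~\ref{lemma:strongcompactness}, use Lemma~\ref{lemma:limitfunctional} to show $\mathcal F_{s_n}[\rho_{s_n}]\to\mathcal F_0[\rho]$, and conclude that $\rho$ minimizes $\mathcal F_0$. Where you diverge is in the $\limsup$ (recovery) inequality: the paper simply tests minimality against an \emph{arbitrary} fixed $\tilde\rho\in\mathcal Y_M$, noting $\mathcal F_{s_n}[\rho_{s_n}]\le\mathcal F_{s_n}[\tilde\rho]\to\mathcal F_0[\tilde\rho]$ (again by Lemma~\ref{lemma:limitfunctional} applied to the constant family $\tilde\rho$), which immediately gives $\mathcal F_0[\rho]\le\mathcal F_0[\tilde\rho]$ for all competitors — no need for Lemma~\ref{lemma:ball} or Proposition~\ref{minF0} inside the proof. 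Your route, pitting $\rho_{s_n}$ against the optimal ball profile $\rho_{R_{s_n}}$ and invoking Lemma~\ref{lemma:ball} and Proposition~\ref{minF0}, reaches the same conclusion but is a bit longer and ties the argument to the explicit ball computation. On the other hand, your version is more scrupulous about two points the paper leaves implicit: you verify that the limit $\rho$ actually lies in $\mathcal Y_M$ (using the uniform compact support from Lemma~\ref{lemma:tightness} to pass to the limit in the mass and center-of-mass constraints), and you record that radial monotonicity passes to the pointwise limit so that Proposition~\ref{minF0} pins down $\rho=\rho_0$; the latter is exactly what is needed to run the Urysohn argument and upgrade subsequential convergence to convergence of the full family, a step the paper states without comment. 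So: correct, same framework, different choice of recovery competitor, and somewhat more careful bookkeeping on the identification and whole-family steps.
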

\begin{proof}
Let $(s_n)\subset(0,1/2)$ be a vanishing sequence. Let $\rho\in\mathcal{Y}_M$ be such that $\rho_{s_n}\to\rho$ strongly in $L^m(\mathbb{R}^d)$ as $n\to+\infty$. The existence of such a limit point follows from Lemma \ref{lemma:strongcompactness}, and the convergence holds in $L^p(\mathbb{R}^d)$ for any $p\in[1,+\infty)$. Given $\tilde\rho\in\mathcal{Y}_M$, by the strong $L^m(\mathbb{R}^d)$ convergence and by Lemma \ref{lemma:limitfunctional} we have
\[
\mathcal{F}_0[\rho]=\lim_{n\to+\infty}\mathcal{F}_{s_n}[\rho_{s_n}]\le \lim_{n\to+\infty} \mathcal{F}_{s_n}[\tilde\rho]=\mathcal{F}_0[\tilde\rho].
\]
By the arbitrariness of $\tilde\rho$, we conclude that $\rho$ is a minimizer of $\mathcal{F}_0$ over $\mathcal{Y}_M$.  Finally, the whole family $(\rho_s)$ converges to $\rho$ in $L^m(\mathbb{R}^d)$ as $s\downarrow 0$.
 \end{proof}

\begin{remark}\rm
For $0\le \beta<\chi/2$, and given $M>0$,
we have in fact the $\Gamma$-convergence of functionals $\mathcal F_s:\mathcal Y_M\to\mathbb R$ to functional $\mathcal F_0:\mathcal Y_M\to\mathbb R^d$ as $s\to 0$, with respect to the strong $L^2(\mathbb R^d)$ topology. Indeed, if $(\zeta_s)_{s\in(0,1/2)}\subset\mathcal Y_M$, $\zeta\in \mathcal Y_M$ and $\zeta_s\to\zeta$  in $L^2(\mathbb R^d)$ as $s\to 0$, by Fatou's lemma and Lemma \ref{lemma:limitfunctional} we get
$\mathcal F_0[\zeta]\le \liminf_{s\to 0} \mathcal F_s[\zeta_s].$
On the other hand, still by Lemma \ref{lemma:limitfunctional}, for every $\zeta\in \mathcal Y_M$ we have $\mathcal F_s[\zeta]\to\mathcal F_0[\zeta]$ as $s\to0$.
\end{remark}

%

\begin{proofad3}
The case $0\le \beta<\chi/2$ has been treated in Theorem \ref{theorem:Gammaconvergence}. Therefore, in order to conclude we consider the case $\beta\ge\chi/2$. 

For every $\rho\in\mathcal Y_M$ and every $\lambda>0$, let $\rho_\lambda(x)=\lambda^d\rho(\lambda x)$, $x\in\mathbb R^d$. We have
\begin{equation}\label{fs1}
 \mathcal F_s[\rho_\lambda]\ge \beta \lambda^d\int_{\mathbb R^d}\rho^2(x)\,dx+\lambda^{d-2s}\mathcal W_s[\rho].
\end{equation}
Similarly to the proof of Proposition \ref{minF0}, we minimize the right hand side with respect to $\lambda$ and find a unique optimal value $\lambda_*$ given by
\[
\lambda_* =\left({(2s-d) \mathcal W_s [\rho]}\right)^{\frac 1{2s}}\left(d\beta\int_{\mathbb R^d}\rho^2(x)\,dx\right)^{-\frac{1}{2s}},
\]
and by inserting such a value in \eqref{fs1} we get
\[
\mathcal F_s[\rho]\ge -\frac{2s\beta}{d-2s}\left(\frac{\chi(d-2s)}{2d\beta}\right)^{\frac d{2s}}\,\left(\int_{\mathbb R^d}\rho
^2(x)\,dx\right)^{1-\frac{d}{2s}}\left(\int_{\mathbb R^d}\int_{\mathbb R^d}K_s(x-y)\rho(x)\rho(y)\,dx\,dy\right)^{\frac d{2s}}.
\]
 But  the Hardy-Littlewood-Sobolev inequality  \eqref{HLS} along with interpolation of $L^p$ norms, entails
 \[
 \int_{\mathbb R^d}\int_{\mathbb R^d}K_s(x-y)\rho(x)\rho(y)\,dx\,dy\le c_{d,s}H_{d,s}\|\rho\|_{\frac{2d}{d+2s}}^2\le  c_{d,s}\,H_{d,s}\,M^{\frac{4s}{d}}\left(\int_{\mathbb R^d}\rho^2(x)\,dx\right)^{1-\frac{2s}{d}},
 \]
 therefore we get the estimate
 \[
 \mathcal F_s[\rho]\ge-\frac{2s\beta}{d-2s}\left(\frac{\chi(d-2s)}{2d\beta}\right)^{\frac d{2s}}\, M^2\,\left(c_{d,s}\,H_{d,s}\right)^{\frac{d}{2s}}
 \]
 for every $\rho\in\mathcal Y_M$.
 It is not difficult to check that $\left(c_{d,s}{H}_{d,s}\right)^{\frac{d}{2s}}$ converges to a finite limit as $s\downarrow 0$, therefore the above right hand side is negative and converges to $0$ as $s\downarrow 0$ due to the condition $\beta\ge \chi/2$. If $\rho_s$ denotes the unique minimizer of $\mathcal F_s$ over $\mathcal Y_M$, we deduce from \eqref{functminim} that $\lim_{s\downarrow 0}\mathcal F_s[\rho_s]=0$. This implies from  \eqref{explicitconstant} and \eqref{functminim} that $\rho_s\to 0$ in $L
 ^m(\mathbb R^d)$ and $\mathcal C_s\to 0$ as $s\downarrow 0$.

 Eventually, we prove that $\|\rho_s\|_{\infty}\to 0$ as $s\downarrow 0$. Since $\rho_s$ is continuous and radially decreasing we have $\|\rho_s\|_\infty=\rho_s(0)$, and as seen in the proof of Lemma \ref{lemma:equibounded} we may take advantage of \eqref{Euler} and get
 \begin{equation}\label{nocs}\begin{aligned}
\frac{m}{m-1}\rho_s(0)^{m-1}+2\beta \rho_{s}(0)&=\chi c_{d,s}(|\cdot|^{2s-d}\ast\rho_s)(0)-\mathcal{C}_s\le \frac{\chi c_{d,s}\sigma_{d}}{2s}\rho_s(0)+\chi c_{d,s}M.
\end{aligned}
\end{equation}
Let $\bar\rho:=\limsup_{s\downarrow 0}\rho_s(0)$. By taking \eqref{cds} and $\sigma_d=2 \pi^{d/2}/\Gamma(d/2)$ into account, we have $\lim_{s\downarrow 0} (2s)^{-1}c_{d,s}\sigma_d=1$, thus from \eqref{nocs} we deduce
\[
\frac{m}{m-1}\bar\rho^{m-1}+2\beta \bar\rho\le \chi\bar \rho.
\]
Since $m>2$ and $\beta\ge\chi/2$, this forces $\bar\rho=0$ which is the desired result.
\end{proofad3}


\section{Weak solutions for the aggregation-diffusion problem}
\label{evolutionsection}
 The first objective of this section is the proof of the main existence result stated in Theorem \eqref{existencethm}. 
We mention that
an alternative existence proof for $s>1/2$ and $\beta=0$ is found in \cite{Z}.

We fix $M>0$. For $\rho^{0}\in\mathcal Y_{M,2}$, and we  consider the  Cauchy problem  \eqref{cauchy1}
We shall construct a  weak solution to problem \eqref{cauchy1} by an application of the JKO scheme to the functional $\mathcal F_s$ defined by \eqref{functional}. Therefore, for a discrete time step  $\tau>0$, we consider the minimization problem
\begin{equation}\label{MM}
\min_{\rho\in \mathcal Y_{M,2}}\left(\mathcal F_s[\rho]+\frac1{2\tau}\,W_2^2(\rho,\rho^0)\right).
\end{equation}

\begin{prop}[\bf Existence of discrete minimizers]\label{existencediscrete} Let $\tau>0$ and $\rho^0\in\mathcal Y_{M,2}$.
The minimization problem \eqref{MM} admits solutions.
\end{prop}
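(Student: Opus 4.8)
The plan is to prove existence of minimizers for \eqref{MM} by the direct method of the calculus of variations. Let $m_\tau:=\inf_{\rho\in\mathcal Y_{M,2}}\big(\mathcal F_s[\rho]+\tfrac1{2\tau}W_2^2(\rho,\rho^0)\big)$ and take a minimizing sequence $(\rho_k)_{k\in\mathbb N}\subset\mathcal Y_{M,2}$. First I would observe that $m_\tau$ is finite: plugging $\rho=\rho^0$ gives $m_\tau\le\mathcal F_s[\rho^0]<\infty$, while \eqref{boundLm} gives $\mathcal F_s[\rho]\ge -\bar C$ for all $\rho\in\mathcal Y_M$, so together with the nonnegativity of $W_2^2$ we get $m_\tau\ge -\bar C>-\infty$. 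The same estimate \eqref{boundLm} applied along the minimizing sequence yields a uniform bound on $\|\rho_k\|_{L^m(\mathbb R^d)}$, and since all $\rho_k$ have mass $M$, interpolation gives a uniform bound on $\|\rho_k\|_{L^p}$ for $1\le p\le m$; moreover $\tfrac1{2\tau}W_2^2(\rho_k,\rho^0)\le m_\tau+\bar C+o(1)$ gives a uniform bound on the second moments $\int|x|^2\rho_k\,dx$, because $W_2^2(\rho_k,\rho^0)$ controls the second moment of $\rho_k$ up to that of $\rho^0$ which is finite by assumption $\rho^0\in\mathcal Y_{M,2}$.

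Next I would extract a limit. From the uniform $L^m$ bound ($m>1$) we get a subsequence (not relabeled) with $\rho_k\rightharpoonup\rho$ weakly in $L^m(\mathbb R^d)$; the uniform second moment bound prevents loss of mass at infinity, so $\rho\ge 0$, $\int_{\mathbb R^d}\rho\,dx=M$, $\int_{\mathbb R^d}x\rho\,dx=0$ and $\int_{\mathbb R^d}|x|^2\rho\,dx<\infty$, hence $\rho\in\mathcal Y_{M,2}$. Then I would check lower semicontinuity of the objective term by term. The local terms $\tfrac1{m-1}\int\rho^m+\beta\int\rho^2$ are convex in $\rho$ and hence weakly lower semicontinuous in $L^m$ (and the $L^2$ part is controlled by interpolation plus mass). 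The Wasserstein term $W_2^2(\cdot,\rho^0)$ is lower semicontinuous with respect to weak-$L^m$ convergence combined with tightness (equivalently, with respect to narrow convergence of measures, which follows from weak $L^1$ convergence plus the uniform second moment bound), as is standard in JKO theory — see Ambrosio–Gigli–Savaré. The genuinely delicate point, and the main obstacle, is the aggregation term $\mathcal W_s[\rho]=-\tfrac\chi2\iint K_s(x-y)\rho(x)\rho(y)\,dx\,dy$, which is concave and enters with the "wrong" sign for weak lower semicontinuity; here I would upgrade weak convergence to strong $L^p$ convergence on bounded sets (e.g. via a compactness argument exploiting the second-moment tightness together with an equi-integrability/regularity bound, or by noting the Riesz potential maps $L^{2d/(d+2s)}$ boundedly and using the Hardy–Littlewood–Sobolev inequality \eqref{HLS} to pass to the limit in $\iint K_s\rho_k\rho_k$), so that $\mathcal W_s[\rho_k]\to\mathcal W_s[\rho]$ in the limit.

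To make the passage to the limit in $\mathcal W_s$ rigorous I would argue as follows: split $\iint K_s(x-y)\rho_k(x)\rho_k(y)$ into the contribution from $|x|,|y|\le R$ and the rest; the tail is controlled uniformly in $k$ by the second moment bound (it is $o(1)$ as $R\to\infty$, uniformly in $k$), and on the bounded region one uses that $K_s\ast\rho_k$ is bounded in a suitable space and converges — combining the $L^m$ weak convergence with the HLS estimate \eqref{HLS} which bounds the bilinear form by $\|\rho_k\|_{L^{2d/(d+2s)}}^2$, itself controlled by mass and $\|\rho_k\|_{L^m}$ through interpolation. Having established $\mathcal W_s[\rho_k]\to\mathcal W_s[\rho]$ and the weak lower semicontinuity of the remaining terms, we conclude
\[
\mathcal F_s[\rho]+\tfrac1{2\tau}W_2^2(\rho,\rho^0)\le\liminf_{k\to\infty}\Big(\mathcal F_s[\rho_k]+\tfrac1{2\tau}W_2^2(\rho_k,\rho^0)\Big)=m_\tau,
\]
so $\rho$ is a minimizer. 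I expect the only real work to be the careful treatment of the concave aggregation term and the tightness/second-moment bookkeeping; everything else is a routine application of the direct method and standard facts about $W_2$.
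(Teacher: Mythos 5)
Your setup (direct method, finiteness of the infimum via \eqref{boundLm}, uniform $L^m$ bound, uniform second moment bound from $W_2^2(\rho_k,\rho^0)$, extraction of a weak $L^m$ and narrow limit $\rho\in\mathcal Y_{M,2}$, weak lower semicontinuity of $\int\rho^m$, $\int\rho^2$ and of $W_2^2(\cdot,\rho^0)$) matches the paper's proof step by step. The gap is exactly where you flag it: the passage to the limit in the concave interaction term $\mathcal W_s$. Neither of your proposed resolutions actually works. Weak $L^m$ boundedness together with uniform second moments does \emph{not} upgrade to strong $L^p$ convergence on bounded sets: there is no a priori regularity (e.g.\ gradient) bound on a minimizing sequence, and Dunford--Pettis/tightness only gives weak $L^1$ compactness, which permits persistent oscillations. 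And the HLS inequality \eqref{HLS} provides an upper bound for the bilinear form, not continuity along weakly convergent sequences: writing $\iint K_s\rho_k\rho_k-\iint K_s\rho\rho=\iint K_s(\rho_k-\rho)(\rho_k+\rho)$, HLS bounds this by $\|\rho_k-\rho\|_{L^{2d/(d+2s)}}\|\rho_k+\rho\|_{L^{2d/(d+2s)}}$, and the first factor does not vanish under weak convergence. Your spatial split ($|x|,|y|\le R$ versus the tail) discards the tail correctly, but on the bounded region you are left facing the same unresolved problem.

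The paper's argument decomposes the \emph{kernel} rather than the spatial domain: write $K_s=K_s(1-\eta_\eps)+K_s\eta_\eps$ where $\eta_\eps$ is a smooth cutoff equal to $1$ near the origin and vanishing for $|x|>2\eps$. The far part $K_s(1-\eta_\eps)$ is a bounded continuous function, so narrow convergence of the product measures $\rho_k(x)\,dx\otimes\rho_k(y)\,dy\to\rho(x)\,dx\otimes\rho(y)\,dy$ (which follows from narrow convergence of each marginal) yields convergence of the corresponding integral. The near part is controlled uniformly in $k$ via Cauchy--Schwarz and Young's convolution inequality,
\[
\iint K_s(x-y)\eta_\eps(x-y)\rho_k(x)\rho_k(y)\,dx\,dy\le\|\rho_k\|_{L^2}^2\,\|\eta_\eps K_s\|_{L^1},
\]
and $\|\eta_\eps K_s\|_{L^1}\to 0$ as $\eps\to 0$ because $K_s$ is locally integrable. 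Letting $\eps\to 0$ after $k\to\infty$ gives $\mathcal W_s[\rho_k]\to\mathcal W_s[\rho]$ with no strong compactness whatsoever. This is the missing ingredient in your proposal; you should replace the vague strong-compactness/HLS step with this kernel-cutoff argument.
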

\begin{proof}
It is clear from Lemma \ref{Minimiz} that the functional to be minimized over $\mathcal Y_{M,2}$ is bounded from below. Let $(\rho_n)\subset\mathcal Y_{M,2}$ be one of its minimizing sequences.
The sequence $(\rho_{n})$ has uniformly bounded $L^m(\mathbb R^d)$ norm by inequality \eqref{boundLm}.
 It also has uniformly bounded second moment, thanks to the uniform bound for $W_2(\rho_n,\rho^0)$,
 which follows from the fact that $\rho_n$ is a minimizing sequence and again from \eqref{boundLm} and \eqref{distantref}.
 Hence, up to subsequences, it converges to $\rho_*$ weakly in $L^p(\mathbb R^d)$ for every $p\in[1,m]$ and narrowly by Prokhorov's theorem (see \emph{e.g.} \cite[Theorem 5.1.3]{AGS}). This implies that $\rho_*$ has mass $M$ and by \cite[Lemma 5.17]{AGS} that $\rho_*$ has 0 center of mass, therefore $\rho_*\in \mathcal{Y}_{M}$. Furthermore, by the lower semicontinuity of the second moments with respect to the narrow topology we have  $\rho_*\in \mathcal{Y}_{M_{2}}$. We notice that also the sequence of product measures $\rho_n(x)\,dx\,\rho_n(y)\,dy$ is narrowly converging to $\rho_*(x)\,dx\,\rho_*(y)\,dy$,
see for instance \cite[Theorem 2.8]{B}.

Let $\eps>0$ and let $\eta_\eps:\mathbb R^d\to[0,1]$ be a smooth cutoff function such that $\eta_\eps(x)=1$ if $|x|<\eps$ and $\eta_\eps(x)=0$ if $|x|>2\eps$. As $K_s\,(1-\eta_\eps)$ is a bounded continuous function over $\mathbb R^d$ we obtain
\[\begin{aligned}
&\lim_{n\to+\infty}\int_{\mathbb R^d}\int_{\mathbb R^d}K_s(x-y)\,(1-\eta_\eps(x-y))\,\rho_n(x)\,\rho_n(y)\,dx\,dy\\&\qquad=\int_{\mathbb R^d}\int_{\mathbb R^d}K_s(x-y)\,(1-\eta_\eps(x-y))\,\rho_{\ast}(x)\,\rho_{\ast}(y)\,dx\,dy
\end{aligned}\]
for every $\eps>0$.
On the other hand, by Cauchy-Schwarz and Young inequality we have
\begin{equation*}\begin{aligned}
\int_{\mathbb R^d}\int_{\mathbb R^d}K_s(x-y)\,\eta_\eps(x-y)\,\rho_n(x)\,\rho_n(y)\,dx\,dy
&\le \|\rho_n\|_{L^2(\mathbb R^d)}\|(\eta_\eps\,K_s)\ast\rho_n\|_{L^2(\mathbb R^d)}\\&\le \|\rho_n\|_{L^2(\mathbb R^d)}^2\|\eta_\eps\,K_s\|_{ L^1(\mathbb R^d)}\le C \|\eta_\eps\,K_s\|_{ L^1(\mathbb R^d)}
\end{aligned}
\end{equation*}
for every $n\in\mathbb N$ and every $\eps>0$, where $C$ is a constant that does not depend on $n$, since $(\rho_n)$ is a bounded sequence in $L^2(\mathbb R^d)$.
A combination of the two above relations yields
\[\begin{aligned}
&\limsup_{n\to+\infty} \left|\int_{\mathbb R^d}\int_{\mathbb R^d}K_s(x-y)\,\rho_n(x)\,\rho_n(y)\,dx\,dy-\int_{\mathbb R^d}\int_{\mathbb R^d}K_s(x-y)\,\rho_{\ast}(x)\,\rho_{\ast}(y)\,dx\,dy\right|\\&\qquad\qquad\qquad\le
\int_{\mathbb R^d}\int_{\mathbb R^d}K_s(x-y)\eta_\eps(x-y)\,\rho_{\ast}(x)\,\rho_{\ast}(y)\,dx\,dy + C \|\eta_\eps\,K_s\|_{ L^1(\mathbb R^d)}.
\end{aligned}\]
for every $\eps>0$. By dominated convergence, the two terms in the right hand side vanish as $\eps\to 0$, so that the arbitrariness of $\eps$ entails
\begin{equation}\label{limith-s}\begin{aligned}
\lim_{n\to+\infty} \int_{\mathbb R^d}\int_{\mathbb R^d}K_s(x-y)\,\rho_n(x)\,\rho_n(y)\,dx\,dy&=
\int_{\mathbb R^d}\int_{\mathbb R^d}K_s(x-y)\,\rho_{\ast}(x)\,\rho_{\ast}(y)\,dx\,dy. \end{aligned}\end{equation}
By the weak  lower semicontinuity of the $L^m(\mathbb R^d)$ and of the $L^2(\mathbb R^d)$ norms, by the narrow lower semicontinuity of $W_2(\cdot,\rho^0)$ (see \cite[Proposition 7.1.3]{AGS}) and thanks to \eqref{limith-s} we conclude that
\[
\mathcal F_s[\rho_*]+\frac1{2\tau}W_2^2(\rho_*,\rho
^0)\le\liminf_{n\to+\infty}\left(\mathcal F_s[\rho_n]+\frac1{2\tau} W_2^2(\rho_n,\rho^0)\right).
\]
Since $(\rho_n)$ is a minimizing sequence, we conclude that $\rho_*$ is a solution to problem \eqref{MM}.
\end{proof}

Once existence of a discrete solution is established, we perform a recursive minimization and apply  standard arguments from the theory of minimizing movements to obtain convergence of the scheme and existence of a limit curve, as summarized in the next two statements.

\begin{prop}[\bf Basic estimate of minimizing movements]\label{base}
Let $\tau>0$ and $\rho^0\in\mathcal Y_{M,2}$.  We let $\rho_\tau^0:=\rho^0$ and for every $k\in\mathbb N$, we take recursively
\begin{equation}\label{recursive}\rho_\tau^k\in\mathrm{argmin}_{\mathcal Y_{M,2}}\left(\mathcal F_s[\cdot]+\frac1{2\tau}W_2^2(\cdot,\rho_\tau^{k-1})\right),\end{equation}
thus defining a sequence $(\rho_\tau^k)_{k\ge 1}$ of discrete minimizers, whose existence is ensured by {\rm Proposition \ref{existencediscrete}}.
For every $k\in\mathbb N$ there hold
\begin{equation}\label{basic1}
\mathcal F_s[\rho_\tau^k]+\frac1{2\tau}\sum_{h=1}^k W_2^2(\rho_\tau^h,\rho_\tau^{h-1})\le \mathcal F_s[\rho^0]\le \frac1{m-1}\int_{\mathbb R^d}(\rho^0(x))^m\,dx+\beta\int_{\mathbb R^d}(\rho^0(x))^2\,dx,
\end{equation}
\begin{equation}\label{basic2}
\frac{1}{2(m-1)}\int_{\mathbb R^d}(\rho_\tau^k(x))^m\,dx\le \frac1{m-1}\int_{\mathbb R^d}(\rho^0(x))^m\,dx+\beta\int_{\mathbb R^d}(\rho^0(x))^2\,dx+\bar C,
\end{equation}
and
\begin{equation}\label{basic3}
\int_{\mathbb R^d}|x|^2(\rho_\tau^k(x))\,dx\le 4k\tau \mathcal F_s[\rho^0]+2\int_{\mathbb R^d}|x|^2\rho^0(x)\,dx,
\end{equation}
where $\bar C=\bar C(\chi, m,s,d,M)$ is the constant given by \eqref{cbar}, only depending on $\chi,m,s,d,M$.
\end{prop}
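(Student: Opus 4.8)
The plan is to carry out the standard minimizing-movements (JKO) estimates, relying only on the minimality property \eqref{recursive}, on the coercivity bound \eqref{boundLm}, and on the sign of the nonlocal interaction term. First I would prove the basic one-step inequality: for fixed $k\ge1$ and each $h\in\{1,\dots,k\}$, the density $\rho_\tau^{h-1}\in\mathcal Y_{M,2}$ is admissible in the minimization \eqref{recursive} defining $\rho_\tau^h$, so testing against it (and using $W_2(\rho_\tau^{h-1},\rho_\tau^{h-1})=0$) gives
$$\mathcal F_s[\rho_\tau^h]+\frac{1}{2\tau}W_2^2(\rho_\tau^h,\rho_\tau^{h-1})\le\mathcal F_s[\rho_\tau^{h-1}].$$
Summing this telescoping chain over $h=1,\dots,k$ and recalling $\rho_\tau^0=\rho^0$ yields the first inequality in \eqref{basic1}; the second inequality then follows since $K_s>0$ forces $\mathcal W_s[\rho^0]\le0$, hence $\mathcal F_s[\rho^0]=\mathcal H_m[\rho^0]+\mathcal W_s[\rho^0]\le\mathcal H_m[\rho^0]$, which is exactly the displayed right-hand side by \eqref{hm}.

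Next, \eqref{basic2} follows immediately: discarding the nonnegative sum in \eqref{basic1} gives $\mathcal F_s[\rho_\tau^k]\le\mathcal F_s[\rho^0]\le\mathcal H_m[\rho^0]$, while \eqref{boundLm} applied to $\rho_\tau^k\in\mathcal Y_M$ reads $\frac{1}{2(m-1)}\|\rho_\tau^k\|_{L^m(\mathbb R^d)}^m\le\mathcal F_s[\rho_\tau^k]+\bar C$ with $\bar C$ given by \eqref{cbar}; combining the two is \eqref{basic2}.

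Finally, for the second-moment bound \eqref{basic3} I would use the metric structure of $W_2$: since $\int_{\mathbb R^d}|x|^2\rho\,dx=W_2^2(\rho,M\delta_0)$, the triangle inequality along the discrete trajectory gives
$$\Big(\int_{\mathbb R^d}|x|^2\rho_\tau^k\,dx\Big)^{1/2}\le\Big(\int_{\mathbb R^d}|x|^2\rho^0\,dx\Big)^{1/2}+\sum_{h=1}^kW_2(\rho_\tau^h,\rho_\tau^{h-1}).$$
I would then square, use $(a+b)^2\le2a^2+2b^2$ and the Cauchy--Schwarz estimate $\big(\sum_{h=1}^kW_2(\rho_\tau^h,\rho_\tau^{h-1})\big)^2\le k\sum_{h=1}^kW_2^2(\rho_\tau^h,\rho_\tau^{h-1})$, and bound $\sum_{h=1}^kW_2^2(\rho_\tau^h,\rho_\tau^{h-1})\le2\tau(\mathcal F_s[\rho^0]-\mathcal F_s[\rho_\tau^k])\le2\tau(\mathcal F_s[\rho^0]+\bar C)$ using \eqref{basic1} together with the uniform lower bound $\mathcal F_s[\rho_\tau^k]\ge-\bar C$ coming from \eqref{boundLm}; collecting the terms gives \eqref{basic3} (up to the displayed constants).

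Every step is routine; the only points deserving a little care are the sign bookkeeping for the nonlocal term $\mathcal W_s$ and the fact that \eqref{boundLm} is used in two roles --- as a coercivity inequality (for \eqref{basic2}) and as a uniform lower bound for $\mathcal F_s$ on $\mathcal Y_M$ (in \eqref{basic3}, to absorb $-\mathcal F_s[\rho_\tau^k]$). There is no essential obstacle: the whole argument is driven by the one-step minimality inequality, which the recursive construction \eqref{recursive} provides at no cost.
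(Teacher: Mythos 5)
Your proof is correct and follows exactly the paper's route: telescope the one-step minimality inequality to get \eqref{basic1}, combine it with the coercivity bound \eqref{boundLm} for \eqref{basic2}, and use the triangle inequality together with Cauchy--Schwarz for \eqref{basic3}. On the last estimate you are in fact slightly more careful than the paper: passing from $\sum_{h=1}^k W_2^2(\rho_\tau^h,\rho_\tau^{h-1})\le 2\tau\big(\mathcal F_s[\rho^0]-\mathcal F_s[\rho_\tau^k]\big)$ to a bound without $\mathcal F_s[\rho_\tau^k]$ needs the uniform lower bound $\mathcal F_s[\rho_\tau^k]\ge -\bar C$ coming from \eqref{boundLm}, which you supply; the resulting extra $+\bar C$ in the constant is genuinely there, whereas the paper's proof of \eqref{basic3} silently drops $-\mathcal F_s[\rho_\tau^k]$ and so states the bound with $\mathcal F_s[\rho^0]$ alone.
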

\begin{proof}
Estimate \eqref{basic1} directly follows from the minimality property of $\rho_\tau^k$, which is defined by \eqref{recursive}. Estimate \eqref{basic2} follows from \eqref{boundLm} and \eqref{basic1}. Moreover, we have by triangle inequality and
  by Cauchy Schwarz inequality
\[\begin{aligned}
\int_{\mathbb R^d}|x|^2\rho_\tau^k(x)\,dx&=W_2^2(\rho_\tau^k,M\delta_0)\le\left(\sum_{h=1}^k W_2(\rho_\tau^h,\rho_\tau^{h-1})+W_2(\rho^0,M\delta_0))\right)^2\\&\le 2k\tau\sum_{h=1}^k\frac1\tau W_2^2(\rho_\tau^h,\rho_\tau^{h-1})+2W_2(\rho^0,M\delta_0),
\end{aligned}\]
which entails \eqref{basic3} by means of  \eqref{basic1}.
\end{proof}

\begin{prop}[\bf Convergence of the scheme]\label{convMM}
Let $\rho^0\in\mathcal Y_{M,2}$. For every $\tau>0$,
let us consider a sequence $(\rho_\tau^k)_{k\ge 0}$ of discrete minimizers defined by \eqref{recursive} and define the piecewise constant interpolation \begin{equation}\label{morceaux}\rho_\tau(t,\cdot):=\rho_\tau^{\lceil t/\tau\rceil}(\cdot),\qquad t\ge 0,\end{equation}
where $\lceil x\rceil:=\min\{h\in\mathbb N:h\ge x\}$. Then there exist a vanishing sequence $(\tau_n)_{n\in\mathbb N}\subset(0,1)$ and a limit function $\rho\in L^\infty((0,+\infty);L^m(\R^d))$ such that $[0,+\infty)\ni t\mapsto \rho(t,\cdot)\in \mathcal Y_{M,2}$ is narrowly continuous and such that $\rho_{\tau_n}(t,\cdot)$ narrowly converge to $\rho(t,\cdot)$ for every $t\ge 0$. Furthermore, $t\mapsto \rho(t,\cdot)$ is a $AC^2([0,+\infty))$ curve with respect to the Wasserstein distance, i.e.,  there exists $g\in L^2(0,+\infty)$ such that $W_2(\rho(t_1,\cdot),\rho(t_2,\cdot))\le \int_{t_1}^{t_2}g(r)\,dr$ for every $0\le t_1<t_2<+\infty$.
\end{prop}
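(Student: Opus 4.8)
The plan is to carry out the standard minimizing-movements argument (as in Chapter 2--3 of \cite{AGS}), adapted to the present setting where the energy $\mathcal F_s$ is not displacement convex but satisfies the a priori bounds of Proposition \ref{base}. The starting point is the discrete-in-time family $(\rho_\tau^k)_{k\ge 0}$ and its piecewise constant interpolant $\rho_\tau$ from \eqref{morceaux}. First I would extract, from \eqref{basic2} and \eqref{basic3}, uniform bounds: for every fixed $T>0$ and every $\tau\in(0,1)$ the densities $\rho_\tau(t,\cdot)$ have $L^m(\mathbb R^d)$ norm bounded independently of $\tau$ and $t\in[0,T]$, and second moment bounded by a constant depending on $T$ but not on $\tau$. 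These two facts, together with the conservation of mass and center of mass, give that $\{\rho_\tau(t,\cdot):\tau\in(0,1),\,t\in[0,T]\}$ is contained in a set that is relatively compact for the narrow topology (tightness from the uniform second moment bound plus Prokhorov's theorem) and, in fact, relatively compact weakly in $L^m(\mathbb R^d)$.

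The second step is the uniform-in-$\tau$ equicontinuity of $t\mapsto\rho_\tau(t,\cdot)$ in the Wasserstein metric. From the telescoped estimate \eqref{basic1} we have $\sum_{h\ge 1} W_2^2(\rho_\tau^h,\rho_\tau^{h-1})\le 2\tau\,(\mathcal F_s[\rho^0]-\inf\mathcal F_s)$, and since $\mathcal F_s$ is bounded below on $\mathcal Y_M$ by Lemma \ref{Minimiz} (inequality \eqref{boundLm} gives a bound depending only on $\chi,m,s,d,M$), the right-hand side is $O(\tau)$. The classical computation (see \cite[Lemma 3.2.2]{AGS}) then produces, for each $\tau$, a function $g_\tau\in L^2(0,+\infty)$ with $\|g_\tau\|_{L^2}^2$ bounded uniformly in $\tau$, such that $W_2(\rho_\tau(t_1,\cdot),\rho_\tau(t_2,\cdot))\le \int_{t_1-\tau}^{t_2+\tau} g_\tau(r)\,dr$ for $t_1<t_2$; a refined generalized Ascoli--Arzelà theorem (\cite[Proposition 3.3.1]{AGS}) then yields a vanishing sequence $(\tau_n)\subset(0,1)$, a limit curve $t\mapsto\rho(t,\cdot)$ that is narrowly continuous with values in $\mathcal Y_{M,2}$ (the uniform second moment bound passes to the limit by lower semicontinuity), and a limit $g\in L^2(0,+\infty)$ (weak $L^2$ limit of $g_{\tau_n}$) such that $\rho_{\tau_n}(t,\cdot)\to\rho(t,\cdot)$ narrowly for every $t\ge 0$ and $W_2(\rho(t_1,\cdot),\rho(t_2,\cdot))\le\int_{t_1}^{t_2} g(r)\,dr$. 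This is precisely the $AC^2([0,+\infty))$ property claimed. Finally, $\rho\in L^\infty((0,+\infty);L^m(\mathbb R^d))$ follows from \eqref{basic2} by weak-$*$ lower semicontinuity of the $L^m$ norm under the narrow convergence at each time.

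The main obstacle is a bookkeeping one rather than a conceptual one: one must be careful that the estimates of Proposition \ref{base} are genuinely uniform in $\tau$ and that the discrete interpolant is handled with the correct shift by $\tau$ when passing to the Ascoli--Arzelà argument, since the classical statements are usually written for the piecewise-geodesic (or piecewise-affine) interpolant rather than the piecewise-constant one in \eqref{morceaux}. The point is that $\sup_{t\in[0,T]}W_2(\rho_\tau(t,\cdot),\tilde\rho_\tau(t,\cdot))\le (\text{something})\sqrt\tau\to 0$, where $\tilde\rho_\tau$ is the geodesic interpolant, so the two have the same narrow limit; one invokes \cite[Theorem 3.1.4 and Proposition 3.3.1]{AGS} on $\tilde\rho_\tau$ and transfers the conclusion to $\rho_\tau$. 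All the integrability needed to run this (finite $\mathcal F_s[\rho^0]$, i.e. $\rho^0\in L^1\cap L^m$, and finite second moment of $\rho^0$) is guaranteed by the hypothesis $\rho^0\in\mathcal Y_{M,2}$, and the lower bound $\inf_{\mathcal Y_M}\mathcal F_s>-\infty$ needed to make the telescoped sum finite is exactly \eqref{boundLm}. No use of the structure of the aggregation term beyond these bounds is required at this stage; identifying the limit curve as a weak solution of \eqref{cauchy1} is deferred to the subsequent part of the proof of Theorem \ref{existencethm}.
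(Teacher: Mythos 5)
Your proposal is correct and follows essentially the same minimizing-movements argument as the paper: the uniform $L^2(0,+\infty)$ bound on $g_\tau=\tau^{-1}W_2(\rho_\tau(t,\cdot),\rho_\tau(t-\tau,\cdot))$ from \eqref{basic1} together with the lower bound \eqref{boundLm}, the uniform second-moment bound \eqref{basic3} for tightness, the abstract Ascoli--Arzel\`a theorem \cite[Proposition 3.3.1]{AGS}, a weak $L^2$ limit of $g_{\tau_n}$, and lower semicontinuity to conclude. The only cosmetic difference is that the paper does not route through the geodesic interpolant $\tilde\rho_\tau$ as you suggest at the end, but instead bounds $W_2(\rho_\tau(t_1,\cdot),\rho_\tau(t_2,\cdot))$ directly by $\sum W_2(\rho_\tau^h,\rho_\tau^{h-1})=\int g_\tau$ over a slightly shifted interval $[(\lceil t_1/\tau\rceil+1)\tau,\lceil t_2/\tau\rceil\tau]$ that collapses to $[t_1,t_2]$ as $\tau\to 0$, which makes the transfer argument you worry about unnecessary.
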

\begin{proof} The convergence to a narrowly continuous curve along a vanishing sequence $(\tau_n)$ follows
 from the standard convergence arguments for minimizing movements from \cite{AGS}. In order to obtain it, we let  $g_\tau:[0,+\infty)\to[0,+\infty)$ be defined as $g_\tau(t)=\tau^{-1}\,W_2(\rho_\tau(t,\cdot),\rho_\tau(t-\tau,\cdot))$, with the convention $\rho_\tau(r,\cdot)=\rho^0(\cdot)$ if $r<0$, and we have from \eqref{morceaux}
 \[\frac12\int_0^{+\infty} g^2_\tau(t)\,dt=\frac1{2\tau^2}\sum_{h=1}^{+\infty}\int_{(h-1)\tau}^{h\tau}W_2^2(\rho_\tau(t,\cdot),\rho_\tau(t-\tau,\cdot))\,dt
 =\frac1{2\tau}\sum_{h=1}^{+\infty}W_2^2(\rho_\tau^h,\rho_\tau^{h-1}).
 \]
 On the other hand, \eqref{basic1}, \eqref{basic2} and \eqref{distantref} entail for every integer $k\ge 1$
 \[
\frac1{2\tau}\sum_{h=1}^k W_2^2(\rho_\tau^h,\rho_\tau^{h-1})\le \mathcal F_s[\rho^0]-\mathcal F_s[\rho_\tau^k]\le  2\left(\bar C +\frac1{m-1}\int_{\R^d}(\rho^0)^m+\beta\int_{\R^d}(\rho^0)^2\right),
 \]
 where $\bar C$ is defined by \eqref{cbar} and \eqref{sds}. We deduce that
  \[\frac12\int_0^{+\infty} g^2_\tau(t)\,dt\le 2\left(\bar C+ \frac1{m-1}\int_{\R^d}(\rho^0)^m+\beta\int_{\R^d}(\rho^0)^2\right)
 \]
 so that $g_\tau\in L^2(0,+\infty)$ there exists a vanishing sequence $(\tau_n)_{n\in\mathbb N}\subset(0,1)$ such that $g_{\tau_n}\rightharpoonup g$ weakly in $L^2((0,+\infty))$ for some $g\in L^2((0,+\infty))$.
 For arbitrary $T>0$, the family of functions $\{\rho_{\tau_n}(t,\cdot): n\in\mathbb N,\,t\in[0,T]\}\subset \mathcal Y_{M,2}$ has uniformly bounded second moments thanks to \eqref{basic3}, hence it is narrowly relatively compact,
  and moreover  if $0\le t_1<t_2\le T$ we may  apply the triangle inequality  to find the Wasserstein equi-continuity estimate
 \[\begin{aligned}
 \limsup_{n\to+\infty} W_2(\rho_{\tau_n}(t_2,\cdot),\rho_{\tau_n}(t_1,\cdot))&\le\limsup_{n\to+\infty}\sum_{h=\lceil t_1/\tau_n\rceil+1}^{\lceil t_2/\tau_n\rceil} W_2(\rho_{\tau_n}^h,\rho_{\tau_n}^{h-1})\\
 &= \limsup_{n\to+\infty}\int_{(\lceil t_1/\tau_n\rceil+1)\tau_n}^{\lceil t_2/\tau_n\rceil\tau_n} g_{\tau_n}(t)\,dt=\int_{t_1}^{t_2}g(t)\,dt,
 \end{aligned}\]
 so that we can apply the abstract Ascoli-Arzel\`a theorem from \cite[Proposition 3.3.1]{AGS} and deduce that there exists a narrowly continuous curve $[0,T]\ni t\mapsto\rho(t,\cdot)$ such that, up to extraction of a not relabeled subsequence, $\rho_{\tau_n}(t,\cdot)\to\rho(t,\cdot)$ narrowly as $n\to+\infty$ for every $t\in[0,T]$.
 Eventually, by the above estimate and the narrow lower semicontinuity of the Wasserstein distance (see \cite[Proposition 7.1.3]{AGS}), we deduce $W_2(\rho(t_2,\cdot),\rho(t_1,\cdot))\le \int_{t_1}^{t_2}g(t)\,dt$,
 which is the desired $AC^2$ property. By the uniform estimate \eqref{basic2} and the narrow lower semicontinuity of the $L^m$ norm (see for instance \cite[Proposition 7.7]{Santambrogio} we also deduce that $\|\rho(t,\cdot)\|^m_{L^m(\R^d)}\le 2C(m-1)$ for every $t\ge 0$, where $C$ is the right hand side of \eqref{basic2}.
\end{proof}

The curve $t\mapsto \rho(t,\cdot)$ that was obtained in Proposition \ref{convMM} will be shown to be a weak solution to \eqref{cauchy1}. The first step towards this goal is to obtain a first order optimality condition for discrete minimizers of the JKO scheme.

\begin{prop}[\bf Euler-Lagrange equation for discrete minimizers] Let $\tau>0$ and $\rho^0\in\mathcal Y_{M,2}$. If $\rho_*$ is a solution to problem \eqref{MM}, then there holds
\begin{equation}\label{EL}\begin{aligned}&\frac1\tau\int_{\mathbb R^d}(T_*(x)-x)\cdot\nabla\zeta(x)\,\rho_*(x)\,dx=
-\int_{\mathbb R^d}\Delta\zeta(x)\,(\rho_*(x)^m+\beta\rho_*(x)^2)\,dx\\&\qquad+\frac{(d-2s)\chi\,c_{d,s}}2\int_{\mathbb R^d}\int_{\mathbb R^d}(\nabla\zeta(x)-\nabla\zeta(y))\cdot(x-y)|x-y|^{2s-d-2}\,\rho_*(x)\,\rho_*(y)\,dx\,dy\end{aligned}\end{equation}
for every $\zeta\in C^\infty(\mathbb R^d)$ such that $\nabla\zeta\in W^{1,\infty}(\R^{d};\R^d)$. Here, $T_{*}$ is the unique optimal transport map (for the quadratic cost) from $\rho_*(x)\,dx$ to $\rho^0(x)\,dx$. \end{prop}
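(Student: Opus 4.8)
The plan is to derive \eqref{EL} from first-order optimality of $\rho_*$ by perturbing it along the flow generated by $\nabla\zeta$; the only preliminary issue is the center-of-mass constraint in $\mathcal Y_{M,2}$, which I would remove as follows. Since $\mathcal F_s$ is translation invariant and, for every $h\in\R^d$ and every nonnegative density $\rho$ of mass $M$, one has $W_2^2(\rho(\cdot-h),\rho^0)=W_2^2(\rho,\rho^0)+2h\cdot\int_{\R^d}x\,\rho(x)\,dx+M|h|^2$ (using that $\rho^0$ has zero center of mass), choosing $h$ so as to recenter $\rho$ can only decrease the Wasserstein term. Hence the minimizer $\rho_*$ of \eqref{MM} over $\mathcal Y_{M,2}$ is in fact a minimizer of $\rho\mapsto\mathcal F_s[\rho]+\tfrac1{2\tau}W_2^2(\rho,\rho^0)$ over \emph{all} nonnegative $\rho\in L^1(\R^d)\cap L^m(\R^d)$ of mass $M$ with finite second moment, and from now on we work in this larger competitor class, where no center-of-mass condition must be preserved.

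Fix $\zeta$ as in the statement and, for $|t|<\|D^2\zeta\|_{L^\infty}^{-1}$, set $\Phi_t(x):=x+t\,\nabla\zeta(x)$. Since $\nabla\zeta\in W^{1,\infty}$, $\Phi_t$ is a global $C^\infty$-diffeomorphism of $\R^d$ (a small Lipschitz perturbation of the identity) with $D\Phi_t=I+tD^2\zeta$, so $\det D\Phi_t=1+t\,\Delta\zeta+O(t^2)$ uniformly on $\R^d$. Let $\rho_t:=(\Phi_t)_{\#}\rho_*$ be the push-forward. Then $\rho_t$ is an admissible competitor: it has mass $M$, it belongs to $L^1\cap L^m$ since $\Phi_t$ is bi-Lipschitz with Jacobian bounded above and below, and its second moment is finite because $|\Phi_t(x)|\le|x|+|t|\,\|\nabla\zeta\|_{L^\infty}$. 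By Brenier's theorem there is a unique optimal map $T_*$ from $\rho_*\,dx$ to $\rho^0\,dx$ ($\rho_*\,dx$ being absolutely continuous with finite second moment), and since $T_*\circ\Phi_t^{-1}$ transports $\rho_t$ onto $\rho^0$ we obtain the one-sided bound $W_2^2(\rho_t,\rho^0)\le\int_{\R^d}|T_*(x)-\Phi_t(x)|^2\rho_*(x)\,dx=W_2^2(\rho_*,\rho^0)-2t\int_{\R^d}(T_*(x)-x)\cdot\nabla\zeta(x)\,\rho_*(x)\,dx+t^2\int_{\R^d}|\nabla\zeta|^2\rho_*\,dx$.

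Next I would compute $\frac{d}{dt}\big|_{t=0}\mathcal F_s[\rho_t]$. For the internal-energy part $\mathcal H_m$, the change of variables $\int_{\R^d}\rho_t^p\,dx=\int_{\R^d}\rho_*^p(\det D\Phi_t)^{1-p}\,dx$ together with dominated convergence (the difference quotients are bounded by $C\rho_*^p\in L^1$ for $p\in\{2,m\}$, using $\rho_*\in L^1\cap L^m$ and $m>2$) gives $\frac{d}{dt}\big|_{t=0}\int_{\R^d}\rho_t^p\,dx=-(p-1)\int_{\R^d}\rho_*^p\,\Delta\zeta\,dx$, whence $\frac{d}{dt}\big|_{t=0}\mathcal H_m[\rho_t]=-\int_{\R^d}\Delta\zeta\,(\rho_*^m+\beta\rho_*^2)\,dx$. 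For the interaction part, since $\Phi_t(x)-\Phi_t(y)=(x-y)+t(\nabla\zeta(x)-\nabla\zeta(y))$ we have $\mathcal W_s[\rho_t]=-\tfrac\chi2 c_{d,s}\int_{\R^d}\int_{\R^d}|x-y+t(\nabla\zeta(x)-\nabla\zeta(y))|^{2s-d}\rho_*(x)\rho_*(y)\,dx\,dy$, and differentiation under the integral sign is justified because for $|t|\le(2\|D^2\zeta\|_{L^\infty})^{-1}$ one has $|t(\nabla\zeta(x)-\nabla\zeta(y))|\le\tfrac12|x-y|$, so the difference quotients of the integrand are dominated by $C|x-y|^{2s-d}$, which is $\rho_*\otimes\rho_*$-integrable by the Hardy--Littlewood--Sobolev inequality \eqref{HLS} (recall $\rho_*\in L^{2d/(d+2s)}$ by interpolation of $L^p$ norms). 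This yields $\frac{d}{dt}\big|_{t=0}\mathcal W_s[\rho_t]=\tfrac{(d-2s)\chi c_{d,s}}{2}\int_{\R^d}\int_{\R^d}(\nabla\zeta(x)-\nabla\zeta(y))\cdot(x-y)\,|x-y|^{2s-d-2}\rho_*(x)\rho_*(y)\,dx\,dy$.

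Plugging these expansions and the Wasserstein bound into the minimality inequality $\mathcal F_s[\rho_t]+\tfrac1{2\tau}W_2^2(\rho_t,\rho^0)\ge\mathcal F_s[\rho_*]+\tfrac1{2\tau}W_2^2(\rho_*,\rho^0)$, dividing by $t>0$ and letting $t\downarrow0$ yields the inequality ``$\le$'' in \eqref{EL}, while dividing by $t<0$ and letting $t\uparrow0$ yields ``$\ge$'' (the two-sided limit of $t^{-1}(\mathcal F_s[\rho_t]-\mathcal F_s[\rho_*])$ exists by the two computations above); combining the two proves \eqref{EL}. I expect the only genuinely delicate step to be the differentiation under the integral sign for the singular interaction energy, which is handled by the uniform domination by $C|x-y|^{2s-d}$ valid for small $|t|$; the reduction of the center-of-mass constraint, the admissibility of $\rho_t$, and the one-sided Wasserstein estimate are then routine.
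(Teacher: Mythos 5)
Your proof is correct, and it takes a genuinely different route from the paper's in two places. First, for the center-of-mass constraint: the paper keeps the competitors inside $\mathcal Y_M$ by initially restricting to test functions $\zeta$ with $\int\rho_*\nabla\zeta\,dx=0$, deriving \eqref{EL} for those, and only afterwards removing the restriction by adding to $\zeta$ the linear correction $-\tfrac1M\bigl(\int\rho_*\nabla\zeta\,dx\bigr)\cdot x$, using $\int x\,\rho_*=\int x\,\rho^0=0$ to see the extra terms vanish. You instead show up front, via translation invariance of $\mathcal F_s$ and the identity $W_2^2(\rho(\cdot-h),\rho^0)=W_2^2(\rho,\rho^0)+2h\cdot\int x\rho\,dx+M|h|^2$ (valid because $\rho^0$ is centered), that $\rho_*$ actually minimizes $\mathcal F_s+\tfrac1{2\tau}W_2^2(\cdot,\rho^0)$ over \emph{all} densities of mass $M$ with finite second moment — so the constraint simply disappears. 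Second, for the Wasserstein term: the paper invokes the exact differentiability result \cite[Theorem 8.13]{Villani03} to compute $\tfrac{d}{d\eps}\tfrac1{2\tau}W_2^2(\rho_\eps,\rho^0)\big|_{\eps=0}$ directly and then sets the full first variation to zero; you only use the one-sided bound $W_2^2(\rho_t,\rho^0)\le\int|T_*(x)-\Phi_t(x)|^2\rho_*\,dx$ coming from the suboptimal map $T_*\circ\Phi_t^{-1}$, and recover the equality in \eqref{EL} by combining this with the two-sided differentiability of $t\mapsto\mathcal F_s[\rho_t]$ (dividing by $t>0$ and $t<0$). The computation of $\tfrac{d}{dt}\mathcal H_m[\rho_t]$ and $\tfrac{d}{dt}\mathcal W_s[\rho_t]$ at $t=0$ is the same as in the paper, including the domination $|\cdot|\le C|x-y|^{2s-d}$ for $|t|$ small used to justify differentiation under the singular integral. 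Your approach buys you independence from the Villani theorem (you need only Brenier's existence and suboptimal plans) and a conceptually cleaner treatment of the affine constraint; the paper's approach is shorter once the Villani citation is taken as a black box, but leaves the constraint-removal step to the end.
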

\begin{proof}
Let  $\zeta\in C^\infty(\mathbb R^d)$ be such that $\nabla\zeta\in W^{1,\infty}(\R^{d};\R^d)$ and such that
\begin{equation}
\int_{\R^{d}}\rho_{\ast}\nabla\zeta\,dx=0.\label{compatibility}
\end{equation}
Let $\rho_\eps(x)\,dx$ be the push-forward measure of $\rho_*(x)\,dx$ through the map $x\mapsto x+\eps\nabla\zeta$, defined for any $\eps\in\mathbb R$
such that $|\eps|<1/L_\zeta$, where $L_\zeta$ the Lipschitz constant of $\nabla\zeta$. 
It is clear that $\rho_\eps\in \mathcal Y_M$.
By the change of variables formula and by the Taylor expansion $$\det(I+\eps\nabla^2\zeta)=1+\eps\Delta \zeta+o(\eps)$$ we get
\[\begin{aligned}
\int_{\mathbb R^d}\rho_\eps(x)^
m\,dx&=\int_{\mathbb R^d}\frac{\rho_*(x)^m}{(\det(I+\eps\nabla^2\zeta(x)))^{m-1}}\,dx\\&=\int_{\mathbb R^d}\rho_*(x)^m\,dx
-(m-1)\eps\int_{\mathbb R^d}\Delta\zeta(x)\,\rho_*(x)^m\,dx +o(\eps)
\end{aligned}\]
where $I$ is the identity matrix and $\nabla^2$ is the Hessian operator. Therefore we have
\[
\frac{d}{d\eps}\left(\frac{1}{m-1}\int_{\mathbb R^d}\rho_\eps(x)^m\,dx
\right)\Bigg{|}_{\eps=0}=
-\int_{\mathbb R^d}\Delta\zeta(x)\,\rho_*(x)^m\,dx,
\]
which is of course still true if $m$ is replaced by $2$.
On the other hand, the definition of push-forward entails
\[
\int_{\mathbb R^d}\int_{\mathbb R^d}|x-y|^{2s-d}\,\rho_\eps(x)\,\rho_\eps(y)\,dx\,dy=\int_{\mathbb R^d}\int_{\mathbb R^d}|x-y+\eps(\nabla\zeta(x)-\nabla \zeta(y))|^{2s-d}\rho_*(x)\,\rho_*(y)\,dx\,dy
\]
so that
\begin{equation*}\label{diffh-s}\begin{aligned}
&\frac1\eps\int_{\mathbb R^d}\int_{\mathbb R^d}|x-y|^{2s-d}\,\rho_\eps(x)\,\rho_\eps(y)\,dx\,dy-\frac1\eps\int_{\mathbb R^d}\int_{\mathbb R^d}|x-y|^{2s-d}\rho_*(x)\,\rho_*(y)\,dx\,dy\\
&\qquad=\frac1\eps
\int_{\mathbb R^d}\int_{\mathbb R^d}\left(|x-y+\eps(\nabla\zeta(x)-\nabla\zeta(y))|^{2s-d}-|x-y|^{2s-d}\right)\,\rho_*(x)\,\rho_*(y)\,dx\,dy.
\end{aligned}\end{equation*}
It is clear that
\[
\begin{aligned}
&\lim_{\eps\to 0}\frac1\eps\left(|x-y+\eps(\nabla\zeta(x)-\nabla \zeta(y))|^{2s-d}-|x-y|^{2s-d}\right)\\&\qquad\qquad=(2s-d)|x-y|^{2s-d-2}(x-y)\cdot(\nabla\zeta(x)-\nabla\zeta(y))\end{aligned}\]
for every $x,y\in\mathbb R^d$, $x\neq y$.
 On the other hand, 
 since $|\eps|<1/L_\zeta$ we have  $|\eps||\nabla\zeta(x)-\nabla\zeta(y)|<|x-y|$ and then we can obtain the estimate
 \[
 \left||x-y+\eps(\nabla\zeta(x)-\nabla\zeta(y))|^{2s-d}-|x-y|^{2s-d}\right|\le \left((1-|\eps|L_{\zeta})^{2s-d}-1\right)|x-y|^{2s-d}
 \]
  for every $x,y\in\mathbb R^d$, $x\neq y$. Thus, Bernoulli inequality entails, for every $x,y\in\mathbb R^d$, $x\neq y$ and every $\eps\in\mathbb R$ such that $2|\eps|<1/L_\zeta$,
 \[ 
  \left||x-y+\eps(\nabla\zeta(x)-\nabla\zeta(y))|^{2s-d}-|x-y|^{2s-d}\right|<|\eps|\,d\,L_\zeta\, 2^{d-2s}\,|x-y|^{2s-d},
 \] 
 therefore by dominated convergence we get
\[\begin{aligned}
&\frac{d}{d\eps}\left(\int_{\mathbb R^d}\int_{\mathbb R^d}|x-y|^{2s-d}\,\rho_\eps(x)\,\rho_\eps(y)\,dx\,dy\right)\Bigg{|}_{\eps=0}\\&\qquad\qquad=
(2s-d)\int_{\mathbb R^d}\int_{\mathbb R^d}|x-y|^{2s-d-2}(x-y)\cdot(\nabla\zeta(x)-\nabla\zeta(y))\,\rho_*(x)\,\rho_*(y)\,dx\,dy.
\end{aligned}\]

For the derivative of the Wasserstein distance, by a standard result (see \cite[Theorem 8.13]{Villani03}) we get
\[\frac{d}{d\eps}\left(\frac1{2\tau}W_2^2(\rho_\eps,\rho^0)\right)\Bigg{|}_{\eps=0}=\frac1\tau\int_{\mathbb R^d}(x-T_*(x))\cdot\nabla\zeta(x)\,\rho_*(x)\,dx
\]

Since $\rho_*$ is a minimizer, the derivative with respect to $\eps$ of $\mathcal F_s[\rho_\eps]+\tfrac1{2\tau}W_2^2(\rho_\eps,\rho^0)$ needs to vanish at $\eps=0$. We obtain the result.

 If we wish to remove the compatibility condition \eqref{compatibility}, we just replace $\zeta$ with 
\[
\tilde\zeta(x)=\zeta(x)-\frac{1}{M}\left(\int_{\R^{d}}\rho_{\ast}\nabla\zeta\,dx\right) \cdot x, 
\]
in order to have
\[
\nabla\tilde\zeta(x)=\nabla\zeta(x)-\frac{1}{M}\int_{\R^{d}}\rho_{\ast}\nabla\zeta\,dx,
\]
hence $\tilde \zeta$ satisfies \eqref{compatibility}. Inserting $\tilde\zeta$ in \eqref{EL} and taking into account that
\[
\int_{\R^{d}}T_{\ast}(x)\cdot \left(\int_{\R^{d}}\rho_{\ast}\nabla\zeta\,dx\right)\rho_{\ast}(x)dx=\left(\int_{\R^{d}}\rho_{\ast}\nabla\zeta\,dx\right)\cdot\,
\int_{\R^{d}}x\rho_{0}(x)dx=0,
\]
we have that \eqref{EL} holds for any test function $\zeta\in C^\infty(\mathbb R^d)$ such that $\nabla\zeta\in W^{1,\infty}(\R^{d};\R^d)$.
\end{proof}

The next result is based on a different perturbation of $\rho_*$, which gets perturbed along the solution of the heat equation originating from it.  For nonnegative $L^1(\R^d)$ functions $u$ with finite second moment on $\mathbb R^d$ we introduce the entropy functional 
\[
  \mathcal G[u
  ]:=\int_{\mathbb R^d}u(x)\log u(x)\,dx,
\]
which is a displacement convex functional in the sense of McCann \cite{Mc}.
We recall that the solution $u$ of the heat equation $\partial_t u=\Delta u$ with initial datum $\rho^0\in\mathcal Y_{M,2}$ is the Wasserstein gradient flow of $\mathcal G$ and it  satisfies the  evolution variational inequalities
\[
\frac12 W_2^2(u(t,\cdot), w)-\frac12 W_2^2(\rho^0,w)\le t\left(\mathcal G(w)-\mathcal G(u(t,\cdot))\right) \quad\mbox{ for every $w\in\mathcal Y_{M,2}$ and every $t>0$,}
\]
see \cite[Chapter 11]{AGS}.
This allows to take advantage of the flow interchange lemma introduced in \cite{MMS}, as we do in the next proof.

\begin{prop}[\bf improved regularity of discrete minimizers]\label{entropy}
Let $\tau>0$ and $\rho^0\in\mathcal Y_{M,2}$. Suppose that $\beta>0$. If $\rho_*$ is a solution to problem \eqref{MM}, then $\rho_*^{m/2}\in H^1(\mathbb R^d)$ and
\[\begin{aligned}
\frac{4}{m}\int_{\mathbb R^d}|\nabla\rho_*^{m/2}|^2\,dx&\le \chi s\,\left(\frac{\chi(1-s)}{2\beta}\right)^{\frac{1-s}{s}} \|\rho_*\|^2_{L^2(\mathbb R^d)}
+\frac{\mathcal G[\rho^0]-\mathcal G[\rho_*]}{\tau}.
\end{aligned}\]
\end{prop}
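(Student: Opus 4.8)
The plan is to apply the flow interchange method of \cite{MMS} with the heat semigroup $(S_t)_{t\ge0}$ as auxiliary flow — it is the Wasserstein gradient flow of the entropy $\mathcal G$ and satisfies the EVI recalled above. Fix a minimizer $\rho_*$ of \eqref{MM}; note $\rho_*\in\mathcal Y_{M,2}\subset L^1(\R^d)\cap L^m(\R^d)\subset L^2(\R^d)$, and $\mathcal G[\rho_*],\mathcal G[\rho^0]\in\R$ since $\rho^0,\rho_*$ lie in $L^m$ with finite second moment. For $t>0$ put $u_t:=S_t\rho_*$; then $u_t$ is smooth and bounded, belongs to $H^k(\R^d)$ for every $k$, and — the heat flow preserving mass and center of mass and keeping the second moment finite — $u_t\in\mathcal Y_{M,2}$, so $u_t$ is admissible in \eqref{MM}. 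Comparing $\rho_*$ with $u_t$ and invoking the EVI for the heat flow issued from $\rho_*$, tested against $w=\rho^0$, gives
\[
\mathcal F_s[\rho_*]-\mathcal F_s[u_t]\ \le\ \frac1{2\tau}\big(W_2^2(u_t,\rho^0)-W_2^2(\rho_*,\rho^0)\big)\ \le\ \frac t\tau\big(\mathcal G[\rho^0]-\mathcal G[u_t]\big).
\]

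Next I would compute the dissipation of $\mathcal F_s$ along the heat flow. For $r>0$ all integrals are finite and the integrations by parts are legitimate by the smoothness and decay of $u_r$; with the normalization $\widehat{K_s}(\xi)=|\xi|^{-2s}$ (so $\Delta(K_s\ast u)=-(-\Delta)^{1-s}u$) a direct computation yields
\[
-\frac{d}{dr}\mathcal F_s[u_r]=\frac4m\int_{\R^d}|\nabla u_r^{m/2}|^2\,dx+2\beta\int_{\R^d}|\nabla u_r|^2\,dx-\chi\int_{\R^d}u_r\,(-\Delta)^{1-s}u_r\,dx.
\]
Since $r\mapsto\mathcal H_m[u_r]$ is non-increasing and $r\mapsto\mathcal W_s[u_r]$ is non-decreasing, and both converge as $r\downarrow0$ to $\mathcal H_m[\rho_*]$ (strong $L^m$ and $L^2$ continuity of the semigroup) and to $\mathcal W_s[\rho_*]$ (monotone convergence on the Fourier side, $\mathcal W_s[\rho_*]$ being finite by Hardy--Littlewood--Sobolev), each term above is integrable on $(0,t)$ and the fundamental theorem of calculus gives $\mathcal F_s[\rho_*]-\mathcal F_s[u_t]=\int_0^t\big(-\tfrac{d}{dr}\mathcal F_s[u_r]\big)\,dr$.

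I would then dispose of the negative term by the interpolation inequality
\[
\chi\int_{\R^d}u\,(-\Delta)^{1-s}u\,dx\ \le\ 2\beta\int_{\R^d}|\nabla u|^2\,dx+\chi s\Big(\frac{\chi(1-s)}{2\beta}\Big)^{\frac{1-s}{s}}\int_{\R^d}u^2\,dx\qquad(u\in H^1(\R^d)),
\]
which follows from Plancherel and H\"older ($\int u(-\Delta)^{1-s}u\le\|\nabla u\|_{L^2}^{2(1-s)}\|u\|_{L^2}^{2s}$) and Young's inequality with the weight tuned so the first coefficient equals $2\beta$ — this is where $\beta>0$ is used. Applying it with $u=u_r$, cancelling the gradient term, and using $\|u_r\|_{L^2}\le\|\rho_*\|_{L^2}$ ($L^2$-contractivity), the dissipation identity becomes
\[
\mathcal F_s[\rho_*]-\mathcal F_s[u_t]\ \ge\ \frac4m\int_0^t\!\int_{\R^d}|\nabla u_r^{m/2}|^2\,dx\,dr-t\,\chi s\Big(\frac{\chi(1-s)}{2\beta}\Big)^{\frac{1-s}{s}}\|\rho_*\|_{L^2(\R^d)}^2.
\]
Combining with the flow interchange inequality, dividing by $t$ and letting $t\downarrow0$ concludes: the right-hand side tends to $\chi s(\chi(1-s)/(2\beta))^{(1-s)/s}\|\rho_*\|_{L^2}^2+(\mathcal G[\rho^0]-\mathcal G[\rho_*])/\tau$, since $\mathcal G$ is non-increasing along the heat flow and narrowly lower semicontinuous, whence $\mathcal G[u_r]\to\mathcal G[\rho_*]$; while on the left one uses $\tfrac1t\int_0^t(\cdot)\,dr\ge\inf_{(0,t)}(\cdot)$ together with the lower semicontinuity of $u\mapsto\int_{\R^d}|\nabla u^{m/2}|^2$ along the strong $L^m$-convergence $u_r\to\rho_*$.

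The delicate point is precisely this final passage: a priori one controls only the Dirichlet energy of the smoothed densities $u_r$, $r>0$, and must transfer it to $\rho_*$. Here the strong $L^m$-continuity of the heat semigroup is essential — it gives $u_r^{m/2}\to\rho_*^{m/2}$ strongly in $L^2(\R^d)$, so that along a subsequence $r_k\downarrow0$ keeping $\|\nabla u_{r_k}^{m/2}\|_{L^2}$ bounded one has $u_{r_k}^{m/2}\rightharpoonup\rho_*^{m/2}$ in $H^1$, which simultaneously yields $\rho_*^{m/2}\in H^1(\R^d)$ (the $L^2$ membership being automatic from $\rho_*\in L^m$) and the semicontinuity bound, hence the asserted estimate. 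A lesser but necessary technical point is the justification of the dissipation identity up to $r=0$, for which the monotonicity and finiteness of $\mathcal H_m[u_r]$ and $\mathcal W_s[u_r]$ provide the integrability of each term near $0$.
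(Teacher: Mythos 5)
Your proposal is correct and follows essentially the same route as the paper: both apply the flow interchange technique with the heat semigroup, use the identical three dissipation formulas (for $\mathcal H_m$, the quadratic term, and $\mathcal W_s$ via the $\dot H^{1-s}$ norm), the same Young/interpolation inequality tuned so the $\dot H^1$ coefficient cancels $2\beta$, and then weak-$H^1$ lower semicontinuity along a vanishing time subsequence to land on $\rho_*$. The only differences are presentational: you unpack the flow interchange lemma explicitly by combining the minimality of $\rho_*$ with the EVI for the heat flow issued from $\rho_*$ (the paper cites \cite[Proposition~4.3]{LMS}), and you integrate the dissipation bound from $0$ to $t$ and then pass to the limit via a mean-value-plus-lsc selection of $r(t)\in(0,t)$, whereas the paper applies Lagrange's mean value theorem directly to $t\mapsto\mathcal F_s[u(t,\cdot)]$ to produce the intermediate time $\theta(t)$; your route requires the extra check that the dissipation is integrable near $r=0$, which you correctly justify via the monotonicity and finiteness of $\mathcal H_m[u_r]$ and $\mathcal W_s[u_r]$.
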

\begin{proof}
Let us introduce  the Cauchy problem
\begin{equation}\label{heat}
\left\{\begin{array}{ll}&\partial_tu=\Delta u\\ &u(0)=\rho_*.\end{array}\right.
\end{equation}
The unique solution to the heat equation with initial datum in $\rho_*\in\mathcal Y_{M,2}$ is given by $u(t,\cdot)=\Gamma_t\ast\rho_\ast$, where $\Gamma_t(x):=(4\pi t)^{-d/2}\exp\{-|x|^2/(4t)\}$ is the Gaussian kernel.  $u(t,\cdot)$ is smooth, positive  for every $t>0$, and moreover a direct computation by means of integration by parts  shows that for any $t>0$ there holds
\begin{equation}\label{derivative1bis}
\frac1{m-1}\frac{d}{dt}\int_{\mathbb R^d}u(t,x)^{m}\,dx=-\frac{4}{m}\int_{\mathbb R^d}\left|\nabla u(t,x)^{\tfrac{m}{2}}\right|^2\,dx
\end{equation}
and in particular
\begin{equation}\label{derivativebeta}\beta\,\frac{d}{dt}\int_{\mathbb R^d}u(t,x)^{2}\,dx=-{2\beta}\int_{\mathbb R^d}\left|\nabla u(t,x)\right|^2\,dx.\end{equation}
Similarly, thanks to \cite[Lemma 4.5]{LMS} we have for any $t>0$
\begin{equation}\label{derivative2bis}
\frac\chi2\frac{d}{dt}\int_{\mathbb R^d}\int_{\mathbb R^d}K_s(x-y)\,u(t,x)\,u(t,y)\,dx\,dy=-\chi\,\|u(t,\cdot)\|_{\dot H^{1-s}(\mathbb R^d)}.
\end{equation}
Here, $\dot H^r(\R^d)$ denotes the homogeneous Sobolev space of order $r\in\R$, i.e., the completion of $C^{\infty}_c(\R^d)$ with respect to the norm $\|w\|_{\dot H^r(\R^d)}^2:=(2\pi)^{-d}\int_{\R^d}|\xi|^{2r}|\hat w(\xi)|^2\,d\xi$.
 By the interpolation inequality $\|w\|_{\dot H^{1-s}(\mathbb R^d)}\le\|w\|^{s}_{L^2(\mathbb R^d)}\|w\|^{1-s}_{\dot H^1(\mathbb R^d)}$ and by Young inequality we deduce, for given $\alpha>0$,
 \begin{equation}\label{larges2}\begin{aligned}
 \chi\, \|u(t,\cdot)\|_{\dot H^{1-s}(\mathbb R^d)}^2
&\le \chi \, \|u(t,\cdot)\|_{L^2(\mathbb R^d)}^{2s}\|u(t,\cdot)\|_{\dot H^1(\mathbb R^d)}^{2(1-s)}\\&\le
\chi\, s\,\alpha^{-1/s}\|u(t,\cdot)\|_{L^2(\mathbb R^d)}^2+\chi(1-s)\,\alpha^{1/(1-s)}\|u(t,\cdot)\|^2_{\dot H^1(\mathbb R^d)}.
\end{aligned} \end{equation}
The choice $\alpha=(\chi(1-s)/(2\beta))^{s-1}$ in \eqref{larges2} entails, together with \eqref{derivative1bis} and \eqref{derivativebeta},
\begin{equation}\label{chi2beta}\begin{aligned}
\frac{d}{dt}\mathcal F_s(u(t,\cdot))&\le -\frac{4}{m}\int_{\mathbb R^d}\left|\nabla u(t,x)^{\tfrac{m}{2}}\right|^2\,dx
+\chi s\,\left(\frac{\chi(1-s)}{2\beta}\right)^{\frac{1-s}{s}}\|u(t,\cdot)\|^2_{L^2(\mathbb R^d)}
\end{aligned}\end{equation}
for every $t>0$.
Moreover,   the maps $$[0,+\infty)\ni t\mapsto \frac{1}{m-1}\int_{\mathbb R^d}u(t,x)^{m}\,dx, \qquad [0,+\infty)\ni t\mapsto \int_{\mathbb R^d}u(t,x)^{2}\,dx,$$$$
[0,+\infty)\ni t\mapsto \int_{\mathbb R^d}\int_{\mathbb R^d}K_s(x-y)\,u(t,x)\,u(t,y)\,dx\,dy$$
are continuous up to $t=0$. They are also differentiable at any $t>0$ with derivatives given by \eqref{derivative1bis}, \eqref{derivativebeta} and \eqref{derivative2bis}, therefore by Lagrange mean value theorem, for every $t>0$  there exists $\theta(t)\in(0,t)$ such that
\[\begin{aligned}
\frac{\mathcal F_s[\rho_*]-\mathcal F_s[u(t,\cdot)]}{t}&=-\frac{d}{dt}\mathcal F_s[u(t,\cdot)]\Bigg{|}_{t=\theta(t)},
\end{aligned}\]
so that by applying \eqref{chi2beta} we obtain
\[
\frac{\mathcal F_s[\rho_*]-\mathcal F_s[u(t,\cdot)]}{t}
\ge \frac{4}{m}\int_{\mathbb R^d}\left|\nabla u(\theta(t),x)^{\tfrac{m}{2}}\right|^2\,dx
-\chi s\,\left(\frac{\chi(1-s)}{2\beta}\right)^{\frac{1-s}{s}}\|u(\theta(t),\cdot)\|^2_{L^2(\mathbb R^d)}
\]
Since the $L^{2}(\mathbb R^d)$ norm decreases along the solution to the heat equation \eqref{heat}, we deduce
\[
\limsup_{t\to 0} \frac{\mathcal F_s[\rho_*]-\mathcal F_s[u(t,\cdot)]}{t}\ge \limsup_{t\downarrow0} \frac{4}{m} \int_{\mathbb R^d}|\nabla u(\theta(t),x)^{\tfrac m2}|^2\,dx-
\chi s\,\left(\frac{\chi(1-s)}{2\beta}\right)^{\frac{1-s}{s}} \|\rho_*\|^2_{L^2(\mathbb R^d)},
\]
showing that
\[\limsup_{t\to 0} \frac{\mathcal F_s[\rho_*]-\mathcal F_s[u(t,\cdot)]}{t}>-\infty.\]
Therefore, we can apply the flow interchange lemma from \cite{MMS}, in its version from \cite[Proposition 4.3]{LMS} and deduce
\[\begin{aligned}
\frac{\mathcal G[\rho^0]-\mathcal G[\rho_*]}{\tau}&\ge\limsup_{t\to 0} \frac{\mathcal F_s[\rho_*]-\mathcal F_s[u(t,\cdot)]}{t}\\&\ge \limsup_{t\downarrow0} \frac{4}{m} \int_{\mathbb R^d}|\nabla u(\theta(t),x)^{\tfrac m2}|^2\,dx-
\chi s\,\left(\frac{\chi(1-s)}{2\beta}\right)^{\frac{1-s}{s}} \|\rho_*\|^2_{L^2(\mathbb R^d)},
\end{aligned}\]
thus showing that the spatial gradient of $u(\theta(t),\cdot)^{m/2}$ stays bounded in $L^2(\mathbb R^d)$ as $t\downarrow 0$. But $u(\theta(t),\cdot)^{m/2}$  is  also bounded in $L^1(\mathbb R^d)$ as $t\downarrow 0$, since $\|u(\theta(t),\cdot)\|_{L^{m/2}(\mathbb R^d)}\le \|\rho_*\|_{L^{m/2}(\mathbb R^d)}$. Thus Sobolev embedding shows that $u(\theta(t),\cdot)^{m/2}$  is  in fact bounded in $H^1(\mathbb R^d)$ as $t\downarrow 0$. Since $\theta(t)\to 0$ as $t\downarrow 0$, and since  $u(\theta(t),\cdot)\to \rho_*$    pointwise a.e. as $t\downarrow 0$,  by the weak lower semicontinuity of the $H^1(\mathbb R^d)$ norm we finally deduce
\[
\chi s\,\left(\frac{\chi(1-s)}{2\beta}\right)^{\frac{1-s}{s}} \|\rho_*\|^2_{L^2(\mathbb R^d)}+\frac{\mathcal G[\rho^0]-\mathcal G[\rho_*]}{\tau}\ge  \frac{4}{m} \int_{\mathbb R^d}|\nabla \rho_*^{\tfrac m2}|^2\,dx,
\]
which is the desired result.
\end{proof}
\begin{remark}\label{redremark}The constant $\chi s\,\left(\frac{\chi(1-s)}{2\beta}\right)^{\frac{1-s}{s}}$ appearing in the above result is bounded as $s\to 0$ if and only if $\beta\ge \chi/2$.\end{remark}

\begin{cor}\label{gradestinterp} Let $\rho^0\in \mathcal Y_{M,2}$ and $\tau>0$. Let $\beta>0$.
Let us consider the sequence $(\rho_\tau^k)_{k\ge 0}$ of discrete minimizers defined by \eqref{recursive} and  the piecewise constant interpolation $\rho_\tau$ defined by \eqref{morceaux}. There holds for any $k\in\mathbb N$
\[
\frac{4}{m}\int_{\mathbb R^d}|\nabla(\rho_\tau^k(x)^{m/2})|^2\,dx\le \frac{\mathcal G[\rho_\tau^{k-1}]-\mathcal G[\rho_\tau^k]}{\tau}+\chi s\,\left(\frac{\chi(1-s)}{2\beta}\right)^{\frac{1-s}{s}}\,\int_{\R^d}(\rho^k_\tau(x))^2\,dx.
\]
Moreover, for every  $T>0$ 
 there holds the time integrated estimate
\begin{equation}\label{spacetime}
\frac{4}{m}\int_{0}^T\int_{\mathbb R^d}|\nabla(\rho_\tau(t,x))^{m/2}|^2\,dx\,dt\le C^*_1+C^*_2(T+\tau)+C^*_3(T+\tau)\,\chi s\,\left(\frac{\chi(1-s)}{2\beta}\right)^{\frac{1-s}{s}},
\end{equation}
where $C^*_i$, $i=1,2,3$, are  a suitable explicit constants, only depending on $\chi, M,m,s,d,\beta$, and on  $\rho^0$.
\end{cor}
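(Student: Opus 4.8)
The first inequality will be an immediate consequence of Proposition \ref{entropy}. Indeed, I would apply that proposition with the initial datum there replaced by $\rho_\tau^{k-1}$ and with $\rho_*=\rho_\tau^k$: by \eqref{recursive} the density $\rho_\tau^k$ solves the one-step problem \eqref{MM} associated with $\rho_\tau^{k-1}$, and inductively $\rho_\tau^{k-1}\in\mathcal Y_{M,2}$ (starting from $\rho_\tau^0=\rho^0$, with each minimizer in $\mathcal Y_{M,2}$ by Proposition \ref{existencediscrete}), so all hypotheses are met. Since $\rho_\tau^k\in L^1\cap L^m$ has finite second moment, the entropies $\mathcal G[\rho_\tau^k]=\int_{\R^d}\rho_\tau^k\log\rho_\tau^k\,dx$ are finite, and Proposition \ref{entropy} yields exactly the stated per-step bound, together with $(\rho_\tau^k)^{m/2}\in H^1(\R^d)$.

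To obtain \eqref{spacetime}, the plan is to multiply the per-step bound by $\tau$ and sum it over $k=1,\dots,N$ with $N:=\lceil T/\tau\rceil$, so that $N\tau<T+\tau$. By the definition \eqref{morceaux} of the interpolant one has $\int_0^T\int_{\R^d}|\nabla(\rho_\tau(t,x)^{m/2})|^2\,dx\,dt\le \tau\sum_{k=1}^N\int_{\R^d}|\nabla(\rho_\tau^k(x)^{m/2})|^2\,dx$, while the entropy differences telescope to $\mathcal G[\rho^0]-\mathcal G[\rho_\tau^N]$ because $\rho_\tau^0=\rho^0$. Hence it remains to bound $\mathcal G[\rho^0]-\mathcal G[\rho_\tau^N]$ and $\tau\sum_{k=1}^N\int_{\R^d}(\rho_\tau^k)^2\,dx$ by quantities growing at most affinely in $T$, with constants depending only on $\chi,M,m,s,d,\beta$ and on $\rho^0$.

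For the entropy difference, I would bound $\mathcal G[\rho^0]$ from above by $C_m\|\rho^0\|_{L^m(\R^d)}^m$ using $t\log t\le C_m t^m$ for $t\ge 1$ (the negative part only improves the bound), and bound $\mathcal G[\rho_\tau^N]$ from below by the standard Gaussian comparison $\mathcal G[u]\ge -\tfrac12\int_{\R^d}|x|^2u(x)\,dx-C(M,d)$, which follows from nonnegativity of the relative entropy of $u$ with respect to a normalised Gaussian of mass $M$ together with $t\log t\ge t-1$. The second moment of $\rho_\tau^N$ is then controlled through \eqref{basic3} and the crude bound $\mathcal F_s[\rho^0]\le\tfrac1{m-1}\|\rho^0\|_{L^m}^m+\beta\|\rho^0\|_{L^2}^2$ (dropping the negative interaction term), which gives a quantity affine in $N\tau\le T+\tau$. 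For the $L^2$ sum, the uniform $L^m$ estimate \eqref{basic2} (with $\bar C$ as in \eqref{cbar}) combined with the interpolation $\|\rho_\tau^k\|_{L^2}^2\le M^{2(1-\lambda)}\|\rho_\tau^k\|_{L^m}^{2\lambda}$, $\lambda=\tfrac{m}{2(m-1)}$, bounds $\int_{\R^d}(\rho_\tau^k)^2\,dx$ uniformly in $k$ and $\tau$, so $\tau\sum_{k=1}^N\int_{\R^d}(\rho_\tau^k)^2\,dx\le(T+\tau)\sup_k\|\rho_\tau^k\|_{L^2}^2$. Collecting these and reading off $C^*_1,C^*_2,C^*_3$ produces \eqref{spacetime}.

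I do not anticipate a genuine obstacle: given Proposition \ref{entropy}, the argument is essentially summation plus telescoping. The one point requiring care is the bookkeeping — keeping the $T$-dependence exactly affine (which is why one uses \eqref{basic3} rather than any sharper but time-dependent bound on the entropy) and checking that every constant depends only on the listed data, not on $\tau$ or $k$; it is precisely the $k$-uniformity of $\|\rho_\tau^k\|_{L^m}$ furnished by \eqref{basic2} that makes this work.
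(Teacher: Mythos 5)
Your proposal is correct and follows essentially the same route as the paper: the per-step bound is a direct application of Proposition \ref{entropy}, and the time-integrated estimate is obtained by multiplying by $\tau$, summing, telescoping the entropies, controlling $\mathcal G[\rho_\tau^{\lceil T/\tau\rceil}]$ from below via the same Gaussian/second-moment comparison (the paper cites \cite[Lemma 2.2]{BCC}) combined with \eqref{basic3}, and using \eqref{basic2} for the $L^2$ sum. The only cosmetic differences are that the paper bounds $\mathcal G[\rho_\tau^k]\le\int(\rho_\tau^k)^m$ directly rather than via a constant $C_m$, and bounds $\int(\rho_\tau^k)^2\le M+\int(\rho_\tau^k)^m$ pointwise rather than by $L^1$--$L^m$ interpolation; both variants are equally valid.
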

\begin{proof} The first estimate in the statement is a direct consequence of Proposition \ref{entropy}, and it implies that for every $T>0$ we have
\begin{equation}\label{expl}
\begin{aligned}
&\frac4m\int_0^T\int_{\R^d}|\nabla (\rho_\tau(t,x)^{m/2})|^2\,dx\,dt\le \frac4m\int_0^{\lceil T/\tau\rceil\tau}
\int_{\R^d}|\nabla (\rho_\tau(t,x)^{m/2})|^2\,dx\,dt\\&\qquad=\frac {4\tau}m\sum_{k=1}^{\lceil T/\tau\rceil}\int_{\R^d}|\nabla(\rho_\tau^k(x)^{m/2})|^2\,dx
\\&\qquad\le \mathcal G[\rho^0]-\mathcal G[\rho_\tau^{\lceil T/\tau\rceil}]+\tau \chi s\,\left(\frac{\chi(1-s)}{2\beta}\right)^{\frac{1-s}{s}}\,\sum_{k=1}^{\lceil T/\tau\rceil}\int_{\R^d}(\rho^k_\tau(x))^2\,dx.
\end{aligned}
\end{equation}
By \eqref{basic3} and by  the standard estimate from \cite[Lemma 2.2]{BCC}, and since $\mathcal G[\rho_\tau^k]\le \int_{\R^d}(\rho_\tau^k)^m\,dx$ (recall that $m>2$), we have for every $k\in\mathbb N$
\[\begin{aligned}
|\mathcal G[\rho_\tau^k]|&\le \mathcal G[\rho_\tau^k]+\frac{1}{2}\int_{\R^d}|x|^2\rho_\tau^k(x)\,dx+d\log(4\pi)+2e^{-1}\\&\le \int_{\R^d}\rho_\tau^k(x)^m\,dx
+2\tau k\,\mathcal F_s[\rho^0]+\int_{\R^d}|x|^2\,\rho^0(x)\,dx+d\log(4\pi)+2e^{-1}.
\end{aligned}\]
From \eqref{basic2} we also have for every $k\in\mathbb N$
\begin{equation}\label{simple}\begin{aligned}
\int_{\R^d}(\rho^k_\tau(x))^2\,dx&\le M+\int_{\R^d}(\rho^k_\tau(x))^m\,dx\\&\le M+2\int_{\R^d}(\rho^0)^m\,dx+2(m-1)\left(\bar C+\beta\int_{\mathbb R^d}(\rho^0)^2\,dx\right)=:C_3^*,
\end{aligned}\end{equation}
We insert the latter two estimates, combined with  \eqref{basic1}-\eqref{basic2}, into \eqref{expl}: since $\lceil T/\tau\rceil\tau\le T+\tau$ we deduce
\[
\frac4m\int_0^T\int_{\R^d}|\nabla (\rho_\tau(t,x)^{m/2})|^2\,dx\,dt\le C^*_{1}+ C^*_2\,(T+\tau)+ C^*_3\,(T+\tau)\,\chi s\,\left(\frac{\chi(1-s)}{2\beta}\right)^{\frac{1-s}{s}},
\]
where
\begin{equation*}\label{cstar1}
C^*_1:=3\int_{\R^d}(\rho^0)^m\,dx+2(m-1)\left(\bar C+\beta\int_{\R^d}(\rho^0)^2\,dx\right)+d\log(4\pi)+2e^{-1}+\int_{\R^d}|x|^2\rho^0\,dx,
\end{equation*}
where $C_2^*$ is twice the right hand side of \eqref{basic1} and where $C_3^*$ is defined in \eqref{simple}.
The proof is concluded.
\end{proof}

In the case $\beta=0$ we provide an alternative estimate under the restriction $s\in[1/2,1)$.
\begin{prop}\label{propbis}
Let $d\ge 2,$ $s\in[1/2,1)$ and $\beta=0$.
Let $\tau>0$ and $\rho^0\in\mathcal Y_{M,2}$. If $\rho_*$ is a solution to problem \eqref{MM}, then $\rho_*^{m-1}\in H^1(\mathbb R^d)$ and
\[\begin{aligned}
\frac{m}{2(m-1)}\int_{\mathbb R^d}|\nabla\rho_*^{m-1}|^2\,dx&\le \frac{\chi^2\,S_{d,2s-1}^2(m-1)}{2m}\, \|\rho_*\|^2_{L^{\frac{2d}{d+4s-2}}(\mathbb R^d)}
\\&\qquad+\frac1{\tau(m-2)}\left(\int_{\mathbb R^d}(\rho^0(x))^{m-1}\,dx-\int_{\mathbb R^d}(\rho_*(x))^{m-1}\,dx\right),
\end{aligned}\]
where $S_{d,2s-1}$ is defined by \eqref{sds} if $1/2<s<1$ and $S_{d,0}:=1$.
\end{prop}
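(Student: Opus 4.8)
The plan is to repeat, almost verbatim, the argument of Proposition \ref{entropy}, but replacing the heat flow (the Wasserstein gradient flow of the Boltzmann entropy) by the porous medium flow
\[
\partial_t u=\Delta u^{m-1},\qquad u(0,\cdot)=\rho_*,
\]
which is the Wasserstein gradient flow of the functional $\mathcal V[u]:=\frac1{m-2}\int_{\mathbb R^d}u^{m-1}\,dx$. Since $m-1>1\ge 1-\tfrac1d$, this functional is geodesically convex along Wasserstein geodesics, so its gradient flow is the classical EVI semigroup of the PME; moreover this flow is mass and center-of-mass preserving, and $L^q$-contractive for every $q\in[1,\infty]$, so that $u(t,\cdot)\in\mathcal Y_{M,2}$ for all $t\ge 0$. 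Hence the flow interchange lemma of \cite{MMS}, in the form of \cite[Proposition 4.3]{LMS}, applies to the minimiser $\rho_*$ of \eqref{MM} and gives
\[
\frac{\mathcal V[\rho^0]-\mathcal V[\rho_*]}{\tau}\ \ge\ \limsup_{t\downarrow0}\frac{\mathcal F_s[\rho_*]-\mathcal F_s[u(t,\cdot)]}{t},
\]
where $\mathcal V[\rho^0]-\mathcal V[\rho_*]=\frac1{m-2}\big(\int(\rho^0)^{m-1}\,dx-\int\rho_*^{m-1}\,dx\big)$ is exactly the last term in the statement.

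The second step is to estimate the dissipation of $\mathcal F_s$ along the PME flow for $t>0$; since $\beta=0$ we have $\mathcal F_s=\tfrac1{m-1}\int u^m+\mathcal W_s[u]$. Integration by parts gives $\tfrac1{m-1}\tfrac{d}{dt}\int u^m\,dx=-\tfrac{m}{m-1}\int|\nabla u^{m-1}|^2\,dx$, while (arguing as for \eqref{derivative2bis}, see also \cite[Lemma 4.5]{LMS}) $\tfrac{d}{dt}\mathcal W_s[u]=\chi\int\nabla(K_s\ast u)\cdot\nabla u^{m-1}\,dx$. By Cauchy--Schwarz this is at most $\chi\|\nabla(K_s\ast u)\|_{L^2}\|\nabla u^{m-1}\|_{L^2}$; by Plancherel $\|\nabla(K_s\ast u)\|_{L^2}^2=(2\pi)^{-d}\int|\xi|^{2-4s}|\hat u(\xi)|^2\,d\xi$, which for $s=1/2$ is precisely $\|u\|_{L^2}^2$ (whence $S_{d,0}:=1$), and for $s\in(1/2,1)$ is a multiple of $\int\!\!\int|x-y|^{2(2s-1)-d}u(x)u(y)\,dx\,dy$ and therefore controlled, via the sharp Hardy--Littlewood--Sobolev inequality \eqref{HLS} of Riesz order $2s-1\in(0,1)$ and the normalisation \eqref{sds}, by $S_{d,2s-1}^2\|u\|^2_{L^{2d/(d+4s-2)}}$. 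The hypotheses $d\ge 2$ and $s<1$ guarantee $1\le\tfrac{2d}{d+4s-2}\le 2<m$, so $\rho_*\in L^{2d/(d+4s-2)}(\mathbb R^d)$ by interpolation between $L^1$ and $L^m$. A Young inequality chosen to absorb exactly one half of $\tfrac{m}{m-1}\int|\nabla u^{m-1}|^2$ then yields, for a.e.\ $t>0$,
\[
-\frac{d}{dt}\mathcal F_s[u(t,\cdot)]\ \ge\ \frac{m}{2(m-1)}\int_{\mathbb R^d}|\nabla u(t,\cdot)^{m-1}|^2\,dx-\frac{\chi^2 S_{d,2s-1}^2(m-1)}{2m}\,\|u(t,\cdot)\|^2_{L^{2d/(d+4s-2)}}.
\]

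I would then conclude exactly as in the proof of Proposition \ref{entropy}. The maps $t\mapsto\int u^m$ and $t\mapsto\int\!\!\int K_s(x-y)u(t,x)u(t,y)$ are absolutely continuous and continuous up to $t=0$ (using $L^q$-continuity of the PME semigroup and \eqref{HLS}), and the PME energy identity gives $\int_0^T\!\int|\nabla u(t,\cdot)^{m-1}|^2\,dx\,dt<\infty$; integrating the dissipation inequality on $(0,t)$, dividing by $t$, letting $t\downarrow0$, and using $\|u(t,\cdot)\|_{L^{2d/(d+4s-2)}}\le\|\rho_*\|_{L^{2d/(d+4s-2)}}$, one sees that $\limsup_{t\downarrow0}\tfrac{\mathcal F_s[\rho_*]-\mathcal F_s[u(t,\cdot)]}{t}$ is finite and bounded below by $\tfrac{m}{2(m-1)}\,\ell-\tfrac{\chi^2 S_{d,2s-1}^2(m-1)}{2m}\|\rho_*\|^2_{L^{2d/(d+4s-2)}}$, where $\ell$ is the liminf of $\int|\nabla u(t,\cdot)^{m-1}|^2\,dx$ along a suitable sequence $t_n\downarrow0$ (such a sequence along which the quantity stays bounded exists by the energy estimate). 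Along such $t_n$ the functions $u(t_n,\cdot)^{m-1}$ are bounded in $\dot H^1$ and in $L^{m/(m-1)}$, hence in $H^1(\mathbb R^d)$ (Sobolev embedding, again $d\ge2$); since $u(t_n,\cdot)\to\rho_*$ in $L^1$ with $\|u(t_n,\cdot)\|_{L^m}$ bounded, they converge weakly in $H^1$ to $\rho_*^{m-1}$, so $\rho_*^{m-1}\in H^1(\mathbb R^d)$ and $\int|\nabla\rho_*^{m-1}|^2\,dx\le\ell$ by weak lower semicontinuity. Combining this with the flow interchange inequality and the expression for $\mathcal V[\rho^0]-\mathcal V[\rho_*]$ gives the claim.

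The main obstacle, relative to Proposition \ref{entropy}, is that the porous medium flow is degenerate and not instantly smooth, so $t\mapsto\mathcal F_s[u(t,\cdot)]$ cannot be differentiated classically at $t=0$ and all the regularity of $\rho_*^{m-1}$ has to be recovered near $t=0$ through the energy identity together with the lower semicontinuity argument above. One must also check carefully that $\mathcal V$ drives an EVI gradient flow on Wasserstein space (classical for the PME since $m-1\ge 1-\tfrac1d$) and that $\nabla(K_s\ast u(t,\cdot))\cdot\nabla u(t,\cdot)^{m-1}\in L^1(\mathbb R^d)$ for $t>0$, which follows from $u(t,\cdot)^{m-1}\in\dot H^1$ (PME energy estimate) together with $u(t,\cdot)\in L^1\cap L^\infty\subset\dot H^{1-2s}$ (valid because $1-2s>-d/2$ when $d\ge2$ and $s<1$).
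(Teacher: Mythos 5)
Your proposal is correct and follows essentially the same route as the paper: replace the heat flow of Proposition \ref{entropy} by the porous medium flow $\partial_t u=\Delta u^{m-1}$ (the Wasserstein gradient flow of $\mathcal G_m[u]=\tfrac1{m-2}\int u^{m-1}$), compute the dissipation of $\mathcal F_s$ along this flow via the energy identity for $\int u^m$ and the derivative of the interaction term obtained by testing the PME against $K_s\ast u$, bound the cross term by Plancherel, Hardy--Littlewood--Sobolev and Young's inequality to absorb half of the gradient term, and conclude by the flow interchange lemma together with lower semicontinuity along a sequence $t_n\downarrow 0$. The only cosmetic differences from the paper's argument are that the paper packages the cross-term estimate through the $\dot H^{1-s}$ inner product and identifies the weak limit of $\nabla u(t_n,\cdot)^{m-1}$ via a.e.\ convergence plus Gagliardo--Nirenberg rather than your direct Sobolev embedding; both yield the same conclusion.
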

\begin{proof}
Let us introduce the auxiliary functional
\[
\mathcal G_m[u]:=\frac{1}{m-2}\int_{\mathbb R^d}u(x)^{m-1}\,dx
\]
and  the Cauchy problem
\begin{equation}\label{pm}
\left\{\begin{array}{ll}&\partial_tu=\Delta u^{m-1}\\ &u(0)=\rho_*.\end{array}\right.
\end{equation}
This is a standard porous media equation with initial datum in $\mathcal Y_{M,2}$ and it enjoys the following properties, for which we refer to \cite[Theorem 9.12, Proposition 9.13]{V}: there exists a unique strong solution $u$ (meaning that the equation $\partial_tu=\Delta u^{m-1}$ is satisfied pointwise a.e. in space-time) such that $u\in C^0([0,+\infty);L^1(\R^d))$ and $\nabla u^{m-1}\in L^2((0,T)\times(\R^d))$ for every $T>0$. Moreover, the map
$t\mapsto \int_{\R^d}u(t,x)^m\,dx$ is a  nonincreasing absolutely continuous map on $[0,T]$ and
\begin{equation}\label{derivative1}
\frac1{m-1}\frac{d}{dt}\int_{\mathbb R^d}u(t,x)^{m}\,dx=-\frac m{m-1}\int_{\mathbb R^d}\left|\nabla u(t,x)^{{m-1}}\right|^2\,dx\qquad
\mbox{for a.e.  $t>0$.}
\end{equation}
Moreover, the solution is the Wasserstein gradient flow of the displacement convex functional $\mathcal G_m$, see \cite[Theorem 11.2.5]{AGS}.

If we multiply \eqref{pm} by $K_s\ast u$, an integration by parts argument  shows that for every $\eta\in C^\infty((0,+\infty))$
\begin{equation}\label{parti}\begin{aligned}
&\frac12\int_0^{+\infty}\eta'(t)\int_{\R^d}(K_s\ast u)(t,x)\,u(t,x)\,dx\,dt=-\int_0^{+\infty}\int_{\R^d}\eta(t) (K_s\ast u)(t,x) \,\partial_t u(t,x)\,dx\,dt
\\&\qquad=-\int_0^{+\infty}\int_{\R^d}\eta(t)\Delta u(t,x)^{m-1}(K_s\ast u)(t,x)\,dx\,dt\\&\qquad=\int_0^{+\infty}\eta'(t)\int_{\R^d}\nabla u(t,x)^{m-1}\cdot\nabla(K_s\ast u)(t,x)\,dx\,dt
\end{aligned}\end{equation}
We notice that for any $s\in(1/2,1)$ and any $v\in L^{\frac{2d}{d+4s-2}}_+(\mathbb R^d)$, by Plancherel theorem and the Hardy-Littlewood-Sobolev inequality \eqref{HLS}, with the notation \eqref{sds}, there holds
\[
\int_{\R^d}|\nabla K_s\ast v|^2=\frac1{(2\pi)^d}\int_{\R^d}||\xi|^{1-2s}\hat v(\xi)|^2\,d\xi=\int_{\R^d}v\, K_{2s-1}\ast v\le S_{d,2s-1}\,\|v\|^2_{L
^\frac{2d}{d+4s-2}(\mathbb R^d)},
\]
thus $\nabla K_s\ast v\in (L^2(\R^d))^{d}$. If $s=1/2$, the above formula directly shows that $\|K_s\ast v\|_{L^2(\R^d)}=\|v\|_{L^2(\R^d)}$. Since $1<\tfrac{2d}{d+4s-2}\le 2<m$ and $u(t,\cdot)\in L^1\cap L^m(\R^d)$ for every $t\ge 0$ with $\|u(t,\cdot)\|_{L^m(\R^d)}\le \|\rho_*\|_{L^m(\R^d)}$, we deduce that
$\nabla K_s \ast u\in (L^2((0,T)\times\R^d))^{d}$ and then we get $\nabla u^{m-1}\cdot\nabla K_s\ast u\in L^1((0,T)\times\R^d)$. Therefore from \eqref{parti} we see that the map $$t\mapsto \int_{\R^d}\int_{\R^d}K_s(x-y)u(t,x)u(t,y)\,dx\,dy$$
is in $AC([0,T])$ with a.e. derivative given by
\begin{equation}\label{ksder}\begin{aligned}
\frac12\frac d{dt}\int_{\R^d}\int_{\R^d}K_s(x-y) u(t,x)u(t,y)\,dx\,dy&=-\int_{\R^d}\nabla u(t,x)^{m-1}\cdot\nabla(K_s\ast u)(t,x)\,dx\\& =-\left\langle u(t,\cdot),u(t,\cdot)^{m-1}\right\rangle_{\dot H^{1-s}(\mathbb R^d)},\end{aligned}
\end{equation}
where the last equality is due to Plancherel theorem, having introduced the following scalar product
 on $\dot H^{r}(\mathbb R^d)$, $r\in(0,1)$,
\[
\langle v,w\rangle_{\dot H^{r}}:=\frac{1}{(2\pi)^d}\int_{\mathbb R^d}|\xi|^{2r}\hat v(\xi)\overline{\hat w(\xi)}\,d\xi,
\]
so that by Cauchy-Schwarz inequality there holds
\[
\langle v,w\rangle_{\dot H^{r}}\le \| v\|_{\dot H^{2r-1}(\mathbb R^d)}\| w\|_{\dot H^1(\mathbb R^d)}.
\]
By taking advantage of the latter inequality, we deduce the following estimate for the scalar product $\left\langle u(t,\cdot),u(t,\cdot)^{m-1}\right\rangle_{\dot H^{1-s}(\mathbb R^d)}$. Indeed, if $1/2<s<1$, by \eqref{HLS}, by \eqref{sds} and by Young inequality we have
\begin{equation}\label{larges}\begin{aligned}
&\chi\,\left\langle u(t,\cdot),u(t,\cdot)^{m-1}\right\rangle_{\dot H^{1-s}(\mathbb R^d)}\le
 \chi\,\| u(t,\cdot)\|_{\dot H^{1-2s}(\mathbb R^d)}\| u(t,\cdot)^{m-1}\|_{\dot H^1(\mathbb R^d)}\\&\qquad\le\chi\,
  S_{d,2s-1}\|u(t,\cdot)\|_{L^\frac{2d}{d+4s-2}(\mathbb R^d)}\| u(t,\cdot)^{m-1}\|_{\dot H^1(\mathbb R^d)}\\&\qquad
 \le Q_{\chi,m,s,d}\|u(t,\cdot)\|^2_{L^{\frac{2d}{d+4s-2}}(\mathbb R^d)}+\frac{m}{2(m-1)}\int_{\mathbb R^d}|\nabla u(t,\cdot)^{m-1}|^2\,dx,
\end{aligned}\end{equation}
where $Q_{\chi,m,s,d}:={\chi^2\,S_{d,2s-1}^2(m-1)}/({2m})$,
which is readily seen to hold also for $s=1/2$, with the convention $S_{d,0}:=1$.


We have shown the absolute continuity  of the map $t\mapsto \mathcal F_s[u(t,\cdot)]$, which together with \eqref{derivative1} and \eqref{ksder} entails for every $t>0$
\[\begin{aligned}
&\frac{\mathcal F_s[\rho_*]-\mathcal F_s[u(t,\cdot)]}{t}=-\frac{1}{t}\int_0^t\mathcal F_s[u(r,\cdot)]\,dr\\&\qquad= \frac{m}{m-1}\,\frac1t\int_0^t \int_{\mathbb R^d}|\nabla u(r,x)^{m-1}|^2\,dx\,dr-\frac\chi t\int_0^t\,\left\langle u(r,\cdot),u(r,\cdot)^{m-1}\right\rangle_{\dot H^{1-s}(\mathbb R^d)}\,dr.
\end{aligned}\]
By applying  \eqref{larges}, since the $L^{\frac{2d}{d+4s-2}}(\mathbb R^d)$ norm decreases along the solution to the porous media equation \eqref{pm}, we deduce
\[\begin{aligned}
&\limsup_{t\to 0} \frac{\mathcal F_s[\rho_*]-\mathcal F_s[u(t,\cdot)]}{t}\\&\qquad\ge \limsup_{t\to 0} \frac{m}{2(m-1)} \frac1t\int_0^t\int_{\mathbb R^d}|\nabla u(r,x)^{m-1}|^2\,dx\,dr-
Q_{\chi,m,s,d} \|\rho_*\|^2_{L^{\frac{2d}{d+4s-2}}(\mathbb R^d)},
\end{aligned}\]
showing that
\[\limsup_{t\to 0} \frac{\mathcal F_s[\rho_*]-\mathcal F_s[u(t,\cdot)]}{t}>-\infty.\]
Therefore, we can apply the flow interchange lemma, in its version from \cite[Proposition 4.3]{LMS} and deduce
\[\begin{aligned}
\frac{\mathcal G_{m}[\rho^0]-\mathcal G_{m}[\rho_*]}{\tau}&\ge\limsup_{t\to 0} \frac{\mathcal F_s[\rho_*]-\mathcal F_s[u(t,\cdot)]}{t}\\&\ge \limsup_{t\to0} \frac{m}{2(m-1)} \frac1t\int_0^t\int_{\mathbb R^d}|\nabla u(r,x)^{m-1}|^2\,dx\,dr-
Q_{\chi,m,s,d} \|\rho_*\|^2_{L^{\frac{2d}{d+4s-2}}(\mathbb R^d)}.
\end{aligned}\]
By the absolute continuity of the map $t\mapsto \int_0^t\int_{\mathbb R^d}|\nabla u(r,x)^{m-1}|^2\,dx\,dr$, we may apply l'Hospital rule and get
\[\begin{aligned}
Q_{\chi,m,s,d} \|\rho_*\|^2_{L^{\frac{2d}{d+4s-2}}(\mathbb R^d)}+\frac{\mathcal G_{m}[\rho^0]-\mathcal G_{m}[\rho_*]}{\tau}&\ge \liminf_{t\to0} \frac{m}{2(m-1)} \int_{\mathbb R^d}|\nabla u(t,x)^{m-1}|^2\,dx.
\end{aligned}\]
By taking a suitable vanishing sequence $(t_n)_{n\in\mathbb N}$ of positive numbers,
the above bound shows that $\nabla u(t_n,\cdot)^{m-1}\to u_*$ weakly in $L^2(\R^d)$ as $n\to+\infty$.
But $u(t_n,\cdot)$ strongly converge to $\rho_*$ as $n\to+\infty$, hence up to subsequences we also have that
 $u(t_n,\cdot)^{m-1}\to \rho_*^{m-1}$ pointwise a.e. and weakly in $L^{\frac{m}{m-1}}(\R^d)$ (since $u(t_n,\cdot)$ is bounded in $L^{m}(\R^{d})$. This allows to conclude that $u_*=\nabla\rho_*^{m-1}$, and the weak lower semicontinuity of the $L^2(\R^d)$ norm yields the desired estimate. Since $\nabla \rho_*^{m-1}\in L^2(\R^d)$ and since $\rho^{m-1}\in L^{\frac{m}{m-1}}(\R^d)$, $m>2$, by the Gagliardo-Niremberg inequality we have $\rho^{m-1}\in L^2(\R^d)$ with the inequality
 \[
 \|\rho^{m-1}\|_{L^{2}(\R^{d})}\leq C\||\nabla \rho^{m-1}|\|^{\alpha}_{L^{2}(\R^{d})}\|\rho^{m-1}\|^{1-\alpha}_{L^{\frac{m}{m-1}}(\R^{d})},
 \]
 where
 $
 \alpha=\dfrac{N(m-2)}{2m+N(m-2)}\in (0,1).
 $
\end{proof}

By arguing as done for proving Corollary \eqref{gradestinterp}, from \eqref{propbis} we immediately deduce the following
\begin{cor}\label{secondcoro}  Let $d\ge2$, $s\in[1/2,1)$ and $\beta=0$. \KKK Let $T>0$.
Let $\rho^0\in \mathcal Y_{M,2}$ and $\tau>0$.
Let us consider the sequence $(\rho_\tau^k)_{k\ge 0}$ of discrete minimizers defined by \eqref{recursive} and  the piecewise constant interpolation $\rho_\tau$ defined by \eqref{morceaux}. Then
\begin{equation}\label{spacetimebis}
\int_{0}^T\int_{\mathbb R^d}|\nabla(\rho_\tau(t,x))^{m-1}|^2\,dx\,dt\le C_1^{**}+(T+\tau)C_2^{**}
\end{equation}
where $C_1^{**}$, $C_2^{**}$ are a suitable explicit constants, only depending on $\chi, M,m,s,d$ and the initial datum $\rho^0$.
\end{cor}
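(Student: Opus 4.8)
The plan is to follow, almost verbatim, the structure of the proof of Corollary \ref{gradestinterp}, with Proposition \ref{propbis} playing the role that Proposition \ref{entropy} played there, and with the R\'enyi-type functional $u\mapsto\tfrac1{m-2}\int_{\R^d}u^{m-1}\,dx$ (the functional $\mathcal G_m$ used in the proof of Proposition \ref{propbis}) replacing the Boltzmann entropy $\mathcal G$. Concretely, I would apply Proposition \ref{propbis} with $\rho_*=\rho_\tau^k$ and the datum $\rho^0$ there replaced by $\rho_\tau^{k-1}$; this is legitimate because, by \eqref{recursive}, $\rho_\tau^k$ is a minimizer of $\mathcal F_s[\cdot]+\tfrac1{2\tau}W_2^2(\cdot,\rho_\tau^{k-1})$, i.e. precisely a solution of problem \eqref{MM} with datum $\rho_\tau^{k-1}\in\mathcal Y_{M,2}$. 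This gives, for every $k\in\mathbb N$,
\[
\frac{m}{2(m-1)}\int_{\R^d}|\nabla(\rho_\tau^k)^{m-1}|^2\,dx\le Q_{\chi,m,s,d}\,\|\rho_\tau^k\|^2_{L^{\frac{2d}{d+4s-2}}(\R^d)}+\frac1{\tau(m-2)}\Big(\int_{\R^d}(\rho_\tau^{k-1})^{m-1}\,dx-\int_{\R^d}(\rho_\tau^k)^{m-1}\,dx\Big).
\]
Multiplying by $\tfrac{2(m-1)}{m}\tau$ and summing over $k=1,\dots,N$ with $N:=\lceil T/\tau\rceil$, the last group of terms telescopes.

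Next I would bound the two resulting sums. For the telescoping sum, summation leaves $\int_{\R^d}(\rho^0)^{m-1}\,dx-\int_{\R^d}(\rho_\tau^N)^{m-1}\,dx\le\int_{\R^d}(\rho^0)^{m-1}\,dx$ by nonnegativity of $\rho_\tau^N$, and $\int_{\R^d}(\rho^0)^{m-1}\,dx<\infty$ because $\rho^0\in L^1(\R^d)\cap L^m(\R^d)$ with $1<m-1<m$, so interpolation gives $\rho^0\in L^{m-1}(\R^d)$. For the other sum I need a bound on $\|\rho_\tau^k\|_{L^{2d/(d+4s-2)}(\R^d)}$ uniform in $k$ and $\tau$: since $s\in[1/2,1)$ and $d\ge2$ one has $4s-2\in[0,2)$, hence $0<d+4s-2\le 2d$ and $2d/(d+4s-2)\in(1,2]$, so this exponent lies strictly between $1$ and $m$; interpolating between the mass constraint $\|\rho_\tau^k\|_{L^1(\R^d)}=M$ and the uniform bound on $\|\rho_\tau^k\|_{L^m(\R^d)}$ furnished by \eqref{basic2} (with $\beta=0$) yields $\sup_{k}\|\rho_\tau^k\|_{L^{2d/(d+4s-2)}(\R^d)}\le C$ for a constant $C$ depending only on $\chi,M,m,s,d$ and $\rho^0$. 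Consequently $\sum_{k=1}^N\tau\,Q_{\chi,m,s,d}\|\rho_\tau^k\|^2_{L^{2d/(d+4s-2)}(\R^d)}\le N\tau\,Q_{\chi,m,s,d}C^2\le (T+\tau)\,Q_{\chi,m,s,d}C^2$, using $N\tau=\lceil T/\tau\rceil\tau\le T+\tau$.

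Finally I would pass from the discrete sum to the space-time integral: since $\rho_\tau(t,\cdot)=\rho_\tau^{\lceil t/\tau\rceil}$ is equal to $\rho_\tau^k$ on each interval $((k-1)\tau,k\tau]$ and $N\tau\ge T$, one has $\int_0^T\!\int_{\R^d}|\nabla(\rho_\tau(t,x))^{m-1}|^2\,dx\,dt\le\int_0^{N\tau}\!\int_{\R^d}|\nabla(\rho_\tau(t,x))^{m-1}|^2\,dx\,dt=\tau\sum_{k=1}^N\int_{\R^d}|\nabla(\rho_\tau^k)^{m-1}|^2\,dx$. Combining the three estimates yields \eqref{spacetimebis} with, e.g., $C_1^{**}:=\tfrac{2(m-1)}{m(m-2)}\int_{\R^d}(\rho^0)^{m-1}\,dx$ and $C_2^{**}:=\tfrac{2(m-1)}{m}Q_{\chi,m,s,d}C^2$, both depending only on $\chi,M,m,s,d$ and $\rho^0$. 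I do not expect a genuine obstacle: as the text asserts, the inequality follows at once from Proposition \ref{propbis}; the only points deserving a line of justification are the integrability $\rho^0\in L^{m-1}(\R^d)$ and, above all, the admissibility check $2d/(d+4s-2)\in(1,m)$ for $s\in[1/2,1)$, $d\ge2$, which is what makes the interpolation bound on $\|\rho_\tau^k\|_{L^{2d/(d+4s-2)}(\R^d)}$ legitimate and uniform in $k$ and $\tau$.
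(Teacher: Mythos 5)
Your proof is correct and follows exactly the route the paper intends: the text of the paper dispatches Corollary \ref{secondcoro} with the single line ``By arguing as done for proving Corollary \ref{gradestinterp}, from Proposition \ref{propbis} we immediately deduce the following,'' and your argument is precisely that transposition, with the telescoping now driven by $\mathcal G_m$ in place of $\mathcal G$, and with the correct verification that $2d/(d+4s-2)\in(1,2]\subset(1,m)$ so that the interpolation between the mass constraint and the uniform $L^m$ bound from \eqref{basic2} is legitimate.
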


\begin{prop}[\bf Improved space-time compactness] \label{Simon}  Let $\beta\ge0$ and $\rho^0\in \mathcal Y_{M,2}$. If $\beta=0$, assume in addition that $d\ge 2$ and $1/2\le s<1$. \KKK
Let us consider a sequence $(\rho_\tau^k)_{k\ge 0}$ of discrete minimizers defined by \eqref{recursive},  the piecewise constant interpolation $\rho_\tau$ defined by \eqref{morceaux}, and let $\rho$ be a limit function obtained from {\rm Proposition \ref{convMM}} along a vanishing sequence   $(\tau_n)_{n\in\mathbb N}\subset (0,1)$.
Then $\rho_{\tau_n}\to\rho$ strongly in $L^2((0,T)\times\mathbb R^d)$ for any $T>0$.
\end{prop}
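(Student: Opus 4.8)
\emph{Strategy.} The plan is to run the standard Aubin--Lions--Simon compactness scheme with a weak pseudo-distance, in the spirit of the argument in \cite[Section 4]{LMS}: strong $L^2(\R^d)$ compactness of $\{\rho_{\tau_n}(t,\cdot)\}$, uniform in $t$, provided by the uniform $H^1$-type bound on a suitable power of $\rho_{\tau_n}$ together with tightness, is combined with an equicontinuity in time of $t\mapsto\rho_{\tau_n}(t,\cdot)$ with respect to the Wasserstein distance, which induces a topology coarser than the $L^2(\R^d)$ one on sets of bounded second moment. Fix $T>0$. We use the following bounds, all uniform in $n$: (i) $\sup_{t\in[0,T]}\|\rho_{\tau_n}(t,\cdot)\|_{L^p(\R^d)}\le C$ for every $p\in[1,m]$, obtained by interpolating mass conservation with \eqref{basic2} (equivalently with the bound in Proposition \ref{convMM}); (ii) $\sup_{t\in[0,T]}\int_{\R^d}|x|^2\rho_{\tau_n}(t,x)\,dx\le C$, from \eqref{basic3}; (iii) the space-time gradient bound $\|\nabla\rho_{\tau_n}^{\kappa}\|_{L^2((0,T)\times\R^d)}\le C$, where $\kappa:=m/2$ if $\beta>0$ (Corollary \ref{gradestinterp}, estimate \eqref{spacetime}) and $\kappa:=m-1$ if $\beta=0$, $d\ge2$, $1/2\le s<1$ (Corollary \ref{secondcoro}, estimate \eqref{spacetimebis}); note $\kappa>1$ since $m>2$, and the constant depends on $s$ but not on $\tau_n$; (iv) the Wasserstein modulus of continuity $W_2(\rho_{\tau_n}(t_2,\cdot),\rho_{\tau_n}(t_1,\cdot))\le C\sqrt{t_2-t_1+\tau_n}$ for all $0\le t_1\le t_2\le T$, which follows from \eqref{basic1} by Cauchy--Schwarz exactly as in the proof of Proposition \ref{convMM}.

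\emph{Spatial compactness.} From the subadditivity inequality $|a^{1/\kappa}-b^{1/\kappa}|\le|a-b|^{1/\kappa}$ (valid for $a,b\ge0$ since $1/\kappa\in(0,1)$), together with the translation estimate $\|u(\cdot+h)-u\|_{L^2(\R^d)}\le|h|\,\|\nabla u\|_{L^2(\R^d)}$ for $u\in H^1(\R^d)$ applied to $u=\rho_{\tau_n}^{\kappa}(t,\cdot)$, one gets
\[
\int_0^T\!\!\int_{\R^d}|\rho_{\tau_n}(t,x+h)-\rho_{\tau_n}(t,x)|^{2\kappa}\,dx\,dt\le|h|^2\,\|\nabla\rho_{\tau_n}^{\kappa}\|^2_{L^2((0,T)\times\R^d)}\le C|h|^2 .
\]
Since $2\kappa>2$ and $\|\rho_{\tau_n}(t,\cdot)\|_{L^1(\R^d)}=M$, interpolation of $L^p$ norms upgrades this to equicontinuity of spatial translates in $L^2((0,T)\times\R^d)$, uniformly in $n$. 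Tightness in $L^2$ follows from (i)--(ii): writing $A_R:=\{|x|>R\}$, the interpolation $\|\rho_{\tau_n}(t,\cdot)\|_{L^2(A_R)}\le\|\rho_{\tau_n}(t,\cdot)\|_{L^1(A_R)}^{\theta}\|\rho_{\tau_n}(t,\cdot)\|_{L^m(\R^d)}^{1-\theta}$ with $\theta=\tfrac{m-2}{2(m-1)}$, combined with Chebyshev's inequality $\int_{A_R}\rho_{\tau_n}(t,\cdot)\le R^{-2}\int_{\R^d}|x|^2\rho_{\tau_n}(t,\cdot)$, gives $\sup_{t,n}\|\rho_{\tau_n}(t,\cdot)\|_{L^2(A_R)}\to0$ as $R\to+\infty$. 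Hence, for $\Lambda$ large enough, the set $X:=\{\rho\in L^m(\R^d):\ \|\rho\|_{L^m(\R^d)}+\int_{\R^d}|x|^2\rho\,dx+\|\rho^{\kappa}\|_{H^1(\R^d)}\le\Lambda\}$ is relatively compact in $L^2(\R^d)$ by the Fr\'echet--Kolmogorov--Riesz criterion, and $\rho_{\tau_n}(t,\cdot)\in X$ for every $t\in[0,T]$ and every $n$.

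\emph{Conclusion.} Since $W_1\le W_2$, (iv) yields $\lim_{\delta\downarrow0}\sup_n\sup\{W_1(\rho_{\tau_n}(t_2,\cdot),\rho_{\tau_n}(t_1,\cdot)):0\le t_1\le t_2\le T,\ t_2-t_1\le\delta\}=0$ (recall $\tau_n\to0$); moreover $W_1$ induces on $X$ a topology coarser than the $L^2(\R^d)$ one, because $L^2(\R^d)$-convergence together with the uniform second-moment bound implies $W_1$-convergence. Thus $(\rho_{\tau_n})$ is bounded in $L^\infty(0,T;X)$ with $X$ relatively compact in $L^2(\R^d)$, and equicontinuous in time with respect to the weaker metric $W_1$; the Aubin--Lions compactness lemma with pseudo-distance (see \cite[Section 4]{LMS}) then gives relative compactness of $(\rho_{\tau_n})$ in $L^2((0,T)\times\R^d)$. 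Any strong $L^2((0,T)\times\R^d)$-limit point coincides a.e.\ with $\rho$, because $\rho_{\tau_n}(t,\cdot)\to\rho(t,\cdot)$ narrowly for every $t$ by Proposition \ref{convMM} (a further subsequence converges in $L^2(\R^d)$ for a.e.\ $t$, hence narrowly, identifying the limit); a standard Urysohn subsequence argument then gives $\rho_{\tau_n}\to\rho$ strongly in $L^2((0,T)\times\R^d)$, and $T$ being arbitrary concludes the proof.

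\emph{Main obstacle.} The delicate point is the time direction: there is no bound on $\partial_t\rho_{\tau_n}$ in a fixed Banach space and no uniform $L^\infty$ bound, so $L^2$-translates in time cannot be controlled directly; one must exploit only the weak Wasserstein modulus of continuity and carry out the compactness argument through the pseudo-distance $W_1$. A secondary difficulty is that the spatial compactness must be established \emph{globally} on $\R^d$, which is why the $H^1$-bound on the nonlinear power $\rho^{\kappa}$ has to be combined with the $L^m$-bound and the second-moment tightness; the regimes $\beta>0$ and $\beta=0$ use their respective estimates \eqref{spacetime} and \eqref{spacetimebis}.
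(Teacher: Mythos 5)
Your overall scheme (a pseudo-distance Aubin--Lions argument using the nonlinear $H^1$ bound for spatial compactness and Wasserstein for temporal equicontinuity) is sound in spirit and close in flavour to what one finds in \cite{LMS}, but as written there is a genuine logical gap in the spatial-compactness step. The bound from Corollary \ref{gradestinterp} (resp.\ Corollary \ref{secondcoro}) is \emph{time-integrated}: it asserts only
\[
\int_0^T\int_{\R^d}|\nabla\rho_{\tau_n}^\kappa(t,x)|^2\,dx\,dt\le C,
\]
i.e.\ an $L^2(0,T;H^1(\R^d))$ bound on $\rho_{\tau_n}^\kappa$, \emph{not} an $L^\infty(0,T;H^1(\R^d))$ bound. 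Hence the claim that $\rho_{\tau_n}(t,\cdot)$ belongs to the fixed compact set
$X=\{\rho:\ \|\rho\|_{L^m}+\int|x|^2\rho+\|\rho^\kappa\|_{H^1}\le\Lambda\}$
for \emph{every} $t\in[0,T]$, and the subsequent assertion that $(\rho_{\tau_n})$ is bounded in $L^\infty(0,T;X)$, are unjustified. Without the $H^1$ constraint on $\rho^\kappa$, the slices $\rho_{\tau_n}(t,\cdot)$ only lie in a set bounded in $L^1\cap L^m$ with bounded second moment, which is not relatively compact in $L^2(\R^d)$; so the Ascoli-type conclusion does not follow.

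This is repairable, but not without changing the formulation: you would need the version of the Aubin--Lions lemma with a pseudo-distance where the spatial compactness is encoded through a normal \emph{coercive} functional $\mathcal F$ satisfying only $\sup_n\int_0^T\mathcal F(\rho_{\tau_n}(t,\cdot))\,dt<\infty$ (Rossi--Savar\'e style), rather than through an $L^\infty$-in-time membership in a fixed compact set. Alternatively, the route the paper actually takes avoids this issue entirely: it first derives a time-Lipschitz estimate in $H^{-q}(\R^d)$ directly from the Euler--Lagrange identity \eqref{EL} and the second-moment bound, obtaining pointwise-in-$t$ $H^{-q}$ convergence by the abstract Ascoli theorem, hence $L^2(0,T;H^{-q})$ convergence by dominated convergence; then it uses the Mironescu embedding to turn the $L^2(0,T;H^1)$ bound on $\rho_{\tau_n}^{m/2}$ into an $L^2(0,T;W^{2/m,m}(\R^d))$ bound, and applies Simon's interpolation lemma \cite[Lemma~9]{S} with the triple $Y\subset L^2(\R^d)\subset H^{-q}(\R^d)$, $Y=W^{2/m,m}\cap L^1((1+|x|^2)dx)$, which requires only an $L^2$-in-time bound on the strong norm. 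Your Wasserstein equicontinuity (iv) and your translation estimate are both correct and could be the ingredients of a Rossi--Savar\'e argument, but the step as stated — passing from an $L^2(0,T;H^1)$ bound to an $L^\infty(0,T;X)$ bound — would fail.
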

\begin{proof}
Let $\mathsf{T}_{\tau}^{k+1}$ be the unique optimal transport map (for the quadratic cost) from $\rho_{\tau}^{k+1}(x)\,dx$ to $\rho_{\tau}^{k}(x)\,dx$. Let $\varphi\in W^{2,\infty}(\mathbb R^d)$ be smooth, let $h>0$, and notice that
\begin{equation}\label{l4}\begin{aligned}
&\left|\int_{\mathbb R^d}\varphi(x)(\rho_{\tau}(t+h,x)-\rho_{\tau}(t,x))\right|=
\left|\sum_{k=\lceil t/\tau\rceil}^{\lceil(t+h)/\tau\rceil-1}\int_{\mathbb R^d}\varphi(x)\left(\rho_\tau^{k+1}(x)-\rho_\tau^k(x)\right)\,dx\right|\\&
=\left|\sum_{k=\lceil t/\tau\rceil}^{\lceil(t+h)/\tau\rceil-1}\int_{\mathbb R^d}\left(\varphi(\mathsf{T}_{\tau}^{k+1}(x))-\varphi(x)\right)\,\rho_{\tau}^{k+1}(x)\,dx\right|
\\&\le
\sum_{k=\lceil t/\tau\rceil}^{\lceil(t+h)/\tau\rceil-1}\left|\int_{\mathbb R^d}\nabla\varphi(x)\cdot(\mathsf{T}_\tau^{k+1}(x)-x)\rho_\tau^{k+1}(x)\,dx\right|\\
&\;\;+\left|\sum_{k=\lceil t/\tau\rceil}^{\lceil(t+h)/\tau\rceil-1}\frac12\int_0^1 (1-\xi)^{2}d\xi\int_{\mathbb R^d}(\mathsf{T}_{\tau}^{k+1}(x)-x)\,\nabla^2\varphi(\vartheta_{\xi})
\cdot(\mathsf{T}_\tau^{k+1}(x)-x)\rho_\tau^{k+1}(x)\,dx\right|
\end{aligned}
\end{equation}
where we used the Taylor expansion formula
\[\begin{aligned}
\varphi(\mathsf{T}_\tau^{k+1}(x))&=\varphi(x)+\nabla\varphi(x)\cdot (\mathsf{T}_\tau^{k+1}(x)-x)\\&\qquad+\frac{1}{2}\int_{0}^{1}
\left[(\mathsf{T}_{\tau}^{k+1}(x)-x)\,\nabla^2\varphi(\vartheta_{\xi})
\cdot(\mathsf{T}_\tau^{k+1}(x)-x)\right]\,(1-\xi)^{2}\, d\xi,
\end{aligned}\]
where 
$
\vartheta_{\xi}=\xi \mathsf{T}_\tau^{k+1}(x)+(1-\xi) x.
$
Let us separately treat the two terms in the right hand side.  For the first, by \eqref{EL} we have
\[\begin{aligned}&\left|\int_{\mathbb R^d}\nabla\varphi(x)\cdot(\mathsf{T}_\tau^{k+1}(x)-x)\rho_\tau^{k+1}(x)\,dx\right|\le
\left|\tau\int_{\mathbb R^d}\Delta\varphi(x)(\rho_{\tau}^{k+1}(x)^m+\beta\rho_\tau^{k+1}(x)^2)\,dx\right|\\&\qquad
+\left|\tau\,c_{d,s}\,\chi\, \frac{d-2s}{2}\int_{\mathbb R^d}\int_{\mathbb R^d}(\nabla\varphi(x)-\nabla\varphi(y))\cdot(x-y)|x-y|^{2s-d-2}\rho_\tau^{k+1}(x)\rho_\tau^{k+1}(y)\,dx\,dy\right|\\&\;\;\le \tau\|\varphi\|_{W^{2,\infty}(\mathbb R^d)}\left(\int_{\mathbb R^d}(\rho_\tau^{k+1}(x)^m+\beta\rho_\tau^{k+1}(x)^2)\,dx\right.\\&\qquad\qquad\qquad\qquad+\left.\frac\chi2\, (d-2s)\int_{\mathbb R^d}\int_{\mathbb R^d}K_s(|x-y|)\rho_\tau^{k+1}(x)\rho_\tau^{k+1}(y)\,dx\,dy\right)\le
K\tau\|\varphi\|_{W^{2,\infty}(\mathbb R^d)}
\end{aligned}\]
where $K$ is an explicit constant, only depending on $M, m,s,d,\beta,\chi$ and $\rho^0$, which can be obtained by applying Proposition \ref{base}, and in particular by combining \eqref{distantref} and \eqref{basic2}.
Concerning the second, we have
\[\begin{aligned}
&\left| \frac12\int_0^1 (1-\xi)^{2}d\xi\int_{\mathbb R^d}\nabla^2\varphi(\vartheta_{\xi})
(\mathsf{T}_{\tau}^{k+1}(x)-x)^{T}\cdot(\mathsf{T}_\tau^{k+1}(x)-x)\rho_\tau^{k+1}(x)\,dx\right|\\&\qquad\le\frac12 \|\varphi\|_{W^{2,\infty}(\mathbb R^d)} W_2^2(\rho_\tau^{k+1},\rho_\tau^k).
\end{aligned}\]
By inserting the latter two estimates in \eqref{l4} we deduce that for every smooth function $\varphi\in W^{2,\infty}(\mathbb R^d)$
\[\begin{aligned}
\int_{\mathbb R^d}\varphi(x)(\rho_{\tau}(t+h,x)-\rho_{\tau_n}(t,x))\,dx&\le \|\varphi\|_{W^{2,\infty}(\mathbb R^d)} \sum_{k=\lceil t/\tau\rceil}^{\lceil(t+h)/\tau\rceil-1}\left(K\tau+ \frac12 W_2^2(\rho_\tau^{k+1},\rho_\tau^k)\right)\\&\le \left(K(\tau+h)+2\tau(\bar C+\mathcal H_{m}[\rho^0])\right)
\|\varphi\|_{W^{2,\infty}(\mathbb R^d)},
\end{aligned}\]
where $\mathcal H_m$ is defined by \eqref{hm} and $\bar C$ is defined by \eqref{cbar},
and where the latter inequality follows again from Proposition \ref{base} together with \eqref{distantref}, and from the basic inequalities
$
\lceil \frac{t+h}{\tau}\rceil\leq \frac{t+h}{\tau}+1,\quad \lceil\frac{t}{\tau}\rceil\geq \frac{t}{\tau}.
$
Therefore, fixing $q\in\mathbb N$ with $q>2+d/2$, so that the continuous embedding given by Morrey's theorem $H^q(\mathbb R^d)\subset W^{2,\infty}(\mathbb R^d)$ holds with constant $Q$, we deduce
\begin{equation*}\label{-q}\begin{aligned}\|\rho_{\tau}(t+h,\cdot)-\rho_{\tau}(t,\cdot)\|_{H^{-q}(\mathbb R^d)}&=
\sup_{\|\varphi\|_{H^q(\mathbb R^d)}\le 1}\int_{\mathbb R^d}\varphi(x)(\rho_{\tau}(t+h,x)-\rho_{\tau}(t,x))\,dx\\&\le
Q\left(K(\tau+h)+2\tau(\bar C+\mathcal H_{m}[\rho^0])\right).
\end{aligned}\end{equation*}
In particular
\begin{equation}\label{AA}
\limsup_{\tau\to0}\|\rho_\tau(t+h,\cdot)-\rho_\tau(t,\cdot)\|_{H^{-q}(\mathbb R^d)}\le QKh.
\end{equation}

The conclusion is similar to the one in   \cite[Proposition 14]{BL}.  Let $(\tau_n)$ and $\rho$ be the vanishing sequence and the limit function in the statement.
Let us consider the set of functions
 $\{\rho_{\tau_n}(t,\cdot): t\in [0,T], n\in\mathbb N\}$. This is a set of functions having uniformly bounded second moments and uniformly bounded $L^1\cap L^2(\mathbb R^d)$ norm, thanks to the estimates \eqref{basic2} and \eqref{basic3}. Hence, it is relatively compact  in $H^{-q}(\mathbb R^d)$ by Lemma \ref{compact} below. Thanks to this fact and to the equicontinuity estimate \eqref{AA}, we may apply  \cite[Proposition 3.3.1]{AGS} and deduce that there exists a vanishing subsequence $(\tau_{n_k})$ and $\bar\rho\in C([0,T];H^{-q}(\R^d))$ such that
 \begin{equation}\label{weakq}
 \rho_{\tau_{n_k}}(t,\cdot)\to \bar\rho(t,\cdot)\quad\mbox{ in $H^{-q}(\mathbb R^d)$}\qquad\mbox{ for every $t\in[0,T]$.}
 \end{equation}
 By uniqueness of the limit we have $\bar\rho\equiv\rho$ and the above convergence holds along the original sequence $(\tau_n)$.

 The conclusion of the proof is split in two cases. We first consider the case $\beta>0$, and we will take advantage of Corollary \eqref{gradestinterp}.
 We observe that for every $f\in H^1(\mathbb R^d)$ there holds $|f|^{\frac{2}{m}}\in X_m:=W^{\frac{2}{m},m}(\mathbb R^d)$ since $m>2$: this is shown in \cite{M}
 along with the estimate $\||f|^{\frac2{m}}\|_{X_m}^{m}\le c \|\nabla f\|_{L^2(\mathbb R^d)}^2$ for a suitable constant $c$.
  Therefore, from \eqref{spacetime} and Jensen inequality we deduce that
\begin{equation}\label{strongbound}
\left(\frac1T\int_0^T \|\rho_{\tau_n}(t,\cdot)\|^2_{X_m}\,dt\right)^{m/2}\le\frac1T\int_0^T \|\rho_{\tau_n}(t,\cdot)\|^{m}_{X_m}\,dt\le C_T
\end{equation}
where $C_T$ is a constant depending only on $M,m,s,d,\beta,\chi,T$ and the initial datum $\rho^0$. This shows that the sequence $\rho_{\tau_n}$ is bounded in $L^{2}((0,T);X_{m})$.
%
 Since $L^2(\mathbb R^d)$ continuously embeds in $H^{-q}(\mathbb R^d)$, by the uniform $L^2(\mathbb R^d)$  bound deduced from \eqref{basic2} and by \eqref{weakq} we may apply the dominated convergence theorem and obtain the convergence of $\rho_{\tau_n}$ to $\rho$
 in $L^2((0,T); H^{-q}(\mathbb R^d))$.
 The latter convergence,
 together with \eqref{strongbound} allows for an application of the compactness result in the space $L^2((0,T); L^2(\mathbb R^d))$ from \cite[Lemma 9]{S}
   so that we conclude that  $\rho_{\tau_n}\to\rho$ strongly in $L^2((0,T)\times\mathbb R^d)$.
   Here, \cite[Lemma 9]{S} is applied by using the Banach triple  $Y\subset L^{2}(\R^d) \subset H^{-q}(\R^d)$, where $Y=X_{m}\cap L^{1}(\R^{d},(1+|x|^{2})dx)$, and where the first embedding is compact by Lemma \ref{compact}.   The proof for the case $\beta>0$ is concluded

Eventually, let us consider the case $\beta=0$, $d\ge 2$, $1/2\le s<1$. In this case we change the definition of the Sobolev space $X_m$ and we let $X_m=W^{\frac{1}{m-1},\,2m-2}(\R^d)$, and by invoking Corollary \eqref{secondcoro} instead of Corollary \eqref{gradestinterp}, we conclude by repeating the same argument that we have used for $\beta>0$.
\end{proof}

 \begin{lem}\label{compact} Let $m>2$. The spaces
 $Y_1:=W^{\frac2m,m}(\R^d)\cap L^1(\R^d,(1+|x|^2)\,dx)$ and $Y_2:=W^{\frac1{m-1},\,2m-2}(\R^d)\cap L^1(\R^d,(1+|x|^2)\,dx)$ are compactly embedded into $L^2(\R^d)$. The space $L^2(\R^d)\cap L^1(\R^d,(1+|x|^2)\,dx)$ is compactly embedded into $H^{-q}(\R^d)$ if $q>d/2$.
 \end{lem}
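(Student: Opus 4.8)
The plan is to treat $Y_1$ and $Y_2$ simultaneously, since both have the form $X_{s,p}:=W^{s,p}(\R^d)\cap L^1(\R^d,(1+|x|^2)\,dx)$ with $0<s<1$, $p>2$ and, crucially, $sp=2$: for $Y_1$ one has $(s,p)=(2/m,m)$ and for $Y_2$ one has $(s,p)=(\tfrac1{m-1},2m-2)$, and in both cases $m>2$ forces $s\in(0,1)$ and $sp=2$. The strategy for the compact embedding $X_{s,p}\hookrightarrow L^2(\R^d)$ is the classical one: local compactness on every ball via the fractional Rellich--Kondrachov theorem, together with uniform tightness at infinity coming from the weighted $L^1$ bound, the two being glued by a diagonal extraction.

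In detail, let $(f_n)\subset X_{s,p}$ be bounded. Since $sp=2$, the fractional Sobolev embedding gives $W^{s,p}(\R^d)\hookrightarrow L^r(\R^d)$ for some $r>2$: when $d\ge 3$ one takes $r=p^*:=dp/(d-2)$, which is $>2$ precisely because $p>2$; when $d\in\{1,2\}$ one has $sp\ge d$ and the inclusion holds for every finite exponent, so again some $r>2$ is available. Hence $(f_n)$ is bounded in $L^r(\R^d)$. For tightness, interpolating the $L^1$ and $L^r$ norms on $\{|x|>R\}$ yields $\|f_n\|_{L^2(|x|>R)}\le\|f_n\|_{L^1(|x|>R)}^{1-\theta}\|f_n\|_{L^r(|x|>R)}^{\theta}$ with $\theta\in(0,1)$ fixed by $\tfrac12=(1-\theta)+\tfrac{\theta}{r}$, and since $\|f_n\|_{L^1(|x|>R)}\le R^{-2}\int_{\R^d}|x|^2|f_n|\,dx\to0$ as $R\to\infty$ uniformly in $n$, we obtain $\sup_n\|f_n\|_{L^2(|x|>R)}\to0$. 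On the other hand, for each $R>0$ the space $W^{s,p}(B_R)$ is compactly embedded in $L^2(B_R)$ (the target exponent $2$ lies strictly below the critical exponent $p^*>2$ when $d\ge3$, and below every finite exponent when $d\le2$). A diagonal extraction over $B_1\subset B_2\subset\cdots$ then produces a subsequence converging in $L^2_{loc}(\R^d)$ to some $f$, and the uniform tightness just shown upgrades this to $f\in L^2(\R^d)$ with $f_n\to f$ strongly in $L^2(\R^d)$. This settles the first assertion.

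For the second assertion, let $(g_n)$ be bounded in $L^2(\R^d)\cap L^1(\R^d,(1+|x|^2)\,dx)$ and argue on the Fourier side, with $\|g\|_{H^{-q}(\R^d)}^2=(2\pi)^{-d}\int_{\R^d}(1+|\xi|^2)^{-q}|\hat g(\xi)|^2\,d\xi$. First, $|\hat g_n(\xi)|\le\|g_n\|_{L^1(\R^d)}$ and $|\nabla_\xi\hat g_n(\xi)|\le\int_{\R^d}|x|\,|g_n(x)|\,dx\le\|g_n\|_{L^1(\R^d)}+\int_{\R^d}|x|^2|g_n(x)|\,dx$, so the family $(\hat g_n)$ is equibounded and equi-Lipschitz on $\R^d$. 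Second, for every $\Lambda>0$,
\[
\int_{|\xi|>\Lambda}(1+|\xi|^2)^{-q}|\hat g_n(\xi)|^2\,d\xi\le(1+\Lambda^2)^{-q}\int_{\R^d}|\hat g_n(\xi)|^2\,d\xi=(2\pi)^d(1+\Lambda^2)^{-q}\|g_n\|_{L^2(\R^d)}^2,
\]
which tends to $0$ as $\Lambda\to\infty$, uniformly in $n$. Given $\eps>0$, I would fix $\Lambda$ making the left-hand side $<\eps$ for all $n$, apply Arzel\`a--Ascoli to extract a subsequence with $\hat g_n\to\hat g$ uniformly on $\{|\xi|\le\Lambda\}$, and then estimate $\int_{|\xi|\le\Lambda}(1+|\xi|^2)^{-q}|\hat g_n-\hat g_k|^2\,d\xi\le\|\hat g_n-\hat g_k\|_{L^\infty(B_\Lambda)}^2\int_{\R^d}(1+|\xi|^2)^{-q}\,d\xi\to0$, where finiteness of $\int_{\R^d}(1+|\xi|^2)^{-q}\,d\xi$ is exactly the point at which the hypothesis $q>d/2$ enters. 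Diagonalizing over $\Lambda\in\N$ yields a subsequence that is Cauchy, hence convergent, in $H^{-q}(\R^d)$, which is the claimed compactness.

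The proof involves no serious obstacle; it is a routine assembly of standard compactness theorems. The only two points deserving a little care are: (i) that $W^{s,p}(\R^d)$ is by itself not contained in $L^2(\R^d)$, because $2<p$ falls below the integrability range of the Sobolev embedding, so the weighted-$L^1$ factor is genuinely needed (through interpolation) to land in $L^2$; and (ii) the separate treatment of the critical/supercritical low-dimensional cases $d\le2$, where $sp\ge d$. In all cases the numerical fact that makes the fractional Rellich--Kondrachov theorem applicable with target exponent $2$ is that the Sobolev conjugate $p^*=dp/(d-2)$ exceeds $2$ whenever $p>2$.
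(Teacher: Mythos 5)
Your proof of the first assertion follows essentially the same route as the paper's: local compactness from the fractional Rellich--Kondrachov theorem on balls, tightness at infinity from the uniform second-moment bound, and an interpolation inequality between $L^1$ and a higher Lebesgue space to glue the two. The only cosmetic difference is that you interpolate against $L^r$ with $r>2$ obtained from the fractional Sobolev embedding and $sp=2$, whereas the paper simply interpolates against $L^m$ (resp. $L^{2m-2}$), which is available for free since $W^{s,p}(\R^d)\subset L^p(\R^d)$ trivially; this bypasses any case distinction on the dimension, so the paper's variant is marginally cleaner, but both are correct.

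For the second assertion, your argument is genuinely different. The paper stays in physical space: it fixes a large ball $B$, uses the compact embedding $H^q(B)\hookrightarrow\hookrightarrow L^2(B)$ together with Schauder's theorem to get $L^2(B)\hookrightarrow\hookrightarrow H^q(B)^*$, and then splits the $H^{-q}(\R^d)$ norm into the part over $B$ and the part over $\R^d\setminus B$, controlling the latter via the tightness of the sequence and the embedding $\|\cdot\|_{L^\infty(\R^d)}\le C\|\cdot\|_{H^q(\R^d)}$ (this is where $q>d/2$ enters for the paper). You instead pass to the Fourier side: the $L^1$ bound and the second-moment bound make $(\hat g_n)$ equibounded and equi-Lipschitz, so Arzel\`a--Ascoli gives locally uniform convergence; the high frequencies $\{|\xi|>\Lambda\}$ are killed uniformly by the $L^2$ bound together with the decaying weight $(1+|\xi|^2)^{-q}$; and the low frequencies $\{|\xi|\le\Lambda\}$ are controlled via the uniform convergence of $\hat g_n$ and the integrability of $(1+|\xi|^2)^{-q}$, which is exactly where $q>d/2$ enters for you. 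Both proofs are correct and of comparable length; the Fourier argument has the pedagogical advantage of making the role of the three hypotheses (the $L^1$ bound, the second moment, and the $L^2$ bound) completely transparent, each governing one of the three ingredients (equiboundedness, equicontinuity, and high-frequency tightness), while the paper's argument avoids Fourier analysis entirely and reuses the same large-ball-versus-exterior template employed for the first assertion.
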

 \begin{proof}
 Let us give the proof for $Y_1$ (the argument for $Y_2$ is analogous).
  Let us consider a sequence $(u_{n})_{n\in\mathbb N}$ which is bounded in $Y_1$, thus in particular it is bounded in $W^{\frac2m,m}(\R^d)$ and in $L^{m}\cap L^{1}(\R^d)$. By fractional Sobolev embedding, since $m>2$ we have that $W^{\frac2m,m}(\R^d)$ is embedded compactly in $L^{2}(B)$ for every ball in $\R^d$, so that there is $u\in L^2(\R^d)$ and a not relabeled subsequence $(u_{n})$ such that $u_{n}\rightarrow u$ strongly in $L^{2}(B)$ for every ball $B$ and weakly in $L^{m}(\R^d)$.
  Let $\eps>0$ and choose $B=B_\eps$ to be a large enough ball, such that $\int_{\R^d\setminus B}|u|+\int_{\R^d\setminus B}|u_n|<\eps$ for every $n\in\mathbb N$: this is possible thanks to the tightness of the sequence $(u_n)$, which has uniformly bounded second moments by assumption.
 We have
\begin{equation}\label{eee}
\|u_{n}-u\|_{L^2(\R^d)}\le \|u_{n}-u\|^{1-\theta}_{L^{1}(\R^d)}\|u_{n}-u\|^{\theta}_{L^{m}(\R^d)}.
\qquad \theta:=\frac m{2m-2},
\end{equation}
We also have
\[
\int_{\R^{d}}|u_{n}-u|\,dx\leq\int_{B}|u_{n}-u| dx+\int_{\R^d\setminus B}|u_{n}-u|\, dx\leq \int_{B}|u_{n}-u| dx
+\eps
\]
and since $u_n\to u$ strongly in $L^1(B)$,
the arbitrariness of $\eps$ shows that $u_n\to u $ strongly in $L^1(\R^d)$ as well. Therefore, the boundedness of $(u_n)$ in $L^m(\R^d)$ and \eqref{eee} show that $u_n\to u$ strongly in $L^2(\R^d)$.

Similarly, let $(u_n)_{n\in\mathbb N}$ be a bounded sequence in $L^2(\R^d)\cap L^1(\R^d,(1+|x|^2)\,dx)$,  and given $\eps>0$ let as above $B=B_\eps$ such that $\int_{\R^d\setminus B}|u|+\int_{\R^d\setminus B}|u_n|<\eps$ for every $n\in\mathbb N$.
By Sobolev embedding,  $H^q(B)$ compactly embeds into $L^2(B)$ and then (by Schauder's theorem) $L^2(B)$ compactly embeds in the dual space $H^q(B)^*$, therefore up to subsequences we have $u_n\to u$ weakly in $L^2(\R^d)$ and strongly in $H^q(B)^*$. We have
\[\begin{aligned}
\|u_n-u\|_{H^{-q}(\R^d)}&\le \sup_{\|\varphi\|_{H^q(\R^d)}\le1} \left|\int_{B}\varphi(u_n-u)\right|+ \sup_{\|\varphi\|_{H^q(\R^d)}\le1} \left|\int_{\R^d\setminus B}\varphi(u_n-u)\right|\\&\le \sup_{\|\varphi\|_{H^q(\R^d)}\le1} \|\varphi\|_{H^q(B)}\|\rho_n-\rho\|_{H^q(B)^*}+C\eps
\le \|\rho_n-\rho\|_{H^q(B)^*}+C\eps,
\end{aligned}
\]
where we have also used the continuous embedding $\|\cdot\|_{L^\infty(\R^d)}\le C \|\cdot\|_{H^q(\R^d)}$, since $q>d/2$.
Taking the limit as $n\to+\infty$, since $u_n\to u$ strongly in $H^{q}(B)^*$ and since $\eps$ is arbitrary, we deduce that  $\|u_n-u\|_{H^{-q}(\R^d)}\to0.$
 \end{proof}

We are ready to prove Theorem \eqref{existencethm}, recalling that by a gradient flow  solution $\rho=\rho(t,x)$ to \eqref{cauchy1} we mean
a weak solution according to \eqref{veryweak} which is a limit of the JKO scheme, i.e., $\rho$ is a limit function (obtained from {\rm Proposition \ref{convMM}} along a vanishing sequence   $(\tau_n)_{n\in\mathbb N}\subset (0,1)$)  of the piecewise constant interpolations $\rho_\tau$ (defined by \eqref{morceaux}) constructed from a sequence $(\rho_\tau^k)_{k\ge 0}$ of discrete minimizers from  \eqref{recursive}.

%
\begin{proofad4}
We apply Proposition \ref{Simon}: we have $\rho_{\tau_n}\to\rho$ strongly in $L^2((0,T)\times\mathbb R^d)$.
In particular, up to extracting a further subsequence,  we have the pointwise a.e. space-time convergence
of $\rho_{\tau_n}$ to $\rho$ and thus of $\rho_{\tau_n}^m$ to $\rho^m$. Thanks to the Sobolev embedding $ H^1(\mathbb R^d)\subset L^{\frac{2d}{d-2}}(\mathbb R^d)$  if $d\ge 3$ (resp. $ H^1(\mathbb R^d)\subset L^{4}(\mathbb R^d)$  if $d=1,2$) we deduce  from \eqref{spacetime} if $\beta>0$ and from \eqref{spacetimebis} if $\beta=0$
that the sequence $(\rho_{\tau_n}^m)_{n\in\mathbb N}$ is also bounded in $L^{\frac{d}{d-2}}((0,T)\times\mathbb R^d)$ if $d\ge 3$ (resp. in $L^2((0,T)\times\mathbb R^d)$ if $d=1,2$),  
hence by interpolation we also find that $(\rho_{\tau_n}^2)_{n\in\mathbb N}$ is bounded in the same space. Thus up the the extraction of one more subsequence we get
\begin{equation*}
\begin{split}
&\lim_{n\to+\infty}\int_0^T\int_{\mathbb R^d}\eta(t)\Delta\varphi(x)\left(\rho_{\tau_n}(t,x)^m+\beta \rho_{\tau_n}(t,x)^2\right)\,dx\,dt\\&=\int_0^T\int_{\mathbb R^d}\eta(t)\Delta\varphi(x)\left(\rho(t,x)^m+\beta \rho(t,x)^2 \right)\,dx\,dt,
\end{split}
\end{equation*}
for every $\eta\in C^\infty_c(0,T)$ and every $\varphi\in C^\infty_c(\mathbb R^d)$. The weak $L^2(\mathbb R^d)$ convergence of $\rho_{\tau_n}(t,\cdot)$ to $\rho(t,\cdot)$ is sufficient for obtaining
\[\begin{aligned}
&\lim_{n\to+\infty} \int_{\mathbb R^d}\int_{\mathbb R^d}(\nabla\varphi(x)-\nabla\varphi(y))\cdot(x-y)|x-y|^{2s-d-2}\rho_{\tau_n}(t,x)\,\rho_{\tau_n}(t,y)\,dx\,dy\\&\qquad\qquad= \int_{\mathbb R^d}\int_{\mathbb R^d}(\nabla\varphi(x)-\nabla\varphi(y))\cdot(x-y)|x-y|^{2s-d-2}\rho(t,x)\,\rho(t,y)\,dx\,dy
\end{aligned}\]
for every $t$, by making use of the same argument of the proof of Proposition \ref{existencediscrete}.
By dominated convergence, the associated time integrals on $(0,T)$ also converge.
Finally, we have for every $\eta\in C^\infty_c(0,T)$, by the convergence properties of minimizing movements, see for instance \cite[Theorem 11.1.6]{AGS},
\[
\lim_{n\to+\infty}\frac1{\tau_n}\int_0^T\eta(t)\int_{\mathbb R^d}(\mathsf T_{\tau_n}(t,x)-x)\cdot\nabla\varphi(x)\,\rho_{\tau_n}(t,x)\,dx\,dt=\int_0^T\partial_t\eta(t)\int_{\mathbb R^d}\varphi(x)\rho(t,x)\,dx\,dt,
\]
where $\mathsf T_{\tau_n}(t,\cdot)$ is the unique optimal transport map from $\rho_{\tau_n}(t,x)\,dx$ to $\rho_{\tau_n}^{\lceil t/\tau_n\rceil-1}(x)\,dx$.
Recalling the definition of $\rho_\tau$ as the piecewise constant interpolation of discrete minimizers, by writing \eqref{EL} for $\rho_\tau$ we have
\begin{equation*}\label{ELfinal}\begin{aligned}&\frac{1}{\tau_{n}}\int_{\mathbb R^d}(\mathsf T_{\tau_{n}}(t,x)-x)\cdot\nabla\varphi(x)\,\rho_{\tau_{n}}(t,x)\,dx=
-\int_{\mathbb R^d}\Delta\varphi(x)\left(\,\rho_{\tau_{n}}(t,x)^{m}+\beta \rho_{\tau_{n}}(t,x)^{2}\right)\,dx\\&\qquad+\frac{(d-2s)\chi\,c_{d,s}}2\int_{\mathbb R^d}\int_{\mathbb R^d}(\nabla\zeta(x)-\nabla\zeta(y))\cdot(x-y)|x-y|^{2s-d-2}\,\rho_{\tau_{n}}(t,x)\,\rho_{\tau_{n}}(t,y)\,dx\,dy\end{aligned}\end{equation*}
for every $t>0$. 
By multiplying the latter by $\eta\in C^\infty_c(0,T)$ and by integrating on $(0,T)$, we may therefore pass to the limit along the sequence
$({\tau_n})_{n}$ and conclude that $\rho$ is a  weak solution to problem \eqref{cauchy1}.
 \end{proofad4}
 Let us collect some properties of the constructed solution.
 \begin{prop}\label{14} Let $\beta\ge 0$. If $\beta=0$, assume in addition that $d\ge 2$, $1/2\le s<1$. Let $\rho^0\in \mathcal Y_{M,2}$. \KKK
Let  $(\rho_\tau^k)_{k\ge 0}$ be a sequence of discrete minimizers defined by \eqref{recursive}, let $\rho_\tau$ be  the piecewise constant interpolation defined by \eqref{morceaux}, and let $\rho$ be a limit function obtained from {\rm Proposition \ref{convMM}} along a vanishing sequence   $(\tau_n)_{n\in\mathbb N}\subset (0,1)$.
Then the following properties hold.
\begin{itemize}
\item[(i)] The function $[0,+\infty)\ni t\mapsto \rho(t,\cdot)\in\mathcal Y_{M,2}$ is  absolutely continuous with respect to the Wasserstein distance $W_2$. 
\item[(ii)] The following $L^m(\R^d)$ estimate holds for every $t>0$
\begin{equation*}
\frac{1}{2(m-1)}\int_{\mathbb R^d}(\rho(t,x))^m\,dx\le \frac1{m-1}\int_{\mathbb R^d}(\rho^0(x))^m\,dx+\beta\int_{\mathbb R^d}(\rho^0(x))^2\,dx+\bar C,
\end{equation*}
where $\bar C$ is defined by \eqref{cbar} and \eqref{sds}.
\item[(iii)] if $\beta>0$, then $\rho^{m/2}\in L^2((0,T);H^1(\R^d))$ for every $T>0$ along with the estimate
\begin{equation*}
\frac{4}{m}\int_{0}^T\int_{\mathbb R^d}|\nabla(\rho(t,x))^{m/2}|^2\,dx\,dt\le C^*_1+TC_2^*+TC_3^*\,\chi s\,\left(\frac{\chi(1-s)}{2\beta}\right)^{\frac{1-s}{s}},\end{equation*}
where $C_i^*$ are the explicit constants defined in the proof of {\rm Proposition \ref{gradestinterp}}.
\item[(iv)] for every $T>0$ there holds
\begin{equation*}\begin{aligned}
\int_{\mathbb R^d}|x|^2\rho(t,x)\,dx&\le 4T \mathcal F_s[\rho^0]+2\int_{\mathbb R^d}|x|^2\rho^0(x)\,dx\\&\le 4T\left(\frac1{m-1}\int_{\mathbb R^d}(\rho^0(x))^m\,dx+\beta\int_{\mathbb R^d}(\rho^0(x))^2\,dx\right)+2\int_{\mathbb R^d}|x|^2\rho^0(x)\,dx.
\end{aligned}
\end{equation*}
\item[(v)] if $\beta=0$, then $\rho^{m-1}\in L^2((0,T);H^1(\R^d))$ for every $T>0$ along with the estimate
\begin{equation*}
\int_{0}^T\int_{\mathbb R^d}|\nabla(\rho(t,x))^{m-1}|^2\,dx\,dt\le C^{**}_1+TC_2^{**},
\end{equation*}
where $C_i^{**}$ are the explicit constants appearing in {\rm Corollary  \ref{secondcoro}}.
\end{itemize}
\end{prop}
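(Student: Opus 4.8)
The plan is to obtain Proposition~\ref{14} by passing to the limit $\tau_n\to 0$ in the uniform-in-$\tau$ estimates already collected in Section~\ref{evolutionsection}; no analytic input beyond Propositions~\ref{convMM} and~\ref{Simon} and Corollaries~\ref{gradestinterp} and~\ref{secondcoro} is needed.

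First I would dispose of (i), (ii) and (iv), which are pure lower-semicontinuity passages to the limit. Item (i) is essentially already contained in Proposition~\ref{convMM}: the limit curve $t\mapsto\rho(t,\cdot)$ is narrowly continuous and $AC^2$ for $W_2$; that $\rho(t,\cdot)\in\mathcal Y_{M,2}$ for every $t$ follows because the narrow convergence $\rho_{\tau_n}(t,\cdot)\to\rho(t,\cdot)$ preserves the mass $M$ and, by \cite[Lemma 5.17]{AGS}, the vanishing of the center of mass, while the uniform bound \eqref{basic2} and the lower semicontinuity of the second moment under narrow convergence give $\rho(t,\cdot)\in L^m(\R^d)$ with finite second moment. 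For (ii) I would fix $t>0$, use $\rho_{\tau_n}(t,\cdot)\to\rho(t,\cdot)$ narrowly together with the weak lower semicontinuity of the $L^m(\R^d)$ norm (\cite[Proposition 7.7]{Santambrogio}), and observe that the right-hand side of \eqref{basic2} does not depend on $k$ or $\tau$; the same argument applied to the second moment and to \eqref{basic3}, using that $\lceil t/\tau_n\rceil\tau_n\to t$, yields (iv).

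The substantive items are (iii) and (v). For (iii) (thus $\beta>0$) I would start from the time-integrated bound \eqref{spacetime} of Corollary~\ref{gradestinterp}, which shows that $(\rho_{\tau_n}^{m/2})_n$ is bounded in $L^2((0,T);H^1(\R^d))$, its $L^2((0,T)\times\R^d)$ part being controlled by the uniform $L^m$ bound \eqref{basic2}. Hence, up to a subsequence, $\rho_{\tau_n}^{m/2}\rightharpoonup w$ weakly in $L^2((0,T);H^1(\R^d))$ for some $w$. By Proposition~\ref{Simon} we have $\rho_{\tau_n}\to\rho$ strongly in $L^2((0,T)\times\R^d)$, hence, extracting once more, pointwise a.e., so that $\rho_{\tau_n}^{m/2}\to\rho^{m/2}$ pointwise a.e. and therefore in $\mathcal D'((0,T)\times\R^d)$; this forces $w=\rho^{m/2}$, so $\rho^{m/2}\in L^2((0,T);H^1(\R^d))$ and $\nabla(\rho_{\tau_n}^{m/2})\rightharpoonup\nabla(\rho^{m/2})$ weakly in $L^2((0,T)\times\R^d)$. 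Letting $\tau_n\to0$ in \eqref{spacetime} (the additive $\tau_n$ terms disappear) and invoking the weak lower semicontinuity of the $L^2$ norm of the gradient gives the stated inequality. Item (v) follows in exactly the same way under $\beta=0$, $d\ge2$, $1/2\le s<1$, replacing $\rho^{m/2}$ by $\rho^{m-1}$, \eqref{spacetime} by \eqref{spacetimebis} of Corollary~\ref{secondcoro}, and again using the strong $L^2$ convergence of Proposition~\ref{Simon} to identify the weak limit of $\nabla(\rho_{\tau_n}^{m-1})$.

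The only genuinely non-routine step is the identification of the weak $L^2$-limit of $\nabla(\rho_{\tau_n}^{m/2})$ (resp.\ $\nabla(\rho_{\tau_n}^{m-1})$) with the distributional gradient of $\rho^{m/2}$ (resp.\ $\rho^{m-1}$); this is precisely where the strong space-time $L^2$ compactness of Proposition~\ref{Simon} is indispensable, all the rest being soft semicontinuity. A minor bookkeeping point is that the constants produced do not degenerate as $\tau_n\to0$, which is clear since they enter only through $T+\tau_n$ and through fixed quantities depending on $\chi,M,m,s,d,\beta$ and $\rho^0$.
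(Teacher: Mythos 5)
Your proof is correct and takes essentially the same approach as the paper's: the paper dispatches all five items in one paragraph by citing the uniform bounds \eqref{basic2}, \eqref{basic3}, \eqref{spacetime}, \eqref{spacetimebis} together with ``lower semicontinuity properties,'' and you have simply supplied the standard details that make those semicontinuity passages precise (in particular, for (iii) and (v), extracting a weak $L^2((0,T);H^1)$ limit of $\rho_{\tau_n}^{m/2}$ resp.\ $\rho_{\tau_n}^{m-1}$ and identifying it via the strong $L^2$ compactness of Proposition~\ref{Simon}).
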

\begin{proof}
Point {\it (i)} was shown in Proposition \ref{convMM}. Point {(ii)} follows from the uniform $L^m(\R^d)$ bound from \eqref{basic2} along with the narrow lower semicontinuity of the $L^m$ norm. Points {\it (iii)} and {\it (iv)} respectively follow from the uniform bounds \eqref{spacetime} and \eqref{basic3}, again by  lower semicontinuity properties. Similarly, point {\it (v)} follows from \eqref{spacetimebis}.
\end{proof}

 We conclude this section by proving Theorem \ref{stozero}.
 Before giving the proof, we include a couple of technical lemmas, whose proofs are postponed to the Appendix.

\begin{lem}\label{lastbutone}
Let $\rho\in L^1\cap L^2(\mathbb R^d)$. Then for every $\varphi\in C^\infty_c(\mathbb R^d)$ there holds
\[
\lim_{s\downarrow 0}\, (d-2s)\,c_{d,s}\int_{\R^d}\int_{\R^d}(\nabla\varphi(x)-\nabla\varphi(y))\,\cdot(x-y)|x-y|^{2s-d-2}\rho(x)\rho(y)\,dx\,dy=\int_{\R^d}\rho^2(x)\Delta\varphi(x)\,dx.
\]
\end{lem}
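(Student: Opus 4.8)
The plan is to symmetrize the integrand and then recognize the kernel as (up to constants) a mollifier converging to $\Delta\varphi$ tested against $\rho$. First I would observe that by relabeling $x\leftrightarrow y$ and averaging, the double integral can be rewritten with a symmetric weight: the quantity $(\nabla\varphi(x)-\nabla\varphi(y))\cdot(x-y)$ is already symmetric, so nothing new appears there, but it is convenient to note that for fixed $y$ the inner integral $\int_{\R^d}(\nabla\varphi(x)-\nabla\varphi(y))\cdot(x-y)|x-y|^{2s-d-2}\rho(x)\,dx$ is, after the change of variables $x=y+z$, an integral of $(\nabla\varphi(y+z)-\nabla\varphi(y))\cdot z\,|z|^{2s-d-2}\rho(y+z)$ over $z\in\R^d$. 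The factor $(d-2s)c_{d,s}|z|^{2s-d-2}z$ is precisely (up to sign) the gradient of the Riesz kernel $c_{d,s}|z|^{2s-d}$, so one recognizes that $-(d-2s)c_{d,s}|z|^{2s-d-2}z = \nabla_z\big(c_{d,s}|z|^{2s-d}\big)$, and an integration by parts in $z$ (justified since $\rho\in W^{1,1}$-type arguments are not available, so instead one keeps the weak form) would turn the expression into $c_{d,s}|z|^{2s-d}$ tested against second derivatives.

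More concretely, the cleanest route: use Lemma \ref{lemma:limitfunctional}-type Fourier analysis. By Plancherel, for $\rho\in L^1\cap L^2$ and $\varphi\in C^\infty_c$,
\[
(d-2s)c_{d,s}\int\!\!\int(\nabla\varphi(x)-\nabla\varphi(y))\cdot(x-y)|x-y|^{2s-d-2}\rho(x)\rho(y)\,dx\,dy
= -\,\chi^{-1}\,\frac{d}{d\eps}\Big|_{\eps=0}\int\!\!\int c_{d,s}|x-y+\eps(\nabla\varphi(x)-\nabla\varphi(y))|^{2s-d}\rho(x)\rho(y)\,dx\,dy,
\]
but this reintroduces the $\eps$-derivative of the nonlocal energy. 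A more direct computation: integrate by parts to write the left side as $c_{d,s}\int\!\!\int |x-y|^{2s-d}\,\big(\Delta\varphi(x)\,\rho(x)\,\rho(y) - \text{cross terms}\big)$; the cross terms involving $\nabla\rho$ must be avoided, so instead one keeps everything on $\varphi$: expanding $\nabla\varphi(x)-\nabla\varphi(y)$ and using $\div_x\big(c_{d,s}|x-y|^{2s-d-2}(x-y)\big) = -c_{d,s}(d-2s)|x-y|^{2s-d-2}\cdot(\text{something})$... this is getting circular. The robust approach is Fourier: the left-hand side equals, via Plancherel,
\[
\frac{1}{(2\pi)^d}\int_{\R^d}|\xi|^{-2s}\,\big(|\xi|^2\hat\varphi \text{-type symbol}\big)\,|\hat\rho(\xi)|^2\,d\xi,
\]
after computing that the symbol of the operator $\rho\mapsto (d-2s)c_{d,s}\int(\nabla\varphi(x)-\nabla\varphi(y))\cdot(x-y)|x-y|^{2s-d-2}\rho(y)\,dy$ paired against $\rho$ reduces, using the identity $\widehat{\nabla(K_s\ast\rho)}=i\xi|\xi|^{-2s}\hat\rho$ and integration by parts transferring two derivatives onto $\varphi$, to $\frac{1}{(2\pi)^d}\int |\xi|^{2-2s}\widehat{\Delta\varphi\text{ convolved appropriately}}$.

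Given the technical nature, I expect the cleanest proof mirrors Lemma \ref{RieszW11} / the Euler--Lagrange computation: write the bilinear form as $\chi^{-1}\tfrac{d}{d\eps}|_{\eps=0}$ of the Riesz energy under the flow $x\mapsto x+\eps\nabla\varphi(x)$, then use Lemma \ref{lemma:limitfunctional} (with $\rho_s\equiv\rho$) to identify $\lim_{s\to0}c_{d,s}\int\!\!\int|x-y|^{2s-d}\rho\rho = \int\rho^2$; but since differentiation in $\eps$ and the limit $s\to0$ must be interchanged, one needs a uniform-in-$s$ dominated-convergence bound on the difference quotient, which is exactly the kind of estimate (Bernoulli inequality, $|z+\eps w|^{2s-d}$ bounds) already carried out in the proof of Proposition \ref{entropy}'s Euler--Lagrange step. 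Concretely: set $\Phi_s(\eps):=c_{d,s}\int\!\!\int|x-y+\eps(\nabla\varphi(x)-\nabla\varphi(y))|^{2s-d}\rho(x)\rho(y)\,dx\,dy$; by the same dominated-convergence argument as in that proof, $\Phi_s'(0) = -(d-2s)c_{d,s}\int\!\!\int(\nabla\varphi(x)-\nabla\varphi(y))\cdot(x-y)|x-y|^{2s-d-2}\rho\rho\,$, so the left-hand side of the claim is $-\Phi_s'(0)$. Now $\Phi_s(\eps)$, by the change of variables $x=z+\eps\nabla\varphi(z)$... — actually by Lemma \ref{lemma:limitfunctional} applied to the pushed-forward density $\rho_\eps$ (the push-forward of $\rho$ under $x\mapsto x+\eps\nabla\varphi(x)$, defined for $\eps$ small), $\Phi_s(\eps)\to\int\rho_\eps^2$ as $s\to0$, and $\tfrac{d}{d\eps}|_{\eps=0}\int\rho_\eps^2 = -\int\rho^2\,\Delta\varphi$ by the change-of-variables computation in the proof of the Euler--Lagrange equation (the $\det(I+\eps\nabla^2\varphi)^{-1}=1-\eps\Delta\varphi+o(\eps)$ expansion). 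The main obstacle is therefore justifying the interchange $\lim_{s\to0}\Phi_s'(0) = \big(\lim_{s\to0}\Phi_s\big)'(0)$, i.e. uniform-in-$s$ control of the $\eps$-difference quotients of $\Phi_s$; this follows from the Bernoulli-type bound $\big||x-y+\eps w|^{2s-d}-|x-y|^{2s-d}\big|\le |\eps|\,d\,L_\varphi\,2^{d-2s}|x-y|^{2s-d}$ valid for $2|\eps|L_\varphi<1$ (with $L_\varphi=\|\nabla^2\varphi\|_\infty$), which makes the difference quotients dominated by $C\,c_{d,s}|x-y|^{2s-d}$, and $c_{d,s}\int\!\!\int|x-y|^{2s-d}\rho\rho$ is bounded uniformly in $s\in(0,s_0)$ by the Hardy--Littlewood--Sobolev estimate \eqref{18} together with boundedness of $S_{d,s}$ near $s=0$; a uniform equicontinuity-in-$s$ of $\eps\mapsto\Phi_s'(\eps)$ near $\eps=0$ then upgrades the pointwise $s\to0$ limit $\Phi_s\to\int\rho_\eps^2$ to convergence of derivatives at $0$.
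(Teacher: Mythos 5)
Your proposal ultimately lands on a genuinely different route from the paper's. The paper proves the lemma directly: approximate $\rho$ in $L^1\cap L^2$ by $\rho_n\in C^\infty_c(\R^d)$; for smooth compactly supported $\rho_n$ one uses identity \eqref{identitydouble} together with $K_s\to\delta_0$ to get the limit $\int\rho_n^2\Delta\varphi$; and then a Hardy--Littlewood--Sobolev/interpolation estimate on $|\mathcal I_s(\rho;\varphi)-\mathcal I_s(\rho_n;\varphi)|$ (uniform in $s$ near $0$) closes the density argument by a $3\eps$-triangle inequality. This is short and entirely elementary. You instead interpret the bilinear form as $-\Phi_s'(0)$ where $\Phi_s(\eps)$ is the Riesz self-interaction energy of the push-forward density $\rho_\eps$, invoke Lemma~\ref{lemma:limitfunctional} for the pointwise limit $\Phi_s(\eps)\to\int\rho_\eps^2$, and invoke the Jacobian expansion for $\tfrac{d}{d\eps}\big|_{\eps=0}\int\rho_\eps^2=-\int\rho^2\Delta\varphi$; the result would then follow once one interchanges $\lim_{s\to0}$ and $\tfrac{d}{d\eps}\big|_{\eps=0}$. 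This recycles the Euler--Lagrange machinery and is conceptually appealing as a variational consistency check, but it is more roundabout than the paper's argument.

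However, there is a genuine gap in the interchange step, which you yourself identify as ``the main obstacle'' but do not actually close. The Bernoulli-type bound you cite only yields a uniform-in-$s$ \emph{Lipschitz} bound on $\eps\mapsto\Phi_s(\eps)$ (equi-boundedness of difference quotients, hence of $\Phi_s'$). That by itself does not give $\Phi_s'(0)\to\Psi'(0)$ where $\Psi(\eps):=\int\rho_\eps^2$; the classical interchange theorem requires uniform convergence of the derivatives, or equivalently an $s$-uniform \emph{modulus of continuity} for $\Phi_s'$ near $\eps=0$. You assert such equicontinuity in the final sentence but give no proof, and it is not a consequence of the first-order Bernoulli bound: one would need a second-order estimate on $|\Phi_s'(\eps)-\Phi_s'(0)|$, i.e.\ control of the difference of the kernels $(x-y+\eps w)|x-y+\eps w|^{2s-d-2}-(x-y)|x-y|^{2s-d-2}$, which is a more delicate calculation than anything carried out here (and the kernel $|z|^{2s-d-2}$ is two orders more singular than $|z|^{2s-d}$, so it needs care). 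Until that equicontinuity is actually established, the proposed proof does not go through. As a separate remark, a substantial part of your write-up consists of abandoned approaches (the Fourier-symbol sketch and the ``this is getting circular'' integration by parts attempt), which should be excised from a final proof.
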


A generalization of the previous lemma is the following

\begin{lem}\label{lastlemma}
Let $\rho\in L^1\cap L^2(\mathbb R^d)$ and let $(\rho_s)_{s\in (0,1/2)}\subset L^1\cap L^2(\R^d)$ be a family of functions such that $\rho_s\to\rho$ in $L^2(\mathbb R^d)$ as $s\downarrow0$ and $\|\rho_s\|_{L^1(\R^d)}=\|\rho\|_{L^1(\R^d)}$ for every $s\in(0,1/2)$. Then for every $\varphi\in C^\infty_c(\mathbb R^d)$ there holds
\[
\lim_{s\downarrow 0}\, (d-2s)\,c_{d,s}\int_{\R^d}\int_{\R^d}(\nabla\varphi(x)-\nabla\varphi(y))\,\cdot(x-y)|x-y|^{2s-d-2}\rho_s(x)\rho_s(y)\,dx\,dy=\int_{\R^d}\rho^2(x)\Delta\varphi(x)\,dx.
\]
\end{lem}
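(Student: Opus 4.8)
The plan is to combine Lemma \ref{lastbutone}, applied to the fixed limit density $\rho$, with a uniform-in-$s$ bound on the bilinear form. For $s\in(0,1/2)$ and $\mu,\nu\in L^1\cap L^2(\R^d)$ set
\[
B_s(\mu,\nu):=(d-2s)\,c_{d,s}\int_{\R^d}\int_{\R^d}\big(\nabla\varphi(x)-\nabla\varphi(y)\big)\cdot(x-y)\,|x-y|^{2s-d-2}\,\mu(x)\,\nu(y)\,dx\,dy ,
\]
which is symmetric and bilinear, so that $B_s(\rho_s,\rho_s)-B_s(\rho,\rho)=B_s(\rho_s-\rho,\rho_s)+B_s(\rho,\rho_s-\rho)$. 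Since $B_s(\rho,\rho)\to\int_{\R^d}\rho^2\Delta\varphi\,dx$ by Lemma \ref{lastbutone}, it suffices to prove that $B_s(\rho_s-\rho,\rho_s)\to 0$ and $B_s(\rho,\rho_s-\rho)\to 0$ as $s\downarrow 0$.

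The core of the proof is the uniform estimate: there are $s_0\in(0,1/2)$ and $C=C(\varphi,s_0,d)$ such that $|B_s(\mu,\nu)|\le C\big(\|\mu\|_{L^2(\R^d)}\|\nu\|_{L^2(\R^d)}+\|\mu\|_{L^2(\R^d)}\|\nu\|_{L^1(\R^d)}+\|\mu\|_{L^1(\R^d)}\|\nu\|_{L^2(\R^d)}\big)$ for all $s\in(0,s_0)$. To obtain it I would use the two elementary bounds, valid for all $x\neq y$,
\[
\big|(\nabla\varphi(x)-\nabla\varphi(y))\cdot(x-y)\big|\,|x-y|^{2s-d-2}\le \|\nabla^2\varphi\|_{L^\infty}\,|x-y|^{2s-d}\le \|\nabla^2\varphi\|_{L^\infty}\,|x-y|^{2s-d}
\]
and likewise $\le 2\|\nabla\varphi\|_{L^\infty}\,|x-y|^{2s-d-1}$, together with the fact that the integrand vanishes unless $x$ or $y$ lies in the compact set $K:=\supp\nabla\varphi$; by symmetry one may assume $x\in K$, at the price of a factor $2$. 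One then splits the $y$--integral into $\{|x-y|\le1\}$ and $\{|x-y|>1\}$: on the first region the kernel is bounded by $\|\nabla^2\varphi\|_{L^\infty}|x-y|^{2s-d}$ and one applies Young's inequality, using that $|z|^{2s-d}\mathbf{1}_{\{|z|\le1\}}\in L^1(\R^d)$ with norm $\sigma_d/(2s)$; on the second region one simply uses $|x-y|^{2s-d-1}\le1$, so $\int_{\{|x-y|>1\}}|x-y|^{2s-d-1}|\nu(y)|\,dy\le\|\nu\|_{L^1(\R^d)}$; the remaining integration over $x\in K$ is handled by Hölder's inequality. The delicate point — and the main obstacle — is the uniformity as $s\downarrow0$: the factor $\sigma_d/(2s)$ produced by the near-diagonal singularity of $|x-y|^{2s-d}$ (which is not locally integrable at $s=0$) is multiplied by $(d-2s)c_{d,s}$, and by the asymptotics $c_{d,s}\sim\pi^{-d/2}\Gamma(\tfrac d2)\,s$ recalled in \eqref{cds} one has $(d-2s)c_{d,s}/(2s)\to\tfrac d2\pi^{-d/2}\Gamma(\tfrac d2)$ and $(d-2s)c_{d,s}\to0$, so all constants stay bounded for small $s$; the cancellation between kernel and normalization must be tracked rather than the two estimated separately.

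Finally one inserts $\mu=\rho_s-\rho$ and $\nu\in\{\rho_s,\rho\}$. Since $\|\rho_s\|_{L^1(\R^d)}=\|\rho\|_{L^1(\R^d)}$ and $\rho_s\to\rho$ in $L^2(\R^d)$ (hence in measure), Scheffé's lemma — applied along subsequences, which suffices because the limit is identified — yields $\rho_s\to\rho$ also in $L^1(\R^d)$; thus $\|\rho_s-\rho\|_{L^1(\R^d)}+\|\rho_s-\rho\|_{L^2(\R^d)}\to0$ while $\|\rho_s\|_{L^1(\R^d)}$ and $\|\rho_s\|_{L^2(\R^d)}$ remain bounded. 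The uniform bilinear estimate then gives $B_s(\rho_s-\rho,\rho_s)\to0$ and $B_s(\rho,\rho_s-\rho)\to0$, and together with Lemma \ref{lastbutone} this completes the proof. (Alternatively, the far-diagonal contribution can be controlled by Young's inequality using $|z|^{2s-d-1}\mathbf{1}_{\{|z|>1\}}\in L^1(\R^d)$, whose norm $\sigma_d/(1-2s)$ is bounded for small $s$; this variant needs only the $L^2$--convergence of $\rho_s$ and bypasses the passage through $L^1$.)
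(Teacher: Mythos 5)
Your argument is correct, but it follows a genuinely different route from the paper. The paper's proof is very short: it applies the triangle inequality $|\mathcal I_s(\rho_s;\varphi)-\int\rho^2\Delta\varphi|\le|\mathcal I_s(\rho_s;\varphi)-\mathcal I_s(\rho;\varphi)|+|\mathcal I_s(\rho;\varphi)-\int\rho^2\Delta\varphi|$, invokes Lemma~\ref{lastbutone} for the second term, and for the first simply reuses the chain of estimates \eqref{longestimate}, which is Hardy--Littlewood--Sobolev followed by interpolation of $L^{2d/(d+2s)}$ between $L^1$ and $L^2$. Crucially, in that estimate the $L^1$ norms of $\rho$ and $\rho_s-\rho$ appear raised to a power proportional to $s$, so \emph{boundedness} of $\|\rho_s-\rho\|_{L^1}$ suffices, and this is exactly what the hypothesis $\|\rho_s\|_{L^1}=\|\rho\|_{L^1}$ guarantees via $\|\rho_s-\rho\|_{L^1}\le 2\|\rho\|_{L^1}$; no $L^1$-convergence is needed. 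You instead estimate the bilinear form directly, splitting the kernel into the near-diagonal part $|x-y|\le1$ (integrable with an $O(1/s)$ constant, cancelled by $(d-2s)c_{d,s}\sim s$) and the far part $|x-y|>1$ (where the prefactor $(d-2s)c_{d,s}\to0$ kills everything). That decomposition is valid and the cancellation bookkeeping is correct; it avoids HLS and interpolation entirely, making the argument more elementary. The price is that in your main estimate the $L^1$ norm of $\rho_s-\rho$ enters to the first power, forcing you to establish $\rho_s\to\rho$ in $L^1$ via Scheffé/Brezis--Lieb along subsequences — an extra step the paper does not need. Your closing remark fixes this: bounding the far part by Young's inequality with the $L^1(\R^d)$ kernel $|z|^{2s-d-1}\mathbf 1_{\{|z|>1\}}$ gives a pure $\|\mu\|_{L^2}\|\nu\|_{L^2}$ bound throughout, which is actually cleaner than both the Scheffé detour and, in a sense, the paper's interpolation: it shows that for this conclusion even the $L^1$-normalization hypothesis on $\rho_s$ is unnecessary once one tracks the vanishing of $(d-2s)c_{d,s}$.
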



We are ready for the proof of our last result.

\begin{proofad5}
Let $s=s_n$. Let $\varphi\in C^{\infty}_c(\mathbb R^d)$. Since $[0,+\infty)\ni t\mapsto \rho_{n}(t,\cdot)$ is absolutely continuous with respect to $W_2$ as recalled in point {\it (i)} of Proposition \ref{14}, the map $t\mapsto \int_{\R^d}\varphi(x)\rho_n(t,x)\,dx$ is absolutely continuous on $[0,+\infty)$ and we may write  the time integrated version of \eqref{veryweak}, i.e.,
\[\begin{aligned}
&\int_{\mathbb R^d}(\rho_n(t_2,x)-\rho_n(t_1,x))\varphi(x)\,dx=\int_{t_1}^{t_2}\int_{\R^d}\Delta\varphi(x)(\rho_n^m(t,x)+\beta\rho_n^2(t,x))\,dx\,dt
\\&\quad-\frac\chi2\,(d-2s)\,c_{d,s}\int_{t_1}^{t_2}dt\int_{\R^d}\int_{\mathbb R^d}(\nabla\varphi(x)-\nabla\varphi(y))\cdot(x-y)|x-y|^{2s-d-2}\rho_n(t,x)\rho_n(t,y)\,dx\,dy,\end{aligned}
\]
 for every $0\le t_1<t_2<+\infty$.
We estimate the last term as done in \eqref{18}-\eqref{distantref}, obtaining
\[
\begin{aligned}
&\left|\frac\chi2\,(d-2s)\,c_{d,s}\int_{t_1}^{t_2}\int_{\R^d}\int_{\mathbb R^d}(\nabla\varphi(x)-\nabla\varphi(y))\cdot(x-y)|x-y|^{2s-d-2}\rho_n(t,x)\rho_n(t,y)\,dx\,dy\right|
\\&\qquad\le \frac\chi2(d-2s) \|\nabla^2\varphi\|_{L^\infty(\R^d)}\,\int_{t_1}^{t_2}\int_{\R^d}(K_{s_n}\ast\rho_n)(t,x)\,\rho_n(t,x)\,dx\\
&\qquad \le d\, \|\nabla^2\varphi\|_{L^\infty(\R^d)}
\int_{t_1}^{t_2}\left(\bar C(\chi, m,s_n,d, M)+\frac1{2(m-1)}\int_{\R^d}\rho_n^m(t,x)\,dx\right)\,dt,
\end{aligned}
\]
where $\bar C$ is defined by \eqref{cbar} and \eqref{sds},
therefore we have by  $L^1-L^2-L^m$ interpolation
\[\begin{aligned}
&\left|\int_{\mathbb R^d}(\rho_n(t_2,x)-\rho_n(t_1,x))\varphi(x)\,dx\right|\le \|\nabla^2\varphi\|_{L^\infty(\R^d)}\int_{t_1}^{t_2}\int_{\R^d}(\rho_n^m(t,x)+\beta\rho_n^2(t,x))\,dx\,dt
\\&\quad+\frac\chi2\,(d-2s)\,\|\nabla^2\varphi\|_{L^\infty(\R^d)}\int_{t_1}^{t_2}\int_{\R^d}(K_{s_n}\ast\rho_n)(t,x)\,\rho_n(t,x)\,dx,\\
&\le \|\nabla^2\varphi\|_{L^\infty(\R^d)}\int_{t_1}^{t_2}\int_{\R^d}\rho_n^m(t,x)\,dx\,dt\\&\qquad+\beta\|\nabla^2\varphi\|_{L^\infty(\R^d)}\int_{t_1}^{t_2}\left(\frac{m-2}{m-1}\,M+\frac1{m-1}\int_{\mathbb R^d}\rho_n^m(t,x)\,dx\right)\,dt\\&\qquad+d\, \|\nabla^2\varphi\|_{L^\infty(\R^d)}
\int_{t_1}^{t_2}\left(\bar C(\chi, m,s_n,d, M)+\frac1{2(m-1)}\int_{\R^d}\rho_n^m(t,x)\,dx\right)\,dt.
\end{aligned}\]
It is immediate to check that $\sup_{s\in(0,1/2)}\bar C(\chi,m,s,d,M)<+\infty$, therefore by including the estimate in point {\it (ii)} of Proposition \ref{14}
we deduce that
\[
\left|\int_{\mathbb R^d}(\rho_n(t_2,x)-\rho_n(t_1,x))\varphi(x)\,dx\right|\le \tilde C(t_2-t_1) \|\varphi\|_{W^{2,\infty}(\R^d)}
\]
where $\tilde C$ is a suitable constant, depending on $m,d,\chi,\beta, M$ and $\rho^0$, but not on $n$. We take $q\in\mathbb N$, $q>2+d/2$, so that we have the continuous embedding $H^q(\R^d)\subset W^{2,\infty}(\R^d)$ with embedding constant $Q$, and we deduce the time equi-Lipschitz estimate
\[
\|\rho_n(t_2,\cdot)-\rho_n(t_1,\cdot)\|_{H^{-q}(\R^d)}\le Q\tilde C(t_2-t_1)\qquad \mbox{ for every $n\in\mathbb N$}.
\]

Letting $T>0$, we repeat the same arguments of the proof of Proposition \ref{Simon}: indeed, the family of functions $\{\rho_n(t,\cdot): t\in[0,T], \,n\in\mathbb N\}$ has uniformly bounded second moments and $L^1\cap L^2(\R^d)$ norms, which is seen by applying to $\rho_n$ the estimates of points {\it (ii)} and {\it (iv)} of Proposition \ref{14} (as already noticed, the right hand side of {\it (ii)} can be estimated uniformly with respect to $n\in\mathbb N$). Therefore by Lemma \ref{compact} such a family of functions is relatively compact in $H^{-q}(\R^d)$, so that in view of the above equi-Lipschitz estimate we may apply \cite[Lemma 3.3.1]{AGS} to find $\rho\in C^0([0,T];H^{-q}(\R^d))$ such that for every $t\in[0,T]$ there holds  $\rho_n(t,\cdot)\to\rho(t,\cdot) $ in $H^{-q}(\R^d) $ as $n\to+\infty$.
 By the dominated convergence theorem, we also get $\rho_n\to \rho \in L^2((0,T);H^{-q}(\R^d))$ as $n\to+\infty$: here, the dominating function for showing that  $\int_0^T\|\rho_n(t,\cdot)-\rho(t,\cdot)\|^2_{H^{-q}(\R^d)}\,dt\to 0$ as $n\to+\infty$ is obtained by using the    continuous embedding of $L^2(\R^d)$ into $H^{-q}(\R^d)$ and the estimate in point {\it (ii)} of Proposition \ref{14}, where again the right hand side is uniformly bounded with respect to $n$. The same estimate and estimate in point {\it (iv)} of the same Proposition implies that $\rho_{n}(t,\cdot)$ converges weakly to $\rho(t,\cdot)$ in $L^{1}\cap L^{m}$, therefore the weak lower semicontinuity of the $L^m$ norm also shows that $\rho\in L^\infty((0,+\infty);L^m(\R^d)).$

The estimate in point {\it (iii)} of Proposition \ref{14}, applied to $\rho_n$, implies
\begin{equation*}
\sup_{n\in\mathbb N}\int_{0}^T\int_{\mathbb R^d}|\nabla(\rho_n(t,x))^{m/2}|^2\,dx\,dt\le\frac m4\,\sup_{n\in\mathbb N}\left(C_1^*+TC_2^*+TC_3^*\,\chi s_n\,\left(\frac{\chi(1-s_n)}{2\beta}\right)^{\frac{1-s_n}{s_n}}\right),\end{equation*}
and the supremum in the right hand side is finite, thanks to the crucial assumption $\beta\ge \chi/2$ (as observed in Remark \ref{redremark}).
Therefore, we may reason as done in the proof of Proposition \eqref{Simon} to get that the sequence $(\rho_n)$ enjoys a uniform $L^2((0,T);W^{2/m,m}(\R^d))$ bound.
Similarly, in view of point {\it (iv)} of Proposition \ref{14},  the sequence $(\rho_n)$ is also uniformly bounded in  $L^2((0,T);L^1(\R^d,(1+|x|^2)\,dx))$.
By the same argument at the end of the proof of Proposition \ref{Simon}, we have $\rho_n\to \rho$ strongly in  $L^2((0,T)\times\mathbb R^d)$.

Let us conclude by passing to the limit in the equation. Let $\varphi\in C^\infty_c(\R^d)$ and $\eta\in C^\infty_c((0,T))$. By definition of weak solution, for each $n\in\mathbb N$ we have that $\rho_n$ satisfies
\begin{equation}\label{weakn}\begin{aligned}
&-\int_0^T\int_{\mathbb R^d}\rho_n(t,x)\partial_t\eta(t)\varphi(x)\,dx\,dt=\int_0^T\int_{\mathbb R^d}\eta(t)\Delta\varphi(x)\,\big(\rho_n(t,x)^m+\beta\rho_n(t,x)^2\big)\,dx\,dt
\\&\qquad-\frac{(d-2s_n)\,c_{d,s_n}\,\chi}{2}\int_{0}^T\eta(t)\int_{\mathbb R^d\times \R^d}\frac{\big(\nabla\varphi(x)-\nabla\varphi(y)\big)
\cdot(x-y)}{|x-y|^{d+2-2s_n}}\rho_n(t,x)\,\rho_n(t,y)\,dx\,dy\,dt.
\end{aligned}
 \end{equation}
 Since
 $\rho_n\to \rho$ strongly in  $L^2((0,T)\times\mathbb R^d)$, up to taking another subsequence we have $\rho_n(t,\cdot)\to\rho(t,\cdot)$ in $L^2(\R^d)$ for a.e. $t\in(0,T)$. An application of Lemma \ref{lastlemma} entails therefore
 \[\begin{aligned}
&\lim_{n\to+\infty}\frac\chi 2\, (d-2s_n)\,c_{d,s_n}\int_{\R^d}\int_{\R^d}(\nabla\varphi(x)-\nabla\varphi(y))\,\cdot(x-y)|x-y|^{2s_n-d-2}\rho_n(t,x)\rho_n(t,y)\,dx\,dy\\&\qquad=\frac\chi2\int_{\R^d}\rho^2(t,x)\Delta\varphi(x)\,dx\qquad\quad\mbox{ for a.e. $t\in(0,T).$}
\end{aligned}\]
After multiplying by $\eta$ and integrating on $(0,T)$, the time integral passes to the limit by dominated convergence: a dominating function is obtained by the usual estimates of the form \eqref{18}-\eqref{distantref}, yielding for a.e. $t\in(0,T)$
\[
\begin{aligned}
&\left|\frac\chi 2\, (d-2s_n)\,c_{d,s_n}\int_{\R^d}\int_{\R^d}(\nabla\varphi(x)-\nabla\varphi(y))\,\cdot(x-y)|x-y|^{2s_n-d-2}\rho_n(t,x)\rho_n(t,y)\,dx\,dy\right|\\
&\qquad\le d\, \|\nabla^2\varphi\|_{L^\infty(\R^d)}\|\eta\|_{L^\infty(\R)}
\left(\bar C(\chi, m,s_n,d, M)+\frac1{2(m-1)}\int_{\R^d}\rho_n^m(t,x)\,dx\right)\\
&\qquad \le d\, \|\nabla^2\varphi\|_{L^\infty(\R^d)}\|\eta\|_{L^\infty(\R)} \left(2\bar C+\frac1{m-1}\int_{\mathbb R^d}(\rho^0(x))^m\,dx+\beta\int_{\mathbb R^d}(\rho^0(x))^2\,dx\right),
\end{aligned}
\]
where we have also used point {\it (ii)} of Proposition \ref{14}, and $\bar C=\bar C(\chi, m,s_n,d, M)$ stays bounded as $n\to+\infty.$

Eventually,
we take advantage of the previously obtained uniform $L^2((0,T);W^{2/m,m}(\R^d))$ estimate: as in the proof of Theorem \ref{existencethm}, by Sobolev embedding it implies that $(\rho_n^m)_{n\in\mathbb N}$ is also uniformly  bounded in $L^{\frac{d}{d-2}}((0,T)\times\mathbb R^d)$ if $d\ge 3$ (and in $L^2((0,T)\times\mathbb R^d)$ if $d=1,2$). Therefore, up to subsequences, we have $\rho_n^m\to\rho^m$ weakly in $L^{\frac{d}{d-2}}((0,T)\times\mathbb R^d)$ if $d\ge 3$ (weakly in $L^2((0,T)\times\mathbb R^d)$ if $d=1,2$) which allow to pass to the limit in the other two terms of \eqref{weakn}.
 \end{proofad5}

\section{Some qualitative properties of solutions}\label{qualitative}
In this section we present some numerical simulations about the evolution problem \eqref{cauchy1}, using the scheme developed in~\cite{MR3372289}.
The  time evolution is shown in Figure~\ref{fig:twobump} and ~\ref{fig:evob04} for different initial data in one dimension, providing a numerical illustration of the expected asymptotic behaviors, i.e.,
solutions approaching the unique stationary states. In general, if $s$ is not too close to zero, the stationary states are reached quickly, otherwise the
convergence may take longer with the appearance of ``disturbances'' near the boundary of the support as in Figure~\ref{fig:evob04} below.

\bigskip

 \begin{figure}[htp]
  \begin{center}
   \includegraphics[totalheight=0.28\textheight]{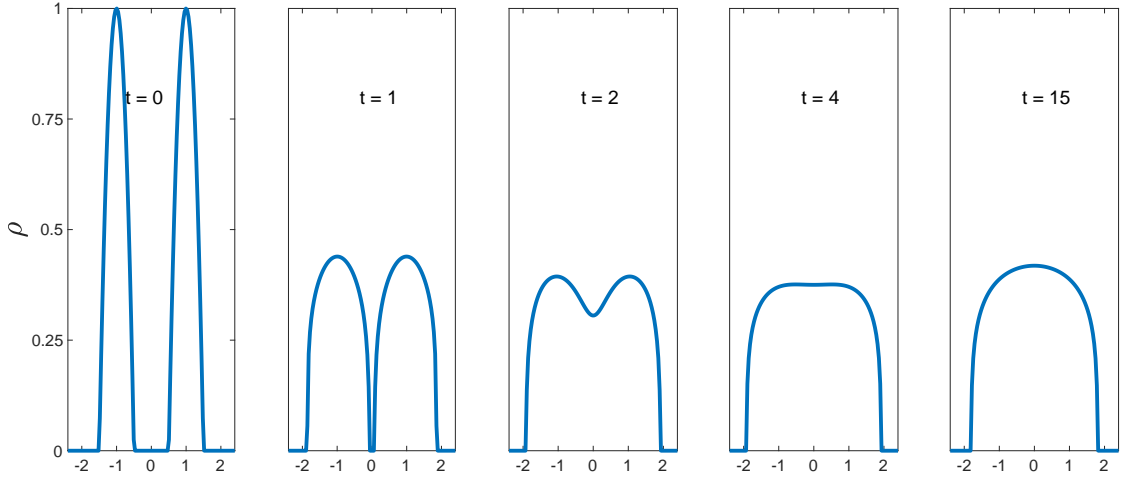}
  \end{center}
  \caption{The evolution of the solution starting with two bumps with parameters $m=3, \chi=1, s=0.1$ and $\beta=0.2$, reaching the stationary state reasonably
  fast.}
\label{fig:twobump}
\end{figure}

\bigskip\bigskip

 \begin{figure}[htp]
 \begin{center}
 \includegraphics[totalheight=0.28\textheight]{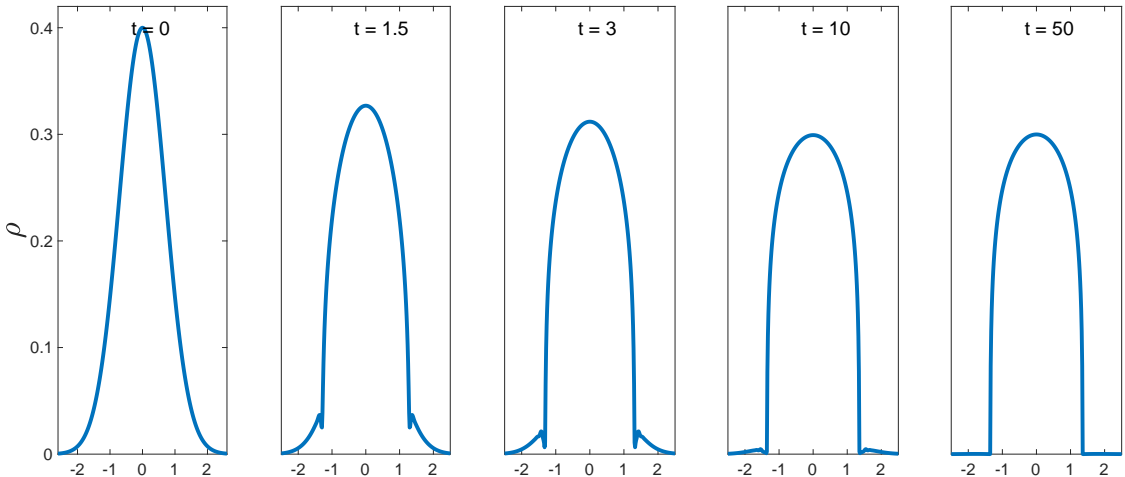}
 \end{center}
 \caption{The evolution  starting with a rescaled Gaussian, with $m=3$, $s=0.08$, $\chi=1$ and $\beta=0.4$, where the solution
 does converge to the expected stationary state. 
 }
 \label{fig:evob04}
 \end{figure}
\newpage

A big open problem concerning the Cauchy problem \eqref{cauchy1} is the \emph{uniqueness of the solution}. Assuming that this property holds true, by the rotationally invariant property of the main equation \eqref{cauchy1} it follows that for a given radial initial datum $\rho_{0}=\rho_{0}(|x|)$ we have that the density solution $\rho$ is radial w.r. t. $x$, \emph{i.e.} $\rho=\rho(|x|,t)$. But the property of being radially decreasing may not be preserved during the evolution, even with initial data (and limiting steady states) sharing this property.  The following counterexample is an adaptation of the one contained in \cite[Proposition 4.3]{KY}.
 Set
 \begin{equation}\label{sample0}
 \rho_{0,\varepsilon}(x)=\varphi_{\varepsilon}\ast(\delta_{0}+\varepsilon^{\alpha} \mathbbm{1}_{B(0,1)})=\varphi_{\varepsilon}(x)+\varepsilon^{\alpha}\varphi_{\varepsilon}\ast \mathbbm{1}_{B(0,1)},
 \end{equation}
 being $\varepsilon>0$, $\varphi$ a mollifier with mass 1 supported in the ball $B(0,1)$,  being
$
\varphi_{\varepsilon}(x)=\frac{1}{\varepsilon^{d}}\varphi\left(\frac{x}{\varepsilon}\right)
$
 and $\alpha>d+2$. Assume that there exists a radial  solution $\rho(x,t)$ to \eqref{cauchy1} with datum $\rho_{0,\varepsilon}$ and suppose we know that the solution $\rho(x,t)$ is smooth enough up to $t=0$. Then it is possible to show that $\rho$ is not radially decreasing. Indeed, it is immediate to see that for $\varepsilon$ small and $\varepsilon<|x|<1-\varepsilon$ we have
 \[
 \rho_{0,\varepsilon}(x)=\varepsilon^{\alpha},
 \]
 while $\rho_{0,\varepsilon}$ is supported in the ball $B(0,1+\varepsilon)$. Taking two points $x_{1},\,x_{2}$ such that $\varepsilon<|x_{1}|<|x_{2}|<1-\varepsilon$ and taking into account that $\rho_{0,\varepsilon}$ is constant in the interval $(\varepsilon, 1-\varepsilon)$, we have for $|x_{1}|\leq|x|\leq |x_{2}|$
 \begin{equation}
 \partial_{t}\rho(x,0)=-\rho_{0,\varepsilon}(x)\Delta(K_{s}\ast\rho_{0,\varepsilon}).\label{equatatt=0}
 \end{equation}
 Now, observe that since $\varphi_{\varepsilon}\rightarrow \delta_{0}$ in $\mathcal{D}^{\prime}(\R^{d})$ for $\varepsilon \rightarrow0$, we have
 \[
 K_{s}\ast\rho_{0,\varepsilon}\rightarrow K_{s}\quad \text{in }\mathcal{D}^{\prime}(\R^{d}).
 \]
as $\varepsilon \rightarrow0$.
But
\begin{equation}
\Delta(K_{s}\ast\rho_{0,\varepsilon})=K_{s}\ast\Delta \rho_{0,\varepsilon}=K_{s}\ast\Delta \varphi_{\varepsilon}+K_{s}\ast\varepsilon^{\alpha}(\Delta \varphi_{\varepsilon}\ast
\mathbbm{1}_{B(0,1)}).\label{Laplacconv}
\end{equation}
Since
\[
\Delta\varphi_{\varepsilon}\rightarrow \Delta\delta_{0}\quad\text{in }\mathcal{D}^{\prime}(\R^{d})
\]
and $\Delta\delta_{0}$ is supported at $0$, taking a cutoff function $\eta_{\varepsilon}$ such that $\eta_{\varepsilon}=1$ in a $B(0,\varepsilon)$
we find
\[
(K_{s}\ast\Delta \rho_{0,\varepsilon})(x)=\int_{\R^{d}} (\Delta\varphi_{\varepsilon}(y))\eta_{\varepsilon}(y)K_{s}(x-y)dy\rightarrow
\langle\Delta \delta,\eta_{\varepsilon}K_{s}(x-\cdot)\rangle=\Delta(\eta_{\varepsilon}K_{s}(x-\cdot))(0)
\]
as $\varepsilon \rightarrow0$ and an easy computation shows that
$
\Delta(\eta_{\varepsilon}K_{s}(x-\cdot))(0)=\Delta K_{s}(x).
$
Now, since
\[
|\Delta \varphi_{\varepsilon}(x)|=\varepsilon^{-d-2}|\Delta\varphi(x/\varepsilon)|\leq C \varepsilon^{-d-2},
\]
we have
\[
\|\Delta \varphi_{\varepsilon}\ast \mathbbm{1}_{B(0,1)}\|_{L^{\infty}}\leq\|\Delta \varphi_{\varepsilon}\|_{L^{\infty}} \|\mathbbm{1}_{B(0,1)}\|_{L^{1}}\leq C \varepsilon^{-d-2}
\]
and by Young inequality
\[
\|\Delta \varphi_{\varepsilon}\ast \mathbbm{1}_{B(0,1)}\|_{L^{1}}\leq \|\Delta \varphi_{\varepsilon}\|_{L^{1}} \|\mathbbm{1}_{B(0,1)}\|_{L^{1}}
=C\varepsilon^{-2}\|\Delta \varphi\|_{L^{1}}.
\]

\begin{figure}[htp]
  \begin{center}
   \includegraphics[totalheight=0.28\textheight]{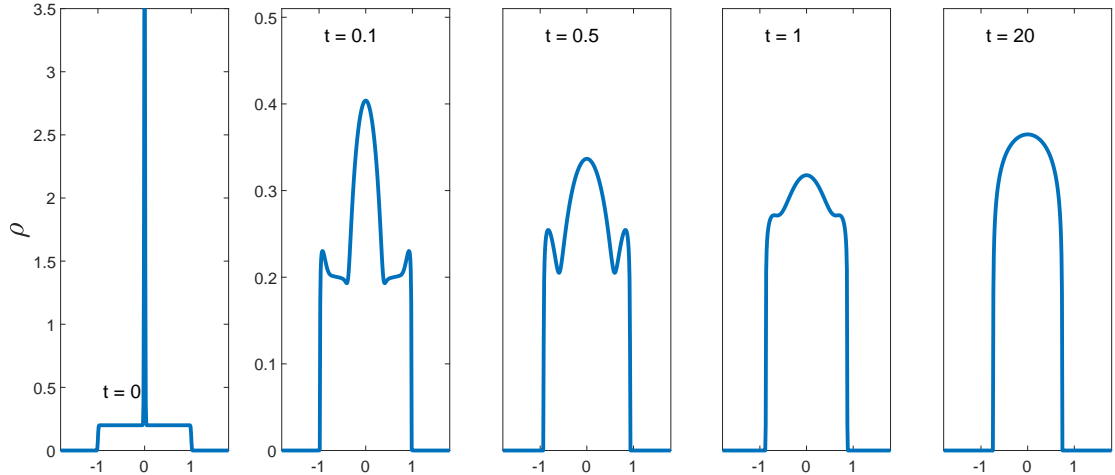}
  \end{center}
  \caption{Numerical demonstration of the fact that radially decreasing initial data does not necessarily remain radially decreasing. The initial
  condition is the one in \eqref{sample0}, with the parameters $m=3, \chi=1,s=0.1$ and $\beta=0$.}
\label{fmollifier}
 \end{figure}

Therefore,
\begin{align*}
|K_{s}\ast(\Delta \varphi_{\varepsilon}\ast\mathbbm{1}_{B(0,1)})| &\leq C(\|\Delta \varphi_{\varepsilon}\ast \mathbbm{1}_{B(0,1)}\|_{L^{\infty}}
+\|\Delta \varphi_{\varepsilon}\ast \mathbbm{1}_{B(0,1)}\|_{L^{1}})\nonumber\\
&\leq C\varepsilon^{-d-2}(1+\varepsilon^{N}).
\end{align*}
Thus choosing $\alpha>d+2$, from \eqref{Laplacconv} we have
\[
\Delta(K_{s}\ast\rho_{0,\varepsilon})(x)\rightarrow \Delta K_{s}(x)=c(d,s)|x|^{2s-d-2},
\]
as $\varepsilon\rightarrow 0$. This implies that for $\varepsilon$ small,
\[
\Delta(K_{s}\ast\rho_{0,\varepsilon})(x_{1})> \Delta(K_{s}\ast\rho_{0,\varepsilon})(x_{2}),
\]
hence \eqref{equatatt=0} gives (recalling that $\rho_{0,\varepsilon}$ is radially decreasing)
\[
\partial_{t}\rho(x_1,0)<\partial_{t}\rho(x_2,0),
\]
meaning that the radially decreasing monotonicity is \emph{not} preserved for small times.
The non-monotonicity of the solution is shown in the simulation from Figure~\ref{fmollifier}.

  Figure \ref{fig:nodec} shows a further simulation, which takes into account a characteristic function of a symmetric interval as initial datum:  also in this case the radial monotonicity is not preserved.

\begin{figure}[hhh]
  \begin{center}
   \includegraphics[totalheight=0.28\textheight]{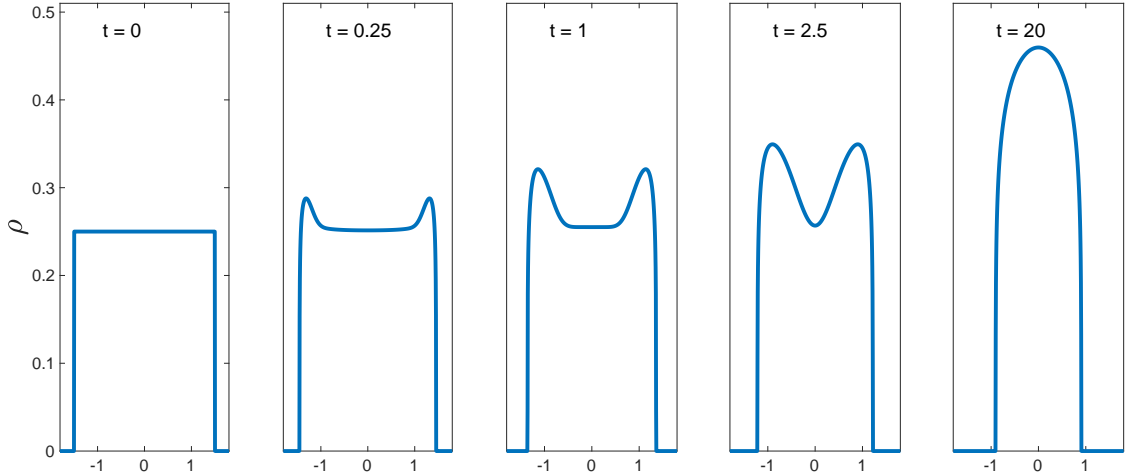}
  \end{center}
  \caption{Another example: again the radially decreasing initial datum does not remain radially decreasing. The initial
  condition is $\rho_0(X)=\frac{1}{4}\bC_{|x|<3/2}$, with the parameters $m=3, \chi=1,s=0.1$ and $\beta=0$.}
\label{fig:nodec}
 \end{figure}


\section{Comments, extensions and open problems}\label{open}
\noindent {$\bullet$} As mentioned in the previous section, an open problem concerns the \emph{uniqueness} of solutions, which would give radiality of solutions with radial initial data as a direct consequence.\\[0.2pt]

\noindent {$\bullet$} A second open problem is to rigorously prove that every solution to the evolution problem \eqref{cauchy1} does converge to the unique stationary state provided by Theorem \ref{furtherregularity*}. We mention that a similar result is available in the two dimensional setting, in the case of aggregation with the Newtonian potential instead of the Riesz potential, with $\beta=0$ and $m>1$ (i.e., diffusion-dominated regime), see \cite{CHVY}.
\\[0.2pt]

\noindent {$\bullet$} 
Concerning Theorem \ref{stozero}, uniqueness of the distributional solution for the Cauchy problem for \eqref{pme} with $\beta\ge \chi/2$ is known under additional conditions. For instance, according to the classical result by \cite{Pierre}, uniqueness holds among distributional solutions that are  essentially bounded on any strip $\R^{d}\times (\tau,T)$, for all $T>0$ and $\tau\in (0,T)$. Therefore, in order to obtain a unique limit as $s\to 0$ for families of gradient flow solutions $\rho_{s}$ to \eqref{cauchy1}, further a-priori $L^\infty$ bounds (uniformly in $s$) should be established for $\rho_s$.   
\\[0.2pt]

\noindent {$\bullet$} Another interesting open problem is to show that the family of solutions $\rho_{s}$ to problem \eqref{cauchy1} converges as $s\rightarrow 0$ to a solution (in an appropriate sense) to the equation \ref{pme} even in the case $\beta<\chi/2$.
We notice that  such equation has the form $\partial_{t}\rho=\Delta\varphi(\rho)$, and if
 $\beta<\chi/2$ the nonlinearity $\varphi$ is nonmonotone and equation \eqref{pme} is of  \emph{forward-backward} type, with the \emph{unstable} phase  given by the interval $[0,(\frac{\chi-2\beta}{m})^{1/(m-2)}]$ and the \emph{stable} phase by $[(\frac{\chi-2\beta}{m})^{1/(m-2)},+\infty)$. The nontrivial zero of $\varphi$ is
\[
\rho=\left(\frac{\chi-2\beta}{2}\right)^{1/(m-2)}
\]
which coincides exactly with the height of the minimizer of the free energy limit functional $\mathcal{F}_{0}$ given in \eqref{rad0}. If $\beta<\chi/2$, we would like to consider equation \eqref{pme} as a \emph{singular limit} as $s\rightarrow0$ of the main equation in \eqref{cauchy1}. An existence theory for equation \eqref{pme} supplemented with an initial condition $\rho(x,0)=\rho_{0}(x)$ could be given in the setting of Young measure solutions, see for instance \cite{Plot}, where such notion of solution is recovered  for cubic-like nonlinearities $\varphi$ as vanishing limit as $\varepsilon\rightarrow 0$ of a third order pseudo-parabolic regularization
$
\partial_{t}\rho=\Delta\varphi(\rho)+\varepsilon \Delta \rho_{t}.
$
 It would be  interesting to show that  even a weak limit $\rho$ as $s\to 0$ of a family of densities $\rho_{s}$ solving the equation \eqref{cauchy1} in the sense of Theorem \ref{existencethm} fits  the above mentioned existence theory.

\section*{Appendix}
We provide the proof of Lemma \ref{RieszW11}, Lemma \ref{lastbutone} and Lemma \ref{lastlemma}.

 \begin{proofad6}
First of all, $K_s\ast\rho\in L^\infty(\mathbb R^d)$ by \cite[Lemma 1]{CHMV}, and let us observe that, for each $i=1,...,d$, $K_{s}\ast\rho_{x_{i}}\in
L^{1}_{loc}(\R^{d})$. Indeed, $K_s$ can be written as the sum of two functions,  $K_s(x)=\bC_{B_1}(x)K_s(x)+(1-\bC_{B_1}(x))K_s(x)$,
supported on the unit ball $B_1(x)$ around $x$ and its complement. By Young convolution inequality we have $(\bC_{B_1}K_s)\ast\rho_{x_i}\in L^1(\R^d)$ and $((1-\bC_{B_1})K_s)\ast\rho_{x_i}\in L^\infty(\R^d)$.
 Moreover, for any smooth compactly supported test function $\varphi$ we have $(K_s\ast\rho_{x_i})\varphi\in L^1(\mathbb R^d)$ and we have, by the symmetry of $K_{s}$,
 \begin{equation}\label{re1}\begin{aligned}
 \int_{\R^{d}}(K_{s}\ast\rho_{x_{i}})\varphi\,dx&=c_{d,s}\int_{\R^{d}}\int_{\R^{d}}\frac{\rho_{x_{i}}(y)\varphi(x)}{|x-y|^{d-2s}}dx\,dy\\
 &=c_{d,s}\int_{\R^{d}}\int_{\R^{d}}\frac{\rho_{x_{i}}(x)\varphi(y)}{|x-y|^{d-2s}}dx\,dy=\int_{\R^{d}}\rho_{x_{i}}(K_{s}\ast\varphi)\,dx
 \end{aligned}
 \end{equation}
 and similarly
 \begin{equation}\label{re2}
 \int_{\R^d}(K_s\ast\varphi_{x_i})\rho\,dx=\int_{\R^d}(K_s\ast\rho)\varphi_{x_i}\,dx.
 \end{equation}
If we take a large ball $B_{R}$ centered at the origin, an integration by parts leads to
 \begin{equation}\label{br}
 \int_{B_{R}}(K_{s}\ast\varphi)\rho_{x_i}dx=-\int_{B_{R}}\rho(x)(K_{s}\ast\varphi_{x_{i}})dx+\int_{\partial B_{R}}\rho\,(K_{s}\ast\varphi)\nu_{i}\,d\sigma
 \end{equation}
and since $|(K_{s}\ast\varphi)(x)|\leq C/|x|^{d-2s}$ for large $x$ (as $\varphi$ is compactly supported),  thanks to the continuity and boundedness of $\rho$ we have if $s<1/2$
\[
\int_{\partial B_{R}}\rho\,(K_{s}\ast\varphi)\nu_{i}\,d\sigma\leq C\|\rho\|_{L^{\infty}(\R^{d})}R^{2s-1}\rightarrow0	\qquad\textit{ as }R\rightarrow+\infty,
\]
 along with
 \[
  \int_{B_{R}}(K_{s}\ast\varphi)\rho_{x_i}dx\to \int_{\R^d}(K_{s}\ast\varphi)\rho_{x_i}dx\quad\mbox{and}\quad\int_{B_{R}}\rho(K_{s}\ast\varphi_{x_{i}})dx\to\int_{\mathbb R^d}\rho(K_{s}\ast\varphi_{x_{i}})dx
 \]
 as $R\to+\infty$,
 which hold by dominated convergence due to the fact that $\rho(K_s\ast\varphi_{x_i})\in L^1(\R^d)$ and $(K_{s}\ast\varphi)\rho_{x_i}\in L^1(\mathbb R^d)$. Hence, we may pass to the limit in \eqref{br} and get
 \[
 \int_{\R^{d}}(K_{s}\ast\varphi)\rho_{x_i}dx=-\int_{\R^{d}}\rho(x)(K_{s}\ast\varphi_{x_{i}})dx,
 \]
 which can be combined with \eqref{re1} and \eqref{re2} to imply
 \[
  \int_{\R^{d}}(K_{s}\ast\rho_{x_i})\varphi\,dx= -\int_{\R^{d}}(K_{s}\ast\rho)\varphi_{x_i}dx.
 \]
 Therefore, we have that $K_{s}\ast\rho\in W^{1,1}_{loc}(\R^{d})$ and $\nabla(K_s\ast\rho)=K_s\ast\nabla\rho$ if $s<1/2$. On the other hand, if $d\ge 2$ and $s>1/2$ we even obtain $K_s\ast\rho\in W^{1,\infty}(\mathbb R^d)$, see \cite[Lemma 1]{CHMV}. Else if $d\ge 2$ and $s=1/2$ we obtain $K_s\ast\rho\in W^{1,p}$ for every $p\in(\tfrac{d}{d-1},+\infty)$: indeed, since $\rho\in L^1(\R^d)\cap L^\infty(\R^d)$, by the Hardy-Littlewood-Sobolev inequality we get $K_s\ast\rho \in L^p(\R^d)$ for every $p\in(\tfrac{d}{d-1},+\infty)$, thus $K_s\ast\rho$  belongs to the Bessel potential space $\mathcal L^{1,p}$ defined as $\mathcal L^{1,p}:=L^p(\R^d)\cap\{K_{1/2}\ast g:g\in L^p(\R^d)\}$, which coincides with $W^{1,p}(\R^d)$, see \cite[Theorem 3, pp 135]{Stein}.

The fact that $\rho^{m-1}\in W^{1,1}(\mathbb R^d)$ follows from the chain rule in Sobolev spaces, since $m> 2$ and $\rho\in L^\infty(\mathbb R^d)\cap
W^{1,1}(\mathbb R^d)$. Therefore $\psi\in W^{1,1}_{loc}(\mathbb R^d)$.

In order to conclude,  we write for every $\varphi\in C^\infty_c(\mathbb R^d)$
 \begin{align*}
 &\int_{\mathbb R^d}dx\int_{\mathbb R^d}(\nabla\varphi(x)-\nabla\varphi(y))\cdot(x-y)|x-y|^{2s-d-2}\rho(x)\,\rho(y)\,dy\\
 &\qquad=\lim_{\varepsilon\rightarrow0}\int_{\mathbb R^d}dx\int_{|x-y|>\varepsilon}(\nabla\varphi(x)-\nabla\varphi(y))\cdot(x-y)|x-y|^{2s-d-2}\rho(x)\,\rho(y)\,dy,
 \end{align*}
 then using the antisymmetry of the gradient of $K_{s}$ we have
 \[\begin{aligned}
& c_{d,s}\int_{\mathbb R^d}\int_{\mathbb R^d}(\nabla\varphi(x)-\nabla\varphi(y))\cdot(x-y)|x-y|^{2s-d-2}\rho(x)\,\rho(y)\,dx\,dy
\\&\qquad =\frac{2}{2s-d}\lim_{\varepsilon\rightarrow0}\int_{\mathbb R^d}\rho(x)\nabla\varphi \cdot \nabla(K_{s,\varepsilon}\ast\rho)dx
 \end{aligned}\]
 where $K_{s,\varepsilon}$ is the following truncation of $K_{s}$:
\begin{equation*}\label{gradS}
 K_{s,\varepsilon}(x) :=
 \begin{cases}
  K_{s}(x)\, ,
  &\text{if} \, \, |x|>\varepsilon\, , \\[2mm]
  c_{d,s}\varepsilon^{2s-d},
  &\text{if} \, \, |x|\leq\varepsilon\, .
 \end{cases}
\end{equation*}
At this point, we observe that
\begin{align*}
&\left|\int_{\mathbb R^d}\rho(x)\nabla\varphi \cdot \nabla(K_{s,\varepsilon}\ast\rho)dx-\int_{\mathbb R^d}\rho(x)\nabla\varphi \cdot \nabla(K_{s}\ast\rho)dx\right|\\
&\qquad\leq C\int_{\mathbb R^d}\left|\nabla\varphi(x)\cdot (K_{s,\varepsilon}-K_{s})\ast\nabla\rho\right|dx\leq C \int_{\mathbb R^d}\left|(K_{s,\varepsilon}-K_{s})\ast\nabla\rho\right|dx,
\end{align*}
thus by Young convolution inequality
\begin{align*}
&\left|\int_{\mathbb R^d}\rho(x)\nabla\varphi \cdot \nabla(K_{s,\varepsilon}\ast\rho)dx-\int_{\mathbb R^d}\rho(x)\nabla\varphi \cdot \nabla(K_{s}\ast\rho)dx\right|\\
&\qquad\leq C\|\nabla\rho\|_{L^{1}}\int_{\R^{d}}|K_{s,\varepsilon}-K_{s}|dx=C\|\nabla\rho\|_{L^{1}}\int_{|x|\leq\varepsilon}(|x|^{2s-d}-\varepsilon^{2s-d})
\rightarrow 0
\end{align*}
as $\varepsilon\rightarrow0$.
Therefore we can write
\begin{equation}\begin{aligned}
&c_{d,s}\int_{\mathbb R^d}\int_{\mathbb R^d}(\nabla\varphi(x)-\nabla\varphi(y))\cdot(x-y)|x-y|^{2s-d-2}\rho(x)\,\rho(y)\,dx\,dy\\&\qquad\qquad=\frac{2}{2s-d}
\int_{\mathbb R^d}\rho(x)\nabla\varphi \cdot \nabla(K_{s}\ast\rho)dx.\end{aligned}\label{identitydouble}
\end{equation}
Since \eqref{continuityeq} implies
 \[
\chi \int_{\mathbb R^d}\rho(x)\nabla\varphi \cdot \nabla(K_{s}\ast\rho)dx=\int_{\R^{d}}\rho\nabla\left(\frac{m}{m-1}\rho^{m-1}+2\beta\rho\right)\cdot\nabla\varphi\,dx
=\int_{\R^{d}}\nabla (\rho^{m}+\beta\rho^{2})\cdot\nabla\varphi\,dx,
 \]
 by \eqref{identitydouble} the identity \eqref{varequsteady} follows. Vice versa, if $\rho$ verifies \eqref{varequsteady}, the same computation gives that $\psi$ solves  \eqref{continuityeq}.
 \end{proofad6}

\begin{proofad7}
Through the proof, for every $f\in L^2(\mathbb R^d)$ and every $\varphi\in C^\infty_c(\R^d)$ we shall use the notation
\[
\mathcal I_s(f;\varphi):=(d-2s)\,c_{d,s}\int_{\R^d}\int_{\R^d}(\nabla\varphi(x)-\nabla\varphi(y))\,\cdot(x-y)|x-y|^{2s-d-2}f(x)f(y)\,dx\,dy
\]
The result is true if $\rho\in C^\infty_c(\mathbb R^d)$, since in this case we may apply \eqref{identitydouble}, and we may  integrate by parts and take advantage of the fact that $K_s\to \delta_0$ in the sense of distributions to get
\[
\begin{aligned}
\lim_{s\downarrow 0}\mathcal I_s(\rho;\varphi)
&=-2\,\lim_{s\downarrow 0}\int_{\R^d}\rho(x)\nabla\varphi(x)\cdot\nabla(K_s\ast\rho)(x)\,dx =-2\int_{\R^d}\rho(x)\nabla\varphi(x)\cdot\nabla\rho(x)\,dx\\
&\qquad=-\int_{\mathbb R^d}\nabla\varphi(x)\cdot\nabla(\rho^2(x))\,dx=\int_{\R^d}\Delta\varphi(x)\rho^2(x)\,dx.
\end{aligned}
\]
In order to obtain the result for $\rho\in L^1\cap  L^2(\R^d)$, let $(\rho_n)_{n\in\mathbb N}\subset C^\infty_c(\mathbb R^d)$ be a sequence that converges to $\rho$ in $L^2(\R^d)$ and in $L^1(\R^d)$ as $n\to+\infty$.
Thanks to \eqref{HLS} and by interpolation of $L^p$ norms,  we have
\begin{equation}\label{longestimate}
\begin{aligned}
&|\mathcal I_s(\rho;\varphi)-\mathcal I_s(\rho_n;\varphi)|\\&\quad\le(d-2s)\,c_{d,s}\int_{\R^d}\int_{\R^d}|\nabla\varphi(x)-\nabla\varphi(y)|\,|x-y|^{2s-d-1}|\rho(x)\rho(y)-\rho_n(x)\rho_n(y)|\,dx\,dy\\
&\quad\le (d-2s)\,\sup_{x\in\R^d} |\nabla^2\varphi(x)| \int_{\R^d}\int_{\R^d}\left|K_s(|x-y|)\,(\rho(x)\rho(y)-\rho_n(x)\rho_n(y))\right|\,dx\,dy\\
&\quad\le d\,\sup_{x\in\R^d} |\nabla^2\varphi(x)| \int_{\R^d}\int_{\R^d}K_s(|x-y|)\rho(x)|\rho(y)-\rho_n(y)|\,dx\,dy\\&\qquad\qquad+  d\,\sup_{x\in\R^d} |\nabla^2\varphi(x)|
\int_{\R^d}\int_{\R^d}K_s(|x-y|)|\rho(x)-\rho_n(x)|\rho_n(y)\,dx\,dy\\
&\quad\le 2d\,\sup_{x\in\R^d} |\nabla^2\varphi(x)| \,S_{d,s} \|\rho\|_{L^\frac{2d}{d+2s}(\mathbb R^d)}\|\rho-\rho_n\|_{L^\frac{2d}{d+2s}(\mathbb R^d)}\\
&\quad \le 2d\,\sup_{x\in\R^d} |\nabla^2\varphi(x)| \,S_{d,s}\, \|\rho\|_{L^1(\R^d)}^{\frac{4s}{d}}\,\|\rho-\rho_n\|_{L^1(\R^d)}^{\frac{4s}{d}}\,\|\rho\|_{L^2(\R^d)}^{\frac{d-2s}{d}}\,\|\rho-\rho_n\|_{L^2(\R^d)}^{\frac{d-2s}{d}},
\end{aligned}
\end{equation}
so that for every $n\in\mathbb N$ and every $\varphi\in C^{\infty}_c(\R^d)$
\begin{equation}\label{Ieps}
\limsup_{s\downarrow 0}|\mathcal I_s(\rho;\varphi)-\mathcal I_s(\rho_n;\varphi)|\le 2d \sup_{x\in\R^d} |\nabla^2\varphi(x)| \,\|\rho\|_{L^2(\R^d)}\,\|\rho-\rho_n\|_{L^2(\R^d)}.
\end{equation}
Moreover, for every $n\in\mathbb N$ and every $\varphi\in C^\infty_c(\R^d)$, since $\rho_n\in C^\infty_c(\mathbb R^d)$, we have
\[
\lim_{s\downarrow 0} \mathcal I_s(\rho_n;\varphi)=\int_{\R^d}\Delta\varphi(x)\rho_n^2(x)\,dx,
\]
which together with \eqref{Ieps} entails
\[
\begin{aligned}
&\limsup_{s\downarrow 0}\left|\mathcal I_s(\rho;\varphi)-\int_{\R^d}\rho^2\Delta\varphi\,dx\right|\\
&\qquad\le \limsup_{s\downarrow 0}\left(\left|\mathcal I_s(\rho;\varphi)-\mathcal I_s(\rho_n;\varphi)\right|+\left|\mathcal I_s(\rho_n;\varphi)-\int_{\mathbb R^d}\rho_n^2\Delta\varphi\right|+\int_{\R^d}|\rho^2-\rho_n^2|\Delta\varphi\right)\\
&\qquad\le 2d \sup_{x\in\R^d} |\nabla^2\varphi(x)| \,\left(\|\rho\|_{L^2(\R^d)}+\|\rho+\rho_n\|_{L^2(\R^d)}\right)\,\|\rho-\rho_n\|_{L^2(\R^d)}.
\end{aligned}
\]
 Since $\rho_n\to\rho$ in $L^2(\R^d)$, by taking the limit as $n\to+\infty$ we finally obtain  $\mathcal I_s(\rho;\varphi)\to\int_{\R^d}\rho^2\Delta\varphi$ as $s\downarrow 0$, for every $\varphi\in C^\infty_c(\mathbb R^d)$.
\end{proofad7}

\begin{proofad8} With the same notation of the previous proof,
we have for every $\varphi\in C^\infty_c(\R^d)$
\[\begin{aligned}
&\limsup_{s\downarrow 0} \left|\mathcal I_s(\rho_s;\varphi)-\int_{\R^d}\rho^2\Delta\varphi\right|\\&\qquad\le\limsup_{s\downarrow 0}\left|\mathcal I_s(\rho_s;\varphi)-\mathcal I_s(\rho;\varphi)\right|+\limsup_{s\downarrow 0}\left|\mathcal I_s(\rho;\varphi)-\int_{\R^d}\rho^2\Delta\varphi\right|,
\end{aligned}
\]
therefore in view of Lemma \ref{lastbutone},  it will be enough to prove that $|\mathcal I_s(\rho_s;\varphi)- \mathcal I_s(\rho;\varphi)|\to 0$ as $s\downarrow 0$. But the very same estimates of \eqref{longestimate} allow to obtain
\[
\left|\mathcal I_s(\rho_s;\varphi)-\mathcal I_s(\rho;\varphi)\right| \le 2d\sup_{x\in\mathbb R^d}|\nabla^2\varphi(x)|\,S_{d,s}\, \|\rho\|_{L^1(\R^d)}^{\frac{4s}{d}}\,\|\rho-\rho_s\|_{L^1(\R^d)}^{\frac{4s}{d}}\,\|\rho\|_{L^2(\R^d)}^{\frac{d-2s}{d}}\,\|\rho-\rho_s\|_{L^2(\R^d)}^{\frac{d-2s}{d}}
\]
where the right hand side vanishes as $s\downarrow 0$ thanks to the assumptions on the family $(\rho_s)$.
\end{proofad8}

\subsection*{Acknowledgements} 
E.M. acknowledge support from the MIUR-PRIN  project  No 2017TEXA3H.
E.M. and B.V.  are members of the
GNAMPA group of the Istituto Nazionale di Alta Matematica (INdAM). The work of J. L. V\'azquez was funded by grant PGC2018-098440-B-I00 from the Spanish Government.  He is an Honorary Professor at Univ. Complutense de Madrid.

\bibliographystyle{plain}

\begin{thebibliography}{10}


\bibitem{AGS}
L. Ambrosio, N. Gigli, and G. Savar\'e.
\newblock{\em Gradient Flows in Metric Spaces and in the Space of Probability Measures}, Lectures in Mathematics.
\newblock Basel: Birkh\"auser Verlag, 2008.

%
%
%
%
%

%

\bibitem{B}
{P. Billingsley}. { Convergence of Probability Measures}, $2^{nd}$ ed., Wiley \& Sons, New York, 1999.

\bibitem{B2} { A. Blanchet}. \newblock{A gradient flow approach to the Keller-Segel systems}.
\newblock{ \em RIMS Kokyuroku's lecture note.}  1837:  52--73, 2013.



\bibitem{BCC} A. Blanchet, V. Calvez and J.A. Carrillo.
\newblock{Convergence of the mass-transport steepest descent scheme for the sub-critical Patlak-Keller-Segel model}.
\newblock{\em SIAM J. Numer. Anal.}
46(2): 691--721, 2008.

\bibitem{B6}
A. Blanchet, J.A. Carrillo, D. Kinderlehrer, M. Kowalczyk, P. Lauren{\c{c}}ot, and S. Lisini,
\newblock{A hybrid variational principle for the Keller-Segel system in $\mathbb R^2$.}
\newblock{\em ESAIM Math. Model. Numer. Anal.} 49(6): 1553--1576, 2015.


\bibitem{BL} A. Blanchet and P. Lauren{\c{c}}ot,
\newblock{The parabolic-parabolic Keller-Segel system with critical diffusion as a gradient flow in $R^d, d \ge
3$.}
\newblock{\em Commun. Partial Differ. Eq.} 38:658--686, 2013.


\bibitem{CV} { L. Caffarelli, J. L. V\'azquez, }
\newblock { Nonlinear porous medium flow with fractional potential pressure. }
\newblock{\em Arch. Ration. Mech. Anal.}, 202: 537--565, 2011.

%
%

\bibitem{CCH}
V.~Calvez, J.~A. Carrillo, and F.~Hoffmann.
\newblock Equilibria of homogeneous functionals in the fair-competition regime.
\newblock {\em Nonlinear Anal.}, 159:85--128, 2017.



\bibitem{CCH2}
V.~Calvez, J.~A. Carrillo, and F.~Hoffmann.
\newblock The geometry of diffusing and self-attracting particles in a
  one-dimensional fair-competition regime.
\newblock In {\em Nonlocal and nonlinear diffusions and interactions: new
  methods and directions}, volume 2186 of {\em Lecture Notes in Math.}, pages
  1--71. Springer, Cham, 2017.

\bibitem{CCH3}
V.~Calvez, J.~A. Carrillo, and F.~Hoffmann.
\newblock Uniqueness of stationary states for singular {K}eller-{S}egel type
  models.
\newblock {\em Nonlinear Anal.}, 205 (2021) 112222.


\bibitem{CHVY}
J.~A. Carrillo, S.~Hittmeir, B.~Volzone, and Y.~Yao.
\newblock Nonlinear aggregation-diffusion equations: radial symmetry
and long time asymptotics.
\newblock {\em Invent. Math.},
218(3): 889--977, 2019.


\bibitem{CHMV}
J.~A. Carrillo, F.~Hoffmann, E.~Mainini, and B.~Volzone.
\newblock Ground states in the diffusion-dominated regime.
\newblock {\em Calc. Var. Partial Differential Equations}, 57(5):Art. 127, 28,
  2018.

\bibitem{MR3372289}
J.~A. Carrillo, A. Chertock, and Y. Huang.
\newblock A finite-volume method for nonlinear nonlocal equations with a
  gradient flow structure.
\newblock {\em Commun. Comput. Phys.}, 17(1):233--258, 2015.
%
%
%


\bibitem{CCV}
J.~A. Carrillo, D.~Castorina, and B.~Volzone.
\newblock Ground states for diffusion dominated free energies with logarithmic
  interaction.
\newblock {\em SIAM J. Math. Anal.}, 47(1):1--25, 2015.


\bibitem{CGHMV}
H.~Chan, M.d.M. Gonz\'alez, Y.~Huang, E.~Mainini, B.~Volzone.
\newblock  Uniqueness of entire ground states for the fractional plasma problem.
 \newblock {\em Calc. Var. Partial Differential Equations}, { 59}: 195 (2020).

\bibitem{F} G. De Figueiredo, E.M. Dos Santos and O.H. Miyagaki.
\newblock Sobolev spaces
of symmetric functions and applications.
\newblock {\em J. Funct. Anal.}, 261(12):
 3735--3770, 2011.


\bibitem{DYY}
M.~G. Delgadino, X.~Yan, and Y.~Yao.
\newblock Uniqueness and non-uniqueness of steady states of
  aggregation-diffusion equations.
\newblock {\em Commun. Pure Appl. Math.}, 75(1): 3--59, 2022.

\bibitem{JKO} { R. Jordan, D. Kinderlehrer, F. Otto}.
\newblock { The variational formulation of the Fokker-Planck equation},
\newblock{\em SIAM J. Math. Anal.}, 29:  1--17, 1998.




\bibitem{KY}
I.~Kim and Y.~Yao.
\newblock The {P}atlak-{K}eller-{S}egel model and its variations: properties of
  solutions via maximum principle.
\newblock {\em SIAM Journal on Mathematical Analysis}, 44(2):568--602, 2012.

%

\bibitem{LiebLoss}
E.~H. Lieb and M.~Loss.
\newblock {\em Analysis}, volume~14 of {\em Graduate Studies in Mathematics}.
\newblock American Mathematical Society, Providence, RI, second edition, 2001.


\bibitem{L}
P.~L. Lions.
\newblock The concentration-compactness principle in the calculus of
  variations. the locally compact case, part 1.
\newblock {\em Annales de l'I.H.P. Analyse non lineaire}, 1(2):109--145, 1984.

\bibitem{LMS} { S. Lisini, E. Mainini and A. Segatti}. \newblock A gradient flow approach to the porous medium equation with fractional pressure.
\newblock {\em Arch. Ration. Mech. Anal.}, { 227(2)}:567--606, 2018.




\bibitem{MMS} D. Matthes, R.J. McCann and G. Savar\'e.
\newblock{A family of nonlinear fourth order equations of gradient flow type}.
\newblock {Commun. Partial Differ. Equ.}, 34:1352--1397, 2009.




\bibitem{Mc}
{R. J. McCann},
\newblock { A convexity principle for interacting gases.}
\newblock{\em Adv. Math.}, 128: 153--179, 1997.

\bibitem{M} P. Mironescu.
\newblock{Superposition with subunitary powers in Sobolev spaces}.
\newblock{C. R. Math. Acad. Sci. Paris},
353(6):483--487, 2015.

\bibitem{Pierre}
M.~Pierre.
\newblock{Uniqueness of the solutions of $u_{t}-\Delta \phi(u) = 0$ with initial datum a measure.}
\newblock {\em Nonlinear Anal., Theory Methods Appl.}, 6: 175--187, 1982.


\bibitem{Plot}
P.~I. Plotnikov.
\newblock{Passing to the limit with respect to viscosity in an equation with variable parabolicity direction.}
\newblock {\em Differ. Equations}, 30(4):614--622, 1994.

%

\bibitem{RS}
X. Ros-Oton, J. Serra.
\newblock Regularity theory for general stable operators.
\newblock {\em J. Differ. Equ.}, 260(12):8675--8715,
2016.


%

\bibitem{Santambrogio}
F. Santambrogio.
\newblock Optimal transport for applied mathematicians. Calculus of variations, PDEs, and modeling.
\newblock {\em Prog. Nonlinear Differ. Equ. Appl.}, Birkh{\"a}user/Springer, 87: 2015

\bibitem{SS}
{T. Senba and T. Suzuki}. \newblock {Weak solutions to a parabolic-elliptic system of chemotaxis}. \newblock {\em J. Funct.
Anal.}, 191: 17--51, 2002.


%
\bibitem{S} J. Simon.
\newblock{Compact Sets in the space $L^p(0, T ; B)$}.
\newblock{\em Ann. Mat. Pura Appl. (4)}, 146:65-96, 1987.


%
%
%

\bibitem{Stein}
E.~M. Stein.
\newblock {\em Singular integrals and differentiability properties of
  functions}.
\newblock Princeton Mathematical Series, No. 30. Princeton University Press,
  Princeton, N.J., 1970.

%
%
%
%

%

%
%
%
%
%













%

%
%
%



%

\bibitem{V}
{ J. L. V\'azquez}, The porous medium equation. Mathematical theory. Oxford University Press, Oxford, 2007.

\bibitem{Villani03}
C.~Villani.
\newblock {\em Topics in optimal transportation}, volume~58 of {\em Graduate
  Studies in Mathematics}.
\newblock American Mathematical Society, Providence, RI, 2003.



\bibitem{Z}
Y. P. Zhang.
\newblock{On a class of diffusion-aggregation equations.}
\newblock{\em Discrete Contin. Dyn. Syst.}, 40(2):907--932, 2020.

\end{thebibliography}

\Addresses

\end{document}